\def\blfootnote{\xdef\@thefnmark{}\@footnotetext}
\newtheorem{thm}{Theorem}[section]
\newtheorem{cor}[thm]{Corollary}
\newtheorem{lemma}[thm]{Lemma}
\newtheorem{prop}[thm]{Proposition}
\newtheorem{prob}[thm]{Problem}
\newtheorem{conj}[thm]{Conjecture}
\theoremstyle{definition}
\newtheorem{df}[thm]{Definition}
\newtheorem*{question}{Question}
\theoremstyle{remark}
\newtheorem{rem}[thm]{Remark}
\newtheorem{ex}[thm]{Example}
\newcommand{\G}{\mathcal{G}}
\newcommand{\ga}{\Gamma}
\newcommand{\T}{\mathcal{T}}
\renewcommand{\L}{\mathcal{L}}
\newcommand{\M}{\mathcal{M}}
\newcommand{\cN}{\mathcal{N}}
\newcommand{\e}{\varepsilon }
\newcommand{\N}{\mathbb{N}}
\newcommand{\Z}{{\mathbb Z}}
\newcommand{\pc}{{\rm Pc}}
\renewcommand{\d}{{\rm d}}
\newcommand{\X}{\mathcal{AH}}
\newcommand{\gN}{\mathcal{N}}
\newcommand{\MM}{{\mathcal M}^3}
\newcommand{\Nn}{{\rm N}}
\newcommand{\C}{{\rm C}}
\newcommand{\st}{{\rm Stab}}
\newcommand{\pst}{{\rm PStab}}
\newcommand{\h}{\hookrightarrow_h}
\DeclareMathOperator{\link}{link}
\DeclareMathOperator{\esupp}{esupp}
\DeclareMathOperator{\pdim}{pdim}
\newcommand{\gen}[1]{\left\langle#1\right\rangle}
\def\coloneqq{\mathrel{\mathop\mathchar"303A}\mkern-1.2mu=}
\begin{document}

\date{}

\begin{abstract}
We provide new examples of acylindrically hyperbolic groups arising from actions on simplicial trees. In particular, we consider amalgamated products and HNN-extensions,
one-relator groups, automorphism groups of polynomial algebras,  $3$-manifold groups and graph products.
Acylindrical hyperbolicity is then used to obtain some results about the algebraic structure, analytic properties and measure equivalence rigidity of groups from these classes.
\end{abstract}

\keywords{Acylindrically hyperbolic groups, groups acting on trees, one-relator groups, graph products, $3$-manifold groups.}

\subjclass[2010]{Primary 20F67, 20F65, 20E08; secondary 20E34, 20E06, 57M05.}

\title{Acylindrical hyperbolicity of groups acting on trees}
\author{Ashot Minasyan}
\address[Ashot Minasyan]{Mathematical Sciences,
University of Southampton, Highfield, Southampton, SO17 1BJ, United Kingdom.}
\email{aminasyan@gmail.com}

\author{Denis Osin}
\thanks{The research of the first author was partially supported by EPSRC grant EP/H032428/1. The research of the second author was supported by
NSF grant  DMS-1006345 and by the RFBR grant 11-01-00945.}
\address[Denis Osin]{Department of Mathematics, Vanderbilt University, Nashville, TN 37240, USA}
\email{denis.v.osin@vanderbilt.edu}

\maketitle

\section{Introduction}
Recall that an isometric action of a group $G$ on a metric space $(S,\d)$ is {\it acylindrical} if for every $\e>0$ there exist $R,N>0$
such that for every two points $x,y$ with $\d (x,y)\ge R$, there are at most $N$ elements $g\in G$ satisfying
$$
\d(x,gx)\le \e \;\;\; {\rm and}\;\;\; \d(y,gy) \le \e.
$$
The notion of acylindricity goes back to Sela's paper \cite{Sel}, where it was considered for groups acting on trees.
In the context of general metric spaces, this concept is due to Bowditch \cite{Bow}. The following definition was suggested by the second author in \cite{Osi13}.

\begin{df}\label{df:acyl-gp}
A group $G$ is called \emph{acylindrically hyperbolic} if there exists a  generating set $X$ (possibly infinite) of $G$
such that the corresponding Cayley graph $\Gamma (G,X)$ is hyperbolic, non-elementary (i.e., its Gromov boundary $\partial \Gamma (G,X)$ consists of more than  $2$ points),
and the action of $G$ on $\Gamma (G,X)$ is acylindrical.
\end{df}

The above notion generalizes non-elementary word hyperbolic groups. Indeed,
it is easy to see that a group is non-elementary word hyperbolic if and only if it satisfies the above definition for a finite set $X$. For $X$ finite,
the acylindricity condition is vacuous as proper and cobounded actions are always acylindrical. On the other hand, if we allow $X$ to be infinite,
hyperbolicity and non-elementarity of $\Gamma (G,X)$ are rather weak assumptions and acylindricity of the action turns out to be \emph{the}
condition that allows one to derive interesting results.

In fact, acylindrically hyperbolic groups were implicitly studied by Bestvina and Fujiwara \cite{BF}, Bowditch \cite{Bow}, Hamenst\" adt \cite{Ham},
Dahmani, Guirardel, and Osin \cite{DGO}, Sisto \cite{Sis}, and many other authors before they were formally defined in \cite{Osi13}.
However the above-mentioned papers used different definitions stated in terms of hyperbolically embedded subgroups \cite{DGO},
weakly contracting elements \cite{Sis}, or various forms of (weakly) acylindrical actions \cite{BF,Bow,Ham}. Some nontrivial relations
between these definitions were established in \cite{DGO} and \cite{Sis}, and finally the equivalence of all definitions was
proved in \cite{Osi13} (see Section \ref{prelim} for details).

The class of acylindrically hyperbolic groups, denoted by $\X$, encompasses many examples of interest: non-(virtually cyclic) groups
hyperbolic relative to proper subgroups, $Out(F_n)$ for $n>1$, all but finitely many mapping class groups, non-(virtually cyclic) groups acting properly on
proper CAT($0$)-spaces and containing rank one elements, and so forth (see \cite{DGO,Osi13}). On the other hand, $\X$ is restricted enough to
possess a non-trivial theory.  Below we mention just few directions in the study of acylindrically hyperbolic groups; for a more
comprehensive survey we refer the reader to \cite{Osi13}.

\begin{itemize}
\item Every acylindrically hyperbolic group contains non-degenerate hyperbolically embedded subgroups. This allows one to use
methods from the paper \cite{DGO} by Dahmani, Guirardel, and Osin to transfer a significant portion of the theory of relatively hyperbolic groups,
including group theoretic Dehn filling, to the class  $\X$. Despite their generality, the techniques from \cite{DGO} are capable of answering open questions
and producing new results even for well-studied classes of groups such as relatively hyperbolic groups, mapping class groups, and $Out(F_n)$.

    \medskip

\item Acylindrical hyperbolicity shares many common features with ``analytic negative curvature" as defined in \cite{MS} and \cite{T}.
In particular, Hamenst\" adt \cite{Ham} showed that $\X$ is a subclass of the Monod--Shalom class $\mathcal{C}_{reg}$ (see also \cite{HO} and \cite{BBF}
for various generalizations). This opens doors for the Monod--Shalom measure rigidity and orbit rigidity theory  \cite{MS}. Results about
quasi-cocycles on acylindrically hyperbolic groups with coefficients in the left regular representation obtained in \cite{BBF,Ham,HO} also
seem likely to be useful in the study of the structure of group von Neumann algebras via a further generalization of methods from \cite{CS,CSU}.

    \medskip

\item Sisto \cite{Sis} studied random walks on acylindrically hyperbolic groups using the language of weakly contracting elements.
His main result covers many classical theorems about random elements of hyperbolic groups, mapping class groups, etc.

    \medskip

\item Yet another direction is explored by Hull \cite{Hull}, who developed  a version of small cancellation theory for acylindrically hyperbolic
groups and used it to construct groups with certain exotic properties.
\end{itemize}

The abundance of non-trivial results and techniques applicable to acylindrically hyperbolic groups justifies the quest for new examples,
which is the main goal of our paper. We concentrate on examples arising from actions on simplicial trees. In particular, we consider fundamental groups
of finite graphs of groups, one-relator groups, automorphism groups of polynomial algebras, fundamental groups of compact orientable $3$-manifolds, and graph products.
To illustrate usefulness of our main results we derive a number of corollaries about the algebraic structure, analytic properties and
measure equivalence rigidity of groups from these classes. However the main focus of this paper is on new examples rather than on applications.

\smallskip
\noindent {\bf Acknowledgements.}
The authors would like to thank Henry Wilton for helpful discussions of $3$-manifolds. We are also grateful to Jack Button for his valuable comments,
and to Stephane Lamy for pointing out an error in an earlier version of the paper. Finally, we would like to thank the referee for a careful reading of this article.

\section{Main results}\label{sec:results}

\subsection{Fundamental groups of graphs of groups}
We begin with a general theorem about groups acting on trees. Recall that the action of a group $G$ by automorphisms on a simplicial tree
$\T$ is called {\it minimal} if $\T$ contains no proper $G$-invariant subtree. As usual, by $\partial \T$ we denote the Gromov boundary of
$\T$, which can be identified with the set of ends of $\T$; no topology on $\partial \T$ is assumed.

\begin{thm}\label{main-1}
Let $G$ be a group acting minimally on a simplicial tree $\T$. Suppose that $G$ does not
fix any point of $\partial \T$ and there exist vertices $u,v$ of $\T$ such that the pointwise stabilizer $\pst_G\{ u,v\}$ is finite.
Then $G$ is either virtually cyclic or acylindrically hyperbolic.
\end{thm}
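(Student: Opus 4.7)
The plan is to reduce to finding a weakly properly discontinuous (WPD) loxodromic element for the action of $G$ on $\T$, at which point acylindrical hyperbolicity of $G$ follows from the equivalent formulation of $\X$ from \cite{Osi13}. I would proceed by the standard trichotomy for isometric group actions on trees.

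First, observe that the action cannot be elliptic except trivially: if $G$ fixes a vertex then by minimality $\T$ is a single point, so $G=\pst_G\{u,v\}$ is finite and hence virtually cyclic. The hypothesis that $G$ fixes no point of $\partial\T$ rules out parabolic and quasi-parabolic actions, as well as lineal actions whose invariant line has its two endpoints individually fixed by $G$. The only remaining lineal case is that of an invariant line whose two endpoints are swapped by $G$; by minimality $\T$ equals this line, the kernel $K$ of the action lies inside $\pst_G\{u,v\}$ for $u\ne v$ and is therefore finite, and $G/K$ embeds into the infinite dihedral group of isometries of the line, so $G$ is again virtually cyclic. The only remaining case is that $G\curvearrowright\T$ is of general type, i.e.\ there exist two independent loxodromic elements $f_1,f_2\in G$.

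In the general type case, I would first note that because $\T$ is a tree one has $\pst_G\{u,v\}=\pst_G[u,v]$: any automorphism of $\T$ fixing two vertices fixes pointwise the unique geodesic between them. Thus the arc $I=[u,v]$ has finite pointwise stabilizer. The key technical step is to produce a loxodromic $g\in G$ whose axis $A_g$ contains a translate of $I$, which I expect to be the main hurdle. It should follow from standard facts about general type actions on trees: the union of the axes of loxodromic elements of $G$ is a nonempty $G$-invariant subtree of $\T$, hence equals $\T$ by minimality, and by combining powers of $f_1$ and $f_2$ via ping-pong one can construct loxodromic products whose axes contain any prescribed finite arc. Once such a $g$ is in hand, WPD is routine: for a vertex $x\in A_g$ lying in the translated copy of $I$ and any $\e>0$, taking $N$ large enough forces any $h\in G$ satisfying $\d(x,hx)\le\e$ and $\d(g^Nx,hg^Nx)\le\e$ to pointwise fix a long arc of $A_g$ up to a controlled finite ambiguity; such $h$ thus lies in a conjugate of $\pst_G[u,v]$, which is finite. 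Since $f_1,f_2$ generate a non-abelian free subgroup by ping-pong, $G$ is not virtually cyclic, and applying the characterization of $\X$ from \cite{Osi13} to the non-elementary action $G\curvearrowright\T$ with the WPD loxodromic element $g$ yields $G\in\X$.
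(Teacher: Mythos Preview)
Your approach is essentially the paper's: locate a hyperbolic WPD element and invoke the characterization (AH$_3$) of Theorem~\ref{acyl}. Your case analysis for the elliptic and lineal possibilities is fine and amounts to the paper's reduction to $G$ not virtually cyclic.

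The one genuine gap is at the step you correctly flag as the main hurdle. The observation that the union of all loxodromic axes is a $G$-invariant subtree, hence all of $\T$ by minimality, only tells you that each \emph{edge} of $I=[u,v]$ lies on some axis; it does not give a single axis containing all of $I$. Your proposal to obtain such an axis by combining powers of the two witness loxodromics $f_1,f_2$ does not work as stated: axes of elements of $\langle f_1,f_2\rangle$ lie in the minimal $\langle f_1,f_2\rangle$-invariant subtree, which can be a proper subtree of $\T$ missing $I$ altogether. Even if one instead uses loxodromics whose axes cover the individual edges of $I$, splicing these into a single element whose axis contains all of $I$ is not a one-line ping-pong argument.

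The paper handles this in Lemma~\ref{axisuv} via the boundary: once the action is non-elliptic and has no global fixed point on $\partial\T$, the set of pairs $(g^{-\infty},g^{+\infty})$ for hyperbolic $g\in G$ is dense in $\partial\T\times\partial\T$ (a result of Gromov for general hyperbolic spaces), and minimality gives $\partial G=\partial\T$; since $\T$ has no leaves, $[u,v]$ sits on a bi-infinite geodesic, and density immediately yields a hyperbolic element whose axis contains $[u,v]$. Your WPD verification is correct in outline; the paper makes the ``controlled finite ambiguity'' precise in Lemma~\ref{lem:epst}, showing that $\pst^\e_G(\{x,y\})$ is covered by at most $2(2\e+1)$ left cosets of $\pst_G(\{u,v\})$ rather than by a single conjugate.
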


A slightly different criterion, which does not make use of the boundary is formulated in Corollary \ref{prop:WPD-crit} (see Subsection \ref{sec:trees} below).

If $G$ is the fundamental group of a graph of groups $(\mathcal G,\ga)$, one can apply Theorem \ref{main-1} to the action of $G$ on the associated Bass-Serre tree.
In this case the minimality of the action and the absence of fixed points on $\partial \T$ can be recognized from the
local structure of $(\mathcal G,\ga)$ -- see Section \ref{Sec-gg}. We mention here two particular cases and refer to Theorem \ref{thm:reduced-crit}  for a more general result.
Below we say that a subgroup $C$ of a group $G$ is \emph{weakly malnormal} if there exists $g\in G$ such that $|C^g \cap C|<\infty$.

\begin{cor}\label{cor:amalg-intr}
Let $G$ split as a free product of groups $A$ and $B$ with an amalgamated subgroup $C$. Suppose $A\ne C\ne B$ and $C$ is weakly malnormal in $G$.
Then $G$ is either virtually cyclic or acylindrically hyperbolic.
\end{cor}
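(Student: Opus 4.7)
The plan is to apply Theorem~\ref{main-1} to the natural action of $G$ on the Bass--Serre tree $\T$ of the splitting $G = A *_C B$: the vertex set of $\T$ is $G/A \sqcup G/B$, the edge set is $G/C$ (with $gC$ joining $gA$ to $gB$), and $G$ acts by left multiplication, so that $\st(gA) = gAg^{-1}$ and $\st(gC) = gCg^{-1}$.

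The first two hypotheses of Theorem~\ref{main-1} are routine. Since $A \ne C \ne B$ forces $G$ not to lie in any conjugate of $A$ or $B$, the group $G$ has no global fixed vertex; combined with transitivity of $G$ on each vertex type, this yields minimality. For a pair of vertices with finite pointwise stabiliser, I would use weak malnormality of $C$ to pick $g \in G$ with $|C \cap gCg^{-1}| < \infty$, and then choose an endpoint $u$ of the edge $e_1 = 1\cdot C$ and an endpoint $v$ of the edge $e_2 = g\cdot C$ so that the geodesic $[u,v]$ in $\T$ runs through both edges (always arrangeable in a tree). Because any automorphism of a simplicial tree fixing two vertices fixes the unique segment between them pointwise, one obtains $\pst_G\{u,v\} \subseteq \st(e_1) \cap \st(e_2) = C \cap gCg^{-1}$, a finite group.

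The main obstacle is the remaining hypothesis: $G$ must not fix a point of $\partial \T$. In general a group acting on a tree can fix an end, so one has to exploit the amalgamated product structure. I would suppose for contradiction that $\xi \in \partial \T$ is $G$-fixed and consider the unique geodesic ray $r = (v_0, v_1, v_2, \ldots)$ from $v_0 = A$ to $\xi$. For every $a \in A$, the ray $a\cdot r$ still begins at $v_0$ and still ends at $\xi$, so uniqueness of geodesic rays from a given vertex to a given end in a tree forces $a\cdot r = r$; in particular $a$ fixes the neighbour $v_1$ of $v_0$. Writing $v_1 = a_0 B$ with $a_0 \in A$, one computes $A \cap a_0 B a_0^{-1} = a_0(A \cap B)a_0^{-1} = a_0 C a_0^{-1}$, so $A \subseteq a_0 C a_0^{-1}$, and conjugating by $a_0^{-1} \in A$ gives $A \subseteq C$, contradicting $A \ne C$. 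With all hypotheses of Theorem~\ref{main-1} verified, the corollary follows.
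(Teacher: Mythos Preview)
Your proof is correct and follows the same overall strategy as the paper: verify the hypotheses of Theorem~\ref{main-1} for the Bass--Serre tree of the splitting. The paper does not argue the amalgam case directly but instead deduces it from the more general Theorem~\ref{thm:reduced-crit}, which in turn invokes Lemma~\ref{lem:min-act} (via Bass) for minimality and Proposition~\ref{prop:charact} for the absence of a fixed end. The one substantive difference is how the fixed-end hypothesis is handled: the paper (Lemma~\ref{lem:ab}) picks $a\in A\setminus C$ and $b\in B\setminus C$ and shows that $ab$ is hyperbolic, so the elliptic elements do not form a subgroup and Lemma~\ref{lem:fix_end} rules out a fixed end; you instead argue that a fixed end $\xi$ would force $A$ to fix the first vertex along the ray from $1\cdot A$ to $\xi$, giving $A\subseteq a_0Ca_0^{-1}$ and hence $A=C$. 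Both arguments are short; yours is tailored to the amalgam and avoids producing a hyperbolic element explicitly, while the paper's route has the advantage of working uniformly for any good edge in an arbitrary graph of groups.
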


Note that the virtually cyclic case cannot be excluded from Corollary \ref{cor:amalg-intr}. Indeed it is realized if $C$ is finite and has index $2$ in both factors.
However these are the only exceptions: it is easy to show that under the assumptions of the corollary all other amalgams contain
non-abelian free subgroups, 
and so they are are acylindrically hyperbolic.

\begin{cor}\label{cor:HNN-intr}
Let $G$ be an HNN-extension of a group $A$ with associated subgroups $C$ and $D$. Suppose that $C\ne A\ne D$ and $C$ is weakly malnormal in $G$. Then $G$ is acylindrically hyperbolic.
\end{cor}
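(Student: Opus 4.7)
The plan is to apply Theorem~\ref{main-1} to the action of $G$ on the Bass-Serre tree $\T$ of the HNN-splitting $G=\langle A,t\mid t^{-1}Ct=D\rangle$, where vertex stabilizers are conjugates of $A$ and edge stabilizers are conjugates of $C$ (equivalently of $D$).

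First, minimality of the $G$-action on $\T$ is standard: the stable letter $t$ is a hyperbolic isometry of $\T$ whose axis passes through the base vertex $v_0$ and $tv_0$, and the $G$-translates of this axis already cover $\T$. Next, I would verify that $G$ fixes no end of $\T$. Since $A$ fixes $v_0$, any $a\in A$ that fixes an end $\xi$ must preserve pointwise the unique geodesic ray from $v_0$ to $\xi$; in particular the first edge of such a ray, whose stabilizer is a subgroup of $A$ conjugate to $C$ or $D$, would be fixed by all of $A$. This forces $A=C$ or $A=D$, contradicting $C\ne A\ne D$. Hence $A$, and therefore $G$, fixes no point of $\partial\T$.

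For the finite pointwise stabilizer of a pair of vertices, I would split into two cases. If $C$ is finite, any edge already has finite stabilizer, and its two endpoints $u,v$ work. If $C$ is infinite, weak malnormality yields $g\in G$ with $|C\cap gCg^{-1}|<\infty$, which forces $g\notin C$ (else $gCg^{-1}=C$ would be infinite); so the edges $e_0$ (between $v_0$ and $tv_0$, with stabilizer $C$) and $ge_0$ (with stabilizer $gCg^{-1}$) are distinct. Choosing $u$ and $v$ to lie on opposite sides of the pair $\{e_0, ge_0\}$ --- even if these edges share a vertex, by picking the far endpoint on each --- the unique geodesic $[u,v]\subset\T$ contains both edges, and so
\[
\pst_G\{u,v\}\;=\;\bigcap_{e\subset[u,v]}\st_G(e)\;\subseteq\;C\cap gCg^{-1},
\]
which is finite.

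By Theorem~\ref{main-1}, $G$ is either virtually cyclic or acylindrically hyperbolic. To exclude the first alternative, I would use the branching of $\T$: the conditions $A\ne C$ and $A\ne D$ give $[A:C]\ge 2$ and $[A:D]\ge 2$, so every vertex of $\T$ has degree $[A:C]+[A:D]\ge 4$, and $\T$ is not a line. However, a virtually cyclic group acting minimally on a non-trivial tree must preserve (and hence equal) a unique axis, i.e., act on a line. Therefore $G$ must be acylindrically hyperbolic. The only mildly delicate point is the configuration argument for $u$ and $v$ in Step~3, checking that two distinct edges in a tree can always be spanned by a single geodesic passing through both; but this is elementary.
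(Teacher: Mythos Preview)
Your proof is correct and follows essentially the same strategy as the paper: apply Theorem~\ref{main-1} to the Bass--Serre tree of the HNN-splitting, verifying minimality, absence of a fixed end, and the finite pointwise stabilizer condition from weak malnormality. The paper packages these verifications into the more general Theorem~\ref{thm:reduced-crit} (via Lemma~\ref{lem:ab}, Proposition~\ref{prop:charact}, and Lemma~\ref{lem:min-act}), whereas you argue them directly for the HNN case; both routes are equally valid and yours is perhaps more transparent here.

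The one genuine difference is how you exclude the virtually cyclic case. The paper simply cites that a non-ascending HNN-extension contains a non-abelian free subgroup \cite[Thm.~6.1]{Bass76}. Your argument---that every vertex of $\T$ has valence $[A:C]+[A:D]\ge 4$, so $\T$ is not a line, while a virtually cyclic group acting minimally with a hyperbolic element must have a line as its minimal invariant subtree---is also correct (the key point being that a finite-index cyclic subgroup $\langle h\rangle$ has a normal power $\langle h^N\rangle$, forcing every $g\in G$ to preserve $axis(h)$). Your elementary observation that any two distinct edges in a tree lie on a common geodesic segment is indeed routine: among the four endpoint pairs, the one realizing the maximal distance gives such a segment.
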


\begin{rem} \label{rem:HNN} The assumption of Corollary \ref{cor:HNN-intr} saying that $C$ is weakly malnormal in $G$ is equivalent to the existence of an element $g \in G$ such that $|C^g \cap D|<\infty$,
because $C$ and $D$ are conjugate in $G$.
\end{rem}

The weak malnormality condition is essential in both corollaries. This can be illustrated in many ways, but the most convincing examples are the following.
\begin{ex} \label{ex:non-ah-1}
\begin{enumerate}
\item[(a)] There are simple groups splitting as $A\ast _C B$, where $A$, $B$ are finitely generated free groups and $C$ can be of finite or infinite index in both
$A$ and $B$ \cite{BM, C}. Note that simple groups are never acylindrically hyperbolic (see Theorem~\ref{thm:elem-prop-ah}).
\item[(b)] The Baumslag-Solitar groups $BS(m,n)=\langle a,t \mid t^{-1}a^mt=a^n \rangle $  are not acylindrically hyperbolic unless $m=0$ or $n=0$ \cite{Osi13}.
\end{enumerate}
\end{ex}
Furthermore, the assumption that $C\ne A\ne D$ in Corollary \ref{cor:HNN-intr} is also necessary, see Examples \ref{ex:HNNB} and \ref{ex:bg}.

It is worth noting that our  Corollaries \ref{cor:amalg-intr} and \ref{cor:HNN-intr} can be used to recover some results proved by
Schupp \cite{S} and Sacerdote and Schupp \cite{SS} by using small cancellation theory for amalgamated products and HNN-extensions.
In fact, our approach is slightly more general; for details we refer to Section \ref{Sec-gg}.

The next result is inspired by \cite{SS}. The proof makes use of Corollary \ref{cor:HNN-intr} and a technical result from \cite{SS}.

\begin{cor}\label{cor:one-rel}
Let $G$ be a one-relator group with at least $3$ generators. Then $G$ is acylindrically hyperbolic.
\end{cor}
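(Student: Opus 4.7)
\emph{Proof proposal.} The plan is to present $G = \langle x_1, \ldots, x_n \mid r\rangle$ (with $n \geq 3$) either as a suitable free product or as an HNN-extension meeting the hypotheses of Corollary \ref{cor:amalg-intr} or Corollary \ref{cor:HNN-intr}, and then invoke the appropriate corollary.

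First I would dispose of a degenerate case: if some generator $x_i$ does not appear in $r$, then $G \cong H \ast \langle x_i\rangle$, where $H = \langle x_j : j \neq i \mid r\rangle$ is a one-relator group on $n-1 \geq 2$ generators, hence non-trivial. Corollary \ref{cor:amalg-intr} applied with amalgamating subgroup $C = \{1\}$ (which is trivially weakly malnormal) yields that $G$ is acylindrically hyperbolic, since $G$ contains a non-abelian free subgroup (as $\langle x_i\rangle \cong \Z$ is infinite cyclic and $H$ is non-trivial) and so cannot be virtually cyclic.

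In the remaining case every generator occurs in $r$. By a Nielsen transformation of the basis of the ambient free group---whose induced action on exponent-sum vectors is all of $GL_n(\Z)$---I may arrange that some generator $t$ has exponent sum zero in the rewritten relator; if $t$ then fails to appear in the rewritten relator we fall back to the previous case. Assuming $t$ still appears, the classical Magnus rewriting presents $G$ as an HNN-extension
\[
G \;=\; \langle A, t \mid t^{-1} C t = D \rangle,
\]
where the base $A$ is a one-relator group on the finite collection of shifted generators $t^{-k} x_j t^k$ that actually occur in the rewriting, and $C$, $D$ are \emph{Magnus subgroups} of $A$, i.e.\ each is generated by a proper subset of the generators of $A$. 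By Magnus's Freiheitssatz, $C$ and $D$ are free subgroups of $A$, and the assumption $n \geq 3$ ensures that $C \neq A \neq D$.

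The hard part will be verifying weak malnormality of $C$ in $G$: one must produce $g \in G$ with $|g C g^{-1} \cap D| < \infty$ (cf.\ Remark \ref{rem:HNN}). This is where the technical result of \cite{SS} is indispensable. In their development of small cancellation theory for HNN-extensions of one-relator groups, Sacerdote and Schupp analyzed the intersections of conjugates of Magnus subgroups of one-relator groups; their results provide an explicit element $g \in A$ (a suitable word in generators of $A$ not shared by $C$ and $D$) for which $g C g^{-1} \cap D$ is finite---in fact typically trivial. With this weak malnormality established, Corollary \ref{cor:HNN-intr} applies directly and $G$ is acylindrically hyperbolic.
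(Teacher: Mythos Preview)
Your proposal is correct and follows essentially the same route as the paper: both arguments exhibit $G$ as an HNN-extension with proper associated subgroups and invoke the technical work of Sacerdote--Schupp \cite{SS} to obtain an element $h$ in the base with $A\cap B^h=\{1\}$, then apply Corollary~\ref{cor:HNN-intr}. The paper's version simply cites \cite{SS} in one line for the entire HNN package (properness of the associated subgroups and the trivial-intersection element), whereas you unpack the Magnus rewriting and the exponent-sum-zero reduction explicitly; but the substance is identical.
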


Corollary \ref{cor:one-rel} is related to a conjecture of P. Neumann \cite{PN}, claiming that if $G$ is a one-relator group then either $G$ is cyclic or $G \cong BS(1,n)$ is a solvable Baumslag-Solitar group, or
$G$ is SQ-universal. Recall that a group $G$ is said to be \emph{SQ-universal} if every countable group embeds in a quotient of $G$. The first non-trivial example of an
SQ-universal group was provided by Higman, Neumann and Neumann \cite{HNN}, who proved that the free group of rank $2$ is SQ-universal or,
equivalently, every countable group embeds in a group generated by $2$ elements. It is straightforward to see that any SQ-universal group contains
non-abelian free subgroups. Furthermore, since the set of all finitely generated groups is uncountable and  every countable group contains countably many finitely
generated subgroups,  every countable SQ-universal group has uncountably many non-isomorphic quotients. Thus the property of being
SQ-universal may be thought of as an indication of ``algebraic largeness" of a group. For a survey of SQ-universality we refer to \cite{AMO,Schupp}.

Sacerdote and Schupp \cite{SS} established the SQ-universality of one-relator groups with at least $3$ generators, thus confirming Neumann's conjecture in this case.
Corollary \ref{cor:one-rel} can be regarded as a strengthening of this result
(indeed, by a theorem of Dahmani, Guirardel and Osin \cite{DGO}, every acylindrically hyperbolic group is SQ-universal).
The case of one-relator groups with two generators is more complicated, as one can see by looking at the Baumslag-Solitar groups.
The full proof of Neumann's conjecture was announced by Sacerdote \cite{Sac}, but it was never published, and it seems that the conjecture is still open.
In Section \ref{sec:1-rel}, we make a step towards the resolution of this conjecture by showing that
every one-relator group with a sufficiently complicated Magnus-Moldavanskii hierarchy is
acylindrically hyperbolic (see Proposition~\ref{cor:1-rel-2-gen}). More precisely, all non-acylindrically hyperbolic one-relator groups belong to the class of  HNN-extensions of
(finitely generated free)-by-cyclic groups. Unfortunately, our techniques do not apply to ascending HNN-extensions (cf. Examples \ref{ex:HNNB}, \ref{ex:bg}), therefore for groups
with Magnus-Moldavanskii hierarchy of low complexity we only have
partial results.

Another example having the structure described in Corollary \ref{cor:amalg-intr} is ${\rm Aut}\,k[x,y]$, the group of automorphisms of the polynomial algebra $k[x,y]$
for any field $k$. This group is sometimes called the \emph{integral 2-dimensional Cremona groups} over $k$ and is also isomorphic to
the group of automorphisms of the free associative algebra $k\langle x,y\rangle$ \cite{Cze,ML}; if ${\rm char}\, k=0$ it is also
isomorphic to the group of automorphisms of the free $2$-generated Poisson algebra \cite{MLTU}. The algebraic structure of these
groups has received a lot of attention but is still far from being well understood, see \cite{BNS,FL,Wri} and references therein. We prove the following.

\begin{cor}\label{GA2}
For any field $k$, the group ${\rm Aut}\,k[x,y]$ is acylindrically hyperbolic.
\end{cor}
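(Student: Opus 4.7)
The plan is to apply Corollary \ref{cor:amalg-intr} to the amalgamated product decomposition given by the classical Jung--van der Kulk theorem: $G = \mathrm{Aut}\,k[x,y] = A \ast_C B$, where $A$ is the subgroup of affine automorphisms, $B$ is the de Jonqui\`eres (``triangular'') subgroup consisting of automorphisms of the form $(x,y) \mapsto (ax + p(y),\, by + c)$ with $a, b \in k^{\ast}$, $p \in k[y]$, $c \in k$, and $C = A \cap B$ consists of those with $\deg p \leq 1$. The strict inclusions $C \subsetneq A$ and $C \subsetneq B$ are immediate: the swap $(y,x)$ lies in $A \setminus C$, and $(x+y^2, y)$ lies in $B \setminus C$.

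To apply the corollary I must verify that $G$ is not virtually cyclic and that $C$ is weakly malnormal in $G$. The first condition is automatic, since $B$ contains the subgroup $\{(x + p(y), y) : p \in k[y]\}$, which, being isomorphic to the additive group of $k[y]$, is not virtually cyclic for any field $k$; hence neither is $G$.

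For the weak malnormality of $C$: when $k$ is finite, $C$ itself is finite and there is nothing to prove. When $k$ is infinite, the plan is to exhibit an explicit $g = h_1 h_2 \in G$, where $h_i(x,y) = (y,\, x + y^{d_i})$ are H\'enon-type automorphisms and $d_1 \neq d_2$ are integers $\geq 2$, neither a power of $\mathrm{char}\,k$ (e.g.\ $d_1 = 2,\, d_2 = 3$ when $\mathrm{char}\,k \notin \{2,3\}$; $d_1 = 3,\, d_2 = 5$ in characteristic $2$; and analogous choices otherwise). Computing directly, $g(x,y) = (x + y^{d_2},\, y + (x + y^{d_2})^{d_1})$. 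For $f(x,y) = (\alpha x + \beta y + \gamma,\, by + c) \in C$, one expands $g^{-1} f g$ as a pair of polynomials in $(x,y)$ and imposes the requirement that it lie in $C$. Comparison of the highest-degree terms forces $\beta = 0$ and $b = \alpha^{d_1}$; the intermediate-degree terms, together with the non-vanishing in $k$ of some $\binom{d_1}{k}$ for $0 < k < d_1$ (which follows from Lucas' theorem and the choice of $d_1$), force $\gamma = 0$; an analogous argument using $d_2$ gives $c = 0$; and the remaining affineness condition on the first component yields $\alpha^{d_1 d_2 - 1} = 1$. Thus $|C \cap gCg^{-1}|$ is bounded by the number of $(d_1 d_2 - 1)$-th roots of unity in $k$, and is in particular finite.

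Granted both hypotheses, Corollary \ref{cor:amalg-intr} yields that $G = \mathrm{Aut}\,k[x,y]$ is acylindrically hyperbolic. The main obstacle in this plan is the explicit polynomial verification of weak malnormality in the infinite-field case. The computation is conceptually elementary but requires careful bookkeeping of the composition $g^{-1} f g$ and its degree structure; small characteristics are delicate because binomial coefficients $\binom{d_i}{k}$ may vanish in $k$ and trivialise some of the key constraints, which is why the exponents $d_1, d_2$ must be tailored to $\mathrm{char}\,k$.
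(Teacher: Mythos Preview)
Your approach is correct and essentially the same as the paper's: both apply Corollary~\ref{cor:amalg-intr} to the Jung--van der Kulk amalgam $A\ast_C B$ and verify weak malnormality of $C$ by an explicit polynomial computation with a carefully chosen conjugating automorphism. The paper's Lemma~\ref{aCaC} uses $\alpha(x,y)=(x+y^2,\,y+p(x+y^2))$ with $p(t)=t^4+t^2$ (resp.\ $(x+y^3,\,y+p(x+y^3))$ with $p(t)=t^5+t^2$ in characteristic~$2$) and obtains the slightly stronger conclusion $\alpha^{-1}C\alpha\cap C=\{1\}$, whereas your H\'enon composition and degree choices tailored to the characteristic give a merely finite intersection; either suffices, and your separate handling of finite fields is a pleasant shortcut.
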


For motivation and a survey of some related results we refer to Section \ref{Sec-gg}. Note that ${\rm Aut}\,k[x]$ is solvable and hence it is not
acylindrically hyperbolic. We do not know if the analogue of Corollary \ref{GA2} holds true for the group ${\rm Aut}\,k[x_1,\ldots, x_n]$ when $n\ge 3 $,
but we would rather expect some higher rank phenomena to prevent it from being acylindrically hyperbolic.

Other examples of acylindrically hyperbolic groups obtained in Section \ref{Sec-gg} by using Corollaries \ref{cor:amalg-intr} and \ref{cor:HNN-intr}
include the Higman group $$H=\langle a,b,c,d \mid a^b=a^2, \, b^c=b^2,\, c^d=c^2,\, d^a=d^2\rangle $$ and
amalgamated products (respectively, HNN-extensions) of the form $A\ast _BC$ (respectively, $A\ast_C$), where $A$ is a
hyperbolic group and $C$ is a quasi-convex subgroup of infinite index in $A$. The latter examples should be compared to
simple groups mentioned in Example \ref{ex:non-ah-1}. For details we refer to Section \ref{sec:misc}

\subsection{$3$-manifold groups}
By default, all manifolds considered in this paper are assumed to be connected. Our next goal is to show that most orientable $3$-manifold groups are
acylindrically hyperbolic.  More precisely, let $\MM $ denote the class of all \emph{subgroups} of fundamental groups of compact orientable $3$-manifolds.
Note that by Scott's theorem \cite{Sco73}, every finitely generated group $G\in \MM$ is itself the fundamental group of a compact orientable $3$-manifold.
However the groups we consider are not necessarily finitely generated.

\begin{thm}\label{main-3-dim}
Let $G\in \MM$. Then exactly one of the following three conditions holds.
\begin{enumerate}
\item[(I)] $G$ is acylindrically hyperbolic.

\item[(II)] $G$ contains an infinite cyclic normal subgroup $Z$ and $G/Z$ is acylindrically hyperbolic.

\item[(III)] $G$ is virtually polycyclic.
\end{enumerate}
\end{thm}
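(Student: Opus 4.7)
The plan is to combine the geometric decomposition theory of $3$-manifolds with the graph-of-groups criteria obtained earlier in the paper, in particular Theorem \ref{main-1} and Corollaries \ref{cor:amalg-intr}, \ref{cor:HNN-intr}. First I would verify that the three alternatives are mutually exclusive: an acylindrically hyperbolic group has trivial amenable radical and contains non-abelian free subgroups, which rules out conditions (II) and (III) under (I); similarly (II) and (III) cannot hold simultaneously because a virtually polycyclic quotient cannot be acylindrically hyperbolic. So the content is to prove that at least one of the three conditions holds.

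Next I would reduce to the case where $G$ is finitely generated. In this case, Scott's compact core theorem gives $G \cong \pi_1(M)$ for some compact $3$-manifold $M$, and after passing to a finite-index subgroup (which preserves all three alternatives) I may assume that $M$ is orientable. Using the Kneser-Milnor prime decomposition, $G$ splits as a free product of prime $3$-manifold groups. If this free product decomposition is nontrivial and not of the form $\Z/2*\Z/2$, then $G$ contains a free subgroup and the trivial edge group is automatically weakly malnormal, so Corollary \ref{cor:amalg-intr} yields alternative (I); the degenerate cases fall into (III). This reduces the problem to $M$ prime, and hence irreducible up to allowing $S^2\times S^1$ (whose fundamental group is cyclic, case (III)).

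For irreducible $M$, I would invoke the Geometrization Theorem and the JSJ decomposition. If the JSJ is nontrivial, then $G$ acts minimally on the Bass-Serre tree $\T$; the reduced structure and the fact that vertex groups have codimension-one edge subgroups prevents a global fixed point on $\partial\T$. Since the peripheral $\Z^2$ subgroups are malnormal in the hyperbolic pieces (as maximal parabolics in a relatively hyperbolic group) and contain the fiber $\Z$ in Seifert pieces, one can locate two vertices whose pointwise stabilizer is finite, so Theorem \ref{main-1} yields (I). If the JSJ is trivial, then $M$ is geometric: a hyperbolic piece gives a relatively hyperbolic (hence acylindrically hyperbolic) fundamental group unless $G$ is virtually cyclic; a Seifert fibered piece produces an infinite cyclic normal subgroup $Z$ (the regular fiber) whose quotient is a $2$-orbifold group, which is either virtually free of rank $\geq 2$ (yielding (II)) or virtually a surface group, and in the latter situation one checks that either (II) holds (for hyperbolic base orbifolds) or $G$ is virtually polycyclic (for Euclidean or small spherical bases); the remaining Sol, Nil and Euclidean geometries all give virtually polycyclic groups, i.e. case (III).

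The main obstacle I expect is the verification of the weak malnormality hypothesis in Case (B) of the JSJ step: when two adjacent vertex groups are Seifert fibered, the common $\Z^2$ edge group is not malnormal in either vertex group (it contains the central fiber), so one must argue globally in $G$ rather than locally. The cleanest approach is probably to find a loxodromic element of $G$ acting on $\T$ whose axis passes through suitable hyperbolic-type JSJ vertices or, failing that, to exploit the fact that two incident $\Z^2$ subgroups in $\T$ correspond to distinct boundary components of the same Seifert piece and hence intersect trivially in the fiber direction. Finally, the extension to infinitely generated subgroups $G \leq \pi_1(N)$ should follow by considering the action of $G$ on the Bass-Serre tree of the JSJ decomposition of $N$: the minimal $G$-invariant subtree together with the criterion of Theorem \ref{main-1} applied directly to $G$ reduces the general case to the finitely generated analysis carried out above.
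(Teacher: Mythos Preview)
Your overall strategy is close to the paper's, but there is a genuine gap at exactly the point you flag as ``the main obstacle'': the weak malnormality / finite pointwise stabilizer verification for the JSJ tree of a graph manifold. Your proposed fix is incorrect. Two boundary tori of the same Seifert piece do \emph{not} intersect trivially---both contain the regular fiber, so their intersection contains an infinite cyclic group. What is true is the subtler statement that when you cross a JSJ torus into an \emph{adjacent} Seifert piece, the fiber of one side is not isotopic to the fiber of the other (else the torus would not be a JSJ torus); but turning this into a uniform bound on pointwise stabilizers of long segments in $\T$ requires a careful argument that also isolates the exceptional case where $M$ is finitely covered by a torus bundle over $S^1$ (in which case the JSJ action is genuinely \emph{not} acylindrical and one must fall into alternative (III)). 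The paper avoids all of this by invoking the Wilton--Zalesskii result (Lemma~\ref{WZ}): either $M$ has a finite cover that is a torus bundle over $S^1$, or the action of $\pi_1(M)$ on the JSJ tree is $4$-acylindrical in Sela's sense. Lemma~\ref{lem:Sa} then converts this into acylindricity in the metric sense, and Lemma~\ref{lem:at} (via Theorem~\ref{class}) gives the trichotomy directly for any subgroup $G\le\pi_1(M)$, with no weak-malnormality computation needed.

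There are two further points where the paper's treatment differs from yours. First, the paper does \emph{not} reduce to finitely generated $G$; instead it works throughout with an arbitrary subgroup $G\le\pi_1(M)$ and analyzes the induced action of $G$ on each relevant space (the free-product tree, the JSJ tree, the relative Cayley graph of an atoroidal piece, the Cayley graph of the base orbifold group of a Seifert piece), using Theorem~\ref{class} at each stage. Your final paragraph gestures at this but it is not a reduction to the finitely generated case. Second, when $G$ fixes a JSJ vertex corresponding to an atoroidal piece, the paper does not just say ``relatively hyperbolic implies acylindrically hyperbolic'' (which is false for subgroups in general---a subgroup could sit inside a $\mathbb Z^2$ cusp); it uses Lemma~\ref{lem:613} to show that a subgroup meeting a peripheral $\mathbb Z^2$ infinitely but not contained in it still has a proper infinite hyperbolically embedded subgroup.
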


If $G$ is itself a fundamental group of a compact irreducible orientable $3$-manifold, we can give a more geometric description of the non-acylindrically hyperbolic cases.

\begin{cor}\label{cor:geom}
Let $M$ be a compact orientable irreducible $3$-manifold
such that $\pi_1(M)$ is not acylindrically hyperbolic. Then either $\pi_1(M)$ is virtually polycyclic or $M$ is Seifert fibered.
\end{cor}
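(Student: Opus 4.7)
The plan is to derive the corollary directly from Theorem \ref{main-3-dim} applied to $G = \pi_1(M) \in \MM$, combined with the (now resolved) Seifert Fibered Space Conjecture. Since $\pi_1(M)$ is assumed not to be acylindrically hyperbolic, only alternatives (II) and (III) of Theorem \ref{main-3-dim} can occur; case (III) gives at once that $\pi_1(M)$ is virtually polycyclic, which is the first conclusion of the corollary.

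The substantive case is therefore (II), in which $\pi_1(M)$ contains an infinite cyclic normal subgroup $Z$. Here I would invoke the \emph{Seifert Fibered Space Theorem}: a compact irreducible $3$-manifold whose fundamental group is infinite and contains an infinite cyclic normal subgroup is Seifert fibered. This is due to Casson--Jungreis and Gabai in the non-Haken closed orientable case (with earlier contributions by Mess, Scott and Tukia), and goes back to Waldhausen and Gordon--Heil in the Haken case; the non-orientable case is handled by passing to the orientation double cover and descending. Note that in case (II) the quotient $\pi_1(M)/Z$ is acylindrically hyperbolic, hence non-elementary and in particular infinite, so $\pi_1(M)$ itself is infinite and the hypothesis of the Seifert Fibered Space Theorem is satisfied. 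One concludes that $M$ is Seifert fibered.

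The main point, and indeed the only content beyond the mere application of Theorem \ref{main-3-dim}, is to quote a form of the Seifert Fibered Space Theorem that covers the full generality required here: $M$ compact irreducible, orientable or not, possibly with non-empty boundary. This is by now standard, but the primary obstacle I would anticipate is bibliographic, namely collecting the right references for the various sub-cases, since the original statements are often made for closed orientable manifolds. Once the quotation is pinned down, the corollary follows with no additional argument, as the group-theoretic trichotomy in Theorem \ref{main-3-dim} already isolates exactly the case in which the topological input is needed.
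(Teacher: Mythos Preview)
Your proposal is correct and follows essentially the same route as the paper: apply Theorem~\ref{main-3-dim}, and in case (II) invoke the Seifert Fibered Space Theorem to conclude that $M$ is Seifert fibered. The paper simply cites \cite[Thm.~1.39]{M_Kapovich} for the requisite general form of the Seifert Fibered Space Theorem, which addresses your bibliographic concern; note also that $\pi_1(M)$ is infinite in case (II) simply because it contains the infinite cyclic subgroup $Z$, so there is no need to go through the quotient.
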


Theorem \ref{main-3-dim} allows to bring many results known for acylindrically hyperbolic groups to the world of $3$-manifold groups.
We discuss some examples below. Let us say that a group $G\in \MM$ has \emph{type} (I), (II), or (III) if the corresponding condition from Theorem \ref{main-3-dim} holds for $G$.

\begin{cor} \label{3-dim-cor-1}
A group $G\in \MM$ is inner amenable iff it is of type (II) or (III).
\end{cor}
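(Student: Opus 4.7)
The plan is to use Theorem \ref{main-3-dim} as the backbone: since its three types are mutually exclusive and exhaustive, the corollary reduces to showing that each of types (II) and (III) implies inner amenability, while type (I) precludes it. The constructions needed for (II) and (III) are elementary manipulations of means; the main obstacle is the type (I) implication, where one must import a genuine rigidity result about acylindrically hyperbolic groups.

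For type (III), $G$ is virtually polycyclic and, assuming $G$ is infinite, amenable, so it admits a bi-invariant mean $\mu$. The identity $\mu(gAg^{-1})=\mu(Ag^{-1})=\mu(A)$ shows that $\mu$ is conjugation-invariant, and translation-invariance on an infinite group forces $\mu$ to vanish on singletons. Restricting $\mu$ to $G\setminus\{1\}$ then exhibits $G$ as inner amenable.

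For type (II), let $Z\le G$ be the postulated infinite cyclic normal subgroup. Since $\mathrm{Aut}(Z)\cong \Z/2\Z$, the conjugation action of $G$ on $Z$ factors through a group of order at most two. Averaging a translation-invariant mean on $Z$ with its push-forward under inversion produces a mean $\mu_0$ on $Z$ that is simultaneously translation-invariant and inversion-invariant, hence invariant under every $G$-conjugation. Extending $\mu_0$ by zero outside $Z$ yields a $G$-conjugation-invariant diffuse mean on $G$, so $G$ is inner amenable.

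The main step is case (I): here I would invoke the theorem that every acylindrically hyperbolic group fails to be inner amenable. This follows from the non-degenerate hyperbolically embedded subgroups produced by the equivalences recalled in Section \ref{prelim} (and originally from \cite{DGO}); the presence of such a subgroup is incompatible with a diffuse $G$-conjugation-invariant mean on $G$. This is the deep ingredient and by far the hardest part of the argument, standing in sharp contrast to the elementary averaging tricks for types (II) and (III). Combining it with those two constructions and the trichotomy of Theorem \ref{main-3-dim} completes the proof.
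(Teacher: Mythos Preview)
Your treatment of types (II) and (III) is essentially the same as the paper's: for (III) you use amenability to get a bi-invariant (hence conjugation-invariant) mean, and for (II) you symmetrize a translation-invariant mean on $Z$ under inversion and extend by zero to $G$. These are fine.

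The gap is in case (I). You write that you ``invoke the theorem that every acylindrically hyperbolic group fails to be inner amenable.'' No such theorem exists, and the claim is false. Theorem~\ref{thm:prop-ah-2} says that for an acylindrically hyperbolic group $G$, non--inner-amenability is \emph{equivalent} to the vanishing of the finite radical $K(G)$. Thus any acylindrically hyperbolic group with $K(G)\ne\{1\}$ \emph{is} inner amenable; for instance, if $H$ is acylindrically hyperbolic with $K(H)=\{1\}$, then $H\times\mathbb Z/2\mathbb Z$ is still acylindrically hyperbolic (Lemma~\ref{lem: com}) but has $K=\mathbb Z/2\mathbb Z$, hence is inner amenable by Theorem~\ref{thm:prop-ah-2}. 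So acylindrical hyperbolicity alone, which is all that Theorem~\ref{main-3-dim}(I) gives you, does not rule out inner amenability.

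What the paper actually uses is the strengthened trichotomy, Theorem~\ref{main-3d}, whose case (I$'$) asserts that a type~(I) group in $\MM$ is acylindrically hyperbolic \emph{with trivial finite radical}. This extra clause is a genuinely $3$-manifold-specific fact: in the proof of Theorem~\ref{main-3d} it comes from torsion-freeness of infinite fundamental groups of closed orientable irreducible $3$-manifolds, together with Lemma~\ref{epstein} (that $F_2\times\mathbb Z_2\notin\MM$) to handle the passage to an orientable double cover. Only once $K(G)=\{1\}$ is established does Theorem~\ref{thm:prop-ah-2} yield non--inner-amenability. Your argument needs to incorporate this step.
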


Recall that a group $G$ is \emph{inner amenable} if there exists a finitely additive conjugacy invariant probability measure on $G\setminus \{ 1\}$. Inner amenability is closely related to the Murray--von Neumann property $\Gamma $ for operator algebras. In particular, if $G$ is not inner amenable, the von Neumann algebra $W^\ast (G)$ of $G$ does not have property $\Gamma $ \cite{Efr}. For further details and motivation we refer to \cite{BeHa} and Section \ref{sec:3m-app}.

\begin{cor}\label{3-dim-cor-2}
Type is a measure equivalence invariant in $\MM$. That is, if $G,H\in \MM$ and $G$ is measure equivalent to $H$, then $G$ and $H$ have the same type.
\end{cor}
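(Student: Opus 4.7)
My plan is to characterize each of the three types in $\MM$ by a property that is preserved under measure equivalence, and then invoke known ME-invariance results.

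First, I would verify that within $\MM$ type (III) coincides with amenability. Virtually polycyclic groups are certainly amenable, so (III) implies amenable. Conversely, a type (I) group is acylindrically hyperbolic and hence contains non-abelian free subgroups; a type (II) group has an acylindrically hyperbolic quotient $G/Z$, which is non-amenable, and since amenability passes to quotients this forces $G$ itself to be non-amenable. Thus inside $\MM$, type (III) is equivalent to amenability.

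Next, I would invoke Corollary \ref{3-dim-cor-1}, which states that a group $G\in\MM$ is inner amenable if and only if it has type (II) or (III). Combining this with the amenability characterization above gives, for $G\in\MM$, the clean trichotomy: $G$ is of type (I) iff $G$ is not inner amenable; $G$ is of type (II) iff $G$ is inner amenable but not amenable; and $G$ is of type (III) iff $G$ is amenable.

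Finally, I would appeal to two classical measure-equivalence invariance statements for countable groups. Amenability is a measure-equivalence invariant by the Ornstein--Weiss theorem (all infinite countable amenable groups are ME to $\Z$). Inner amenability is a measure-equivalence invariant as well, which is the content of a theorem of Tucker-Drob. Applying these invariances to the pair $G,H\in\MM$, each of the three characterizing conditions above is respected by the measure equivalence, so $G$ and $H$ have the same type.

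The only real obstacle is organisational: the technical heavy lifting (both Corollary \ref{3-dim-cor-1} and the ME-invariance of inner amenability) is already done elsewhere, so once those inputs are assembled the argument reduces to the purely formal combination above.
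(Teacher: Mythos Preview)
Your argument is correct but takes a different route from the paper. The paper does not pass through inner amenability at all; instead it invokes Corollary~\ref{cor:GANC-3m}, which characterizes the types via the Monod--Shalom classes: type~(I) $\Longleftrightarrow G\in\Cr$, and type~(I) or (II) $\Longleftrightarrow G\in w\Cr$ $\Longleftrightarrow$ $G$ non-amenable. It then quotes \cite[Cor.~7.6 and~7.9]{MS} for the ME-invariance of membership in $\Cr$ and $w\Cr$, together with the classical ME-invariance of amenability. Your approach replaces the bounded-cohomology invariant $\Cr$ by inner amenability, using Corollary~\ref{3-dim-cor-1} instead of Corollary~\ref{cor:GANC-3m}, and then appeals to Tucker-Drob's theorem that inner amenability is preserved under measure equivalence. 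Both arguments are short once the right invariant is identified; the paper's choice is the one available from the Monod--Shalom machinery already in play throughout the article, whereas yours trades that for a later (and in some sense deeper) rigidity result about inner amenability. Note in particular that Tucker-Drob's theorem postdates the paper, which is why the authors did not take your route.
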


This result can be seen as the first step towards measure equivalence classification of $3$-manifold groups. We note that a classification of fundamental groups of compact $3$-manifolds up to quasi-isometry was done
by Behrstock and  Neumann \cite{BN,BN1}, while no non-trivial results about measure equivalence seem to be known.

Finally we mention that Theorem \ref{main-3-dim} is used in \cite{A-M-S} to show the following:
\emph{the outer automorphism group ${\rm Out}(\pi_1(M))$ is residually finite for any compact $3$-manifold $M$}.  The proof is based on the approach suggested in \cite{MO}
and essentially uses acylindrical hyperbolicity.

\subsection{Graph products}\label{subsec:gp}
Let $\Gamma $ be a graph without loops and multiple edges with vertex set $V(\Gamma)=V$. Let also  $\{G_v\}_{v\in V}$  be a family of groups, called \emph{vertex groups},
indexed by vertices of $\Gamma$. The \emph{graph product} of $\{G_v\}_{v\in V}$ with respect to $\Gamma $, denoted  $\Gamma \{G_v\}_{v\in V}$,
is the quotient group of the free product $\ast_{v\in V} G_v$ by the relations $[g,h]=1$ for all $g\in G_u$, $h\in G_v$ whenever $u$ and $v$ are
adjacent in $\Gamma$. Graph products generalize free and direct products of groups. Other basic examples include right angled Artin and
Coxeter groups. The study of graph products and their subgroups has gained additional importance in view of the recent breakthrough results of
Agol, Haglund, Wise, and their co-authors, which claim that many groups can be virtually embedded into right angled Artin groups (see \cite{Agol,HW,Wise-qch} and references therein).

For any subset $U \subseteq V$, the subgroup $\langle G_u \mid u \in U\rangle$ of the graph product $G=\Gamma \{G_v\}_{v\in V}$ is said to be a \emph{full subgroup}; it is
naturally isomorphic to to the graph product $\Gamma_U \{G_u\}_{u\in U}$, where $\Gamma_U$ is the full subgraph of $\Gamma$ spanned on the vertices from $U$.
Any conjugate of a full subgroup is called \emph{parabolic}. We will say that the graph $\ga$ is \emph{irreducible} if its complement graph $\ga^c$ is connected; this equivalent to
saying that the graph product $\Gamma \{G_v\}_{v\in V}$ does not naturally split as a direct product of two proper full subgroups.

Graph products naturally decompose as amalgamated products in many ways (see Subsection~\ref{sec:wpd_in_gp}). In Section \ref{sec:GP} we study subgroups of graph products using the
actions on the associated Bass-Serre trees together with the theory of parabolic subgroups developed in \cite{A-M-Tits}. In particular, we prove the following (see Section 6.3).

\begin{thm} \label{cor:gp-hyp} Let $G=\ga\{G_v\}_{v\in V}$ be the graph product of non-trivial groups with respect to some finite irreducible graph $\Gamma$ with at least two vertices.
Suppose that $H \leqslant G$ is a subgroup that is not contained in a proper parabolic subgroup of $G$. If $H$ is not virtually cyclic then $H \in \X$.
\end{thm}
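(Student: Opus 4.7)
The plan is to reduce to Theorem \ref{main-1} by exploiting a natural amalgamated product decomposition of $G$. Since $\Gamma$ is irreducible with at least two vertices, $\Gamma^c$ is connected and non-empty, so $\Gamma$ is not a complete graph; thus I may fix a vertex $v \in V$ with $\link(v) \cup \{v\} \subsetneq V$. The graph product then splits as
\[
G \;\cong\; G_{V \setminus \{v\}} \;*_{G_{\link(v)}}\; \bigl(G_v \times G_{\link(v)}\bigr),
\]
in which both amalgam factors are proper full (hence proper parabolic) subgroups of $G$. Let $\T$ be the corresponding Bass--Serre tree. Every vertex stabilizer of $\T$ is conjugate to one of the two factors above, hence is a proper parabolic subgroup of $G$. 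Since by hypothesis $H$ is contained in no proper parabolic, it fixes no vertex of $\T$; let $\T_H$ denote the minimal $H$-invariant subtree, so that the action of $H$ on $\T_H$ is minimal and non-trivial.

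To invoke Theorem \ref{main-1} I would verify that (i) $H$ fixes no point of $\partial \T_H$, and (ii) there exist vertices $x, y \in \T_H$ with finite pointwise $H$-stabilizer. For (i), a minimal action on a tree with a fixed end must act by translations on a line, giving an extension $1 \to N \to H \to Q \to 1$ with $Q \leqslant \Z$ and $N$ contained in the common stabilizer of this axis; using the theory of parabolic subgroups developed in \cite{A-M-Tits}, $N$ lies in a proper parabolic of $G$, and a virtually cyclic extension of a proper parabolic is either virtually cyclic or again contained in a proper parabolic -- both contradicting the hypotheses on $H$.

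The main obstacle is (ii). Edge stabilizers of $\T$ are conjugates of the parabolic subgroup $G_{\link(v)}$, and the pointwise stabilizer of any finite path in $\T$ is a nested intersection of such subgroups, which is again parabolic by \cite{A-M-Tits}. If every pair of vertices of $\T_H$ had infinite pointwise $H$-stabilizer, I would iterate along the axis of a loxodromic element $h \in H$ (which exists since $H$ acts minimally and non-trivially on $\T_H$ without fixed end), producing an $h$-invariant ascending union of intersections of $H$ with conjugates of $G_{\link(v)}$; the almost-malnormal behaviour of proper parabolics in graph products (from \cite{A-M-Tits}) would force this union to sit inside a proper parabolic of $G$ that also contains $h$, and ultimately to contain $H$, contradicting our assumption. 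This produces the required pair $(x,y)$, so Theorem \ref{main-1} applies and $H$ is virtually cyclic or acylindrically hyperbolic -- the first alternative being excluded by hypothesis.
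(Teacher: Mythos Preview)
Your overall architecture---split $G$ as $G_{V\setminus\{v\}}*_{G_{\link(v)}}(G_v\times G_{\link(v)})$ and analyse the induced action of $H$ on the Bass--Serre tree---is exactly the paper's strategy. Part~(i) is essentially fine, though your claim that $N$ lies in ``the common stabilizer of this axis'' is not quite what happens: the elliptic kernel $N$ need not fix any single vertex, but one can still show $\pc_\Gamma(N)$ is a proper parabolic (using Lemma~\ref{lem:parab_clos}(ii) to reduce to a finitely generated subgroup of $N$, which then fixes a vertex by Serre), and then irreducibility of $\Gamma$ forces $\Nn_G(\pc_\Gamma(N))$ to be proper as you indicate.

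Part~(ii), however, has a genuine gap. First, the phrase ``almost-malnormal behaviour of proper parabolics'' is simply false: in the path $a$--$b$--$c$, the parabolic $G_b$ is centralised by $G_a$ and $G_c$, so parabolics are very far from almost malnormal, and no such statement appears in \cite{A-M-Tits}. Second, and more seriously, iterating along the axis $\mathcal L$ of a single hyperbolic $h\in H$ only tells you that $h$ normalises $\pst_G(\mathcal L)$; it gives no control over how the rest of $H$ interacts with $\pst_G(\mathcal L)$, and there is no reason why $\langle h,\pst_H(\mathcal L)\rangle$ should exhaust $H$. The missing idea, which is the technical core of the paper's argument (Proposition~\ref{prop:pst(axis)-normal}), is to choose $h$ among \emph{all} hyperbolic elements of $H$ so that $\pdim_\Gamma(\pst_G(\mathcal L))$ is minimal, and then compare the axis of $h$ with the axis of an arbitrary hyperbolic $g\in H$ (treating separately the cases where the axes overlap in an edge or not) to show that $\pst_G(\mathcal L)\subseteq\pst_G(axis(g))$ for every such $g$. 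Only then does one get that $\pst_G(\mathcal L)$ is normalised by $H$, whence Lemma~\ref{lem:H_in_join} places $H$ inside a join subgroup if $\pst_G(\mathcal L)$ is infinite---contradicting irreducibility.
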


The corollary below covers the particular case when $H=G$.

\begin{cor}\label{thm:gpacyl}
Let $G=\Gamma \{G_v\}_{v\in V}$ be the graph product of non-trivial groups with respect to some finite irreducible
graph $\Gamma$ with at least two vertices. Then $G$ is either virtually cyclic or acylindrically hyperbolic.
\end{cor}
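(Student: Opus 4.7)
The plan is to derive Corollary \ref{thm:gpacyl} as an immediate specialization of Theorem \ref{cor:gp-hyp} to the case $H = G$. First I would check that $G$ itself cannot sit inside a proper parabolic subgroup of $G$, which is immediate since a proper parabolic subgroup is by definition a proper subgroup. Once this is observed, the hypothesis of Theorem \ref{cor:gp-hyp} is trivially satisfied with $H = G$, and its conclusion reads: if $G$ is not virtually cyclic then $G \in \X$, i.e., $G$ is acylindrically hyperbolic. This is precisely the dichotomy asserted by the corollary.

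Because the corollary reduces to a single application of Theorem \ref{cor:gp-hyp}, there is no genuine obstacle left at this point; the entire technical weight has been absorbed into the theorem, which (according to the setup of Section \ref{sec:GP}) is presumably proved by combining a suitable amalgamated-product decomposition of the graph product along a cut of $\ga$, the action on the associated Bass-Serre tree, the parabolic subgroup theory of \cite{A-M-Tits}, and the general criterion provided by Theorem \ref{main-1}. The only thing one really has to convince oneself of when specializing to $H=G$ is that the irreducibility of $\ga$ together with the assumption that $\ga$ has at least two vertices and all vertex groups are non-trivial really produces a non-trivial splitting for which the minimality and non-fixed-end hypotheses of Theorem \ref{main-1} can be arranged; but this is built into the hypotheses of Theorem \ref{cor:gp-hyp} itself.

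Finally, I would remark that the virtually cyclic alternative in Corollary \ref{thm:gpacyl} cannot be removed. Taking $\ga$ to be the edgeless graph on two vertices (which is irreducible, as its complement is a single edge and hence connected) with both vertex groups equal to $\mathbb{Z}/2\mathbb{Z}$, the resulting graph product is $\mathbb{Z}/2\mathbb{Z} \ast \mathbb{Z}/2\mathbb{Z}$, the infinite dihedral group, which is virtually cyclic. Thus the statement of Corollary \ref{thm:gpacyl} is sharp, and the dichotomy \emph{virtually cyclic versus acylindrically hyperbolic} is the strongest one can hope for.
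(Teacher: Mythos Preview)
Your proposal is correct and matches the paper's approach exactly: the paper states that Corollary \ref{thm:gpacyl} is a direct consequence of Theorem \ref{cor:gp-hyp} applied with $H=G$, which is precisely what you do. Your additional observation about the infinite dihedral group showing sharpness is a nice extra remark not present in the paper's proof.
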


Graph products are simplest examples of groups having ``relative non-positive curvature"  with respect to the vertex subgroups. From this point of view, Corollary \ref{thm:gpacyl} can be considered as another instance of a more general phenomenon:
``irreducibility" of ``nice" non-positively curved groups (or spaces) often implies the existence of ``hyperbolic directions"
in an appropriate sense; in turn, the latter condition implies acylindrical hyperbolicity. In the context of groups acting on $CAT(0)$ spaces,
a more formal version of this claim  is sometimes called the \emph{Rank Rigidity Conjecture} \cite{BB,CS} and is closely related to the
classical work of Ballman, Brin, Burns, and Spatzier \cite{Bal,Bal95,BS87} as well as more recent results of Caprace and Sageev \cite{CS}.
For a more detailed discussion we refer to Section \ref{sec:subgr-class}.

Theorem \ref{cor:gp-hyp} is quite useful for studying properties of subgroups of graph products. For example, it is used  in \cite{A-M-S} to prove that $Out(G)$ is residually finite for any virtually compact special
(in the sense of Haglund and Wise \cite{HW}) group $G$.
Theorem \ref{cor:gp-hyp} also implies that every subgroup $H$ of a graph product $G$ is either acylindrically hyperbolic, or  ``reducible'', or ``elementary'' in an appropriate sense
(see Theorem \ref{thm:subgroups}). A particular application to  right angled Artin groups gives rise to the
following algebraic alternative:

\begin{cor}\label{cor:RAAG_in_X} Let $H$ be a non-cyclic subgroup of a finitely generated right angled Artin group. Then exactly one of the following holds:
\begin{itemize}
  \item $H$ is acylindrically hyperbolic;
  \item $H$ contains two non-trivial normal subgroups $N_1,N_2 \lhd H$ such that $N_1 \cap N_2=\{1\}$.
\end{itemize}
\end{cor}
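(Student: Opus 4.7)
The plan is to prove the corollary by induction on the number of vertices in the defining graph, using the parabolic closure of $H$ to reduce to Theorem~\ref{cor:gp-hyp}. Let $G = A(\Gamma)$ denote the ambient finitely generated right angled Artin group containing $H$. By the parabolic closure theory for graph products (see \cite{A-M-Tits}), the collection of parabolic subgroups of $G$ containing $H$ admits a unique minimal element $P$; after conjugation we may assume $P = A(\Delta)$ for some full subgraph $\Delta \subseteq \Gamma$.

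I induct on $|V(\Gamma)|$, with the base $|V(\Gamma)| \le 1$ being vacuous because $H$ is non-cyclic. If $\Delta$ has at most one vertex, then $P$ is cyclic, forcing $H$ cyclic and contradicting the hypothesis. If $\Delta$ has at least two vertices and is irreducible, then by minimality of $P$ the subgroup $H$ lies in no proper parabolic of $P$; since $H$ is torsion-free and non-cyclic (hence not virtually cyclic), Theorem~\ref{cor:gp-hyp} applied to the inclusion $H \le P$ gives $H \in \X$, and the first alternative holds.

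If $\Delta$ is reducible, write $P = P_1 \times P_2$ with $P_i = A(\Delta_i)$ non-trivial, where $\Delta = \Delta_1 \sqcup \Delta_2$ is joined by all possible edges. Let $\pi_i \colon P \to P_i$ be the projections and set $N_i := H \cap P_i = \ker(\pi_{3-i}|_H) \lhd H$; then $N_1 \cap N_2 \subseteq P_1 \cap P_2 = \{1\}$. If both $N_i$ are non-trivial, the second alternative of the corollary holds. Otherwise, say $N_2 = \{1\}$; then $\pi_1|_H$ is injective, so $H \cong \pi_1(H) \le P_1$. I claim the parabolic closure of $\pi_1(H)$ in $P_1$ equals $P_1$: if $\pi_1(H) \le Q$ for some proper parabolic $Q$ of $P_1$, then $H \le Q \times P_2$, which is a proper parabolic of $G$ strictly contained in $P$, contradicting the minimality of $P$. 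Since $|V(\Delta_1)| < |V(\Gamma)|$, the inductive hypothesis applied to $\pi_1(H) \le A(\Delta_1)$ yields the desired dichotomy for $\pi_1(H)$, and the isomorphism $H \cong \pi_1(H)$ transfers it to $H$.

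Mutual exclusivity follows from the standard fact that in an acylindrically hyperbolic group with trivial finite radical (automatic for torsion-free groups, so in particular for subgroups of RAAGs) any two non-trivial normal subgroups intersect non-trivially. The main bookkeeping obstacle I expect is the reducible case, specifically the verification that the parabolic closure of $\pi_1(H)$ in $P_1$ is the whole group $P_1$; the remaining steps are a clean application of Theorem~\ref{cor:gp-hyp} together with the parabolic closure theory for RAAGs.
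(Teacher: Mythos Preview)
Your proof is correct and follows essentially the same route as the paper: the paper simply cites Theorem~\ref{thm:subgroups} (whose proof is exactly this induction on the number of vertices, passing to the parabolic closure, applying Theorem~\ref{cor:gp-hyp} in the irreducible case, and projecting to a factor in the reducible case) and then specializes using torsion-freeness of right angled Artin groups. One minor remark: in the reducible branch, your verification that the parabolic closure of $\pi_1(H)$ in $P_1$ is all of $P_1$ is unnecessary for the induction---you only need that $\pi_1(H)$ is a non-cyclic subgroup of the smaller RAAG $A(\Delta_1)$---though the claim is true and harmless.
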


We also mention one application of graph products to groups with hyperbolically embedded subgroups. Recall that if a group $G$ is hyperbolic relative to a subgroup $H$,
then many ``nice'' properties of $H$ (e.g., solvability of algorithmic problems, finiteness of the asymptotic dimension, various analytic properties, etc.)
are inherited by $G$ \cite{Farb,Osi06,O-asdim,DS}. As shown in \cite{DGO},  many properties of acylindrically hyperbolic groups resemble those of
relatively hyperbolic groups, and hyperbolically embedded subgroups (see Subsection \ref{sec:eqdef} for the definition)
serve as analogues of peripheral subgroups in the relatively hyperbolic case.
Thus one may wonder if the structure of an acylindrically hyperbolic group is determined, to some extent, by the structure of its hyperbolically embedded subgroups.
The following result provides a strong negative answer; it is a simplified version of Theorem \ref{thm:emb}.

\begin{thm}
Any finitely generated group can be embedded into a finitely generated acylindrically hyperbolic group $G$ such that every proper
hyperbolically embedded subgroup of $G$ is finitely generated and virtually free.
\end{thm}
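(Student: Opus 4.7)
Given a finitely generated group $H$, the plan is to realise $H$ as a vertex subgroup of a carefully chosen graph product $G$, so that acylindrical hyperbolicity of $G$ comes for free from Corollary~\ref{thm:gpacyl}, while the combinatorics of the defining graph restrict the proper hyperbolically embedded subgroups of $G$ to be virtually cyclic (and hence finitely generated and virtually free).

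First I would fix a finite irreducible graph $\Gamma$ on vertex set $V=\{v_0,\dots,v_{n-1}\}$ with $n\ge 2$ in which every vertex has at least one neighbour; the path $P_4$ is a concrete example (its complement is connected). Put $G_{v_0}:=H$, $G_{v_i}:=\mathbb Z$ for $i\ge 1$, and let $G:=\Gamma\{G_v\}_{v\in V}$. Then $H$ embeds in $G$ as $G_{v_0}$, $G$ is finitely generated, and Corollary~\ref{thm:gpacyl} tells us that $G$ is either virtually cyclic or acylindrically hyperbolic. Picking two non-adjacent vertices $v_i,v_j\in V$ and non-trivial elements in $G_{v_i},G_{v_j}\cong\mathbb Z$ produces a rank-two free subgroup of $G$ via the standard graph-product normal form, ruling out the virtually cyclic alternative.

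It remains to show that every proper hyperbolically embedded subgroup $E\leqslant G$ is finitely generated and virtually free; the case of finite $E$ is trivial, so assume $E$ is infinite. Two ingredients drive this step: hyperbolically embedded subgroups are \emph{weakly malnormal}, i.e.\ $|E\cap gEg^{-1}|<\infty$ whenever $g\in G\setminus E$ (see \cite{DGO}), and the parabolic-subgroup theory for graph products from \cite{A-M-Tits}. Let $P(E)$ denote the smallest parabolic subgroup of $G$ containing $E$. The connectedness of $\Gamma$ guarantees that every proper parabolic $P$ admits a ``crossing vertex'' $v\notin\supp(P)$ adjacent to some $u_0\in\supp(P)$, so that $G_v$ commutes with $G_{u_0}\leqslant P$; conjugating by a non-trivial $g\in G_v$ then forces an infinite overlap between $E$ and $gEg^{-1}$ whenever $E$ has infinite intersection with a conjugate of $G_{u_0}$, contradicting weak malnormality. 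A short induction on $|V|$ reduces the case $P(E)\subsetneq G$ to the case $P(E)=G$. In the latter, Theorem~\ref{cor:gp-hyp} forces $E$ to be either virtually cyclic (done) or acylindrically hyperbolic. To handle the acylindrically hyperbolic case, I would fix an edge $\{v_i,v_j\}$ of $\Gamma$, pass to the associated amalgamated-product splitting of $G$ (cf.\ Subsection~\ref{sec:wpd_in_gp}), observe that $E$'s induced action on the Bass--Serre tree is acylindrical with finite edge stabilisers, and conclude by Bass--Serre theory that $E$ stabilises an axis, since an acylindrically hyperbolic group cannot act with finite edge stabilisers on a tree and fix a single end; hence $E$ is virtually cyclic, as required.

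The principal obstacle is the parabolic step: while ruling out a full parabolic itself from being hyperbolically embedded is easy (its centraliser is visibly infinite), a strictly smaller infinite $E\leqslant P$ requires careful use of the Green normal form to exhibit an explicit infinite overlap of $E$ and one of its conjugates. The axis-stabilisation argument in the non-parabolic case is more standard but still needs the acylindricity of the induced $E$-action on the Bass--Serre tree together with the exclusion of end-fixing without an axis.
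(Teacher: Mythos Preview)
Your construction (the graph product over $P_4$ with $H$ at one end and copies of $\mathbb Z$ elsewhere) is exactly the one the paper uses, and the acylindrical hyperbolicity of $G$ follows as you say. The remainder of your argument, however, has real gaps.

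First, the parabolic step does not work. Knowing that $G_{u_0}\leqslant P(E)$ says nothing about $E\cap G_{u_0}$; the parabolic closure of $E$ can be much larger than $E$ itself, and there is no reason $E$ should meet any particular vertex group in an infinite set. So the conjugation-by-$G_v$ argument, which requires an infinite overlap inside $G_{u_0}$ to exploit almost malnormality, never gets off the ground. The ``induction on $|V|$'' you mention does not repair this.

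Second, the endgame is incoherent. You assume $E$ is acylindrically hyperbolic, then claim its action on a Bass--Serre tree has finite edge stabilisers and that it ``stabilises an axis'', concluding $E$ is virtually cyclic. But you have not shown the edge stabilisers of $E$ are finite (edge stabilisers in $G$ are infinite here), and an acylindrically hyperbolic group acting on a tree with finite edge stabilisers would typically have a non-elementary action, not stabilise an axis. More fundamentally, the conclusion ``virtually cyclic'' is too strong: proper hyperbolically embedded subgroups of this $G$ need only be virtually free, and the paper does not claim virtual cyclicity.

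What the paper actually does is bypass parabolic closures entirely and exploit the specific centre structure of $P_4$. Using the splitting $G=G_{\{a,b\}}\ast_{G_b}G_{\{b,c,d\}}$ and almost malnormality of $E\hookrightarrow_h G$: if $E$ meets a conjugate of $G_{\{a,b\}}$ in an infinite set, then since $x_b$ is central in $G_{\{a,b\}}$ one gets $x_b\in E$ (up to conjugacy), then $G_{\{a,b\}}\leqslant E$, then $x_c\in E$ (it commutes with $x_b$), then $x_d\in E$, hence $E=G$. The symmetric argument handles $G_{\{b,c,d\}}$. Thus a \emph{proper} $E$ meets every vertex stabiliser of the Bass--Serre tree finitely; finite generation of $E$ comes from \cite[Cor.~4.32]{DGO}, and then standard Bass--Serre theory (action with finite vertex stabilisers, cocompact on the minimal subtree) gives virtual freeness. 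This ``propagation through centres'' is the missing idea in your plan.
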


\subsection{Geometric vs. analytic negative curvature}
Recall that a countable group $G$ belongs to the Monod--Shalom class $\mathcal C_{reg}$ if $H_b^2(G, \ell^2(G))\ne 0$ \cite{MS}.
An a priori different class $\mathcal D_{reg}$ was defined by Thom \cite{T}: a countable group $G$ is in $\mathcal D_{reg}$ if $G$ is
non-amenable and there exists an unbounded quasi-cocycle $G\to \ell^2(G)$.  Groups from $\mathcal C_{reg}$ and $\mathcal D_{reg}$
exhibit behavior typical to non-elementary hyperbolic groups. Moreover, most known examples from these classes come from the world
of groups acting on hyperbolic spaces. Motivated by these observations, Monod \cite{Mon} and Thom  \cite{T} suggested to consider
$\mathcal C_{reg}$ and $\mathcal D_{reg}$ as analytic analogues of the class of ``negatively curved" groups. It is worth noting that
no non-trivial relation between $\mathcal C_{reg}$ and  $\mathcal D_{reg}$ is known, although they are likely to coincide (see Remark 2.7 in \cite{T})

Monod \cite[Problem J]{Mon} asked whether $\mathcal C_{reg}$ can be characterized by a geometric condition. In this paper we discuss the relation between $\mathcal C_{reg}$, $\mathcal D_{reg}$, and $\X $.
First we recall that $\X\subseteq \mathcal C_{reg}\cap \mathcal D_{reg}$. This was originally proved by Hamenst\"adt \cite{Ham} in slightly different terms and also follows from the results of
Hull-Osin \cite{HO} and Bestvina--Bromberg--Fujiwara \cite{BBF}. Moreover, acylindrically hyperbolic groups constitute the majority of examples from $\mathcal C_{reg}$ or $\mathcal D_{reg}$. Indeed the only examples of non-acylindrically hyperbolic groups
from $\mathcal C_{reg}\cup \mathcal D_{reg}$ known up to now are those constructed by Osin \cite{Osi09} and L\"uck and Osin \cite{LO}.
These are finitely generated infinite torsion groups with positive first $\ell^2$-betti numbers. Note that $\beta_1^{(2)}(G)>0$ implies $G\in \mathcal D_{reg}$ while torsion groups are never acylindrically hyperbolic (see Example \ref{ex:non-ah} (b)).

Groups from \cite{Osi09} and \cite{LO} are not finitely presented. On the other hand, there is an evidence that for finitely presented groups, existence of non-trivial quasi-cocycles may be used to construct non-elementary actions on hyperbolic spaces \cite{Man}. This motivates the following question:

\begin{question}[Geometric vs. Analytic Negative Curvature]
Are the conditions $G\in {\mathcal C}_{reg}$, $G\in {\mathcal D}_{reg}$, and $G\in \mathcal{AH}$ equivalent for a finitely presented group $G$?
\end{question}

We do not really believe in a positive answer to this question in full generality.
However, in Section \ref{sec:nc} we obtain the following as immediate applications of other results from the paper.

\begin{cor}\label{cor:GANC} The conditions $G\in {\mathcal C}_{reg}$, $G\in {\mathcal D}_{reg}$, and $G\in \mathcal{AH}$ are equivalent for any group $G$
from the following classes:
\begin{enumerate}
\item[(a)] Subgroups of fundamental groups of compact orientable $3$-manifolds.
\item[(b)] Subgroups of graph products of amenable groups.
In particular, this class includes subgroups of right angled Artin groups.
\end{enumerate}
\end{cor}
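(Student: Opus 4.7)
The approach is to invoke the already-established inclusion $\X\subseteq \Cr\cap\Dr$ and verify the reverse inclusions in each of the two classes. Thus for $G$ in class (a) or (b), it suffices to show that if $G\notin\X$ then $G\notin\Cr$ and $G\notin\Dr$. The main analytic input I would use is the following well-known consequence of Monod's Hochschild--Serre spectral sequence in bounded cohomology: if a countable group $G$ contains an infinite amenable normal subgroup $N$, then $H^2_b(G,\ell^2(G))=0$ and every quasi-cocycle $G\to\ell^2(G)$ is bounded, hence $G\notin\Cr\cup\Dr$. The point is that $H^q_b(N,\cdot)$ vanishes for $q\geq 1$ by amenability of $N$, while $\ell^2(G)^N=0$ because $N$ is infinite and acts on $G$ by free translation, so the spectral sequence collapses to zero. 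Amenable groups are likewise excluded from both classes, and all three classes $\X$, $\Cr$, $\Dr$ are invariants of commensurability up to a finite kernel.

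For part (a), if $G\in\MM$ is not in $\X$, then Theorem~\ref{main-3-dim} asserts that $G$ is either virtually polycyclic (hence amenable), or $G$ contains an infinite cyclic, and in particular infinite amenable, normal subgroup. In both cases the analytic input above yields $G\notin\Cr\cup\Dr$.

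For part (b), commensurability-invariance reduces the problem to the case $G\leq\ga\{A_v\}_{v\in V}$ with each $A_v$ amenable and $\ga$ finite. I would induct on $|V|$: the base case $|V|=1$ is immediate since $G$ is amenable. For the inductive step, the subgroup classification Theorem~\ref{thm:subgroups} leaves four possibilities: $G\in\X$ (done), $G$ is virtually cyclic (hence amenable, done), $G$ is conjugate into a proper parabolic subgroup (which is itself a graph product of amenable groups over a proper full subgraph, handled by the inductive hypothesis), or $G$ falls in the ``reducible'' alternative. In the reducible case the structural decomposition should be combined with the amenability of the vertex groups in order to extract, possibly after passing to a finite-index subgroup, an infinite amenable normal subgroup of $G$, to which the analytic input again applies.

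The main obstacle will be this last step: tracing through the ``reducible'' alternative of Theorem~\ref{thm:subgroups} carefully enough to verify that amenability of the vertex groups of the ambient graph product really does force an infinite amenable normal subgroup of $G$ (or of a finite-index subgroup), rather than merely supplying, say, two non-trivial normal subgroups with trivial intersection of mixed character. Part (a), by contrast, is an essentially immediate consequence of Theorem~\ref{main-3-dim} together with the Hochschild--Serre vanishing recalled above.
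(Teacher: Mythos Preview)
Your treatment of part (a) is correct and matches the paper's: Theorem~\ref{main-3-dim} forces a non-acylindrically-hyperbolic $G\in\MM$ to be amenable or to have an infinite cyclic normal subgroup, and either condition excludes $G$ from $\Cr\cup\Dr$.

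The gap is in part (b), and it is exactly the obstacle you flagged: in the ``reducible'' case one \emph{cannot} in general extract an infinite amenable normal subgroup, even after passing to finite index. Take $G=F_2\times F_2$, realized as the graph product of four copies of $\mathbb Z$ over a $4$-cycle. This group is not in $\X$, not virtually cyclic, and falls under alternative (c) of Theorem~\ref{thm:subgroups} (that is, $G\in\gN$, with $N_1=F_2\times\{1\}$ and $N_2=\{1\}\times F_2$). But $G$ has no non-trivial amenable normal subgroup: any such $N$ would project to an amenable normal subgroup of each $F_2$ factor, hence to the identity in each, so $N=\{1\}$. The same holds for every finite-index subgroup. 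So your proposed analytic input (infinite amenable normal subgroup kills $\Cr$ and $\Dr$) is simply not available here.

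What the paper does instead is replace that input by a different vanishing principle: if $G$ contains a subnormal subgroup $N$ and a finite $K\lhd N$ such that $N/K$ is a direct product of two \emph{infinite} groups (the class $\mathcal{SN}$), then $G\notin\Cr\cup\Dr$. This uses that $\Cr$ and $\Dr$ are closed under passing to infinite (sub)normal subgroups and finite-kernel quotients, together with the fact (from Monod--Shalom and Thom, via $\ell^2$-Betti numbers in the $\Dr$ case) that no direct product of two infinite groups lies in $\Cr$ or $\Dr$. Alternative (c) of Theorem~\ref{thm:subgroups} places $H$ in $\gN\subset\mathcal{SN}$, and this class is stable under $\sim_{fk}$, so the conclusion follows. (Incidentally, alternative (a) of Theorem~\ref{thm:subgroups} already says $H$ is finite-by-(subgroup of a vertex group), hence amenable when the vertex groups are; no induction on $|V|$ is needed.)
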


Recall that two groups $G_1$ and $G_2$ are {commensurable up to a finite kernel} if there are finite index subgroups $H_i\leqslant G_i$ and finite normal subgroups
$N_i\lhd H_i$, $i=1,2$, such that $H_1/N_1\cong H_2/N_2$. For definitions, details, and related questions we refer to Section \ref{sec:nc}
and \cite{HW}.

\section{Preliminaries}\label{prelim}
In this section we collect the main definitions and results about acylindrically hyperbolic groups relevant to our paper. For more details we refer to \cite{DGO,Osi13}.

\subsection{Equivalent definitions of acylindrically hyperbolic groups}\label{sec:eqdef}

Given a group $G$ acting on a hyperbolic space $S$, an element $g\in G$ is called \emph{elliptic} if some (equivalently, any) orbit of
$g$ is bounded, and \emph{loxodromic} (or \emph{hyperbolic}) if the map $\mathbb Z\to S$ defined by $n\mapsto g^ns$ is a quasi-isometry for some (equivalently, any) $s\in S$.

\begin{rem}
In papers devoted to groups acting on general hyperbolic spaces, the terms ``loxodromic" and ``hyperbolic" are used as synonyms.
Recent papers on relatively hyperbolic and acylindrically hyperbolic groups (including papers by the second author) tend
to use the term ``loxodromic" more often. In this paper we switch to ``hyperbolic" since we mostly deal with actions on trees for which this term is well established.
\end{rem}

Every hyperbolic element $g\in G$ has exactly $2$ limit points $g^{\pm\infty}$ on the Gromov's boundary $\partial S$.
Hyperbolic elements $g,h\in G$ are called \emph{independent} if the sets $\{ g^{\pm \infty}\}$ and $\{ h^{\pm \infty}\}$ are disjoint.
The next theorem classifies acylindrical group actions on hyperbolic spaces.
\begin{thm}[{\cite[Theorem 1.1]{Osi13}}]\label{class}
Let $G$ be a group acting acylindrically on a hyperbolic space. Then $G$ satisfies exactly one of the following three conditions.
\begin{enumerate}
\item[(a)] $G$ has bounded orbits.
\item[(b)] $G$ is virtually cyclic and contains a hyperbolic element.
\item[(c)] $G$ contains infinitely many independent hyperbolic elements.
\end{enumerate}
\end{thm}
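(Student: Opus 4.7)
The plan is to start from Gromov's classification of isometric group actions on a (not necessarily proper) hyperbolic space $S$ into five types -- elliptic, horocyclic (unbounded, unique fixed point on $\partial S$, no hyperbolic element), lineal (a pair in $\partial S$ preserved setwise, with some hyperbolic element), focal (a unique fixed point in $\partial S$, with some hyperbolic element), and general type (two independent hyperbolic elements) -- and then use acylindricity to collapse these into the trichotomy (a)--(c). The elliptic case is exactly (a), so from now on I assume that some, hence every, orbit is unbounded.

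The crux is to exclude the horocyclic and focal actions under acylindricity. In the horocyclic case, pick a sequence $g_n\in G$ with $\d(x_0, g_n x_0)\to\infty$; by hypothesis every $g_n$ is elliptic. Fix $\e>0$ and let $R=R(\e)$, $N=N(\e)$ be the acylindricity constants. Applied to the pair $(x_0, gx_0)$ for any $g$ with $\d(x_0,gx_0)\ge R$, acylindricity bounds by $N$ the number of elements $h\in G$ satisfying $\d(x_0,hx_0)\le\e$ and $\d(gx_0,hgx_0)\le\e$. A pigeonhole/Morse-type argument along sufficiently long quasi-geodesic segments from $x_0$ to $g_n x_0$ should then produce two translates whose product has positive stable translation length, i.e. a hyperbolic element -- contradicting the defining property of the horocyclic case. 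The focal case is handled analogously: the unique global fixed point $\xi$ forces every hyperbolic element to have $\xi$ as one of its endpoints, and acylindricity applied to $(x_0,y)$ with $y$ chosen deep along a common ray toward $\xi$ produces more than $N$ elements that approximately fix $\{x_0,y\}$, a contradiction.

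What remains is the lineal and general-type cases. In the lineal case, $G$ preserves $\{\xi_1,\xi_2\}\subset\partial S$ setwise, and the index-at-most-two subgroup $G_0$ fixing both acts on a $G_0$-invariant quasi-line with a well-defined translation-length homomorphism $\tau\colon G_0\to\mathbb R$. Acylindricity applied to two points far apart on the quasi-line forces $\ker\tau$ to be finite (its elements nearly fix both points) and $\tau(G_0)$ to be discrete (otherwise arbitrarily small non-trivial translations violate the bound $N$); hence $G_0$ and $G$ are virtually cyclic, giving (b). In the general-type case, given two independent hyperbolic elements $g,h$, the conjugates $h^n g h^{-n}$ have endpoint pairs $h^n\{g^{\pm\infty}\}$, which are pairwise disjoint for all large enough $n$, so a standard ping-pong yields infinitely many pairwise independent hyperbolic elements, giving (c). Mutual exclusivity of (a)--(c) is automatic: (a) admits no hyperbolic element, and a virtually cyclic group cannot contain two independent hyperbolic elements since all its infinite-order elements share the same pair of limit points.

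The main obstacle I anticipate is the elimination of horocyclic and focal actions. For proper $S$, classical Gromov-type arguments rely on compactness of $\partial S$, whereas here $S$ is only assumed to be hyperbolic, so compactness must be replaced by careful quantitative bookkeeping using the acylindricity constants $R(\e), N(\e)$. Concretely, building a hyperbolic element out of many elliptic elements with long displacement requires locating approximate overlap of their displacement rays and then applying a Morse-type lemma inside $S$, all while respecting the acylindricity bound. Once these two cases are excluded, the virtually cyclic conclusion in the lineal case and the ping-pong upgrade in the general-type case are essentially standard.
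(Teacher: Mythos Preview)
The paper does not contain a proof of this theorem: it is quoted in Section~\ref{prelim} as a preliminary result, attributed to \cite[Theorem~1.1]{Osi13}, and used as a black box throughout. So there is no ``paper's own proof'' to compare against.

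That said, your outline is essentially the strategy of the original proof in \cite{Osi13}: start from Gromov's five-type classification, show that acylindricity rules out the horocyclic and focal (quasi-parabolic) types, reduce the lineal case to virtually cyclic via discreteness of translation numbers and finiteness of the kernel, and upgrade general type to infinitely many independent hyperbolics by conjugation and ping-pong. Your treatment of the lineal and general-type cases is fine. The weakest part of your sketch is the elimination of the horocyclic case: the sentence ``a pigeonhole/Morse-type argument \ldots\ should then produce two translates whose product has positive stable translation length'' is not a proof, and this is precisely the step that requires real work in \cite{Osi13}. There one proves a quantitative lemma (roughly: under acylindricity, any element with sufficiently large displacement relative to the constants $R(\e)$, $N(\e)$ is already hyperbolic), from which the non-existence of horocyclic actions follows immediately. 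Your focal-case sketch is similarly schematic; in \cite{Osi13} this is handled by exhibiting, inside the Busemann kernel fixing the common endpoint, infinitely many elements that $\e$-fix a pair of distant points, contradicting acylindricity. If you want to turn your proposal into an actual proof, these two eliminations are where the substance lies.
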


Compared to the general classification of groups acting on hyperbolic spaces, Theorem~\ref{class} rules out parabolic and quasi-parabolic actions in Gromov's terminology \cite{Gro}.
For details we refer to \cite{Osi13}.

Recall that the action of a group $G$ on a hyperbolic space $S$ is called {\it elementary} if the limit set of $G$ in the Gromov boundary $\partial S$
consists of at most $2$ points and {\it non-elementary} otherwise. According to Theorem \ref{class}, an acylindrical action of $G$ is non-elementary if and only if the orbits are unbounded and $G$ is not virtually cyclic.

The next theorem provides equivalent characterizations of acylindrically hyperbolic groups. In practice, (AH$_3$) is the most useful condition for proving that a
certain group is acylindrically hyperbolic, while (AH$_1$), (AH$_2$), and (AH$_4$) are more useful for proving theorems about acylindrically hyperbolic groups.

\begin{thm}[{\cite[Theorem 1.2]{Osi13}}]\label{acyl}
For any group $G$, the following conditions are equivalent.
\begin{enumerate}
\item[(AH$_1$)] The group $G$ is acylindrically hyperbolic in the sense of Definition \ref{df:acyl-gp}.
\item[(AH$_2$)] $G$ admits a non-elementary acylindrical action on a hyperbolic space.
\item[(AH$_3$)] $G$ is not virtually cyclic and admits an action on a hyperbolic space such that at least one element of $G$ is hyperbolic and satisfies the WPD condition.
\item[(AH$_4$)] $G$ contains a proper infinite hyperbolically embedded subgroup.
\end{enumerate}
\end{thm}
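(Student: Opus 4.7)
The plan is to establish the cycle of implications $(AH_1)\Rightarrow(AH_2)\Rightarrow(AH_3)\Rightarrow(AH_4)\Rightarrow(AH_1)$.

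The implication $(AH_1)\Rightarrow(AH_2)$ is essentially tautological: by Definition \ref{df:acyl-gp}, $G$ acts on the hyperbolic space $\Gamma(G,X)$ acylindrically, the action is isometric and cobounded, and non-elementarity of $\partial\Gamma(G,X)$ forces the orbits to be unbounded. So the Cayley graph itself serves as the required hyperbolic space.

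For $(AH_2)\Rightarrow(AH_3)$, I would apply Theorem \ref{class}. Since the action is non-elementary and acylindrical, cases (a) (bounded orbits) and (b) (virtually cyclic) of that theorem are ruled out; hence we are in case (c), and in particular $G$ is not virtually cyclic and contains many independent hyperbolic elements. Pick any such hyperbolic $g\in G$ and any basepoint $x\in S$. To verify WPD for $g$, fix $\varepsilon>0$ and apply acylindricity with this $\varepsilon$, obtaining constants $R,N$. Since $g$ is hyperbolic, $\d(x,g^n x)\to\infty$, so choose $n$ with $\d(x,g^n x)\ge R$; then setting $y\coloneqq g^n x$, acylindricity bounds the number of $h$ satisfying both $\d(x,hx)\le\varepsilon$ and $\d(g^n x,hg^n x)\le\varepsilon$ by $N$. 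This is precisely the WPD condition at the displacement scale $\varepsilon$.

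The implication $(AH_3)\Rightarrow(AH_4)$ is the technical heart. Given a WPD hyperbolic element $g$, the plan is to exhibit the \emph{elementary closure}
\[ E(g)=\{h\in G \mid hg^n h^{-1}=g^{\pm n}\text{ for some }n>0\} \]
as a proper infinite hyperbolically embedded subgroup. One first verifies that $E(g)$ is the unique maximal virtually cyclic subgroup of $G$ containing $g$, so it is infinite. Since $G$ is not virtually cyclic, $E(g)$ is proper. The main work is to produce a finite set $Y\subseteq G$ and a proper left-invariant metric $\widehat\d$ on $E(g)$ such that $\Gamma(G,Y\sqcup E(g))$ is hyperbolic and $\widehat\d$-balls intersect $G$-translates of long $E(g)$-segments only boundedly many times. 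Following the Dahmani--Guirardel--Osin strategy, I would use WPD to show that orbits of $E(g)$ in $S$ are quasi-convex quasi-axes with strong projection/contraction estimates, then transport this geometry from $S$ to a Cayley graph on $Y\sqcup E(g)$ and check the projection complex / combinatorial input needed for hyperbolicity and geometric separation. This is the step I expect to be the main obstacle, as it requires a genuine construction rather than a translation between languages.

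Finally, for $(AH_4)\Rightarrow(AH_1)$, given a proper infinite $H\hookrightarrow_h G$, by definition there exists a finite $Y\subseteq G$ such that $G=\langle Y\cup H\rangle$, $\Gamma(G,Y\sqcup H)$ is hyperbolic, and the induced relative metric on $H$ is proper (the geometric-separation-type property). Setting $X\coloneqq Y\cup H$, the Cayley graph $\Gamma(G,X)$ is hyperbolic by hypothesis. Non-elementarity of $\partial\Gamma(G,X)$ follows by producing two independent hyperbolic isometries: one can take a nontrivial element of $H$ (which is hyperbolic in $\Gamma(G,X)$ since $H$ embeds quasi-isometrically with its relative metric that is unbounded by infiniteness and properness) and a conjugate by an appropriate element outside $H$, using the almost-malnormality that hyperbolic embedding forces. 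Acylindricity of the $G$-action on $\Gamma(G,X)$ is then extracted from the properness of the relative metric on $H$: a direct argument (or the corresponding proposition of \cite{DGO}) shows that geometric separation of $H$ translates exactly into the uniform bound on the number of near-stabilizers of pairs of distant points. This closes the cycle and yields $(AH_1)$.
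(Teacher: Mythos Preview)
This theorem is not proved in the paper: it is quoted from \cite{Osi13} as background in the preliminaries section, with no argument given here. So there is no ``paper's own proof'' to compare your proposal against.

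That said, your sketch is broadly the correct architecture of the actual proof in \cite{Osi13}, with one slip and one understated difficulty. The slip is in $(AH_4)\Rightarrow(AH_1)$: the definition of $H\hookrightarrow_h G$ does \emph{not} require the relative generating set to be finite (finiteness of $X$ is what characterizes relative hyperbolicity, not hyperbolic embedding), so you cannot take ``a finite $Y$'' by definition. More substantively, the action of $G$ on $\Gamma(G,X\sqcup H)$ need not itself be acylindrical; producing an acylindrical action from the hyperbolically embedded datum requires genuine additional work (in \cite{Osi13} this is done by passing to a different, carefully chosen generating set and is one of the main technical contributions of that paper). Your paragraph on $(AH_3)\Rightarrow(AH_4)$ correctly identifies it as the other hard step and correctly points to the Dahmani--Guirardel--Osin construction of $E(g)$ as the hyperbolically embedded subgroup.
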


Note that every group has an obvious acylindrical action on a hyperbolic space, namely the trivial action on a point.
Thus considering elementary acylindrically hyperbolic groups does not make much sense. For this reason we include non-elementarity in the definition.
The terminology used in conditions (AH$_3$) and (AH$_4$) will also be used in our paper, so we explain it below.

Given a group $G$ acting on a metric space $(S,\d)$, a subset $A\subseteq S$, and $\e\ge 0$, we define the \emph{pointwise $\e$-stabilizer} of $A$ in $G$
as the set of all $g\in G$ that move every point of $A$ by at most $\e$. That is,
$$
\pst_G^\e(A)=\{ g\in H\mid \d (ga, a)\le \e\; \forall \, a\in A\} .
$$
Thus the pointwise $0$-stabilizer is the same as the usual pointwise stabilizer, which will be denoted $\pst_G(A)$. Note that, in general, pointwise $\e$-stabilizers are not subgroups.

\begin{rem}\label{rem:equiv_def-acyl}
In these terms, the action of $G$ on $S$ is acylindrical if and only if for every $\e>0$ there exists $R>0$ such that for all $x,y\in S$ with $\d (x,y)\ge R$,
the sizes of pointwise $\e$-stabilizers of $\{ x,y\}$ are uniformly bounded by a constant $N$ which only depends on $\e$.
\end{rem}

\begin{df} \label{def:wpd} An element $h$ of a group $G$ acting isometrically on a metric space $S$ satisfies the {\it weak proper discontinuity}
condition (or $h$ is a {\it WPD element}) if for some $s\in S$ (or, equivalently, for all $s \in S$) and every $\e \ge 0$, exists $M\in \mathbb N$ such that
\begin{equation*}
|\pst^\e_G(\{ s, h^Ms\} )|<\infty .
\end{equation*}
This definition is essentially due to Bestvina and Fujiwara \cite{BF}.
\end{df}

The notion of a hyperbolically embedded subgroup was introduced in \cite{DGO}; it generalizes the notion of a peripheral
subgroup of a relatively hyperbolic group. More precisely, let $G$ be a group, $H\leqslant G$, $X\subseteq G$.
We assume that $G=\langle X\cup H\rangle $ and denote by $\Gamma(G, X\sqcup H)$ the Cayley graph of $G$ with
respect to the generating set $X\sqcup H$ (even though $X$ and $H$ might intersect as subsets of $G$, for the purposes of constructing $\Gamma(G, X\sqcup H)$
we consider them to be disjoint) and by $\Gamma _H$ the Cayley graph of $H$ with respect to the generating set $H$.
Clearly $\Gamma _H$ is a complete subgraph of $\Gamma (G, X\sqcup H)$.

Given two elements $h_1,h_2\in H$, we define $\widehat d(h_1,h_2)$ to be the length of a shortest path $p$ in $\Gamma (G, X\sqcup H) $ that connects $h_1$ to $h_2$
and does not contain edges of $\Gamma _H$. If no such path exists we set $\widehat d(h_1, h_2)=\infty $. Clearly $\widehat d\colon H\times H\to [0, \infty]$
is an extended metric on $H$.
We say that $H$ is {\it hyperbolically embedded in $G$ with respect to} $X\subseteq G$ (and write $H\hookrightarrow _h (G,X) $) if the following conditions hold:
\begin{enumerate}
\item[(a)] $G=\langle X\cup H\rangle$ and $\Gamma (G, X\sqcup H)$ is hyperbolic.
\item[(b)] $(H,\widehat{d})$ is a locally finite space, i.e., every ball (of finite radius) is finite.
\end{enumerate}
We also say that $H$ is \textit{hyperbolically embedded in $G$} (and write $H\hookrightarrow _h G$) if $H\hookrightarrow _h (G,X) $ for some $X\subseteq G$.

Let us consider three basic examples.

\begin{ex}\label{hes}
\begin{enumerate}
\item For any group $G$, we have $G\h G$.  Indeed take $X=\emptyset $. Then the Cayley graph $\Gamma(G, X\sqcup H)$ has diameter $1$ and $\widehat d(h_1, h_2)=\infty $ whenever $h_1\ne h_2$. Further, if $H$ is a finite subgroup of a group $G$, then $H\h G$. Indeed $H\h (G,X)$ for $X=G$. These cases are referred to as {\it degenerate}.

\item Let $G=H\times \mathbb Z$, $X=\{ x\} $, where $x$ is a generator of $\mathbb Z$. Then $\Gamma (G, X\sqcup H)$ is quasi-isometric to a line and hence it is hyperbolic. However $\widehat d(h_1, h_2)\le 3$ for every $h_1, h_2\in H$. If $H$ is infinite, then $H\not\h (G,X)$, and moreover $H\not\h G$.

\item Let $G=H\ast \mathbb Z$, $X=\{ x\} $, where $x$ is a generator of $\mathbb Z$. In this case $\Gamma (G, X\sqcup H)$ is quasi-isometric to a tree and $\widehat d(h_1, h_2)=\infty $ unless $h_1=h_2$. Thus $H\h (G,X)$.
\end{enumerate}
\end{ex}

It is known that a group $G$ is hyperbolic relative to a subgroup $H$ if and only if $H\hookrightarrow _h (G,X) $ for some finite set $X$ \cite{DGO}.

\subsection{Properties of acylindrically hyperbolic groups}\label{sec:ah-prop}
We begin with some elementary algebraic properties of acylindrically hyperbolic groups. The theorem below summarizes some results
from \cite[Section 7]{Osi13} and \cite[Theorems 2.24, 2.27, 2.33]{DGO}. Recall that a subgroup $H\leqslant G$ is \emph{$s$-normal} in $G$
if for every $g\in G$, we have $|H\cap H^g|=\infty$. Clearly every infinite normal subgroup is $s$-normal. Recall also that a group $G$ is SQ-universal
if every countable group embeds in a quotient  of $G$.

\begin{thm}[Dahmani--Guirardel-Osin \cite{DGO}, Osin \cite{Osi13}]\label{thm:elem-prop-ah}
For any acylindrically hyperbolic group $G$ the following conditions hold.
\begin{enumerate}
\item[(a)] The amenable radical of $G$ is finite.

\item[(b)] If $G=G_1\times G_2$, then $|G_i|<\infty $ for $i=1$ or $i=2$.

\item[(c)] Every $s$-normal subgroup of $G$ is acylindrically hyperbolic.

\item[(d)] If $G=G_1\ldots G_n$ for some subgroups $G_1, \ldots, G_n$ of $G$. Then $G_i$ is acylindrically hyperbolic for at least one $i$. In particular, $G$ is not boundedly generated.
\item[(e)] $G$ is SQ-universal. In particular, $G$ contains non-abelian free subgroups.
\item[(f)] $G$ contains uncountably many normal subgroups.
\end{enumerate}
\end{thm}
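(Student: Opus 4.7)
I would exploit the equivalence of definitions in Theorem~\ref{acyl}, pivoting to whichever formulation is most convenient for each property; fix throughout an acylindrical non-elementary action of $G$ on a hyperbolic space $S$ together with a hyperbolic WPD element $g\in G$ provided by (AH$_3$). For (a), let $N$ denote the amenable radical and restrict the action of $G$ to $N$, which remains acylindrical. Theorem~\ref{class} then gives three cases: the possibility of infinitely many independent hyperbolic elements is excluded because $N$ would contain a non-abelian free subgroup; the case of $N$ virtually cyclic with a hyperbolic element $r$ is excluded because $N\triangleleft G$ would force $G$ to preserve the pair $\{r^{+\infty},r^{-\infty}\}\subset\partial S$, contradicting non-elementarity. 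Hence $N$-orbits are bounded, and then acylindricity applied to two points in a sufficiently large $G$-orbit forces $|N|<\infty$.

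Part (c) is the engine of the theorem. Given an $s$-normal subgroup $H$, I would use $|H\cap H^{h_0}|=\infty$ for a carefully chosen $h_0\in G$ which makes $h_0gh_0^{-1}$ independent from $g$ on $\partial S$. A ping-pong argument then produces an element of $H$ that acts hyperbolically on $S$ and inherits the WPD property from $g$, so $H$ satisfies (AH$_3$). Parts (b) and (d) then fall out of (a) and (c). For (b), if $|G_1|=|G_2|=\infty$ then each factor is $s$-normal in $G$, so $G_1\in\X$ by (c); but $G_2$ centralizes every hyperbolic WPD element of $G_1$, forcing $G_2$ to be virtually cyclic by the standard consequence of WPD for centralizers, and symmetrically for $G_1$, making $G$ virtually abelian and contradicting (a). For (d) I would induct on $n$: if no $G_i$ lies in $\X$, the subgroup generated by the $G$-conjugates of $G_1\cdots G_{n-1}$ is normal and infinite, hence $s$-normal and thus in $\X$ by (c), reducing to a shorter product.

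For (e) and (f) I would pass to condition (AH$_4$) and pick a proper infinite hyperbolically embedded subgroup $H\h G$, enlarging if necessary so that $H$ contains a free subgroup of countably infinite rank. The group-theoretic Dehn filling theorem of \cite{DGO} produces, for every normal $N\lhd H$ avoiding a prescribed finite subset of $H$, an embedding $H/N\hookrightarrow G/\langle\langle N\rangle\rangle$. Choosing $N$ so that $H/N$ surjects onto a prescribed countable group yields (e); taking uncountably many pairwise non-conjugate such $N$, inducing pairwise distinct normal closures in $G$, yields (f).

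The principal obstacle will be part (c): ensuring that the ping-pong element of $H$ is not merely of infinite order but genuinely loxodromic and WPD on $S$ requires the full strength of $|H\cap H^{h_0}|=\infty$ together with careful control of the boundary dynamics of the conjugating element. In \cite{DGO} this delicacy is handled by constructing a hyperbolically embedded subgroup directly inside $H$ via projection complex techniques, which is substantially more involved than the naive ping-pong sketch above.
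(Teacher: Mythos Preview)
The paper does not prove this theorem at all; it is stated as a summary of results from \cite[Section 7]{Osi13} and \cite[Theorems 2.23, 2.25, 2.30]{DGO}, with no argument given. So there is no ``paper's proof'' to compare against, and your proposal is a sketch of how the cited results are obtained.

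Your outlines for (a), (b), (e), and (f) are essentially the arguments one finds in the cited sources. For (a), the trichotomy of Theorem~\ref{class} applied to the restricted action of the amenable radical is exactly the standard route. For (e) and (f), passing to a hyperbolically embedded subgroup and invoking the group-theoretic Dehn filling theorem of \cite{DGO} is precisely how those results are proved there. For (c) you correctly identify the main difficulty and correctly note that \cite{DGO} does not run a naive ping-pong but instead builds a hyperbolically embedded subgroup inside $H$ directly; your honesty about this gap is appropriate, since the ping-pong sketch as written does not suffice.

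There is, however, a genuine gap in your induction for (d). You form the normal closure $N$ of $G_1\cdots G_{n-1}$ and correctly deduce $N\in\X$ from (c), but the phrase ``reducing to a shorter product'' is unjustified: $N$ is generated by \emph{all} $G$-conjugates of $G_1,\ldots,G_{n-1}$, which is in general an infinite collection, and there is no reason $N$ should decompose as a product of fewer than $n$ subgroups. The argument in \cite[Section 7]{Osi13} proceeds differently: one shows that for a non-elementary acylindrical action on a hyperbolic space, at least one factor $G_i$ must contain a loxodromic element (this uses growth considerations, since elliptic subgroups for an acylindrical action have bounded orbits and a product of $n$ bounded sets cannot cover an exponentially growing orbit), and then that $G_i$ is itself non-elementary for the same action, giving $G_i\in\X$ via (AH$_3$). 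Your induction scheme does not recover this and would need to be replaced.
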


It is useful to keep this theorem in mind when searching for examples of  acylindrically hyperbolic groups as it allows one to show that
certain groups are \emph{not} acylindrically hyperbolic. 

\begin{ex}\label{ex:non-ah}
\begin{enumerate} \item[]
\item[(a)] No amenable group is acylindrically hyperbolic.

\item[(b)] No group without non-abelian free subgroups is acylindrically hyperbolic. In particular, no torsion group or a group satisfying a non-trivial identity is acylindrically hyperbolic.

\item[(c)]The Baumslag-Solitar groups $$BS(m,n)=\langle a,t\mid t^{-1}a^mt=a^n\rangle $$ are not acylindrically hyperbolic unless $m=0$ or $n=0$, because
the cyclic subgroup $\langle a \rangle$ is $s$-normal in $BS(m,n)$ in this case.
\item[(d)]
$\mathrm{SL}(n,\mathbb Z)$ is not acylindrically hyperbolic for $n\ge 3$, since it is boundedly generated. Another argument is based on the Margulis Theorem,
which states that for $n\ge 3$ every normal subgroup of $\mathrm{SL}(n, \mathbb Z)$ is either finite or of finite index and hence $\mathrm{SL}(n, \mathbb Z)$
has only countably many normal subgroups. For a generalization, see \cite[Example 7.5]{Osi13}.
\end{enumerate}
\end{ex}


\begin{lemma} \label{lem: com} Let $G$ be an acylindrically hyperbolic group. Suppose that $H$ is a finite index subgroup of $G$, or a quotient of $G$ modulo a finite normal subgroup, or an extension of $G$ with finite kernel. Then $H$ is also acylindrically hyperbolic.
\end{lemma}

\begin{proof}
Firstly, if $G\in \X$ and $H$ is a finite index subgroup of $G$,
then $H$ is $s$-normal in $G$ and thus $H\in \X$ by Theorem \ref{thm:elem-prop-ah}.

Further, let $G$ be a group, $N\lhd G$ a finite normal subgroup and let $\phi:G \to G/N$ be the natural epimorphism. Then for any generating set $X$ of $G$, $\phi(X)$ generates $G/N$
and the natural map $\ga(G,X) \to \ga(G/N, \phi(X))$ is a $G$-equivariant quasi-isometry.
Thus if $\ga(G,X)$ is non-elementary hyperbolic then so is $\ga(G/N,\phi(X))$.
Also observe that the action of $G$ on $\ga(G/N,\phi(X))$ factors through the canonical action of $G/N$.
Hence if the former is acylindrical then so is the latter. Thus if $G \in \X$ then $G/N \in \X$.

Conversely, if $Y$ is some generating set of $G/N$ then $X=\phi^{-1}(Y)$ is a generating set of $G$ and the Cayley graphs $\ga(G,X)$ and $\ga(G/N,Y)$  are $G$-equivariantly quasi-isometric.
Since $|N|<\infty$, if the action of $G/N$ on $\ga(G/N,Y)$ is acylindrical, then so is the natural action of $G$ on $\ga(G/N,Y)$. This allows to argue as above to conclude that
$G/N \in \X$ implies $G \in \X$.
\end{proof}

The next two results will be used in Section 7 of our paper. The first one was proved by Dahmani, Guirardel and the second author in \cite[Theorems 2.24 and 2.35]{DGO}.

\begin{thm}[{\cite[Theorem 2.32]{DGO}}]\label{thm:prop-ah-2}
For any acylindrically hyperbolic group $G$, there exists a maximal normal finite subgroup $K(G)\lhd G$ and the following conditions are equivalent.
\begin{enumerate}
\item[(a)] $K(G)=\{ 1\}$.
\item[(b)] $G$ has infinite conjugacy classes (equivalently, the von Neumann algebra $W^\ast (G)$ of $G$ is a $II_1$ factor).
\item[(c)] $G$ is not inner amenable. In particular, $W^\ast (G)$ does not have property $\Gamma $ of Murray and von Neumann.
\item[(d)] The reduced $C^\ast $-algebra of $G$ is simple with unique trace.
\end{enumerate}
\end{thm}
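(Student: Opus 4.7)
The plan is to first establish the existence of $K(G)$. By Theorem \ref{thm:elem-prop-ah}(a) the amenable radical of $G$ is finite, so every finite normal subgroup of $G$ is contained in it; since the product of two finite normal subgroups is itself finite and normal, the collection of finite normal subgroups of $G$ is directed and bounded above, and therefore admits a unique maximum element, which we call $K(G)$.

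The key structural step is to identify $K(G)$ with the FC-center $\mathrm{FC}(G)=\{g\in G : |g^G|<\infty\}$. The inclusion $K(G)\subseteq \mathrm{FC}(G)$ is immediate. For the reverse it suffices to show that $\mathrm{FC}(G)$ is finite. As $\mathrm{FC}(G)$ is normal in $G$, were it infinite it would automatically be $s$-normal, and hence by Theorem \ref{thm:elem-prop-ah}(c) would itself lie in $\X$. Theorem \ref{thm:elem-prop-ah}(e) would then furnish a non-abelian free subgroup $F\leqslant \mathrm{FC}(G)$; but non-trivial elements of $F$ have infinite conjugacy classes already inside $F$, contradicting $F\subseteq \mathrm{FC}(G)$. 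Therefore $\mathrm{FC}(G)$ is finite, so $K(G)=\mathrm{FC}(G)$.

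With this identification (a)$\Leftrightarrow$(b) is tautological, and the implications (c)$\Rightarrow$(a) and (d)$\Rightarrow$(a) are routine: if $K(G)\neq\{1\}$, then the normalized counting measure on $K(G)\setminus\{1\}$ is a conjugation-invariant mean on $G\setminus\{1\}$, while the central projection $e=\frac{1}{|K(G)|}\sum_{k\in K(G)}k\in C_r^*(G)$ generates a proper two-sided ideal and obstructs simplicity. The equivalence of (b) with $W^*(G)$ being a $II_1$ factor is classical Murray--von Neumann, and Effros's theorem identifies inner amenability with property $\Gamma$ in the presence of the factor hypothesis.

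The main obstacle is the forward direction of the analytic implications, (b)$\Rightarrow$(c) and (b)$\Rightarrow$(d). The plan is to upgrade (AH$_4$) from Theorem \ref{acyl} using the refinements from \cite{DGO} so as to produce a non-degenerate hyperbolically embedded free subgroup $F_2 \h G$. The quasi-malnormality built into hyperbolic embedding then supplies a Powers-style averaging statement: for every finite $E\subseteq G\setminus\{1\}$ one can select $t_1,\ldots,t_n\in F_2$ so that the convex combinations $\frac{1}{n}\sum_{i=1}^n t_i g t_i^{-1}$ tend weakly to $0$ in $\ell^2(G)$, uniformly in $g\in E$. This simultaneously rules out a conjugation-invariant mean on $G\setminus\{1\}$ (giving (c)) and forces uniqueness of the canonical trace together with simplicity of $C_r^*(G)$ via the classical Powers argument (giving (d)); the ICC hypothesis is used precisely to ensure the averages have no atom at the identity.
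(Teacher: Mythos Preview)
The paper does not prove this theorem at all: it is quoted verbatim from \cite[Theorem~2.32]{DGO} and used as a black box. So there is no ``paper's own proof'' to compare against; your sketch is an outline of the argument that appears in \cite{DGO} itself, not of anything in the present paper.

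That said, a couple of remarks on the sketch. Your identification $K(G)=\mathrm{FC}(G)$ is correct and the argument via Theorem~\ref{thm:elem-prop-ah}(c),(e) works, though a shorter route is to note that $\mathrm{FC}(G)$ is an amenable normal subgroup (FC-groups are locally virtually abelian) and hence lies in the finite amenable radical. Your appeal to ``Effros's theorem identifies inner amenability with property $\Gamma$'' overstates things: Effros only proved that property $\Gamma$ implies inner amenability, and Vaes later exhibited inner amenable ICC groups without property~$\Gamma$; this does not damage the logic here, since the ``in particular'' clause in (c) only needs the Effros direction. Finally, the Powers-type averaging you invoke for (b)$\Rightarrow$(c),(d) is indeed the mechanism used in \cite{DGO}, though carrying it out rigorously requires the specific malnormality and separation properties of hyperbolically embedded subgroups established there, which you are taking on faith.
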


The next theorem was first proved in \cite{Ham} under a certain assumption equivalent to acylindrical hyperbolicity (see \cite{Osi13}); it also follows from the main result of \cite{HO}, where the language of hyperbolically embedded subgroups was used. For details about quasi-cocycles and bounded cohomology we refer to \cite{Ham,HO,Mon,T}.

\begin{thm}[Hamenst\"adt \cite{Ham}]\label{thm:prop-ah-3}
$\X \subseteq \mathcal C_{reg}\cap \mathcal D_{reg}$.
\end{thm}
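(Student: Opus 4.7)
The plan is to reduce the problem to a construction on a virtually cyclic hyperbolically embedded subgroup, following the strategy of Hamenst\"adt and of Hull--Osin. By the equivalence of (AH$_1$) and (AH$_3$)--(AH$_4$) in Theorem \ref{acyl}, any $G \in \X$ admits an action on a hyperbolic space with at least one WPD hyperbolic element $g$, and (combining with Theorem \ref{class} and results in \cite{DGO}) contains the maximal virtually cyclic subgroup $E_G(g)$ as a proper infinite hyperbolically embedded subgroup, say $H \h (G,X)$ for some $X \subseteq G$. The idea is then to induce non-trivial cohomological/quasi-cohomological data on $G$ from easy data on the infinite cyclic part of $H$, rather than attempting to construct them directly on $G$.

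For the membership $G \in \mathcal{D}_{reg}$: the cyclic group $\Z \leqslant H$ admits an obvious unbounded $1$-cocycle into its regular representation, namely $n \mapsto \sum_{k=0}^{n-1}\delta_k \in \ell^2(\Z)$. Viewing $\ell^2(G)$ as $\mathrm{Ind}_H^G \ell^2(H)$ and averaging over left cosets of $H$, the induction procedure from \cite{HO} (equivalently, Hamenst\"adt's projection construction in \cite{Ham}) produces an unbounded quasi-cocycle $q \colon G \to \ell^2(G)$, with the defect controlled by distances in the relative Cayley graph $\Gamma(G, X \sqcup H)$. The required non-amenability of $G$ is automatic from Theorem \ref{thm:elem-prop-ah}(e), which gives non-abelian free subgroups. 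For the membership $G \in \mathcal{C}_{reg}$: an entirely analogous induction produces a non-trivial class in $H_b^2(G, \ell^2(G))$, either directly or via the exact sequence
\[
0 \longrightarrow H_b^1(G,\ell^2(G)) \longrightarrow QZ^1(G,\ell^2(G))/Z^1(G,\ell^2(G)) \longrightarrow H_b^2(G,\ell^2(G))
\]
relating the quasi-cocycle constructed above to second bounded cohomology.

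The main obstacle is the analytic control of the induction procedure: one must verify that (i) the induced cochain is genuinely square-summable at each point, (ii) the defect stays uniformly bounded in $\ell^2$-norm rather than merely in some coarser sense, and (iii) the resulting class is not a coboundary, i.e.\ the quasi-cocycle is genuinely unbounded. The geometric ingredient that makes all three work is the local finiteness of the metric $\widehat d$ on $H$ (condition (b) in the definition of $H \hookrightarrow_h (G,X)$): it guarantees that distinct left cosets $gH$ and $g'H$ project onto essentially disjoint pieces of any geodesic in $\Gamma(G,X\sqcup H)$, so contributions from different cosets are almost orthogonal in $\ell^2(G)$, and error terms arising from reassembling projections are $\ell^2$-summable. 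This is the acylindrical analogue of the relatively hyperbolic setting where peripheral cosets are quasi-isolated, and it is precisely where the hypothesis $G \in \X$ enters in a non-trivial way. Once this control is established, the non-triviality and unboundedness follow because the axis of $g$ is quasi-isometrically embedded in $\Gamma(G, X\sqcup H)$, so the induced class restricts non-trivially to $\langle g\rangle$.
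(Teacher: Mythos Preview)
The paper does not give a proof of this theorem. It is stated as a result from the literature, attributed to Hamenst\"adt \cite{Ham} (with the remark that it also follows from Hull--Osin \cite{HO}), and no argument is supplied. So there is no ``paper's own proof'' to compare against.

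That said, your sketch is a fair outline of how the cited proofs actually proceed, particularly the Hull--Osin version: pass from acylindrical hyperbolicity to a proper infinite hyperbolically embedded virtually cyclic subgroup $E_G(g)\h (G,X)$ via Theorem~\ref{acyl}, and then induce quasi-cocycles from $E_G(g)$ to $G$ using the coset-projection machinery, with the local finiteness of $\widehat d$ providing the $\ell^2$-control. One point you should tighten is the passage from ``unbounded quasi-cocycle'' to ``non-trivial class in $H_b^2(G,\ell^2(G))$'': an unbounded quasi-cocycle $q$ yields a non-zero element of $H_b^2$ only if $q$ is not at bounded distance from a \emph{genuine} $1$-cocycle, and this is strictly stronger than $q$ being unbounded. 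In the actual arguments this is handled either by verifying that property directly for the constructed $q$, or by producing infinitely many quasi-cocycles whose pairwise differences are not at bounded distance from cocycles (as in \cite{BF,Ham,HO}); your exact sequence line hides this step. Otherwise the strategy and the identification of the key analytic obstacles are accurate.
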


Finally we mention a theorem from \cite{HO13}. Recall that the conjugacy growth function of a finitely generated group for every $n \in \N$ measures the number of conjugacy classes intersecting
the ball of radius $n$ centered at $1$ in the word metric. For details we refer to \cite{HO13}.

\begin{thm}[Hull-Osin \cite{HO13}]\label{thm:prop-ah-4}
Every acylindrically hyperbolic group has exponential conjugacy growth.
\end{thm}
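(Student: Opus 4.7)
The strategy is to transfer the exponential conjugacy growth of the free group of rank $2$ to $G$ via a hyperbolically embedded copy. Since the conjugacy growth function requires a finite generating set, I assume $G$ is finitely generated and fix such a set $X$.

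First, I would extract a hyperbolically embedded free subgroup $F$ of rank $2$. By the equivalence (AH$_1$)$\Leftrightarrow$(AH$_2$) in Theorem \ref{acyl}, $G$ admits a non-elementary acylindrical action on a hyperbolic space, and by part (c) of Theorem \ref{class} this action contains two independent hyperbolic elements $g_1, g_2$ (both WPD, since the action is acylindrical). A standard ping-pong style argument from \cite{DGO} then shows that for $N$ sufficiently large, $F = \langle g_1^N, g_2^N \rangle$ is free of rank $2$ and satisfies $F \h G$.

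Next I would show that the natural map from nontrivial $F$-conjugacy classes to $G$-conjugacy classes is injective. The key input from \cite{DGO} is the almost malnormality of hyperbolically embedded subgroups: for every $g \in G \setminus F$ one has $|F \cap gFg^{-1}| < \infty$. If $f_1, f_2 \in F \setminus \{1\}$ and $f_2 = xf_1x^{-1}$ in $G$, then since $F$ is torsion-free, the infinite cyclic group $\langle f_1\rangle$ lies in $F \cap x^{-1}Fx$, forcing $x \in F$. Hence $f_1$ and $f_2$ are conjugate already in $F$.

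Finally I would count. Set $K = \max\{|g_1^N|_X, |g_2^N|_X\}$, so that any $f \in F$ of $F$-length at most $n$ satisfies $|f|_X \le Kn$. A standard count in the free group of rank $2$, using that two cyclically reduced elements are $F$-conjugate iff one is a cyclic permutation of the other, gives at least $c \cdot 3^n / n$ distinct $F$-conjugacy classes represented by elements of $F$-length at most $n$, for some $c > 0$. Combined with the injection from the previous step, this yields at least $c \cdot 3^n / n$ distinct $G$-conjugacy classes represented by elements of $X$-length at most $Kn$, which is exponential in $n$. The main obstacle is the almost-malnormality statement in the second step, a nontrivial structural fact from \cite{DGO}; given it, the remaining pieces reduce to elementary counting and the standard ping-pong construction.
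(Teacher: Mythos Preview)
The paper does not prove this statement: Theorem~\ref{thm:prop-ah-4} is quoted from \cite{HO13} without proof, so there is nothing to compare your argument against here. That said, your outline is sound and would yield a valid proof modulo one correction.

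The gap is in Step~1. The claim that $F=\langle g_1^N,g_2^N\rangle$ satisfies $F\h G$ is not quite what \cite{DGO} gives in general. When the finite radical $K(G)$ is nontrivial, a free subgroup cannot be hyperbolically embedded: for instance, in $G=F_2\times\mathbb Z/2\mathbb Z$ the factor $F_2$ is not almost malnormal, since the central involution conjugates it to itself. What \cite{DGO} actually produces is a subgroup $H\cong F_2\times K(G)$ with $H\h G$. Your Step~2 then goes through with $H$ in place of $F$: if $f_1,f_2\in F\setminus\{1\}$ are conjugate by $x\in G$, then $\langle f_1\rangle\subseteq H\cap x^{-1}Hx$ forces $x\in H$ by almost malnormality of $H$; writing $x=(f,k)\in F\times K(G)$ and using that $K(G)$ is central in $H$, one gets $f_2=ff_1f^{-1}$ in $F$. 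The counting in Step~3 is unaffected. With this adjustment the argument is complete.
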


\section{Fundamental groups of graphs of groups}

\subsection{Hyperbolic WPD elements in groups acting on trees}\label{sec:trees}

Given an oriented edge $e$ of a graph $\ga$, $e_-$ and $e_+$ will denote the initial vertex and the terminal vertex of $e$ respectively, and
$\bar e$ will denote the inverse edge to $e$ (i.e., $e$ equipped with the opposite orientation).

In this paper, a tree means a simplicial tree, unless specified otherwise. Let $\mathcal T$ be a tree.
The natural metric on $\T$, induced by identification of edges with $[0,1]$, is denoted by $\d_\T$.
We will think of $\T $ as a simplicial tree and a metric space simultaneously; in particular we will talk about vertices and edges as well as
points of $\T$. Given two points $x,y\in \T$, $[x,y]$ will denote the unique geodesic segment connecting $x$ and $y$.

Throughout this subsection let $G$ denote a group acting on $\T$ by isometries. It is well known that any element $g \in G$ either fixes a point of $\mathcal T$
(which is either a vertex or the midpoint of an edge) or there exists a unique minimal $\gen{g}$-subtree of $\mathcal T$, which is a bi-infinite
geodesic path, called the \textit{axis} of $g$ and denoted by $axis (g)$, on which $g$ acts by translation \cite{Tits}. In the former case $g$ is said to
be \textit{elliptic}, and in the latter case it is said to be \textit{hyperbolic}. For a hyperbolic element $g$, we denote by $g^{\pm \infty}$ the
limit points of the axis of $g$ in $\partial T$.

The \emph{translation length} $\|g\|$, of an element $g \in G$, is defined as the minimum of the distances $\d_\T(v,gv)$,
where  $v$ runs over the set of all vertices of $\T$. Since $\T$ is a simplicial tree,  $\|g\|$ is a non-negative integer.
The translation length can be used to classify the elements of $G$: $\|g\|=0$ if and only if $g$ is elliptic. And if $g$ is hyperbolic,
then the set set of all points $x$ of $\T$ such that $\d_\T(x,gx)=\|g\|$ is exactly the axis of $g$.

\begin{df}\label{df:edge_transl}
Consider an edge $e$ of $\T$ with some fixed orientation.
We will say that an element $g \in G$ \emph{translates} $e$ if $g e \neq e$ and
the vertices $e_-$ and $g e_+$ lie outside of the geodesic segment $[e_+,g e_-]$.
\end{df}

Note that, in this terminology,
if $g \in G$ acts as a hyperbolic isometry and $e$ is an edge on the axis of $g$ then $e$ is translated by exactly one of the elements $g$ or $g^{-1}$
(the above definition implies that $e \ge g e$ in the sense of \cite[I.4.10]{D-D}).
Conversely, if $g \in G$ translates some edge $e$ of $\T$ then
$g$ is hyperbolic and the axis of $g$ contains the segment $[e_-,g e_+]$ (see \cite[I.4.11]{D-D}).

Our main goal in this section is to establish a criterion for the existence of hyperbolic WPD elements in $G$.
We begin with a technical result about pointwise $\e$-stabilizers defined in Section \ref{sec:eqdef}.

\begin{lemma}\label{lem:epst}
Fix any $\e\ge 0$. Let $x,y$ be any two points in $\T$ with $\d_\T (x,y)>2\e$ and let $u,v$ be any two vertices on the
geodesic segment $[x,y]$ such that $\d _\T (\{ u,v\}, \{ x,y\}) \ge \e$. Then $\pst^\e_G(\{ x,y\})$ is contained in the union of at most $2(2\e+1)$ left cosets of $\pst_G (\{u,v\})$.
\end{lemma}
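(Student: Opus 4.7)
The plan is to reduce the problem to counting pairs $(gu, gv)$ as $g$ ranges over $\pst^\e_G(\{x,y\})$: two elements $g_1, g_2 \in \pst^\e_G(\{x,y\})$ lie in the same left coset of $\pst_G(\{u,v\})$ if and only if $g_1 u = g_2 u$ and $g_1 v = g_2 v$, so the number of cosets equals the number of such pairs.

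First I will establish that $\d_\T(u, gu) \le \e$ and $\d_\T(v, gv) \le \e$ for every $g \in \pst^\e_G(\{x, y\})$. This follows from the standard fact that for any isometry $g$ of a simplicial tree, the displacement function $w \mapsto \d_\T(w, gw)$ is convex along geodesic segments (it equals $2\,\d_\T(w, M) + \|g\|$, where $M$ is either the fixed subtree or the axis of $g$, and distance to a subtree of a tree is convex along geodesics); hence its maximum on $[x, y]$ is attained at an endpoint, where by hypothesis it is bounded by $\e$.

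Next I will show that $gu$ and $gv$ both lie on $[x, y]$. Let $p$ and $q$ be the medians in $\T$ of the triples $\{x, y, gx\}$ and $\{x, y, gy\}$ respectively; equivalently, $p$ and $q$ are the nearest-point projections of $gx$ and $gy$ onto $[x, y]$. Since each median lies on all three pairwise geodesics of its triple, $\d_\T(x, p) \le \d_\T(x, gx) \le \e$ and $\d_\T(y, q) \le \d_\T(y, gy) \le \e$; together with $\d_\T(x, y) > 2\e$ this forces the ordering $x, p, q, y$ on $[x, y]$ and the decomposition $[gx, gy] = [gx, p] \cup [p, q] \cup [q, gy]$, whence $[x, y] \cap [gx, gy] = [p, q]$. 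The hypotheses $\d_\T(u, \{x,y\}) \ge \e$ give $u \in [p, q]$, and symmetrically $v \in [p, q]$. The point $gu \in [gx, gy]$ satisfies $\d_\T(gu, gx) = \d_\T(u, x) \ge \e \ge \d_\T(gx, p)$, placing it beyond $p$ on $[gx, gy]$ in the direction of $gy$; the symmetric estimate at the $y$-end places it before $q$. Hence $gu \in [p, q] \subseteq [x, y]$, and the same argument gives $gv \in [x, y]$.

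The final step is counting. Since $gu$ is a vertex of $[x, y]$ within distance $\e$ of the vertex $u$, there are at most $2\lfloor \e \rfloor + 1 \le 2\e + 1$ choices for $gu$. Once $gu$ is fixed, $gv$ must be a vertex of $[x, y]$ at distance exactly $\d_\T(u, v)$ from $gu$, giving at most $2$ further choices. Thus the number of pairs $(gu, gv)$, and hence the number of cosets, is at most $2(2\e+1)$. I expect the main obstacle to be the geometric argument localizing $gu$ and $gv$ on $[x, y]$ via the projections $p, q$; the convexity of the displacement function is classical and the final count is routine.
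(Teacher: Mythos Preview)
Your proof is correct and follows essentially the same route as the paper's: both introduce the projections of $gx$ and $gy$ onto $[x,y]$ (your $p,q$ are the paper's $a,b$), identify $[x,y]\cap[gx,gy]=[p,q]$, place $gu,gv$ inside $[p,q]$, and then count the possible images of $[u,v]$. The one genuine difference is that you obtain the bound $\d_\T(u,gu)\le \e$ up front via convexity of the displacement function, whereas the paper extracts it afterwards from the inclusion $u,gu\in[a,b]\subseteq[gx,gy]$ by the triangle-inequality computation $\d_\T(u,gu)=|\d_\T(u,gx)-\d_\T(u,x)|\le \d_\T(x,gx)\le\e$; both arguments are valid and the remainder is identical.
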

\begin{proof}
Let $[u_i,v_i]$, $i=1, \ldots ,k$, be the set of all translations of $[u,v]$ by elements of $G$ such that $[u_i, v_i]\subset [x,y]$
and $\d_\T(u,u_i) \le \e$. Obviously $k\le 2(2\e +1)$ (note that edge inversions are possible).  For each $i=1, \ldots, k$, choose any element $t_i\in G$ such that $[u_i, v_i]=t_i [u,v]$.

Let $g$ be an element of $\pst^\e_G(\{ x,y\})$. That is,
\begin{equation}\label{eq:dtgx}
\max\{ \d_\T (x,g  x),\d_\T (y, g  y)\} \le\e .
\end{equation}
Let $a$ (respectively, $b$) be the point on $[x,y]$ closest to $gx$ (respectively, $gy$). Thus $[x,gx]$ is a concatenation of $[x,a]$ and $[a,gx]$ and, similarly, $[y,gy]$
is a concatenation of $[y,b]$ and $[b,gy]$.
Using (\ref{eq:dtgx}) we obtain
\begin{equation}\label{eq:dtxa}
\max \{\d_\T (x,a), \d_\T (y,b)\}\le \e
\end{equation}
and
\begin{equation}\label{eq:dtgxa}
\max \{\d_\T (gx,a), \d_\T (gy, b)\}\le \e.
\end{equation}
Since $\d_\T(x,y) > 2 \e$, (\ref{eq:dtxa}) implies that $a\ne b$ and hence $[a,b]=[x,y]\cap [g  x,g  y]$.

Since $u \in [x,y]$ and $\d_\T(u,\{ x,y\})\ge \e$, we obtain $u\in [a,b]$ using \eqref{eq:dtxa} again. Similarly we obtain $gu\in [a,b]$ and $gv\in [a,b]$ using (\ref{eq:dtgxa}).
Since  $u,g u\in [a,b]\subseteq [g x,g y]$, we have
$$
\d_\T (u,g u)= |\d_\T (u,g x)-\d_\T (g u,g x)|= |\d_\T (u,g x)-\d_\T (u,x)| \le \d_\T (x,g x)\le \e .
$$
Thus $g [u,v]=[u_i,v_i]$ for some $i\in \{ 1, \ldots , k\}$. This means that $g\in t_i\pst_G(\{u,v\})$ and hence  $\pst_G^\e(\{ x,y\})\subseteq \bigcup_{i=1}^k t_i \pst_G(\{ u, v\}).$
\end{proof}


\begin{cor}\label{prop:WPD-crit} Let $G$ be a group acting on a simplicial tree $\T$ and  let
$h \in G$ be a hyperbolic element.
Suppose that for some vertices $u,v \in axis(h)$, the pointwise stabilizer $\pst_G(\{u,v\})$ is finite (the possibility $u=v$ is allowed). Then $h$ satisfies the WPD condition. In particular, either
$G$ is virtually cyclic or $G \in \X$.
\end{cor}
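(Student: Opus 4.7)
The plan is to verify the WPD condition for $h$ acting on $\T$ and then invoke criterion (AH$_3$) of Theorem~\ref{acyl}. Since a tree is $0$-hyperbolic and $h$ is already a hyperbolic isometry of $\T$, once WPD is established the dichotomy follows immediately: either $G$ is virtually cyclic, or $G$ is not virtually cyclic and hence lies in $\X$ by (AH$_3$).

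To establish WPD, I fix $s \coloneqq u$ and an arbitrary $\e \ge 0$, and aim to produce $M \in \N$ such that $\pst^\e_G(\{s, h^M s\})$ is finite. Let $\tau \coloneqq \|h\| > 0$. I orient $axis(h)$ in the direction of translation of $h$; after swapping $u$ and $v$ if necessary (the hypothesis on $\pst_G(\{u,v\})$ is symmetric in $u$ and $v$), we may assume that $v$ lies at or after $u$ along this orientation. Then for every integer $i \ge 0$ the translates $u' \coloneqq h^i u$ and $v' \coloneqq h^i v$ lie on $axis(h)$ with
\[
\d_\T(s, u') = i\tau \quad\text{and}\quad \d_\T(s, v') = i\tau + \d_\T(u,v).
\]
Setting $i \coloneqq \lceil \e/\tau \rceil$ and $M \coloneqq i + \lceil (\d_\T(u,v)+\e)/\tau \rceil$, both $u'$ and $v'$ lie on the geodesic segment $[s, h^M s] \subset axis(h)$, and one checks directly that
\[
\d_\T\bigl(\{u',v'\},\, \{s, h^M s\}\bigr) \ge \e \quad\text{and}\quad \d_\T(s, h^M s) = M\tau > 2\e.
\]
Lemma~\ref{lem:epst}, applied with $x=s$ and $y=h^M s$, then gives that $\pst^\e_G(\{s, h^M s\})$ is contained in a union of at most $2(2\e+1)$ left cosets of $\pst_G(\{u', v'\})$. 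But $\pst_G(\{u', v'\}) = h^i\, \pst_G(\{u,v\})\, h^{-i}$ is finite by hypothesis, so $\pst^\e_G(\{s, h^M s\})$ is finite as required, confirming the WPD condition.

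I do not anticipate a serious obstacle: the proof is essentially a clean packaging of Lemma~\ref{lem:epst} combined with the fact that $h$ acts as a pure translation on $axis(h)$. The only minor care points are the choice of $i$ (to push $u'$ at least $\e$ past $s$) and of $M$ (to leave at least $\e$ of buffer past $v'$ before reaching $h^M s$), and the observation that the degenerate case $u=v$ is absorbed uniformly by the same formulas with $\d_\T(u,v) = 0$.
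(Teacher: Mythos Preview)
Your argument is correct and follows essentially the same route as the paper: both proofs reduce WPD to Lemma~\ref{lem:epst} by using the translation action of $h$ on its axis to position (a translate of) $\{u,v\}$ at distance at least $\e$ from the endpoints of a long segment $[s,h^Ms]$; the only cosmetic difference is that the paper shifts the ambient segment (taking $x=h^{-m}u$, $y=h^{2m}u$ and then conjugating back), whereas you keep $[s,h^Ms]$ fixed and shift the inner vertices to $u'=h^iu$, $v'=h^iv$. One tiny boundary case: with your explicit choices, when $u=v$ and $\e$ is a positive multiple of $\tau$ you only get $M\tau = 2\e$, not the strict inequality $M\tau>2\e$ required by Lemma~\ref{lem:epst}; simply taking $M$ one larger fixes this.
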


\begin{proof} Without loss of generality we can assume that $v \in [u,h^l u]$ for some $l \in \N$ (otherwise, exchange $u$ and $v$).
We will show that in this case $h$ satisfies Definition \ref{def:wpd} for $s=u$. Take any $\e >0$. Evidently we can increase $\e$ to assume that
$\d_\T (u,v)\le \e$. Let $m\in \mathbb N$ be such that $\d_\T (u, h^m  u)\ge \e $ and let
$x=h^{-m}u$ and $y=h^{2m}  u$. Then $[u,v] \subset [u,h^m u]\subset [x,y]$ and $\d_\T (\{ x,y\}, \{ u,v\})\ge \e$. Applying Lemma \ref{lem:epst}, we obtain that $\pst_G^\e(\{ x, y\}) $ is finite.
Since $\{ x, y\} = h^{-m}\{ u, h^{3m}u\}$,  $\pst_G^\e(\{ u, h^{3m}u\}) $ is conjugate to $\pst_G^\e(\{ x, y\}) $ in $G$, and hence it is also finite.
Thus the definition of WPD is satisfied for $s=u$ and $M=3m$.

The last claim of the corollary follows from Theorem \ref{acyl} together the observation that a hyperbolic isometry of a simplicial tree is hyperbolic (loxodromic) in the sense defined at the beginning of Subsection \ref{sec:eqdef} .
\end{proof}

\begin{rem}
The action of $\mathbb R$ on itself by translations shows that  Corollary \ref{prop:WPD-crit}  does not extend to actions on $\mathbb R$-trees.
\end{rem}

Recall that the action of $G$ on $\T $ is \emph{minimal }if $\T $ does not contain any proper $G$-invariant subtree.
Let $\widehat \T=\T \cup \partial \T$ be the natural compactification of $\T$. Take any point $x \in \T$.
The \emph{limit set} $\partial G$, of $G$ in the boundary $\partial \T$, is the intersection of the closure of $Gx$ in $\widehat \T$ with $\partial \T$. Since $G$ acts on $\T$ isometrically,
$\partial G$ does not depend on the choice of the point $x$.

\begin{lemma}\label{axisuv}
Let $\T $ be a tree with at least $3$ vertices. Suppose that a group $G$ acts minimally on $\T$ and does not fix any point of $\partial \T$.
Then for any two vertices $u,v$ of $\T$, there exists a hyperbolic element $g\in G$ such that $axis(g)$ contains $u$ and $v$.
\end{lemma}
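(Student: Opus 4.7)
The plan is to first produce enough hyperbolic elements in $G$, then use a ping-pong construction to combine them into one whose axis contains both $u$ and $v$.

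I would first show $G$ contains a hyperbolic element: otherwise, classical results on group actions on trees (Tits/Serre) would force $G$ either to fix a vertex or to fix an end of $\T$; minimality together with $|V(\T)| \ge 3$ rules out the first, and the second is excluded by hypothesis.

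Next, I would prove the intermediate claim that every vertex of $\T$ lies on the axis of some hyperbolic element of $G$. Let $A \subseteq V(\T)$ be this set. It is $G$-invariant and non-empty (any hyperbolic element's axis contributes vertices to $A$), so by minimality it suffices to show $A$ is convex. Given $w_1, w_2 \in A$ lying on axes $\ell_1, \ell_2$ of hyperbolic $f_1, f_2$: if $\ell_1 \cap \ell_2 \neq \emptyset$, then $[w_1, w_2] \subseteq \ell_1 \cup \ell_2 \subseteq A$; if $\ell_1 \cap \ell_2 = \emptyset$, then $[w_1, w_2]$ decomposes as a piece of $\ell_1$, the bridge $[p_1, p_2]$, and a piece of $\ell_2$, and a standard ping-pong argument shows that with $f_i$ oriented so that $f_i^{+\infty}$ lies on the side of $p_i$ containing $w_i$ and $n$ large enough, the element $f_1^n f_2^n$ is hyperbolic with axis containing the bridge, placing every vertex of the bridge in $A$.

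Finally, given the specified $u, v$, I apply the previous claim to obtain hyperbolic $f_1, f_2$ with $u \in axis(f_1) = \ell_1$ and $v \in axis(f_2) = \ell_2$. If $\ell_1 = \ell_2$, take $g = f_1$. Otherwise I would orient $f_1$ so that $f_1^{+\infty}$ lies on the $\ell_1$-side of $\ell_1 \cap \ell_2$ (or of the bridge, when the axes are disjoint) that contains $u$, and similarly orient $f_2$ so that $f_2^{+\infty}$ lies on the side containing $v$. Then for sufficiently large $n$ the same ping-pong computation shows that $g = f_1^n f_2^n$ is hyperbolic with axis containing the entire segment $[u,v]$, hence both $u$ and $v$. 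The main technical obstacle is exactly this ping-pong bookkeeping: one has to choose orientations and exponents carefully enough to guarantee that the axis of $f_1^n f_2^n$ really contains the target segment, handling both the disjoint-axes case and the case when the two axes overlap in a (possibly infinite) segment. The underlying facts — that a product of two hyperbolic tree isometries with disjoint axes is hyperbolic with axis through the bridge, extending far into each factor's axis as the exponent grows — are standard, but the verification is a bit fiddly.
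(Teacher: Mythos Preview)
Your approach is sound and genuinely different from the paper's. The paper first shows that $\T$ has no degree-$1$ vertices (so any segment $[u,v]$ sits on a bi-infinite geodesic $L(u,v)$), then classifies the action: it is neither elliptic, parabolic, nor quasi-parabolic, so by a theorem of Gromov the set $\{(g^{-\infty},g^{+\infty}) : g \text{ hyperbolic}\}$ is dense in $\partial\T\times\partial\T$. Picking endpoints close to those of $L(u,v)$ then yields an axis containing $[u,v]$. Your route is more elementary --- no appeal to boundary dynamics --- at the cost of explicit ping-pong case analysis.

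There is, however, one case where your ping-pong does not go through as stated, and it is more than bookkeeping. Suppose $\ell_1\cap\ell_2$ is an infinite ray, so the two axes share a point $\xi\in\partial\T$, and $u\in\ell_1\setminus\ell_2$, $v\in\ell_2\setminus\ell_1$. Then both $f_1$ and $f_2$ fix $\xi$, hence the entire subgroup $\langle f_1,f_2\rangle$ fixes $\xi$. Consequently \emph{every} hyperbolic element of $\langle f_1,f_2\rangle$ has $\xi$ as one endpoint of its axis. But the axis you want has endpoints beyond $u$ on $\ell_1$ and beyond $v$ on $\ell_2$, neither of which is $\xi$. So no product $f_1^m f_2^n$ (or any word in $f_1,f_2$) can do the job; indeed when $\|f_1\|=\|f_2\|$ the obvious products are even elliptic. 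The hypothesis that $G$ fixes no point of $\partial\T$ must enter here: you need a third element $h\in G$ with $h\xi\neq\xi$, and then combine (for instance) $f_1$ with a conjugate of $f_2$ whose axis still passes through $v$ but no longer ends at $\xi$. This repair is doable but requires an extra step that your outline does not supply. A minor additional point: in the disjoint-axes case the axis of $f_1^n f_2^n$ contains $[f_2^{-n}p_2,\,f_1^n p_1]$, so it extends into $\ell_2$ on the $f_2^{-\infty}$ side, not the $f_2^{+\infty}$ side; your stated orientation of $f_2$ is inverted.
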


\begin{proof}
Since $T$ has at least $3$ vertices, there exists a vertex $v$ of degree at least $2$. Let $\T_0$ be the minimal subtree of $\T $ containing the orbit $Gv$. Obviously $\T_0$ is $G$-invariant and no vertex of $\T_0$ has degree $1$ in $\T $. Since the action of $G$ is minimal, $\T=\T_0$. Thus $\T$ does not contain vertices of degree $1$. This implies that any two vertices $u,v$ of $\T$ are contained in a bi-infinite geodesic $L(u,v)$. In particular, $\T$ is unbounded.

If the action of $G$ is elliptic (i.e., orbits are bounded), then for any vertex $v$ of $\T $, the minimal subtree $\T_1$ containing the orbit $Gv$ is bounded. Obviously $\T_1$ is $G$-invariant. However this contradicts minimality of the action since $\T $ is unbounded. Thus the action of $G$ cannot be elliptic. Since $G$ does not fix any point of $\partial\T$, the action of $G$ on $\T$ cannot be parabolic or quasi-parabolic in Gromov's terminology either (see \cite[Section 8.2]{Gro}). Hence the  set $\{ g^{\pm \infty}\mid g\in \mathcal H(G)\}$, where $\mathcal H(G)$ is the set of all hyperbolic elements of $G$, is dense in $\partial G\times \partial G$
 \cite[Corollary 8.2.G]{Gro} (for a detailed proof in a more general situation see \cite{H}).
Moreover, since the action of $G$ on $\T$ is minimal, for any vertex  $v \in \T$,  $\T$ coincides with the convex span of the orbit $Gv$. Thus any point of $\T$ lies on a geodesic segment connecting two vertices from $Gv$.
It follows that $\partial G=\partial \T$, and so the  set $\{ g^{\pm \infty}\mid g\in \mathcal H(G)\}$ is dense in $\partial \T \times \partial\T$.
This implies that for any segment $I$ of $L(u,v)$, there is a hyperbolic element $g\in G$ with $I\subseteq axis(g)$, as claimed.
\end{proof}

\begin{rem}
Note that the lemma does not hold for the tree $\mathcal E$ consisting of a single edge. Indeed $\mathbb Z/2\mathbb Z$ acts on $\mathcal E$ by inversion.
The action is minimal and obviously $\mathbb Z/2\mathbb Z$ does not fix any point of $\partial \mathcal E$ as the latter is empty.
Similar counterexamples can be constructed by using any group that surjects onto $\mathbb Z/2\mathbb Z$. However if we assume that $G$ acts of $\T$ without inversions,
then the condition $|V(\T)|\ge 3$ in the lemma can be relaxed to $|V(\T)|\ge 2$.
\end{rem}

The next result is obtained in the course of proving part (3) of Proposition 6 in \cite{BF}. Note that although the authors assume that every hyperbolic element satisfies WPD there, the proof only uses the existence of a single hyperbolic WPD element.

\begin{lemma}[Bestvina-Fujiwara]\label{BF}
Let $G$ be a group acting on a hyperbolic space $X$ and containing a hyperbolic WPD element. If $G$ is not virtually cyclic, then it contains two hyperbolic elements $g_1, g_2$ such that $\{g_1^{\pm \infty}\} \cap \{ g_2^{\pm \infty}\} =\emptyset$.
\end{lemma}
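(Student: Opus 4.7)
The plan is to produce, starting from the single hyperbolic WPD element $h$ given by hypothesis, two hyperbolic elements of $G$ with disjoint limit sets in $\partial X$. The three main ingredients are: (i) use the WPD condition to control the setwise stabilizer of the pair $\{h^{-\infty},h^{+\infty}\}\subset\partial X$; (ii) use the non-virtual-cyclicity of $G$ to escape this stabilizer; and (iii) a ping-pong / powers-and-products argument to handle the case when a conjugate of $h$ shares exactly one fixed point with $h$ at infinity.

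First I would analyze the setwise stabilizer
\[
E(h) := \bigl\{g \in G : g\cdot\{h^{-\infty},h^{+\infty}\} = \{h^{-\infty},h^{+\infty}\}\bigr\}.
\]
The WPD hypothesis on $h$ is designed precisely to force $E(h)$ to be virtually cyclic: any $g\in E(h)$ coarsely stabilizes a quasi-axis of $h$ (for instance the orbit $\{h^n x_0\}_{n\in\Z}$ for a fixed basepoint $x_0$), and after multiplication by a suitable power of $h$ it moves a long initial segment of this quasi-axis by a bounded amount. Applying Definition~\ref{def:wpd} to a sufficiently far-separated pair of orbit points then bounds the number of such cosets of $\langle h\rangle$ by a finite constant, whence $[E(h):\langle h\rangle]<\infty$.

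Since $G$ is not virtually cyclic by assumption, $E(h)\subsetneq G$, so one can pick $f\in G\setminus E(h)$. Setting $h':=fhf^{-1}$, one obtains a hyperbolic element with $\{h'^{\pm\infty}\} = f\cdot\{h^{\pm\infty}\}\neq\{h^{\pm\infty}\}$. If these two pairs are disjoint, then $(g_1,g_2):=(h,h')$ already satisfies the conclusion.

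The main obstacle is the remaining case, in which the two pairs share exactly one point $\xi$; denote the other, unshared endpoints by $\alpha$ (of $h$) and $\beta$ (of $h'$). Here I would exploit the coarse north-south dynamics of hyperbolic isometries on $X\cup\partial X$: for any neighborhoods $U^-,U^+$ of the repelling and attracting fixed points of a hyperbolic isometry, a sufficiently large power maps $\partial X\setminus U^-$ into $U^+$. A standard ping-pong argument then produces, for large enough $N$, a new hyperbolic element $k := h'^{N} h^{N}$ (or a symmetric variant, depending on which pair of the four endpoints coincides) whose attracting and repelling fixed points lie in prescribed small neighborhoods of $\beta$ and $\alpha$ respectively. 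Choosing these neighborhoods to exclude $\{h^{\pm\infty}\}=\{\alpha,\xi\}$ and using the direct calculation $k\alpha = h'^{N}\alpha\ne\alpha$ (valid because $\alpha\notin\{h'^{\pm\infty}\}$) confirms that neither $\alpha$ nor $\xi$ is a fixed point of $k$. Hence $\{k^{\pm\infty}\}\cap\{h^{\pm\infty}\}=\emptyset$, and $(g_1,g_2):=(h,k)$ completes the argument.
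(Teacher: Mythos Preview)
Your outline is the right one, and steps (i)–(ii) are exactly how the argument goes: WPD for $h$ forces $E(h)=\{g\in G:g\{h^{\pm\infty}\}=\{h^{\pm\infty}\}\}$ to be virtually cyclic, and then $f\in G\setminus E(h)$ produces a conjugate $h'=fhf^{-1}$ whose pair of limit points differs from that of $h$. If the pairs are disjoint you are done. The paper itself does not give a proof here; it simply points to the proof of Proposition~6(3) in Bestvina--Fujiwara \cite{BF}, noting that only WPD for the single element $h$ is used. So there is nothing to compare on that level.

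However, your treatment of the ``one shared fixed point'' case has a genuine gap. If $\{h^{\pm\infty}\}\cap\{h'^{\pm\infty}\}=\{\xi\}$, then both $h$ and $h'$ fix $\xi$, and therefore \emph{every} word in $h^{\pm 1},h'^{\pm 1}$ fixes $\xi$ as well. In particular your proposed element $k=h'^{N}h^{N}$, and any ``symmetric variant'' built only from powers of $h$ and $h'$, satisfies $k\xi=\xi$, so $\xi\in\{k^{\pm\infty}\}$ and $\{k^{\pm\infty}\}\cap\{h^{\pm\infty}\}\neq\emptyset$. The ping-pong you describe can never produce an element independent of $h$ from these ingredients alone; the neighborhoods-of-$\alpha$-and-$\beta$ picture is simply wrong in this situation.

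The correct resolution is to show that this case does not occur. This is where WPD is used a second time (and is the content of the relevant part of \cite{BF}): if a hyperbolic $g$ shares exactly one boundary fixed point $\xi$ with $h$, then the quasi-axes of $g$ and $h$ fellow-travel along an entire ray toward $\xi$. Choosing integers $j_n$ so that $g^{j_n}$ and $h^{n}$ have comparable translation distance toward $\xi$, the elements $g^{-j_n}h^{n}$ move a fixed basepoint $x_0$---and indeed $h^{M}x_0$ for all sufficiently large $M$---by a uniformly bounded amount; yet they are pairwise distinct because $g$ and $h$ have different repelling points. This gives infinitely many elements in $\pst_G^{\varepsilon}(\{x_0,h^{M}x_0\})$ for a fixed $\varepsilon$ and arbitrarily large $M$, contradicting WPD. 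Hence $h$ and $h'$ must already be independent, and no ping-pong step is needed.
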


We are now ready to prove the main result of this subsection.

\begin{prop}\label{prop:min-wpd}
Let $G$ be a group acting minimally on a simplicial tree $\T$. If $G$ is not virtually cyclic then the following conditions are equivalent:
\begin{enumerate}
\item[(a)] $G$ contains a hyperbolic WPD element (for the given action of $G$ on $\T$).
\item[(b)] $G$ does not fix any point of $\partial \T$ and there exist vertices $u,v$ of $\T$ such that $\pst_G(\{ u,v\})$ is finite.
\end{enumerate}
\end{prop}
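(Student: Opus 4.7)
My plan is to prove the two implications separately: (b) $\Rightarrow$ (a) is essentially an immediate combination of Lemma \ref{axisuv} with Corollary \ref{prop:WPD-crit}, while (a) $\Rightarrow$ (b) extracts the required finite pointwise stabilizer directly from the WPD definition applied at $\e = 0$ and rules out a global fixed point at infinity via Lemma \ref{BF}.

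First I would dispose of the degenerate cases $|V(\T)| \le 2$: no element of $G$ can act hyperbolically on such a $\T$, so (a) fails; on the other hand, for any choice of vertices $u,v$ the subgroup $\pst_G(\{u,v\})$ has index at most $2$ in $G$, so finiteness of this stabilizer would force $G$ itself to be finite and hence virtually cyclic, contradicting the standing hypothesis --- so (b) also fails. From this point on I assume $|V(\T)| \ge 3$.

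For (b) $\Rightarrow$ (a), given vertices $u,v$ with $\pst_G(\{u,v\})$ finite and $G$ fixing no point of $\partial \T$, Lemma \ref{axisuv} supplies a hyperbolic $g \in G$ whose axis contains both $u$ and $v$, and Corollary \ref{prop:WPD-crit} immediately yields that $g$ is a hyperbolic WPD element. For (a) $\Rightarrow$ (b), let $h \in G$ be a hyperbolic WPD element and pick a vertex $s \in \T$ lying on $axis(h)$. Applying Definition \ref{def:wpd} with $\e = 0$ produces some $M \in \N$ such that $|\pst_G^0(\{s, h^M s\})| < \infty$; since $\pst_G^0$ coincides with $\pst_G$ and $h^M s$ is itself a vertex (because $h$ is a simplicial isometry), the pair $u = s$, $v = h^M s$ satisfies the required finiteness. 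For the fixed-point claim, Lemma \ref{BF} (applicable because $G$ is not virtually cyclic and already contains the hyperbolic WPD element $h$) yields two hyperbolic elements $g_1, g_2 \in G$ with $\{g_1^{\pm\infty}\} \cap \{g_2^{\pm\infty}\} = \emptyset$ in $\partial\T$; since a hyperbolic isometry of a tree fixes exactly the two endpoints of its axis on the boundary, any $\xi \in \partial\T$ fixed by all of $G$ would have to lie in both $\{g_1^{\pm\infty}\}$ and $\{g_2^{\pm\infty}\}$, a contradiction.

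I do not anticipate any genuine obstacle here: all the substantive work is already done by the results accumulated earlier in this subsection, and the only minor points to keep in mind are choosing $s$ to be a vertex rather than an interior point of an edge when applying the WPD definition, and dealing with the degenerate small-tree cases separately.
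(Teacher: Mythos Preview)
Your proposal is correct and follows essentially the same approach as the paper: for (a)$\Rightarrow$(b) you extract $u=s$, $v=h^M s$ from the WPD condition at $\e=0$ and invoke Lemma~\ref{BF} to rule out a global fixed point on $\partial\T$, exactly as the paper does; for (b)$\Rightarrow$(a) you combine Lemma~\ref{axisuv} with Corollary~\ref{prop:WPD-crit}, again as in the paper. The only cosmetic difference is that you dispose of the degenerate small-tree case $|V(\T)|\le 2$ upfront, whereas the paper handles the finite-tree case inside the (b)$\Rightarrow$(a) argument --- both are valid ways to ensure Lemma~\ref{axisuv} applies.
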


\begin{proof}
Suppose that $G$ is non-elementary and contains a hyperbolic WPD element $g\in G$. Let $L$ be the axis of $g$ and let $u \in L$ be any vertex.
Let $M$ be the constant from the definition of the WPD condition for $g$ corresponding to $\e=0$. Set $v=g^M u$, then $\pst_G(\{ u,v\})$ is finite by the WPD condition.
Since the only points of $\partial \T$ fixed by a hyperbolic element $h\in G$ are $h^{\pm \infty}$, Lemma \ref{BF} implies that $G$ does not fix any point of $\partial \T$.

Conversely, suppose $G$ does not fix any point of $\partial \T$ and there exist vertices $u,v$ of $\T$ such that $\pst_G(\{ u,v\})$ is finite.
If $T$ were finite then $\pst_G(\{u,v\})$ would have finite index in $G$ implying that $|G|<\infty$, which would contradict the assumption that $G$ is not virtually cyclic. Hence the tree
$T$ must be infinite, so, by Lemma \ref{axisuv}, there exists a hyperbolic element $g\in G$ such that $axis(g)$ passes through $u$ and $v$. Therefore, according to Corollary~\ref{prop:WPD-crit},
$g$ satisfies the WPD condition.
\end{proof}

\begin{proof}[Proof of Theorem \ref{main-1}]
Suppose that $G$ is not virtually cyclic. Then it contains a hyperbolic WPD element (with respect to the action on $\T$) by Proposition \ref{prop:min-wpd}.
Hence $G$ is acylindrically hyperbolic by Theorem \ref{acyl}.
\end{proof}

\subsection{Fundamental groups of graphs of groups}\label{Sec-gg}
In  this subsection we assume that the reader is familiar with Bass-Serre theory.
For a detailed exposition we refer to \cite{Bass93,Bass76,Serre}.

Let $(\mathcal G,\ga)$ be a graph of groups, where $\ga$ is a connected graph and $\mathcal G$ is a collection consisting of
\begin{enumerate}
\item[(a)] edge groups $G_e$ and vertex groups $G_v$ associated to edges $e\in E(\ga)$ and vertices $v\in V(\ga)$ of $\Gamma $;

\item[(b)] maps $\phi_e$ and $ \phi_{\bar e}$, which for every edge $e$ of $\ga$, define the embeddings of the edge group $G_e$ into the vertex groups $G_{e_+}$ and $G_{e_-}$, respectively.
\end{enumerate}
Let also $\pi_1(\mathcal G,\ga)$ denote the fundamental group of $(\mathcal G,\ga)$. As we know, this fundamental group naturally acts on the associated Bass-Serre tree $\T$.
In this section we will investigate various conditions ensuring that this action satisfies the assumptions of Theorem \ref{main-1}.

First we will characterize the situation when $\pi_1(\mathcal G,\ga)$ fixes a point of $\partial \T$.
The proof of the following statement is essentially contained in the proof of Theorem 4.1 from \cite{A-M-Tits}:

\begin{lemma}\label{lem:fix_end} Suppose that $G$ is a group acting isometrically on a simplicial tree $\T$. If $G$ fixes a point of $\partial \T$ then all elliptic elements in $G$ form a normal subgroup $N \lhd G$
such that the quotient $G/N$ is either trivial or infinite cyclic.
\end{lemma}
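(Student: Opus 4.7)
The plan is to construct a canonical homomorphism $h\colon G \to \Z$ whose kernel is precisely the set $N$ of elliptic elements of $G$; once this is established, both the normality of $N$ and the structure of $G/N$ follow at once from the first isomorphism theorem.

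Fix a vertex $v_0 \in \T$ and let $\xi \in \partial\T$ denote the boundary point fixed by every element of $G$. Let $\beta_\xi \colon \T \to \mathbb{R}$ be the Busemann function at $\xi$ normalized by $\beta_\xi(v_0)=0$; it takes integer values at vertices, decreasing by $1$ along each edge of any geodesic ray pointing toward $\xi$ and increasing by $1$ along each edge pointing away from $\xi$. First I would verify that for every $g \in G$ the difference $\beta_\xi(gx)-\beta_\xi(x)$ is independent of $x \in \T$; this is a standard consequence of the facts that any two geodesic rays ending at $\xi$ eventually coincide and that $g$ is an isometry with $g\xi=\xi$. Call this common value $h(g)$; since $g$ permutes the vertex set of $\T$ and $\beta_\xi$ is $\Z$-valued on vertices, $h(g)\in\Z$. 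The identity $\beta_\xi\circ g = \beta_\xi + h(g)$ then immediately yields $h(g_1 g_2)=h(g_1)+h(g_2)$, so $h$ is a homomorphism.

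Next I would show $\ker h = N$. If $g$ is elliptic and fixes a vertex $v$, then $h(g)=\beta_\xi(gv)-\beta_\xi(v)=0$. The only other possibility for an elliptic $g$ would be that $g$ inverts some edge $e=[u,v]$ by swapping its endpoints; but then one would get simultaneously $h(g)=\beta_\xi(v)-\beta_\xi(u)$ and $h(g)=\beta_\xi(u)-\beta_\xi(v)$, forcing $\beta_\xi(u)=\beta_\xi(v)$, in contradiction with the fact that Busemann values at adjacent vertices differ by $\pm 1$. Hence under the hypothesis that $G$ fixes $\xi$, no element of $G$ can invert an edge, and every elliptic element of $G$ lies in $\ker h$. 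Conversely, a hyperbolic element $g\in G$ has a well-defined axis $axis(g)$ with two distinct endpoints in $\partial\T$; since $g$ fixes $\xi$, the point $\xi$ is necessarily one of these endpoints, and for any vertex $v\in axis(g)$ we obtain $|h(g)|=|\beta_\xi(gv)-\beta_\xi(v)|=\|g\|\neq 0$, so $g\notin\ker h$.

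It follows that $N=\ker h$ is a normal subgroup of $G$ equal to the set of all elliptic elements, and $G/N$ embeds into $\Z$, so it is either trivial or infinite cyclic, as required. The most delicate point I foresee is the edge-inversion bookkeeping: one needs to rule out an involution swapping the endpoints of an edge, because otherwise the putative integer-valued homomorphism $h$ would not even be well-defined. The Busemann-parity argument above disposes of this cleanly, after which the rest of the proof is essentially formal.
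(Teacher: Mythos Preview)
Your proof is correct and takes a genuinely different route from the paper's. The paper argues directly: given the fixed end $\alpha$, any two elliptic elements fix rays toward $\alpha$ which eventually coincide, hence they share a fixed vertex and their product is elliptic; then, assuming a hyperbolic element exists, it picks one $h$ of minimal translation length and shows by a division-with-remainder argument on translation lengths that every hyperbolic $g$ satisfies $g\in h^m N$ for some $m$, whence $G/N$ is cyclic generated by $hN$.

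Your Busemann-cocycle approach packages both steps into a single homomorphism $h\colon G\to\Z$ and reads off everything from $\ker h$. This is slicker and, as you note, yields for free the pleasant side fact that a group fixing an end of a simplicial tree never inverts an edge. The paper's argument, by contrast, is more bare-hands and extracts along the way the geometrically useful observation that any two elliptic elements fixing a common end already have a common fixed vertex. Either approach is perfectly adequate here.
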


\begin{proof} Let $\alpha \in \partial T$ be the point fixed by $G$ and let $x \in G$ be an elliptic element. Then $x$ fixes some vertex $u$ of $\T$, and hence it must fix (pointwise) the unique geodesic ray $[u,\alpha]$ from $u$ to $\alpha$.
If $y \in G$ is another elliptic element, then $y$ fixes the geodesic ray $[v,\alpha]$ for some vertex $v$ of $\T$. The intersection of the rays $[u,\alpha]$ and $[v,\alpha]$ is again a geodesic ray $[w,\alpha]$ for some vertex $w$.
It follows that $w$ is fixed by both $x$ and $y$, hence the product $xy$ is elliptic. Evidently an inverse and a conjugate of an elliptic element is again elliptic, therefore the set of elliptic elements forms a normal subgroup $N \lhd G$.

If $G$ has no hyperbolic elements, then $G=N$ and the lemma is proved. So, assume that $G$ has at least one hyperbolic element. Then there is a hyperbolic element $h \in G$ which has minimal translation length $\|h\|>0$.
Recall that the only fixed points of $h$ on $\partial \T$ are $h^{\pm\infty}$. After replacing $h$ with $h^{-1}$, if necessary, we can assume that $\alpha = h^{+\infty}$.
Take any other hyperbolic element $g \in G$. We want to show that $g \in \langle h \rangle N$. As above, we can suppose that $\alpha=g^{+\infty}$.
Hence the intersection of $axis (g)$ and $axis (h)$ contains an infinite geodesic ray $[v, \alpha]$ for some vertex $v$ of $\T$, and positive powers of $h$ and $g$ translate this ray into itself.

By the construction of $h$, $\|g\|\ge \|h\|$, hence there exist $m, n \in \Z $ such that $\|g\|=m \|h\|+n$ and $m >0$, $0 \le n < \|h\|$. Set $f=h^{-m}g$
and observe that $f$ translates the ray $[v,\alpha]$ into itself and $\d_\T(v,fv)=n$. So, if $n>0$ then $f$ is a hyperbolic element with $\|f\|=n<\|h\|$, contradicting the choice of $h$. Therefore $n=0$, i.e.,
$f$ fixes $v$, and so it is elliptic. Thus $g=h^m f \in h^m N$, which implies that $G=\langle h \rangle N$. Since $\langle h \rangle \cap N=\{1\}$, we see that $G/N \cong \langle h \rangle$ is infinite cyclic, and the lemma is proved.
\end{proof}

\begin{df}
Suppose that $e$ is an  edge  between vertices $u$ and $v$ in a graph of groups $(\mathcal G,\ga)$. We will say that $e$ is \emph{good} if the
natural images of $G_e$ in $G_u$ and $G_v$ are proper subgroups. Any edge that is not good will be called \emph{bad}.
\end{df}

\begin{lemma}\label{lem:ab} If the graph of groups $(\mathcal G,\ga)$ has at least one good edge then the elliptic elements of $G=\pi_1(\mathcal G,\ga)$ do not form a subgroup. In particular,
$G$ does not fix any point of $\partial \T$, where $\T$ is the corresponding Bass-Serre tree.
\end{lemma}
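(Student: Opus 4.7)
The plan is to exhibit two elliptic elements $a,b \in G$ whose product $ab$ is hyperbolic in the action on the Bass--Serre tree $\T$; this shows at once that the elliptic elements fail to form a subgroup. The ``in particular'' clause then follows by contraposition from Lemma~\ref{lem:fix_end}: if $G$ fixed a point of $\partial\T$, the elliptic elements would be forced to form a (normal) subgroup of $G$.

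To produce $a$ and $b$, I would fix a good edge $e$ of $(\mathcal G,\ga)$ joining vertices $u,v$ (possibly $u=v$) and lift $e$ to an edge $\tilde e$ of $\T$ with endpoints $\tilde u$ (over $u$) and $\tilde v$ (over $v$). The Bass--Serre correspondence identifies $\st_G(\tilde u)$, $\st_G(\tilde v)$, $\st_G(\tilde e)$ with conjugates of $G_u$, $G_v$, $G_e$, so that goodness of $e$ is precisely the assertion that $\st_G(\tilde e) \subsetneq \st_G(\tilde u)$ and $\st_G(\tilde e) \subsetneq \st_G(\tilde v)$. Pick $a \in \st_G(\tilde u) \setminus \st_G(\tilde e)$ and $b \in \st_G(\tilde v) \setminus \st_G(\tilde e)$: both are elliptic, and because any element fixing both endpoints of $\tilde e$ would fix $\tilde e$, we have $a\tilde v \ne \tilde v$ and $b\tilde u \ne \tilde u$.

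To show $ab$ is hyperbolic I would compute distances in $\T$. Applying $a$ to the length-$2$ geodesic $\tilde u \to \tilde v \to b\tilde u$ gives the length-$2$ geodesic $\tilde u \to a\tilde v \to ab\tilde u$, so $\d_\T(\tilde u, ab\tilde u) = 2$. Concatenating this with its $ab$-translate yields the candidate path
\[
\tilde u \to a\tilde v \to ab\tilde u \to ab\cdot a\tilde v \to (ab)^2\tilde u.
\]
The only nontrivial point is to rule out backtracking at the middle vertex $ab\tilde u$, i.e.\ to check that $ab \cdot a\tilde v \ne a\tilde v$; cancelling $a$ on the left and using $b \in \st_G(\tilde v)$ reduces this identity to $a\tilde v = \tilde v$, which is false by the choice of $a$. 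Hence $\d_\T(\tilde u,(ab)^2\tilde u)=4$. On the other hand, any elliptic isometry $g$ of a simplicial tree satisfies $\d_\T(x,g^2x) \le \d_\T(x,gx)$ at every vertex $x$ (both are bounded by twice the distance from $x$ to the fixed-point set of $g$, and the right-hand quantity attains this bound). Since $4>2$, the element $ab$ cannot be elliptic, so it is hyperbolic. The main technical step is the no-backtracking verification at $ab\tilde u$; everything else amounts to identifying vertex and edge stabilisers in $\T$ with the expected groups and invoking Lemma~\ref{lem:fix_end}.
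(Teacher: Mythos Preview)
Your proof is correct and follows essentially the same approach as the paper: both pick $a\in\st_G(\tilde u)\setminus\st_G(\tilde e)$ and $b\in\st_G(\tilde v)\setminus\st_G(\tilde e)$ and show that $ab$ is hyperbolic, then invoke Lemma~\ref{lem:fix_end}. The only difference is in how hyperbolicity is verified: the paper appeals to the edge-translation criterion of Definition~\ref{df:edge_transl} (showing that $ab$ translates the edge $b^{-1}\tilde e$, hence is hyperbolic by \cite[I.4.11]{D-D}), whereas you argue directly that $\d_\T(\tilde u,(ab)^2\tilde u)=4>2=\d_\T(\tilde u,ab\,\tilde u)$, which is incompatible with ellipticity.
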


\begin{proof} Let $e$ be any edge of $\T$ which maps onto a good edge of $\ga = G\backslash\T$. Fixing some orientation of $e$ and abusing the notation, we let $G_e$, $G_{e_-}$ and
$G_{e_+}$ denote the $G$-stabilizers of $e$, $e_-$ and $e_+$ respectively.
Since the image of $e$ in $\ga$ is good, there is an element $a \in G_{e_-} \setminus G_e$
and an element $b \in G_{e_+} \setminus G_e$.
Clearly $a$ and $b$ are elliptic, and to prove the statement it suffices to show that the element $h=ab$ is hyperbolic. Let $f=b^{-1}e$, then
$f \neq e$ but $f_+=e_+$ as $b^{-1} \in G_{e_+} \setminus G_e$. On the other hand, the edge $h f = a e$ starts at $e_-$ and is also distinct from $e$ (see Figure \ref{fig:ab}).

\begin{figure}[ht]
\input{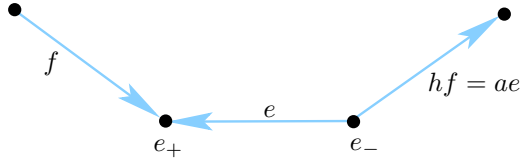}
\caption{ Illustration of Lemma \ref{lem:ab}.}\label{fig:ab}
\end{figure}

Thus we see that the edge $f$ is translated by $h$ (cf. Def. \ref{df:edge_transl}), which implies that $h$ is hyperbolic. The last claim of the lemma now follows from Lemma \ref{lem:fix_end}.
\end{proof}

\begin{df}
We will say that an oriented edge $e$ of $\ga $ is {\it reducible} if  $e_+\ne e_-$ and at least one of the maps $\phi_{\bar e}$, $\phi_{e}$ is onto.
A graph of groups $(\mathcal G,\ga)$ is \emph{reduced} if it does not contain reducible edges.
\end{df}

Thus a reducible edge is a bad edge which is not a loop. It is easy to see that
any reducible edge can be contracted without affecting the fundamental group. Therefore every finite graph
of groups can be contracted to a reduced graph of groups with the same fundamental group (see \cite{Bass76} for details, where reducible edges are called \emph{directed}).

Recall that an HNN-extension $A\ast_{C^t=D}$ of a group $A$
with associated subgroups $C$ and $D$ is called \emph{ascending} if $C=A$ or $D=A$. Ascending HNN-extension gives a basic example of a reduced graph of groups that has a
single vertex and a single bad loop at this vertex.
The following proposition characterizes the case when the fundamental group of a reduced graph of groups fixes a point on the boundary of the corresponding Bass-Serre tree:

\begin{prop}\label{prop:charact} Suppose that $(\mathcal G,\ga)$ is a reduced graph of groups with at least one edge.
Let $G=\pi_1(\mathcal G,\ga)$ and let $\T$ be the corresponding Bass-Serre tree. Then the following are equivalent:

\begin{itemize}
\item[(i)] $G$ fixes a point of the boundary $\partial \T$;
\item[(ii)] $(\mathcal G,\ga)$ has only one vertex $v$ and one (unoriented) bad edge $e$ with $e_-=e_+=v$. In other words, $G$ is an ascending HNN-extension of $G_v$.
\end{itemize}
\end{prop}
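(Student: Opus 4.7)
The plan is to prove the two directions separately, with the bulk of the work going into (i) $\Rightarrow$ (ii).

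\textbf{Direction (ii) $\Rightarrow$ (i).} Assume $(\mathcal G,\ga)$ consists of a single vertex $v$ and a single bad loop $e$, so that $G=\langle A, t\mid t^{-1}Ct=D\rangle$ with $A=G_v$ and either $C=A$ or $D=A$. After possibly replacing $t$ by $t^{-1}$ (i.e.\ reversing the orientation of $e$), we may assume $tAt^{-1}\subseteq A$. Let $v_0$ be the base vertex of $\T$, so that $\mathrm{Stab}(v_0)=A$. Then $\mathrm{Stab}(t^{-n}v_0)=t^{-n}At^n\supseteq A$ for every $n\ge 0$, so every element of $A$ pointwise fixes the geodesic ray $(v_0,t^{-1}v_0,t^{-2}v_0,\dots)$, and hence fixes its end $\alpha\in\partial\T$. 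Since $t$ shifts this ray to a subray of itself, $t$ also fixes $\alpha$. As $G=\langle A,t\rangle$, this yields $G\cdot \alpha=\alpha$.

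\textbf{Direction (i) $\Rightarrow$ (ii).} Suppose $G$ fixes some $\alpha\in\partial\T$. By Lemma~\ref{lem:ab}, $\ga$ has no good edges. Every bad non-loop is reducible by definition, and since $(\mathcal G,\ga)$ is reduced, every edge of $\ga$ must be a loop. Combined with connectedness of $\ga$ and the existence of at least one edge, this forces $\ga$ to consist of a single vertex $v$ carrying one or more loops; it remains to show that exactly one loop is present.

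Suppose, for contradiction, that $\ga$ has two distinct (unoriented) loops $e_1,e_2$ at $v$, with corresponding stable letters $t_1,t_2\in G$. By Bass--Serre theory each $t_i$ lies outside $G_v=\mathrm{Stab}(v_0)$, so $v_0\ne t_iv_0$, and a direct check against Definition~\ref{df:edge_transl} shows that $t_i$ translates the edge $[v_0,t_iv_0]$ (which projects to $e_i$ in $\ga$). Hence $t_i$ is hyperbolic with $v_0\in axis(t_i)$, and every edge of $axis(t_i)$ is a $\langle t_i\rangle$-translate of $[v_0,t_iv_0]$ and therefore projects to $e_i$. Since $G$ fixes $\alpha$, the cyclic subgroup $\langle t_i\rangle$ fixes $\alpha$; as the only $\langle t_i\rangle$-fixed points of $\partial\T$ are $t_i^{\pm\infty}$, we get $\alpha\in\{t_i^{+\infty},t_i^{-\infty}\}$ for $i=1,2$. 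Consequently the unique geodesic ray from $v_0$ to $\alpha$ is a subray of both $axis(t_1)$ and $axis(t_2)$, and so its initial edge projects simultaneously to $e_1$ and to $e_2$, contradicting $e_1\neq e_2$.

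\textbf{Main obstacle.} The delicate point is the claim that every edge of $axis(t_i)$ projects to $e_i$ in $\ga$: this makes the final contradiction possible without invoking Britton's lemma for multiple HNN-extensions, but it requires one to verify carefully that the edge $[v_0,t_iv_0]$ is translated by $t_i$ in the sense of Definition~\ref{df:edge_transl} and that the $\langle t_i\rangle$-orbit of this edge fills $axis(t_i)$. Given this, everything else reduces to Lemma~\ref{lem:ab}, the definition of ``reduced'', and the standard fact that a hyperbolic isometry of a tree has exactly two boundary fixed points, the endpoints of its axis.
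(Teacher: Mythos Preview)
Your proof is correct. For the direction (i) $\Rightarrow$ (ii) you and the paper agree up to the point where $\ga$ has a single vertex $v$ carrying only (bad) loops; the divergence is in how you rule out two or more loops. The paper argues algebraically: the normal closure $N$ of $G_v$ coincides with the set of elliptic elements, and $G/N$ is free of rank equal to the number of loops, while Lemma~\ref{lem:fix_end} forces $G/N$ to be cyclic, so there is at most one loop. You instead give a direct geometric argument: each stable letter $t_i$ is hyperbolic of translation length $1$ with $v_0$ on its axis, every edge of $axis(t_i)$ projects to $e_i$, and since $\alpha\in\{t_i^{\pm\infty}\}$ the ray $[v_0,\alpha)$ lies in both axes, forcing its initial edge to project to two distinct loops. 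Your route bypasses Lemma~\ref{lem:fix_end} and the free quotient $G\twoheadrightarrow F_k$, at the cost of verifying the standard Bass--Serre fact that stable letters act hyperbolically with $v_0$ on their axis; the paper's route reuses machinery already in place. Both are short and either would be acceptable.
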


\begin{proof} The fact that (ii) implies (i) is well-known and we leave it as an exercise for the reader.
Now, suppose that (i) holds. Then $(\mathcal G,\ga)$ has no good edges by Lemma \ref{lem:ab}. Since $(\mathcal G,\ga)$ has no reducible edges
by the assumptions and $\ga$ is connected, one concludes that $\ga$ has only one vertex $v$ and all the edges of $\ga$ are loops at $v$.

The quotient of $G=\pi_1(\mathcal G,\ga)$ by the normal closure $N \lhd G$, of $G_v$ in $G$, is the free group, whose rank is equal to the number of (unoriented) loops at $v$.
Since $v$ is the only vertex of $\ga$, every elliptic element is conjugate to an element of $G_v$ in $G$. So, by Lemma \ref{lem:fix_end}, $N$ is the set of all elliptic elements, and
the quotient $G/N$ is cyclic. Thus $\ga$ can have at most one loop at $v$, which is a bad edge (as defined above), i.e., (ii) holds.
\end{proof}

For a general graph of groups  $(\mathcal G,\ga)$,  the situation when the canonical action of $G=\pi_1(\mathcal G,\ga)$  on the corresponding Bass-Serre tree $\T$ is minimal
was characterized by Bass in \cite{Bass93}. The next lemma follows from \cite[Prop. 7.12]{Bass93}:

\begin{lemma}\label{lem:min-act}
Let $(\mathcal G,\ga)$ be a  reduced graph of groups of finite diameter.
Then the action of $\pi_1(\mathcal G,\ga)$ on the corresponding Bass-Serre tree is minimal.
\end{lemma}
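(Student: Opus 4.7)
The plan is to derive the lemma from Proposition~7.12 of Bass \cite{Bass93}, which provides a minimality criterion for the action of the fundamental group of a graph of groups on its Bass--Serre tree. I would argue by contradiction: suppose $G := \pi_1(\mathcal{G},\ga)$ acts non-minimally on $\T$, so that there is a proper nonempty $G$-invariant subtree $\T' \subsetneq \T$. Setting $\ga' := G \backslash \T'$, one obtains a nonempty connected subgraph of $\ga$. First I would check $\ga' \subsetneq \ga$: if instead $\ga' = \ga$ then $\T'$ would meet every $G$-orbit of vertices and of edges of $\T$, and $G$-invariance would force $\T'$ to contain them all, contradicting $\T' \subsetneq \T$.

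Next I would exploit the finite-diameter hypothesis to locate a ``boundary edge'' of $\ga'$. If $V(\ga') \subsetneq V(\ga)$, the finite diameter of $\ga$ produces an edge $e \in E(\ga)\setminus E(\ga')$ joining some vertex $v \in V(\ga)\setminus V(\ga')$ at minimal distance from $\ga'$ to a vertex $u \in V(\ga')$; otherwise $V(\ga') = V(\ga)$ and any $e \in E(\ga)\setminus E(\ga')$ will do, with $u := e_+$. Orienting $e$ so that $u = e_+$ and picking a lift $\tilde u \in V(\T')$ of $u$, the edges of $\T$ at $\tilde u$ projecting to $\bar e$ form a single $G_{\tilde u}$-orbit of cardinality $[G_u : \phi_e(G_e)]$, and by $G$-invariance of $\T'$ none of them lies in $\T'$.

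The remaining step --- turning this missing local orbit into a contradiction with the reduced hypothesis on $(\mathcal{G},\ga)$ --- is the content of Bass's Prop.~7.12 and is the main obstacle in a self-contained proof. The underlying idea is that the restriction $(\mathcal{G}|_{\ga'}, \ga')$ is itself a graph of groups whose Bass--Serre tree is $\T'$ and whose fundamental group is still $G$; consequently the passage from $(\mathcal{G}|_{\ga'}, \ga')$ back to $(\mathcal{G},\ga)$ adjoins extra vertices and edges without altering the fundamental group. When $e$ is not a loop this forces $\phi_e$ or $\phi_{\bar e}$ to be surjective, i.e.\ $e$ is reducible in the sense of the preceding definition, contradicting the reduced hypothesis. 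When $e$ is a loop (so $e_- = e_+ = u$) one must propagate the defect along a path in $\ga'$, using the finite diameter to guarantee termination, until one arrives at a non-loop boundary edge and reduces to the previous case. For a uniform treatment encompassing both cases I would invoke Bass's proposition rather than reprove it.
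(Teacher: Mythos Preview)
Your proposal is correct and takes essentially the same approach as the paper: both derive the lemma directly from \cite[Prop.~7.12]{Bass93}. The paper simply states that the lemma follows from that reference without further elaboration, while you additionally sketch some of the ideas behind Bass's argument before ultimately deferring to his proposition.
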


A combination of Proposition \ref{prop:charact} with Lemma \ref{lem:min-act} and Theorem \ref{main-1} gives a criterion to show that the fundamental group
of a finite reduced graph of groups is acylindrically hyperbolic (see Theorem \ref{thm:reduced-crit} below). Before formulating it we need to make a couple of observations.

\begin{rem}\label{rem:vert-edg} Suppose that $G$ is a group acting on a simplicial tree $\T$.
If the tree $\T$ has at least two vertices, the existence of  vertices $u,v \in \T$ with $|\pst_G(\{u,v\})|<\infty$ is equivalent to the existence
of two edges $e,f$ of $\T$ such that $|\pst_G(\{e,f\})|<\infty$. (Note that we allow the possibilities $u=v$ and/or $e=f$.)
\end{rem}

Indeed,
in one direction this is evident. For the other direction, suppose that $e$ and $f$ are  edges of $\T$ with $|\pst_G(\{e,f\})|<\infty$.
Without loss of generality we can assume that the segment $[e_-,f_+]$ contains both of these edges. Then
$\pst_G(\{e_-,f_+\})=\pst_G([e_-,f_+])=\pst_G(\{e,f\})$, i.e., one can take $u=e_-$ and $v=f_+$.

\begin{rem}\label{rem:fin_pst} Suppose that $G$ is the fundamental group of a graph of groups $(\mathcal G,\ga)$ and $\T$ is the corresponding Bass-Serre tree.
In algebraic terms, the existence of vertices $u,v$ of $\T$ such that $\pst_G(\{ u,v\})$ is finite means that
$|G_{u'}^g\cap G_{v'}|<\infty$ for some $g\in G$ and some vertices $u'$, $v'$ of $\ga$. Similarly, the
existence of two edges $e,f$ of $\T$ such that $|\pst_G(\{e,f\})|<\infty$ corresponds to
$|G_{e'}^g\cap G_{f'}|<\infty$ for some $g\in G$ and some edges $e'$, $f'$ of $\ga$.
\end{rem}

\begin{thm}\label{thm:reduced-crit}
Let $G$ be the fundamental group of a finite reduced graph of groups $(\mathcal G,\ga)$ with at least one edge such that
the condition (ii) from Proposition \ref{prop:charact} does not hold.
Suppose that there are edges $e,f$ of $\ga$ (not necessarily distinct) and an element $g \in G$ such that $|G_f \cap G_e^g|<\infty$.
Then $G$ is either virtually cyclic or acylindrically hyperbolic.
\end{thm}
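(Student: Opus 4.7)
The plan is to apply Theorem \ref{main-1} to the natural action of $G$ on the Bass-Serre tree $\T$ of $(\mathcal G,\ga)$. Three hypotheses need to be verified: minimality of the action, absence of a global $G$-fixed point on $\partial \T$, and existence of vertices $u,v \in \T$ with $|\pst_G(\{u,v\})|<\infty$.

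Minimality will be immediate from Lemma \ref{lem:min-act}, since a finite reduced graph of groups automatically has finite diameter. For the boundary condition, I will use Proposition \ref{prop:charact}: because $(\mathcal G,\ga)$ is reduced with at least one edge, the proposition gives an equivalence between $G$ fixing a point of $\partial \T$ and condition (ii), and the latter is explicitly ruled out in the hypothesis of the theorem. So both of these verifications reduce to quoting an earlier result.

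It remains to produce two vertices of $\T$ with finite pointwise stabilizer, and this is where the algebraic hypothesis enters. First I will invoke Remark \ref{rem:fin_pst} to reinterpret the condition $|G_f \cap G_e^g|<\infty$ as the existence of edges $\tilde e, \tilde f$ of $\T$ with $|\pst_G(\{\tilde e,\tilde f\})|<\infty$. Since $\ga$ contains at least one edge, $\T$ has at least two vertices, and hence Remark \ref{rem:vert-edg} converts this edge-level finiteness statement into a vertex-level one of the form required by Theorem \ref{main-1} (the two vertices need not be distinct, consistently with the case $\tilde e = \tilde f$). With all three hypotheses in place, Theorem \ref{main-1} delivers the desired dichotomy.

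The argument is essentially bookkeeping; the substantive work has already been absorbed into Theorem \ref{main-1} and the criteria of this subsection. The only mildly delicate point will be the algebraic-to-geometric translation of the stabilizer condition, which is precisely what Remarks \ref{rem:fin_pst} and \ref{rem:vert-edg} were set up to handle, so I do not anticipate any genuine obstacle.
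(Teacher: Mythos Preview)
Your proposal is correct and follows essentially the same route as the paper's proof: verify minimality via Lemma~\ref{lem:min-act}, rule out a fixed boundary point via Proposition~\ref{prop:charact}, translate the edge-group intersection condition into a finite pointwise vertex-stabilizer via Remarks~\ref{rem:fin_pst} and~\ref{rem:vert-edg}, and then invoke Theorem~\ref{main-1}. The paper additionally pauses to note that $\T$ is infinite, but this is not needed for the application of Theorem~\ref{main-1}, so your slightly leaner version is fine.
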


\begin{proof}
Let $\T$ be the Bass-Serre tree associated to $(\mathcal G,\ga)$. By Lemma \ref{lem:min-act}, the action of $G$ on $\T$ is minimal, and, by Proposition \ref{prop:charact},
$G$ does not fix any point of $\partial \T$. Since $(\mathcal G,\ga)$ is reduced and has at least one edge, it is easy to see that the tree $\T$ must be infinite
(any loop gives rise to a hyperbolic element; if there are no loops, then all the edges must be good, hence a hyperbolic element can be produced as in the proof Lemma \ref{lem:ab}).
By Remarks \ref{rem:fin_pst} and \ref{rem:vert-edg}, the assumptions imply that $\T$ contains two vertices $u,v$ such that $\pst_G(\{u,v\})$ is finite.
Therefore,  we can apply Theorem \ref{main-1} to conclude that
$G$ is either virtually cyclic or acylindrically hyperbolic.
\end{proof}

Corollary \ref{cor:amalg-intr} from Section \ref{sec:results} is an immediate consequence of Theorem \ref{thm:reduced-crit} applied in the situation when $(\mathcal G,\ga)$ consists of two vertices
and a good edge connecting them. Corollary~\ref{cor:HNN-intr} corresponds to the case when $(\mathcal G,\ga)$ has one vertex and one (good) loop at this vertex. Thus
its claim also follows from Theorem \ref{thm:reduced-crit} modulo the observation that a non-ascending HNN-extension of any group contains
non-abelian free subgroups (see, for example, \cite[Thm. 6.1]{Bass76}), and so it cannot be virtually cyclic.

It is easy to see that the condition $A\ne C\ne B$ is necessary in Corollary \ref{cor:amalg-intr}.
As for the requirement $C\ne A\ne D$ in Corollary \ref{cor:HNN-intr}, the situation is more complicated and counterexamples are not so obvious. We note the following.

\begin{prop}\label{prop:asc-HNN}
Let $G$ be an ascending HNN-extension of a group $A$. If $G$ is acylindrically hyperbolic, then so is $A$.
\end{prop}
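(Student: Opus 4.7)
The plan is to find, inside some non-elementary acylindrical action of $G$ on a hyperbolic space $S$, a hyperbolic WPD element that happens to lie in a conjugate of $A$; a $G$-conjugate of that element then sits in $A$ and, via the WPD characterization (AH$_3$), witnesses that $A$ is acylindrically hyperbolic.

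Without loss of generality, assume we are in the case $C=A$, so $G=\langle A,t\mid t^{-1}at=\phi(a),\, a\in A\rangle$ for some injective endomorphism $\phi\colon A\to A$. There is a surjective homomorphism $\beta\colon G\to\mathbb{Z}$ sending $A\mapsto 0$ and $t\mapsto 1$; let $N:=\ker\beta$. Using $t^{-1}At\subseteq A$, one checks that $A\subseteq tAt^{-1}\subseteq t^2At^{-2}\subseteq\cdots$, and hence $N=\bigcup_{n\ge 0}t^{n}At^{-n}$. Two degenerate cases can be eliminated up front: if $A$ is finite then $\phi$ is an automorphism and $G\cong A\rtimes\mathbb{Z}$ is virtually cyclic; if $A$ is virtually cyclic then $N$ is a directed union of amenable groups, hence amenable, making $G$ amenable-by-$\mathbb{Z}$ and thus amenable. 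Either conclusion contradicts $G\in\X$ (cf. Example \ref{ex:non-ah}(a)), so $A$ must be infinite and not virtually cyclic.

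By Theorem \ref{acyl}, $G$ admits a non-elementary acylindrical action on a hyperbolic space $S$, and by Theorem \ref{class} this action has a pair $h_1,h_2$ of independent hyperbolic elements, each of which is automatically WPD by acylindricity. If one of $\beta(h_i)$ vanishes, set $h:=h_i$; otherwise, a standard ping-pong argument shows that for all sufficiently large $m$ the element $h:=h_1^{m\beta(h_2)}h_2^{-m\beta(h_1)}$ is hyperbolic, and by construction $\beta(h)=0$. Acylindricity then forces $h$ to be WPD, so we obtain a hyperbolic WPD element $h\in N$.

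Finally, since $h\in N=\bigcup_{n}t^{n}At^{-n}$, we have $h\in t^{n}At^{-n}$ for some $n\ge 0$, so that $a:=t^{-n}ht^{n}$ is an element of $A$ which is $G$-conjugate to $h$. Conjugation by an isometry preserves being hyperbolic and WPD, so $a$ is a hyperbolic WPD element of the $G$-action on $S$; since $\pst_A^{\e}(B)\subseteq \pst_G^{\e}(B)$ for any $B\subseteq S$, the element $a$ is also WPD for the restricted action of $A$ on $S$. Combined with the fact that $A$ is not virtually cyclic, Theorem \ref{acyl} (AH$_3$) yields $A\in\X$. The step I expect to be most delicate is the ping-pong construction in the third paragraph: the exponents $m\beta(h_2)$ and $m\beta(h_1)$ must be made large enough for the product to be hyperbolic, while the linear constraint $\beta(h)=0$ is preserved. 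This is essentially the same argument that underlies the proof of Theorem \ref{thm:elem-prop-ah}(c) for infinite normal subgroups, so no genuinely new idea should be required; the remaining verifications are algebraic.
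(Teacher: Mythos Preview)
Your argument is correct, but the paper takes a much shorter route. Rather than building a hyperbolic WPD element in $N$ via ping-pong and then conjugating it into $A$, the paper simply notes that $G=\langle t\rangle\, A\,\langle t\rangle$: since $G/N\cong\langle t\rangle$ and every $n\in N$ lies in some $t^{m}At^{-m}$, an arbitrary $g\in G$ can be written as $t^{m}a\,t^{k}$. It then invokes Theorem~\ref{thm:elem-prop-ah}(d), which says that if an acylindrically hyperbolic group factors as a product $G_1\cdots G_n$ of subgroups then at least one $G_i$ is acylindrically hyperbolic; since $\langle t\rangle$ is cyclic, the factor $A$ must be the acylindrically hyperbolic one. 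Your approach has the virtue of being more self-contained---you are essentially re-deriving the relevant instance of Theorem~\ref{thm:elem-prop-ah}(d) from the (AH$_3$) characterization---and it makes the mechanism visible: a hyperbolic WPD element in the kernel $N$ necessarily sits in some $t^{n}At^{-n}$ and can be transported into $A$ by conjugation. The paper's version, by contrast, is a two-line application of an available black box; if you are willing to quote Theorem~\ref{thm:elem-prop-ah}(d), the degenerate-case analysis and the ping-pong construction become unnecessary.
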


\begin{proof}
Let $t$ be the stable letter of the HNN-extension, $t^{-1}At \le A$. Then the subgroup $N=\bigcup\limits_{n=1}^\infty t^n At^{-n}$ is normal in $G$ and $G/N\cong \langle t\rangle$. Consequently we have $G=\langle t\rangle A\langle t\rangle $. Thus by part (d) of Theorem \ref{thm:elem-prop-ah} $A$ must be acylindrically hyperbolic.
\end{proof}

Proposition \ref{prop:asc-HNN} allows to construct examples of ascending HNN-extensions with weakly malnormal base which are not acylindrically hyperbolic.

\begin{ex}\label{ex:HNNB}
Using methods from \cite[Section 4.2]{OS} it is not hard to construct a proper malnormal subgroup $B<A$ of a free Burnside group $A$ of rank $2$ and large odd exponent such that $B\cong A$. Let $G$ be the corresponding ascending HNN-extension of $A$. Then $A$ and $B$ are weakly malnormal in $G$, but $G$ is not  acylindrically hyperbolic by a combination of Proposition \ref{prop:asc-HNN} and Example \ref{ex:non-ah} (b).
\end{ex}

If $A$ is acylindrically hyperbolic, then its ascending HNN-extensions can be acylindrically hyperbolic as well. For example, many ascending HNN-extensions of free groups are hyperbolic. Nevertheless the example below shows that acylindrical hyperbolicity (even relative hyperbolicity) of $A$  together with weak malnormality is still not sufficient to derive acylindrical hyperbolicity of $G$.

\begin{ex}\label{ex:bg}
In the proof of \cite[Prop. 18]{O-asdim}, the second author constructed a boundedly generated finitely presented group (denoted by $G$ in \cite{O-asdim}) which is universal, i.e., contains an isomorphic
copy of every recursively presented group. Let us denote this group by $W$. Let $A=W\ast \mathbb Z$ and let $s\in A$ be a generator of the subgroup $\mathbb Z$.
Then $A$ is acylindrically hyperbolic (in fact, it is hyperbolic relative to $W$). Since $A$ is finitely presented,
it is isomorphic to a subgroup $B\le W$. Let $G$ be the corresponding ascending HNN-extension of $A$ with the stable letter $t$. Obviously $B^s\cap B =\{1\}$ and hence $A$ and $B$ are weakly malnormal in $G$.
Arguing as in the proof of Proposition \ref{prop:asc-HNN}, we obtain $G=\langle t\rangle W\langle t\rangle $. Since $W$ is boundedly generated, so is $G$. Hence $G$ is not acylindrically hyperbolic by part (d) of Theorem \ref{thm:elem-prop-ah}.
\end{ex}

\subsection{One-relator groups}\label{sec:1-rel}
We begin with the proof of Corollary \ref{cor:one-rel}, which is inspired by \cite{S} and \cite{SS}.

\begin{proof}[Proof of Corollary \ref{cor:one-rel}]
It is proved in \cite{SS} (see the proof of Theorem III there) that $G$ splits as $G=H\ast _{A^t=B}$, where $A, B$ are proper subgroups
of $H$, and there exists $h\in H$ such that $A\cap B^h=\{ 1\}$. Thus the claim follows from Corollary \ref{cor:amalg-intr}
and the well known fact that a one relator group with at least $3$ generators is never virtually cyclic.
\end{proof}

Our next goal is to study one-relator groups with $2$ generators. As we already mentioned in Example \ref{ex:non-ah-1},
the Baumslag-Solitar groups $BS(m,n)$ are not acylindrically hyperbolic unless $m=0$ or $n=0$. The obstacle for acylindrical hyperbolicity of
$BS(m,n)$ is the existence of an infinite cyclic $s$-normal subgroup. The are are many other one-relator groups with the same property.
However, we will show that all of them (as well as other potential examples of non-acylindrically hyperbolic one-relator groups) have low complexity in a certain sense.

Recall that, given a group $H$ and an injective (but not necessarily surjective) endomorphism $\phi: H \to H$, the\emph{ mapping torus of $\phi$} is the (ascending)
HNN-extension $$G= \langle H,t \, \mid \, tht^{-1}=\phi(h)~\forall h \in H \rangle.$$

\begin{prop}\label{cor:1-rel-2-gen} Let $G$ be a group with two generators and one defining relator. Then at least one of the following holds:
\begin{itemize}
  \item[(i)] $G$ is acylindrically hyperbolic;
  \item[(ii)] $G$ contains an infinite cyclic $s$-normal subgroup.
  More precisely, either $G$ is infinite cyclic or it is an HNN-extension of the form $$G=\langle a,b, t\mid  a^t=b, r=1\rangle $$ of a one-relator group
  $H= \langle a,b \mid r\rangle $ with non-trivial center, so that $a^k=b^l$ in $H$ for some $k,l \in \Z \setminus\{0\}$. In the latter case $H$  is (finitely generated free)-by-cyclic and contains a finite index normal subgroup
splitting as a direct product  of a finitely generated free group with an infinite cyclic group.
  \item[(iii)] $G$ is isomorphic to the mapping torus of an injective endomorphism of a finitely generated free group.
\end{itemize}
Moreover, the possibilities (i) and (ii) are mutually exclusive.
\end{prop}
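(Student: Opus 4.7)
My plan is to run the Magnus--Moldavanskii hierarchy on $G$. First, I dispose of the degenerate cases: if $w$ involves at most one of the generators, then $G$ is easily identified with $\Z$ (case (ii)), $F_2$, or $\Z/n\Z * \Z$ for some $n \ge 2$; the latter two are non-elementary virtually free and therefore acylindrically hyperbolic by Corollary~\ref{cor:amalg-intr}, giving case (i). So assume henceforth that $w$ involves both $a$ and $b$. A standard Nielsen substitution on $\{a, b\}$ (a unimodular change arising from the abelianization map $F_2 \to \Z^2$) lets me assume $\sigma_b(w) = 0$. I then apply the Moldavanskii rewriting: setting $t := b$ and $a_i := t^i a t^{-i}$, the relator $w$ becomes a cyclically reduced word $r(a_m, \ldots, a_M)$, where $m \le M$ are the extreme indices of the $a_i$ that occur, yielding
\[ G \cong \langle H, t \mid t a_i t^{-1} = a_{i+1},\ m \le i < M \rangle, \]
where $H := \langle a_m, \ldots, a_M \mid r\rangle$. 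By Magnus' Freiheitssatz, the associated subgroups $C := \langle a_m, \ldots, a_{M-1}\rangle$ and $D := \langle a_{m+1}, \ldots, a_M\rangle$ of $H$ are free on their displayed generators, and $M > m$ because $b$ genuinely appears in $w$.

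Next I split on the nature of the base. If $C = H$ or $D = H$---equivalently, $r$ is a primitive element of $F(a_m, \ldots, a_M)$ and $H$ is free of rank $M-m$---then $G$ is an ascending HNN-extension of $F_{M-m}$, i.e.\ a mapping torus of an injective endomorphism of a finitely generated free group, yielding case (iii). Otherwise $C, D \subsetneq H$, and Corollary~\ref{cor:HNN-intr} combined with Remark~\ref{rem:HNN} tells me that $G \in \X$ as soon as some $g \in G$ satisfies $|C^g \cap D| < \infty$. So the substantive case is when this weak malnormality fails: $|C^g \cap D| = \infty$ for every $g \in G$.

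The main obstacle is to analyse this failing case, and my plan is to show it forces $M - m = 1$. When $M - m \ge 2$, I would need to exhibit some $g \in G$ with $C^g \cap D$ finite by combining the HNN normal form in $G$ with intersection theorems for Magnus subgroups of one-relator groups---for instance, by computing $C^{t^k} \cap H$ via iterated pinch reduction (which, up to applying $\phi^{-k}$, is the nested intersection $C \cap D \cap \phi(C\cap D) \cap \cdots \cap \phi^{k-1}(C\cap D)$) and invoking Newman-- or Collins-type results to show this eventually becomes finite---or by directly producing a hyperbolic WPD element on the Bass--Serre tree of the HNN decomposition; this is the most delicate step of the argument. Once $M - m = 1$ is known, I relabel $a := a_m$, $b := a_{m+1}$, so that $H = \langle a, b \mid r\rangle$ and $C = \langle a\rangle$, $D = \langle b\rangle$. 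Taking $g = 1$ in the failing weak-malnormality hypothesis gives $|\langle a\rangle \cap \langle b\rangle| = \infty$, so $a^k = b^l$ in $H$ for some nonzero $k, l \in \Z$; in particular $z := a^k = b^l$ has infinite order. Since $z$ commutes with both $a$ and $b$, it is central and so $Z(H) \ne \{1\}$. By the Murasugi--Pietrowski classification of finitely generated one-relator groups with non-trivial center, $H$ admits a finite-index normal subgroup isomorphic to $F \times \Z$ with $F$ finitely generated free, and in particular $H$ is (finitely generated free)-by-cyclic. The central cyclic subgroup $\langle z\rangle$ then sits as an infinite cyclic $s$-normal subgroup of $G$, completing case (ii). Finally, mutual exclusivity of (i) and (ii) is immediate from Theorem~\ref{thm:elem-prop-ah}(c) together with Example~\ref{ex:non-ah}(a): if $G$ were acylindrically hyperbolic, its infinite cyclic $s$-normal subgroup $\langle z\rangle \cong \Z$ would itself be acylindrically hyperbolic, contradicting that $\Z$ is amenable and hence not in $\X$.
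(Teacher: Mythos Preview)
Your overall strategy matches the paper's: run Magnus--Moldavanskii, split on the number $n=M-m+1$ of new generators, and for $n\ge 3$ show weak malnormality of the associated subgroup so that Corollary~\ref{cor:HNN-intr} applies. The setup, the $n=2$ case, and the mutual exclusivity of (i) and (ii) are all handled correctly and essentially as in the paper.

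The genuine gap is the case $M-m\ge 2$ with $C,D\subsetneq H$, which you yourself flag as ``the most delicate step'' but do not actually carry out. Your proposed routes (iterating $C^{t^k}\cap H$ and hoping a Newman-- or Collins--type result forces it to shrink, or producing a WPD element directly) are not executed, and it is not clear either one goes through as stated. The paper's argument here is concrete and different in flavour: it works entirely inside the \emph{free} group $A=\langle a_m,\dots,a_{M-1}\rangle$. Set $C':=A\cap B$ and $D':=t^{-1}(A\cap B)t$, both subgroups of $A$. Collins' theorem on intersections of Magnus subgroups bounds $\mathrm{rk}(A\cap B)\le n-1$; since $C',D'$ are proper in $A$ (this is exactly where your hypothesis $C,D\subsetneq H$ enters) and $\mathrm{rk}(A)=n-1\ge 2$, the Schreier index formula forces $|A:C'|=|A:D'|=\infty$. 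A separate lemma (Lemma~\ref{lem:inter_conj_free_gps}, proved via Stallings graphs) then produces $f\in A$ with $(D')^f\cap C'=\{1\}$, and a short Britton's Lemma computation shows $g:=ft^{-1}$ satisfies $A^g\cap B=\{1\}$. That combination of Collins' rank bound, Schreier, and the free-group conjugation lemma is the missing idea you need.
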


In order to prove Proposition \ref{cor:1-rel-2-gen} we will need the following lemma:

\begin{lemma}\label{lem:inter_conj_free_gps} If $C$ and $D$ are finitely generated subgroups of infinite index in a finitely generated free group $A$ then there exists an element $f \in A$
such that $fDf^{-1} \cap C=\{1\}$.
\end{lemma}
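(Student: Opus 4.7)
My plan is to argue via the action of $A$ on its Gromov boundary $\partial A$. If $A$ has rank one, both $C$ and $D$ must be trivial and there is nothing to prove, so I will assume that the rank of $A$ is at least two; in this case $A$ is non-elementary hyperbolic, $\partial A$ is a Cantor set, and every non-trivial $a\in A$ acts on $\partial A$ with north-south dynamics about the two endpoints $a^{\pm\infty}$ of its axis: $a^n\cdot \xi\to a^{+\infty}$ as $n\to +\infty$, uniformly on compact subsets of $\partial A\setminus\{a^{-\infty}\}$.

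The proof will rest on three standard facts about a finitely generated subgroup $H$ of the free group $A$. First, $H$ is quasi-convex in $A$, and its limit set $\Lambda(H)\subseteq\partial A$ equals $\partial A$ if and only if $[A:H]<\infty$; in particular, both $\Lambda(C)$ and $\Lambda(D)$ are proper closed subsets of $\partial A$. Second, $\Lambda(fHf^{-1})=f\cdot\Lambda(H)$ for every $f\in A$. Third, if $H_1,H_2\le A$ are finitely generated and $\Lambda(H_1)\cap\Lambda(H_2)=\emptyset$, then $H_1\cap H_2=\{1\}$.

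Granting these, the construction of $f$ is short. Since $\partial A\setminus\Lambda(D)$ and $\partial A\setminus\Lambda(C)$ are non-empty open subsets of $\partial A$, and endpoint pairs of axes of non-trivial elements are dense off the diagonal in $\partial A\times\partial A$ once $A$ has rank at least two, I can choose a non-trivial $a\in A$ with $a^{-\infty}\notin\Lambda(D)$ and $a^{+\infty}\notin\Lambda(C)$. Fix a small open neighbourhood $V$ of $a^{+\infty}$ disjoint from $\Lambda(C)$; by north-south dynamics, $a^n\cdot\Lambda(D)\subseteq V$ for all sufficiently large $n$. Combining this with the second bullet gives $\Lambda(a^n D a^{-n})\cap\Lambda(C)=\emptyset$, and then the third bullet forces $a^n D a^{-n}\cap C=\{1\}$, so $f=a^n$ works.

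The main step that needs a moment of thought is the third bullet. By Howson's theorem $H_1\cap H_2$ is again finitely generated, and hence quasi-convex in $A$ by the first bullet; its limit set is contained in $\Lambda(H_1)\cap\Lambda(H_2)=\emptyset$, so it has empty limit set. But a quasi-convex subgroup of the hyperbolic group $A$ with empty limit set is finite, and since $A$ is torsion-free the intersection must be trivial. The first bullet is a classical consequence of Stallings' folding description of finitely generated subgroups of free groups, and the second is immediate from the definition of the limit set. So the only real work is in the cleanup of the third bullet; everything else is essentially formal.
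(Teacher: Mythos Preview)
Your argument is correct but genuinely different from the paper's. The paper gives a hands-on construction via Stallings core graphs: it builds the finite folded graphs $\Gamma_1,\Gamma_2$ representing $C$ and $D$, uses infinite index to find a deficient vertex in each, and then explicitly writes down a conjugator $f$ as a word whose first and last letters are chosen to prevent any non-trivial reduced loop in the (attached) graph for $fDf^{-1}$ from being readable in $\Gamma_1$. This yields an elementary, self-contained, and constructive proof requiring nothing beyond the combinatorics of labelled graphs. Your approach via limit sets and north-south dynamics is cleaner conceptually and generalizes verbatim to quasi-convex subgroups of infinite index in any non-elementary hyperbolic group, but it imports more machinery (quasi-convexity of finitely generated subgroups, the characterization $\Lambda(H)=\partial A\iff [A:H]<\infty$, density of pole pairs). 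One small remark: your ``third bullet'' does not actually need Howson's theorem---if $g\in H_1\cap H_2$ were non-trivial then $g^{\pm\infty}\in\Lambda(H_1)\cap\Lambda(H_2)$ directly, so disjoint limit sets already force trivial intersection.
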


\begin{proof} One way to prove this would be to refer, first, to a result of W. Neumann \cite[Prop.~2.1]{WN} stating that there are finitely many elements $g_1,\dots,g_n \in A$ such that if
$D^g \cap C \neq \{1\}$ then $g \in Cg_i D$ for some $i \in \{1,\dots,n\}$. Second, one can use the fact that the free group $A$ cannot be covered by finitely many double cosets from
$C \backslash A / D$ because $C$ and $D$ are finitely generated and have infinite index in $A$ (a generalization of this fact for quasiconvex subgroups of hyperbolic groups
was proved in \cite[Cor. 1]{Min-2}). This implies that there is some $f \in A$ such that $f \notin \bigcup_{i=1}^n Cg_i D$, and hence $D^f \cap C=\{1\}$.

However, let us give a  direct proof of the lemma using Stallings core graphs (see \cite{K-M} for an introduction to this method). Suppose that $A$ has a free generating set $X$ of cardinality $k \in \N$.
Let $\ga_1$ and $\ga_2$ be the finite $X$-labelled directed core
graphs corresponding to the subgroups $C$ and $D$ of $A$ respectively (cf. \cite[Prop. 3.8]{K-M}). Thus each $\ga_i$ has a distinguished vertex $v_i$ such that the language of reduced
closed loops at this vertex coincides with $C$  if $i=1$ and with $D$ if $i=2$. Since $C$ and $D$ have infinite index in $A$, each $\ga_i$ possesses a vertex of degree strictly less than $2k$
(see \cite[Prop. 8.3]{K-M}).
After replacing $C$ and $D$ with their conjugates we can assume that these vertices are $v_i$, $i=1,2$. Thus $v_1$ has no outgoing edge labelled by $x_1$ for some
$x_1\in X^{\pm 1}$, and $v_2$ has no incoming edge labelled by $x_2$ for some $x_2\in X^{\pm 1}$. Now, take a freely reduced word $w$ over $X^{\pm 1}$ such that $w$ starts with $x_1$ and ends with $x_2$.
Let $p$ be the simple directed path (regarded as a simplicial graph) labelled by $w$.
Construct the new $X$-labelled directed graph $\ga_2'$ by attaching the endpoint of $p$ to the graph $\ga_2$ at $v_2$. Let $v_2'$ be the initial vertex of $p$.
Observe that, by construction, the graph $\ga_2'$ is folded, and so, equipped with the base vertex $v_2'$, it represents the core graph of the subgroup $fDf^{-1}$ in $A$, where $f\in A$ is the element given by the word $w$.
Moreover, the label of a non-trivial reduced loop at $v_2'$ in $\ga_2'$ cannot be equal to the label of a reduced loop at $v_1$ in $\ga_1$, because the former starts with the letter $x_1$ and $v_1$ has no
outgoing edges with that label. Therefore $C \cap D^f=\{1\}$, as required.
\end{proof}

\begin{proof}[Proof of Proposition \ref{cor:1-rel-2-gen}]
If $G$ is cyclic, (ii) holds, so we suppose that $G$ is not cyclic. Thus $G=\langle a,t \mid r \rangle$, where $r$ is a cyclically reduced word over the alphabet $\{a,t\}^{\pm 1}$.
Without loss of generality we can assume that the word $r$ is non-empty and
 the total exponent of $t$ in $r$ is $0$ by \cite[Lemma 4.1]{SS}. Then, using the standard Magnus-Moldavanski\v{\i} re-writing procedure (see, for example, \cite[p. 738]{SS})
one shows that $G$ has a presentation
\begin{equation}\label{eq:G-pres}
G=\langle a_1,\dots,a_n,t  \mid  r', ta_1t^{-1}=a_2,\dots,ta_{n-1}t^{-1}=a_n \rangle,
\end{equation}
where $n \in \N$ and  $r'$ is a cyclically reduced word in the alphabet $\{a_1,\dots,a_n\}^{\pm 1}$ involving both $a_1$ and $a_n$.
Let $H$ be the one-relator group given by the presentation $H=\langle  a_1,\dots,a_n  \mid r' \rangle$.

If $n=1$, then $G \cong H*\langle t \rangle$. If $H = \{1\}$, $G$ is infinite cyclic. If $H$ is non-trivial but finite, $G$ is non-elementary hyperbolic and, hence, it is acylindrically hyperbolic.
Finally, if $H$ is infinite, $G$ is also acylindrically hyperbolic as $H$ is a non-degenerate and hyperbolically embedded subgroup in $G$ (see Example \ref{hes} and Theorem \ref{acyl}).

Suppose, now, that $n \ge 2$.
By construction, both $a_1$ and $a_n$ occur in $r'$. Consequently, using the Magnus's Freiheitssatz, the elements $\{ a_1,\dots,a_{n-1}\}$ and $\{ a_2,\dots,a_{n}\}$
freely generated free subgroups of rank $n-1$ in $H$. The presentation \eqref{eq:G-pres} now shows that $G$ is the HNN-extension of
$H$ with the associated Magnus subgroups $A:=\langle a_1,\dots,a_{n-1}\rangle\leqslant H$ and
$B:=\langle a_2,\dots,a_{n}\rangle\leqslant H$. Again we consider two possibilities.

First, let us consider the case $n=2$. Then $A=\langle a_1 \rangle$ and $B=\langle a_2 \rangle$ are infinite cyclic groups. If $A \cap B=\{1\}$ then $G \in \X$ by Corollary \ref{cor:HNN-intr} and Remark \ref{rem:HNN}.
So, assume that $ a_1^k= a_2^l$ in $H$ for some $k,l \in \Z\setminus\{0\}$. Then $a_1^k$ is central in $H$ and $(a_1^l)^t=a_2^l=a_1^k$, which easily implies that $A$ is $s$-normal in $G$.
It also follows that $H/[H,H]\not\cong \mathbb Z\oplus \mathbb Z$. Therefore $H$ can be represented as a stem product of cyclic groups by a theorem of Pietrowski \cite[Thm. 1]{Piet}.
Moreover, in \cite{Mur} Murasugi showed that $H$ is (finitely generated free)-by-cyclic in this case (see the first paragraph of the proof of Theorem 1 in \cite{Piet}).
Thus $H$ contains a finitely generated normal free subgroup $F \lhd H$ and an element $t \in H$ of infinite order so that $F \cap \langle t \rangle =\{1\}$; in other words,
$H = F \rtimes \langle t \rangle$. If $F$ is trivial or infinite cyclic then $H$ is virtually abelian, and thus (ii) holds. So, assume that $F$ is non-cyclic.
Then it has trivial center, and since the center of $H$ is non-trivial, there must exist $p \in \Z\setminus\{0\}$ and $f \in F$ such that the element $ft^p$ is central in $H$.
Observe that the subgroup $P:=\langle F,t^p \rangle$ is normal in $H$ and has index $|p|$. Clearly $$P=F \cdot \langle t^p \rangle =  F \cdot \langle ft^p \rangle \cong F \times \langle ft^p \rangle \cong F \times \Z.$$
Hence (ii) holds.

Thus we can further assume that $n \ge 3$.
Consider the subgroups $C=A \cap B$ and $D=t^{-1} (A \cap B) t$ in $G$. Then $C,D \leqslant A$ as $A\cap B \leqslant B$ and $A=t^{-1}Bt$ in $G$.
If  $D=A$ then $A \cap B=B$, i.e., $B \leqslant A$ in $H$, which implies that $H=A$ and hence $G$ is isomorphic to the mapping torus of some injective endomorphism of the free group $A$. Similarly, (iii) holds if $C = A$.

It remains to deal with the case when $C,D$  are proper subgroups of $A$ and $n \ge 3$. By a result of Collins  \cite{Collins} the intersection $A \cap B$, of the Magnus subgroups of $H$ is a free subgroup of rank at most $n-1$.
Thus $C$ and $D$ are proper subgroups of the free group $A$ and their rank  does not exceed the rank of $A$. Since the rank of $A$ is $n-1 \ge 2$, the Schreier index formula (cf. \cite[I.3.9]{L-S})
implies that the rank of any proper finite index subgroup of $A$ is strictly greater than $n-1$.
Hence both $C$ and $D$ must have infinite index in $A$. Consequently, we can apply Lemma \ref{lem:inter_conj_free_gps} to find an element $f \in A$ such that $D^f \cap C =\{1\}$ in $A$.
Set $g=ft^{-1} \in G$. We will now check that $A^g \cap B=\{1\}$, which would yield that $G \in \X$ by Corollary \ref{cor:HNN-intr} and Remark \ref{rem:HNN}, finishing the proof.

Indeed, suppose that $A^g \cap B\neq \{1\}$, i.e., there exist $h \in A\setminus\{1\}$ and $b \in B\setminus \{1\}$ such that
$ft^{-1} h t f^{-1}b^{-1}=1$ in $G$. By Britton's Lemma (see \cite[IV.2]{L-S}), $h \in B$, hence $t^{-1}ht \in D$.
Therefore $ft^{-1} h t f^{-1}=b \in D^f \cap B$, but $D^f \leqslant A$ and so $$D^f \cap B=D^f \cap (A\cap B)=D^f \cap C=\{1\},$$ implying that $b=1$.
The above contradiction shows that $A^g \cap B=\{1\}$, as claimed.

Finally we observe that conditions (i) and (ii) are indeed mutually exclusive, because the first claim in (ii) implies that $G$ is not acylindrically hyperbolic by Theorem \ref{thm:elem-prop-ah}.
\end{proof}

\subsection{Automorphism groups of polynomial algebras}\label{sec:GA}
Let $k$ be a field. We denote by ${\rm Aut}\,k[x_1, \ldots, x_n]$ the group of $k$-automorphisms of the (formal, commutative)
polynomial algebra $k[x_1, \ldots, x_n]$, i.e., those automorphisms of the ring $k[x_1, \ldots, x_n]$ that fix $k$. This group is sometimes called the integral Cremona group of dimension $n$ over $k$.

Note that every $k$-automorphism $\alpha$ of $k[x_1, \ldots, x_n]$ is uniquely determined by the images $\alpha (x_1), \ldots , \alpha (x_n)$. Thus elements of ${\rm Aut}\,k[x_1, \ldots, x_n]$ can be represented by polynomial maps $$\left(\begin{array}{c}
                                                                                                        x_1 \\
                                                                                                        \vdots \\
                                                                                                        x_n
                                                                                                      \end{array}\right)
 \mapsto \left(
                   \begin{array}{c}
                     p_1 (x_1, \ldots, x_n)\\
                     \vdots \\
                     p_n (x_1, \ldots, x_n)\\
                   \end{array}
                 \right) ,
$$
where $p_i (x_1, \ldots, x_n)\in k[x_1, \ldots, x_n]$ for $i=1, \ldots, n$. Obviously multiplication in ${\rm Aut}\,k[x_1, \ldots, x_n]$ corresponds to the composition of such maps.

It is easy to see that ${\rm Aut}\,k[x]$ is generated by the affine maps $x\mapsto ax +b$, where $a\in k^\ast$ and $b\in k$. Therefore, ${\rm Aut}\,k[x] \cong k \rtimes k^\ast$, where $k^\ast $ acts on the additive group $k$ by multiplication. In particular,  ${\rm Aut}\,k[x]$ is metabelian and hence it is not acylindrically hyperbolic.

For $n=2$, the situation is completely different. We recall a well-known decomposition theorem for ${\rm Aut}\,k[x,y]$. Let $A$ be the group of all affine maps $$\left(                                                                                                                                                              \begin{array}{c}
x \\
y \\
\end{array}
\right)
\mapsto
\left(\begin{array}{c}
ax+by+c \\
dx+ey+f \\
\end{array}
\right),
$$
where $a, \ldots, f\in k$ and $\left| \begin{array}{cc}
                a & b \\
                c & d
              \end{array}\right|\ne 0$. Let also $B$ be the so-called triangular group consisting of all maps of the form
\begin{equation}\label{triang}
\left(                                                                                                                                                              \begin{array}{c}
x \\
y \\
\end{array}
\right)
\mapsto
\left(\begin{array}{c}
ax+b \\
cy+g(x) \\
\end{array}
\right),
\end{equation}
where $a,c\in k^\ast$, $b\in k$, and $g(x)\in k[x]$. It is easy to see that the intersection $C=A\cap B$ consists of all maps of the form (\ref{triang}), where $g$ is a linear polynomial. It is known that $${\rm Aut}\,k[x,y]=A\ast _CB.$$

The fact that ${\rm Aut}\,k[x,y]$ is generated by $A$ and $B$ was proved by Jung \cite{Jung} for ${\rm char}\, k=0$ and van der Kulk \cite{vdK} in the general case. The above amalgamated product decomposition can be derived from their work quite easily. Alternative proofs were later suggested by many people including Dicks \cite{Dic}, Nagata \cite{Nag}, and Wright \cite{Wri}.

For $k=\mathbb C$ the lemma below can be extracted from \cite{FL}. It is quite possible that similar (rather sophisticated) arguments work for any field. We provide an elementary ``hands on" proof.

\begin{lemma}\label{aCaC}
There exists $\alpha \in {\rm Aut}\, k[x,y]$ such that $\alpha^{-1} C \alpha \cap C=\{ 1\}$.
\end{lemma}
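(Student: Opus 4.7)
The strategy will be to exhibit a specific automorphism $\alpha \in {\rm Aut}\,k[x,y]$ and then verify the conclusion by a direct computation with polynomials.

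\textbf{Step 1.} I will choose $\alpha$ to be a composition of basic elements from $A$ and $B$ that represents a ``long'' element in the amalgamated product $G = A \ast_C B$. A natural first attempt is the H\'enon-like automorphism $\alpha_0 = \tau \sigma$, where $\sigma \in B \setminus C$ is given by $\sigma(x) = x$, $\sigma(y) = y + x^2$, and $\tau \in A \setminus C$ is the swap $\tau(x) = y$, $\tau(y) = x$. This produces $\alpha_0(x) = y$ and $\alpha_0(y) = x + y^2$, with explicit inverse $\alpha_0^{-1}(x) = y - x^2$, $\alpha_0^{-1}(y) = x$.

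\textbf{Step 2.} For a general element $\beta \in C$, writing $\beta(x) = ax + b$ and $\beta(y) = cy + dx + e$ with $a, c \in k^\ast$ and $b, d, e \in k$, I would compute $\alpha^{-1}\beta\alpha$ as polynomials in $x, y$ by substituting $\alpha(x), \alpha(y)$ into $\beta$ and then applying $\alpha^{-1}$ termwise. The requirement $\alpha^{-1}\beta\alpha \in C$ translates to demanding that $(\alpha^{-1}\beta\alpha)(x)$ lie in $k[x]$ and be of degree at most one, and $(\alpha^{-1}\beta\alpha)(y)$ have total degree at most one. Equating coefficients of all monomials then produces a system of polynomial equations in the parameters $(a, b, c, d, e)$.

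\textbf{Step 3.} A direct computation with $\alpha_0$ alone yields only a three-parameter family of solutions ($d = 0$, $a = c^2$, with $b, c, e$ free), so a single H\'enon is not enough. However, conjugation by a power $\alpha = \alpha_0^n$ corresponds, in the Bass-Serre tree of $G = A \ast_C B$, to looking at the pointwise stabilizer of two edges at distance $2n$, which intersects more and more stabilizers along the geodesic between them. A computation for $\alpha_0^3$ produces the additional constraints $e = 0$, $b = 0$, and $c^3 = 1$, and therefore gives $\beta = 1$ over any field without nontrivial cube roots of unity. To handle fields containing such roots, one replaces $\alpha_0$ by a composition involving a H\'enon of different degree, e.g.\ $\alpha_0'(x) = y$, $\alpha_0'(y) = x + y^3$: this produces an analogous constraint $c^m = 1$ for a different exponent $m$; combined with $c^3 = 1$ via $\gcd$-considerations it forces $c = 1$ in every field. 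A single $\alpha$ working uniformly can be built as a product of suitably many iterates of the two H\'enons.

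\textbf{Main obstacle.} The subgroup $C$ is five-dimensional, containing a two-dimensional torus $(k^\ast)^2$ of diagonal scalings together with a three-dimensional unipotent part. Conjugation by any short element of $G$ tends to preserve a positive-dimensional subfamily of this torus, since $\alpha$ commutes with certain scalings encoded by the polynomial $\alpha(y)$. The key to eliminating the intersection is to combine H\'enon-like automorphisms whose ``degrees'' in $y$ are coprime, forcing the scaling parameter $c$ to satisfy a pair of relations $c^N = c^M = 1$ with $\gcd(N, M) = 1$, hence $c = 1$. Once $c = 1$, the remaining unipotent part is killed by imposing enough high-degree coefficient vanishings along the tree. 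Handling characteristics $2$ and $3$ (where small coefficients in the expansions may vanish) requires adjusting the exponents in the chosen H\'enons, but the overall computation remains elementary.
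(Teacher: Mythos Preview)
Your strategy is correct and shares the paper's core idea: choose an explicit $\alpha$, impose $\alpha\beta\alpha^{-1}\in C$ (equivalently $\alpha\gamma=\gamma'\alpha$ for $\gamma,\gamma'\in C$), and equate polynomial coefficients; the decisive point, which you identify correctly, is that the diagonal torus inside $C$ must be killed by a pair of \emph{coprime} exponent constraints on the scaling parameter.

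The paper executes this more economically. For $\mathrm{char}\,k\ne 2$ it takes the single automorphism
\[
\alpha(x,y)=\bigl(x+y^2,\; y+p(x+y^2)\bigr),\qquad p(t)=t^4+t^2,
\]
and a short direct computation with $\alpha\gamma=\gamma'\alpha$ yields $d^8=d$ and $d^4=d$ (from the $t^4$ and $t^2$ terms of $p$), i.e.\ $d^7=1$ and $d^3=1$, forcing $d=1$; the unipotent parameters then fall out in a couple of lines. For $\mathrm{char}\,k=2$ it uses $y^3$ in place of $y^2$ and $p(t)=t^5+t^2$. Thus the coprime trick is packaged into a \emph{single} polynomial $p$ with two terms of coprime weighted degree, rather than into a long product of H\'enons of different degrees as you propose.

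Your route via iterates of $\alpha_0$ and $\alpha_0'$ would also work, but it is heavier: the computation for $\alpha_0^3$ is already unpleasant, the mixed product is worse, and you leave all of these computations unverified (``a computation \ldots\ produces''). Your proposal is a plan, not yet a proof. If you want to carry it through, it is much cleaner to imitate the paper's device of building the coprimality into a single $p(t)$ rather than iterating.
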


\begin{proof}
Our choice of  $\alpha$ will depend on the characteristic of $k$. We first consider the case when ${\rm char}\, k\ne 2$.

Let $\alpha$ be the map $$ \alpha: \left(                                                                                                                                                              \begin{array}{c}
x \\
y \\
\end{array}
\right)
\mapsto
\left(\begin{array}{c}
x+y^2 \\
y+p(x+y^2) \\
\end{array}
\right),
$$ where $p(t)=t^4+t^2$. It is obvious that $\alpha \in {\rm Aut}\, k[x,y]$.
Let $\gamma \in \alpha^{-1} C \alpha \cap C$.
Then $\gamma $ has the form  $$\gamma: \left(                                                                                                                                                              \begin{array}{c}
x \\
y \\
\end{array}
\right)
\mapsto
\left(\begin{array}{c}
ax +b \\
cx+dy+e \\
\end{array}
\right) ,
$$
and
\begin{equation}\label{ag=ga}
\alpha \gamma=\gamma ^\prime\alpha
\end{equation}
for some $\gamma ^\prime \in C$,
$$\gamma^\prime\colon \left(                                                                                                                                                              \begin{array}{c}
x \\
y \\
\end{array}
\right)
\mapsto
\left(\begin{array}{c}
a^\prime x +b^\prime \\
c^\prime x+d^\prime y+e^\prime \\
\end{array}
\right).
$$

Computing the compositions in (\ref{ag=ga}), we obtain
\begin{equation}\label{ag1}
ax +b +(cx+dy+e)^2 = a^\prime (x+y^2) + b^\prime
\end{equation}
and
\begin{equation}\label{ag2}
cx+dy+e +p (ax+b+(cx+dy+e)^2) = c^\prime(x+y^2) + d^\prime(y +p(x+y^2))+e^\prime.
\end{equation}
Equating the coefficients at $y$ in (\ref{ag2}), we obtain $d^\prime =d$. Note that if $d=d^\prime=0$, then (\ref{ag2}) yields $c^\prime =0$, which implies that $\gamma^\prime $ is not an automorphism.
Thus $d\ne 0$. Now using (\ref{ag1}), we obtain $c=0$ and then $a=a^\prime=d^2$ and $2de=0$. The later equality in turn gives $e=0$ as $d\ne 0$ and ${\rm char}\, k\ne 2$. Using these we can rewrite (\ref{ag2}) as
$$
p(ax+b+(cx+dy+e)^2)=c^\prime(x+y^2) + dp(x+y^2)+e^\prime.
$$
Further replacing $ax+b+(cx+dy+e)^2$ with the right side of (\ref{ag1}) and denoting $x+y^2$ by $t$, we obtain
\begin{equation}\label{ag3}
p(d^2t +b^\prime) = c^\prime t +dp(t) +e^\prime.
\end{equation}
Now equating the coefficients at $t^3$ in (\ref{ag3}) we obtain $b^\prime =0$. Substituting this to (\ref{ag3}) and expanding $p$ yields
\begin{equation}\label{ag4}
d^8t^4+ d^4t^2= c^\prime t +dt^4 +dt^2 +e^\prime.
\end{equation}
Thus we have $d^7=1$ and $d^3=1$,  which in turn implies $d=1$. As $a=d^2$, we obtain $a=1$. Finally substituting $b^\prime =0$ and $e=0$ in (\ref{ag1}), we obtain $b=0$. Thus $\gamma $ is the identity.

Now assume that ${\rm char}\, k=2$. Then we let $\alpha$ be the map $$ \alpha: \left(                                                                                                                                                              \begin{array}{c}
x \\
y \\
\end{array}
\right)
\mapsto
\left(\begin{array}{c}
x+y^3 \\
y+p(x+y^3) \\
\end{array}
\right),
$$ where $p(t)=t^5+t^2$. Arguing as above we obtain $\gamma = id$. This case is completely analogous to the previous one and we leave it to the reader.
\end{proof}

\begin{proof}[Proof of Corollary \ref{GA2}]
Clearly $C \neq A$ and $|B:C|=\infty$. Hence ${\rm Aut}\, k[x,y] =A*_C B$ contains non-abelian free subgroups (cf. \cite[Thm. 6.1]{Bass76}) and therefore it
is not virtually cyclic. Thus the claim
follows from Corollary~\ref{cor:amalg-intr} and Lemma \ref{aCaC}.
\end{proof}

\subsection{Miscellaneous examples}\label{sec:misc}

\begin{cor}\label{cor:am-hyp}
Let $G=A\ast_{C} B$ ($G=A\ast_{D=C^t}$), where $A$ is a hyperbolic group, $C$ is a quasi-convex subgroup of infinite index in $A$, and $B$ is arbitrary (respectively, $D\lneqq A$).
Then $G$ is either virtually cyclic or acylindrically  hyperbolic.
\end{cor}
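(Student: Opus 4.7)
The plan is to reduce the corollary to Corollaries \ref{cor:amalg-intr} and \ref{cor:HNN-intr} by verifying weak malnormality of $C$ in $G$. The main geometric input is the following standard fact about hyperbolic groups: if $C$ is a quasi-convex subgroup of infinite index in a hyperbolic group $A$, then there exists $s\in A$ with $|C\cap C^s|<\infty$. When $C$ is finite this is immediate. When $C$ is infinite, one uses that $C$ has finite index in the setwise stabilizer of its limit set $\Lambda(C)\subsetneq \partial A$, so any element $s$ outside this stabilizer conjugates $\Lambda(C)$ off itself, forcing $C\cap C^s$ to be finite; this is a consequence of the finite width theorem for quasi-convex subgroups, and can also be read off from the theory of elementary subgroups of hyperbolic groups.

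For the amalgamated product $G=A\ast_{C}B$, I would first dispense with the degenerate case $B=C$, in which $G\cong A$ is itself hyperbolic and hence either virtually cyclic or acylindrically hyperbolic. Otherwise $C\neq B$, and $C\neq A$ since $[A:C]=\infty$. The element $s$ furnished above, viewed inside $G$, satisfies $|C\cap C^s|<\infty$, so $C$ is weakly malnormal in $G$, and Corollary \ref{cor:amalg-intr} applies.

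For the HNN-extension $G=A\ast_{D=C^t}$ with stable letter $t$ realising $t^{-1}Ct=D$ in $G$, choose $s\in A$ with $|C\cap C^s|<\infty$ as above and set $g=st\in G$. A direct computation inside the HNN-extension, using that $A$ embeds in $G$ and $t^{-1}Ct=D$, gives
\[
C^g\cap D \;=\; t^{-1}C^s t\,\cap\, t^{-1}Ct \;=\; t^{-1}(C\cap C^s)t,
\]
which is finite. By Remark \ref{rem:HNN} this is exactly weak malnormality of $C$ in $G$, and since $C\neq A\neq D$ by hypothesis, Corollary \ref{cor:HNN-intr} yields that $G$ is acylindrically hyperbolic.

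The proof is essentially a routine reduction to the two previously established corollaries. The only mildly delicate step is the HNN computation, where one must choose $g$ of the form $st$ (rather than a plain element of $A$) so that conjugation by $g$ lines a conjugate of $C$ up against $D$ via the stable letter; the malnormality-type input is then supplied entirely by the hyperbolicity of $A$ and the quasi-convexity of $C$.
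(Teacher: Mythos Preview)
Your proposal is correct and follows essentially the same route as the paper: both arguments reduce to Corollaries~\ref{cor:amalg-intr} and~\ref{cor:HNN-intr} by using the finite-width/limit-set theory of quasi-convex subgroups of hyperbolic groups to produce $s\in A$ with $|C\cap C^s|<\infty$. Your HNN computation via $g=st$ and Remark~\ref{rem:HNN} is correct but unnecessary --- the element $s\in A\subset G$ already witnesses weak malnormality of $C$ in $G$ directly, which is all Corollary~\ref{cor:HNN-intr} requires.
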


\begin{proof} In the case when $G=A \ast_C B$, if $C=B$ then $G=A$ is hyperbolic, and hence $G \in \X$ unless it is virtually cyclic. So, we can further assume that $C \neq B$.

Recall that a quasi-convex subgroup of a hyperbolic group has finite width -- see \cite[Main Thm.]{GMRS}. This means that there is $n \in \N$ such that for any collection of elements $a_1,\dots,a_{n+1} \in A$,
with $Ca_i \neq Ca_j$ whenever $i \neq j$, there exist $k,l \in \N$, $1 \le k < l \le n+1$, such that $|a_k^{-1} C a_k \cap a_l^{-1} C a_l|<\infty$. Since $|A:C|=\infty$, a collection of elements
$a_1,\dots,a_{n+1}$ with these properties exists, therefore $C$ is weakly malnormal in $A$ (and, hence, in $G$). Thus to reach the required conclusion it remains to apply
Corollary~\ref{cor:amalg-intr} in the case of the amalgamated product or Corollary \ref{cor:HNN-intr} in the case of the HNN-extension.
\end{proof}

It is not hard to show that in Corollary \ref{cor:am-hyp} $G$ will be virtually cyclic only if $A$ is virtually cyclic and $C=B$.
Let us mention one particular case, which may be compared with simple groups discussed in Example \ref{ex:non-ah-1}.(a),  showing that neither the assumption that $C$ is finitely generated
nor the assumption that $C$ has infinite index in $A$ can be dropped.

\begin{cor}\label{cor:am-free}
Let $G=A\ast_{C} B$, where $A$ is a free group and $C \leqslant A$ is a finitely generated subgroup of infinite index.
Then $G$ is either infinite cyclic or acylindrically  hyperbolic.
\end{cor}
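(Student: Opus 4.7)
The plan is to deduce Corollary~\ref{cor:am-free} directly from Corollary~\ref{cor:am-hyp}. Since $A$ is a free group it is hyperbolic, and by a classical consequence of Stallings' folding method every finitely generated subgroup of a free group is quasi-convex. Thus $C$ is a quasi-convex subgroup of infinite index in the hyperbolic group $A$, and Corollary~\ref{cor:am-hyp} applies to give that $G$ is either virtually cyclic or acylindrically hyperbolic. It will then remain to strengthen ``virtually cyclic'' to ``infinite cyclic'' under our specific hypotheses.

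For this strengthening I would split into cases according to the rank of $A$. If $A$ has rank at least $2$ then $A$ contains a non-abelian free subgroup; since $A$ embeds into $G$ as one of the factors of the amalgam, $G$ also contains $F_2$ and cannot be virtually cyclic. The hypothesis that $C$ has infinite index in $A$ excludes the rank-$0$ case (in which $|A:C|=1$), so the only remaining possibility in the virtually cyclic case is $A\cong \Z$. The only finitely generated subgroup of infinite index in $\Z$ is trivial, forcing $C=\{1\}$ and hence $G\cong \Z\ast B$.

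If $B=\{1\}$ then $G\cong \Z$ and we are done. Otherwise, picking a generator $a$ of the $\Z$ factor and a non-trivial $b\in B$, I would verify via the normal form theorem for free products that the subgroup $\langle a,\, bab^{-1}\rangle$ of $G$ is free of rank $2$: upon expanding $(bab^{-1})^k = ba^k b^{-1}$, any non-trivial reduced word in $a$ and $bab^{-1}$ becomes a reduced alternating word in $\Z\ast B$ with non-trivial syllables, and hence is non-trivial. This shows that $G$ contains $F_2$, contradicting virtual cyclicity, so $B=\{1\}$ and $G\cong \Z$.

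The substantive work is done by Corollary~\ref{cor:am-hyp}; the only mildly delicate point will be the case analysis above, which must be organized carefully enough to exclude every virtually cyclic possibility other than $G\cong \Z$.
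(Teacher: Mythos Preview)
Your proposal is correct and follows essentially the same route as the paper: deduce the dichotomy from Corollary~\ref{cor:am-hyp} (free groups are hyperbolic, finitely generated subgroups are quasi-convex), then upgrade ``virtually cyclic'' to ``infinite cyclic'' by a short case analysis on the rank of $A$. The paper disposes of the virtually cyclic case via the remark just before Corollary~\ref{cor:am-free} that $G$ virtually cyclic forces $A$ virtually cyclic and $C=B$, which immediately yields $A\cong\Z$, $C=\{1\}=B$; your direct construction of an $F_2$ inside $\Z\ast B$ when $B\ne\{1\}$ is an equally valid way to finish.
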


Obviously, under the assumptions of the corollary, $G$ is cyclic if and only if so is $A$ and $C=B$. It is worth mentioning that non-simplicity of groups from Corollary \ref{cor:am-free}
with $B$ free was proved in \cite{S} using the small cancellation theory. In this particular case the assumptions of Corollary \ref{cor:amalg-intr} can be easily
verified using the following theorem of M. Hall: \emph{every finitely generated subgroup of a free group $F$ is a free factor of a finite index subgroup of $F$.}

It seems plausible that Corollary \ref{cor:am-hyp} generalizes to amalgamated products and
HNN-extensions of relatively hyperbolic and even acylindrically hyperbolic groups (for a suitable definition of quasi-convex subgroups). We leave such generalizations to the reader.

Schupp's arguments from \cite{S} combined with Corollary  \ref{cor:amalg-intr} also yield the following:

\begin{cor}
The Higman group $$G=\langle a,b,c,d \mid a^b=a^2, \, b^c=b^2,\, c^d=c^2,\, d^a=d^2\rangle $$ is acylindrically hyperbolic.
\end{cor}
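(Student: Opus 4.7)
The plan is to realize the Higman group $H$ as an amalgamated product and apply Corollary \ref{cor:amalg-intr}, with the weak malnormality of the amalgamated subgroup coming from Schupp's arguments in \cite{S}.

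First, I would split $H$ as $H = A *_C B$, where $A = \langle a, b, c \mid a^b=a^2,\ b^c=b^2\rangle$, $B = \langle c, d, a \mid c^d=c^2,\ d^a=d^2\rangle$, and $C = \langle a, c\rangle$. Each of $A$ and $B$ is itself an amalgamated product of two copies of $BS(1,2)$ along a common cyclic subgroup (generated by $b$ and $d$ respectively), and by the normal form theorem for amalgams the subgroup $C$ is free of rank two in both $A$ and $B$. In particular, $C$ is a proper subgroup of both $A$ and $B$ (as $b \in A \setminus C$ and $d \in B \setminus C$), and $H$ contains $F_2$, so it is not virtually cyclic.

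The central step is to check that $C$ is weakly malnormal in $H$: there exists $g \in H$ with $|C \cap gCg^{-1}|<\infty$. Schupp establishes this in \cite{S} via a small-cancellation analysis over the given amalgam. A concrete candidate is $g=bd^{-1}$, which is hyperbolic on the Bass-Serre tree of the splitting since it is a product of two elliptic elements ($b$ fixing the $A$-vertex, $d^{-1}$ fixing the $B$-vertex), neither of which lies in $C$. To see that $gCg^{-1} \cap C = \{1\}$, one takes $x \in C$ with $g^{-1}xg \in C$, sets $y=b^{-1}xb$ and $z=dyd^{-1}$, and demands $y, z \in C$; expanding $x$ in $\langle a, c\rangle$-normal form and tracking the sliding of $\langle b\rangle$- and $\langle d\rangle$-elements through the $\mathbb{Z}[1/2]\rtimes\mathbb{Z}$ structure of each edge group, every stage produces a doubling-type divisibility constraint on the exponents of $x$, and iterating these forces all the exponents to vanish.

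With weak malnormality in hand, Corollary \ref{cor:amalg-intr} yields that $H$ is either virtually cyclic or acylindrically hyperbolic, and the presence of the free subgroup $C \cong F_2$ rules out the virtually cyclic case. The main obstacle in this plan is the verification of weak malnormality itself; this is a delicate combinatorial check that interweaves the two amalgam normal forms with the explicit Baumslag--Solitar arithmetic of the four edge groups, and it is exactly where Schupp's small-cancellation arguments in \cite{S} do the work.
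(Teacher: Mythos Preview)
Your proposal is correct and follows exactly the approach the paper indicates: the paper simply states that ``Schupp's arguments from \cite{S} combined with Corollary~\ref{cor:amalg-intr}'' yield the result, and you spell out precisely this---the splitting $H=A\ast_C B$ with $C=\langle a,c\rangle$ free of rank $2$, weak malnormality of $C$ deferred to Schupp, and then Corollary~\ref{cor:amalg-intr} together with the non-virtual-cyclicity coming from $C\cong F_2$. Your reduction showing that $g^{-1}xg\in C$ forces $y=b^{-1}xb\in C$ (via normal forms in $A\ast_C B$) is a correct first step toward the malnormality check; the remaining combinatorics in $A$ and $B$ are indeed where \cite{S} does the work, as you acknowledge.
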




\section{$3$-manifold groups}


\subsection{Preliminary information on the JSJ decomposition}
We begin by reviewing some results about $3$-manifolds and their fundamental groups used in this paper. For basic definitions we refer to \cite{Hem}. Our main reference for properties of fundamental groups of $3$-manifold groups is \cite{AFW}. Details about Seifert fibered manifolds can be found in \cite{Bri,Sco83}.

Let $M$ be a closed, orientable, irreducible $3$-manifold. The Jaco-Shalen-Johannson decomposition theorem provides a canonical finite collection of
disjoint incompressible tori $T=\{T_i\}$ in $M$ such that all connected components of $M\setminus T$ are either a Seifert fibred or atoroidal. A minimal such collection is unique up to isotopy. By the Seifert-van Kampen theorem, this decomposition of $M$ induces a decomposition of $\pi _1(M)$ into a graph of groups. We will call it the \emph{JSJ decomposition} of $\pi _1(M)$.

Recall that the action of a group $G$ on a tree $\T $ is \emph{$k$-acylindrical in the sense of Sela}, if the diameter of the fixed point set of every $g\in G\setminus \{ 1\}$ is at most $k$.
For example, the action of a free product $G=A\ast B$ on the associated Bass-Serre tree is $1$-acylindrical.
The following result was proved by Wilton and Zalesskii in \cite{WZ}.

\begin{lemma}[{\cite[Lemma 2.4]{WZ}}]\label{WZ}
Let $M$ be a closed, orientable, irreducible $3$-manifold. Then either $M$ has a finite-sheeted covering space that is a torus bundle over a circle or
the action of $\pi_1(M)$ on the Bass-Serre tree associated to the JSJ decomposition of $\pi_1(M)$ is $4$-acylindrical.
\end{lemma}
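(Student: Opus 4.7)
The plan is to argue by contradiction: suppose $g\in \pi_1(M)\setminus\{1\}$ fixes a geodesic path $v_0,v_1,\ldots,v_5$ of length $5$ in the Bass-Serre tree $\T$ of the JSJ decomposition, and show that this forces $M$ to have a finite-sheeted cover which is a torus bundle over $S^1$. By geometrization, each JSJ vertex group is either the fundamental group of a finite-volume hyperbolic $3$-manifold with toroidal cusps (the atoroidal case) or of a Seifert fibered $3$-manifold with incompressible torus boundary; each edge group is the peripheral $\mathbb{Z}^2$ of a JSJ torus.

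First I would rule out atoroidal interior vertices. At each interior vertex $v_i$, $1\leq i\leq 4$, the element $g$ lies in the intersection of the stabilizers of two distinct incident edges of $\T$, which are two distinct conjugates in $G_{v_i}$ of peripheral subgroups. When $G_{v_i}$ is a cusped hyperbolic $3$-manifold group, its peripheral subgroups form a malnormal collection, so distinct conjugates intersect trivially, forcing $g=1$. Hence every interior vertex on the path must be Seifert fibered.

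Next I would handle Seifert pieces whose base orbifold $B_i$ is hyperbolic. From the central extension $1\to\langle f_i\rangle\to G_{v_i}\to\pi_1^{\mathrm{orb}}(B_i)\to 1$, each peripheral $\mathbb{Z}^2$ of $G_{v_i}$ projects onto a peripheral infinite cyclic subgroup of $\pi_1^{\mathrm{orb}}(B_i)$; when $B_i$ is hyperbolic, these peripherals form a malnormal collection in $\pi_1^{\mathrm{orb}}(B_i)$, so the projection of $g$ is trivial and $g\in\langle f_i\rangle$. For two adjacent such vertices $v_i,v_{i+1}$ sharing an edge $e_i$, both $f_i$ and $f_{i+1}$ embed into $G_{e_i}\cong\mathbb{Z}^2$ as primitive cyclic subgroups; if they coincided, the Seifert fibrations would extend across $e_i$, contradicting minimality of the JSJ decomposition. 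Hence $\langle f_i\rangle\cap\langle f_{i+1}\rangle=\{1\}$ inside $G_{e_i}$, and again $g=1$, a contradiction.

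Combining these two reductions, some interior $v_i$ must be Seifert fibered with non-hyperbolic base orbifold. Among pieces that can occur in a minimal JSJ decomposition of a closed, orientable, irreducible $3$-manifold -- after excluding solid tori and $T^2\times I$, which either are not JSJ pieces or would be absorbed -- the only remaining possibility is essentially the twisted $I$-bundle over the Klein bottle, whose base orbifold is the M\"obius band and which admits \emph{two} distinct Seifert fibrations. The final step is to propagate this ambiguity along the path: the fiber at one exceptional piece is not intrinsically defined, and combined with the gluing data it forces the neighbouring pieces to fiber compatibly; passing to the finite-sheeted cover that replaces each such piece by $T^2\times I$, the whole JSJ decomposition collapses to a single Seifert fibered manifold admitting a $T^2$-fibration over $S^1$, i.e., a torus bundle over $S^1$. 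The hard step is precisely this last one: rigorously tracking the fibration ambiguity through the gluings of twisted $I$-bundle pieces and showing that the resulting compatibility conditions on the gluing matrices assemble, after passage to a finite cover, into a global torus bundle structure. This will require a careful analysis of the normalizer of the peripheral subgroup inside the Klein bottle bundle group together with the outer automorphism that interchanges the two Seifert fibrations.
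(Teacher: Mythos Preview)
The paper does not supply a proof of this lemma; it is quoted verbatim from Wilton--Zalesskii \cite{WZ} and used as a black box in the proof of Theorem~\ref{main-3d}. So there is no ``paper's proof'' to compare against, only the original argument in \cite{WZ}.

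Your first two reductions are correct and are exactly the standard ingredients: malnormality of cusp subgroups in finite-volume hyperbolic pieces forces every interior vertex on the fixed segment to be Seifert, and for a Seifert piece with hyperbolic base orbifold the intersection of two distinct peripheral $\mathbb{Z}^2$ subgroups lies in the fibre subgroup, while adjacent fibre slopes on a JSJ torus never coincide by minimality.

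The gap is in your last step. From ``some interior vertex $v_i$ is a small Seifert piece'' you cannot conclude that $M$ is virtually a torus bundle: a twisted $I$-bundle over the Klein bottle can perfectly well be glued to a hyperbolic or large Seifert piece, and such an $M$ is certainly not virtually a torus bundle. What you have not yet exploited is that $g\neq 1$ still fixes the \emph{entire} length-$5$ segment. The missing computation is this. The small piece has $\pi_1\cong\langle a,b\mid bab^{-1}=a^{-1}\rangle$ with boundary subgroup $\langle a,b^2\rangle$ of index $2$, hence normal; consequently the two neighbours $v_{i-1},v_{i+1}$ of $v_i$ in $\T$ are conjugate copies $G_N$ and $bG_Nb^{-1}$ of the \emph{same} JSJ neighbour $N$. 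If $N$ is hyperbolic one gets $g=1$ at $v_{i\pm 1}$ immediately. If $N$ is Seifert with hyperbolic base and $v_{i-1},v_{i+1}$ are both interior, then $g$ lies in the fibre subgroup on each side, i.e.\ $g\in\langle f_N\rangle\cap b\langle f_N\rangle b^{-1}$ inside $\langle a,b^2\rangle$. Writing $f_N=a^pb^{2q}$ one has $b f_N b^{-1}=a^{-p}b^{2q}$, so these two cyclic subgroups coincide only when $p=0$ or $q=0$, i.e.\ precisely when $f_N$ matches one of the two Seifert fibre slopes $a$, $b^2$ of the small piece --- forbidden by JSJ minimality. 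Hence $g=1$ again. Running this along the four interior vertices, the only way to avoid $g=1$ is that every JSJ piece adjacent to the path is a twisted $I$-bundle over the Klein bottle or $T^2\times I$; only then is $M$ finitely covered by a torus bundle. Your ``propagate the ambiguity'' paragraph does not carry this out: the decisive point is an explicit fibre-slope calculation in the Klein bottle group, not a fibration-extension argument, and the conclusion in the generic case is $g=1$ rather than a global statement about $M$.
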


We now briefly discuss the pieces of the JSJ decomposition. By the Perelman's proof of the Thurston's Geometrization Conjecture \cite{Per1}--\cite{Per3}, atoroidal pieces admit a finite volume hyperbolic structure
(in the case of Haken manifolds this was originally proved by Thurston -- see \cite[Thm. 1.42]{M_Kapovich}).
Hence the fundamental group of each atoroidal piece is hyperbolic relative to peripheral subgroups isomorphic to $\mathbb Z\oplus \mathbb Z$ \cite{Farb}.
As for a Seifert fibered piece $S$, we will just use the fact that if the  fundamental group $\pi_1(S)$ is infinite then it fits into the short exact sequence
$$\{1\} \to Z \to \pi_1(S)\to H\to \{1\},$$
where $Z$ is infinite cyclic and $H$ has a finite index subgroup which is a surface group (here by a surface group we mean any group isomorphic to the fundamental group of a compact surface) -- see \cite[Sec. 1.6]{M_Kapovich}.

\subsection{A trichotomy for $3$-manifold groups}
We say that an action of a group $G$ on a tree is $\T$ \emph{acylindrical in the sense of Sela} if it is $k$-acylindrical for some $k\ge 0$.
We begin with a lemma that relates the notion to acylindricity in the sense of Sela to the one defined in the Introduction.

\begin{lemma}\label{lem:Sa}
If the action of a group $G$ on a tree $\T$  is acylindrical in the sense of Sela, then it is acylindrical (as defined in the Introduction).
\end{lemma}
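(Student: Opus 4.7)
The plan is to verify Bowditch's acylindricity condition (Definition from the Introduction) by reducing pointwise $\e$-stabilizers to ordinary pointwise stabilizers via Lemma \ref{lem:epst}. Suppose the action of $G$ on $\T$ is $k$-acylindrical in Sela's sense, so $\mathrm{diam}(\mathrm{Fix}(g)) \le k$ for every $g \in G\setminus\{1\}$.

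Given $\e > 0$, I will exhibit witnesses $R = R(\e, k)$ and $N = N(\e)$ for acylindricity at scale $\e$. Taking $R$ slightly larger than $2\e + k$ (for concreteness, $R = 2\e + k + 3$), for any points $x, y \in \T$ with $\d_\T(x, y) \ge R$, I pick vertices $u, v$ on the geodesic $[x, y]$ sitting at distance at least $\e$ from $\{x, y\}$ and at distance strictly greater than $k$ from each other. The slack built into $R$ ensures the middle portion of $[x,y]$ (after trimming off $\e$ at each end) has length $> k+1$, so such vertices $u,v$ exist. Lemma \ref{lem:epst} then bounds $\pst_G^\e(\{x, y\})$ by a union of at most $2(2\e + 1)$ left cosets of $\pst_G(\{u, v\})$.

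The remaining point is to observe that $\pst_G(\{u, v\}) = \{1\}$: any isometry fixing the two vertices $u$ and $v$ fixes the entire segment $[u, v]$ pointwise, so if such an element were non-trivial its fixed set would have diameter at least $\d_\T(u, v) > k$, violating the Sela-$k$-acylindricity hypothesis. Combining this with Lemma \ref{lem:epst} yields $|\pst_G^\e(\{x, y\})| \le 2(2\e + 1)$. Setting $N = 2(2\e + 1)$ gives Bowditch's condition in the form stated in Remark \ref{rem:equiv_def-acyl}.

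There is no substantive obstacle here: Lemma \ref{lem:epst} does the heavy lifting, and the triviality of $\pst_G(\{u,v\})$ is immediate from the trivial observation that a tree isometry fixing two points fixes the geodesic between them. The only piece requiring minor care is the quantitative choice of $R$ in terms of $\e$ and $k$ so that the desired vertices $u, v$ can be located inside $[x,y]$.
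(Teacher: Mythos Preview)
Your proof is correct and follows essentially the same route as the paper: both arguments apply Lemma~\ref{lem:epst} to a pair of vertices $u,v$ on $[x,y]$ chosen to be at distance $\ge \e$ from the endpoints and at distance $>k$ from each other, then use Sela's $k$-acylindricity to conclude $\pst_G(\{u,v\})=\{1\}$, yielding the uniform bound $|\pst_G^\e(\{x,y\})|\le 2(2\e+1)$. Your choice $R=2\e+k+3$ matches the paper's threshold $\d_\T(x,y)>k+2\e+2$, and the additional justification you give for the existence of $u,v$ and the triviality of $\pst_G(\{u,v\})$ is sound.
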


\begin{proof} Suppose that the action is $k$-acylindrical for some $k\ge 0$.
Fix any $\e>0$ and consider any two points $x,y$ on $\T $ such that $\d_\T (x,y)> k+ 2\e+2$.
Then the segment $[x,y]$ contains a subsegment $[u,v]$, where $u,v$ are vertices of $\T$, such that $\d _\T (\{ u,v\}, \{ x,y\}) \ge \e$ and $\d (u,v)> k$.
By our assumption, $|\pst_G(\{ u,v\})|=1$ and hence $|\pst _G^\e(\{ x,y\}) | < 2(2\e+1)$ by Lemma \ref{lem:epst}. Thus the action of $G$ on $\T$ is acylindrical by Remark \ref{rem:equiv_def-acyl}.
\end{proof}

Recall that every acylindrically hyperbolic group $G$ contains a unique maximal finite normal subgroup $K(G)$, called the \emph{finite radical} of $G$ (see Theorem \ref{thm:prop-ah-2}).

\begin{lemma}\label{lem:at}
Let $G$ be the a group acting acylindrically in the sense of Sela and without inversions on a tree $\T$. Then either $G$ is acylindrically hyperbolic with trivial finite radical,
or it is infinite cyclic or infinite dihedral, or $G$ fixes a vertex of $\T$.
\end{lemma}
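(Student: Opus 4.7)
My plan is to apply Theorem~\ref{class} to the action of $G$ on $\T$. By Lemma~\ref{lem:Sa} the given Sela-acylindrical action is acylindrical in the sense of Definition~\ref{df:acyl-gp}; since $\T$ is $0$-hyperbolic, one of the three alternatives (a)--(c) of Theorem~\ref{class} applies. In case (a) the orbits are bounded, and by Serre's fixed-point theorem for groups acting on trees (combined with the no-inversions hypothesis) $G$ fixes a vertex of $\T$, giving the last conclusion of the lemma.

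The main work is case (b), where $G$ is virtually cyclic and contains a hyperbolic element $h$. Replacing $h$ by a suitable positive power, I may assume that $H=\langle h\rangle$ is normal and of finite index in $G$, so that $khk^{-1}=h^{\pm 1}$ for every $k\in G$. This forces each $k\in G$ to preserve the axis $L$ of $h$ setwise, and one thus obtains a simplicial isometric action $G\to\mathrm{Isom}(L)$. The kernel of this homomorphism pointwise fixes the entire line $L$, and so must be trivial because $L$ contains arbitrarily long segments and the action is $k$-acylindrical in the sense of Sela. Since $G$ acts without inversions, $\mathrm{Isom}(L)$ here is the group of simplicial isometries of a line that do not invert edges, i.e.\ $D_\infty$; as $G$ is infinite and every infinite subgroup of $D_\infty$ is isomorphic to $\Z$ or to $D_\infty$, the desired conclusion follows.

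In case (c) the action of $G$ on $\T$ is non-elementary (it has infinitely many independent hyperbolic elements) and acylindrical on the hyperbolic space $\T$, so the implication (AH$_2$)$\Rightarrow$(AH$_1$) of Theorem~\ref{acyl} gives $G\in\X$. It remains to check that the finite radical $K(G)$ provided by Theorem~\ref{thm:prop-ah-2} is trivial: as a finite subgroup $K(G)$ has bounded orbits and hence fixes some vertex $v$ of $\T$, and by normality it also fixes $gv$ for every $g\in G$, so it pointwise stabilizes the segment $[v,gv]$. Taking $g$ to be a sufficiently high power of a hyperbolic element makes $[v,gv]$ arbitrarily long, so Sela acylindricity forces $K(G)=\{1\}$. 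The main subtlety is case (b): I need both that the induced action on $L$ is faithful (for which Sela acylindricity is essential) and the identification of $\mathrm{Isom}(L)$ with $D_\infty$ via the no-inversion hypothesis.
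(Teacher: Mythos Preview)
Your proof is correct and tracks the paper's structure closely: invoke Lemma~\ref{lem:Sa} and Theorem~\ref{class} to obtain the trichotomy, dispose of bounded orbits via a fixed vertex (using the no-inversions hypothesis), and in the non-elementary case show $G\in\X$ and kill the finite radical by observing it must pointwise fix arbitrarily long geodesic segments. The only difference is in case~(b). The paper treats (b) and (c) uniformly: it proves that \emph{any} finite normal subgroup of $G$ is trivial (by exactly your case-(c) argument, after replacing $h$ by a power centralizing $K$), and then uses that a virtually cyclic group with trivial finite radical is $\Z$ or $D_\infty$. You instead embed $G$ faithfully into $\mathrm{Isom}(L)$ via the action on the axis. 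Both routes work; the paper's is slightly more economical since one lemma covers both cases.

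One small imprecision in your case~(b): the non-edge-inverting simplicial isometries of a line do \emph{not} form a subgroup (the composite of a reflection at a vertex with a unit translation is a reflection at a midpoint, which inverts an edge), so the phrase ``the group of simplicial isometries of a line that do not invert edges, i.e.\ $D_\infty$'' is not quite right. This is harmless for the argument, however: the full simplicial isometry group of $L$ is already isomorphic to $D_\infty$, your kernel computation shows $G$ embeds in it, and every infinite subgroup of $D_\infty$ is isomorphic to $\Z$ or $D_\infty$. The no-inversions hypothesis is genuinely needed only in case~(a), where you use it correctly.
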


\begin{proof}
By Lemma \ref{lem:Sa} the action of $G$ on $\T$ is acylindrical (as defined in the Introduction). So, in view of Theorems \ref{class} and \ref{acyl},
$G$ is either acylindrically hyperbolic, or virtually cyclic containing a hyperbolic element,
or has bounded orbits. In the latter case $G$ fixes a vertex $x$ of $\T$ (see \cite[I.4.7]{D-D}). 

Now assume that $G$ is acylindrically hyperbolic or virtually cyclic and contains a hyperbolic element $h$.
Recall that every virtually cyclic group $E$ has a finite normal subgroup $K\leqslant E$ such that $E/K$ is either infinite cyclic or infinite dihedral.
Thus to prove the lemma it suffices to show that $G$ has no non-trivial finite normal subgroups.

Suppose $K$ is a finite normal subgroup of $G$. After passing from $h$ to some non-trivial power, we can assume that $h$ belongs to the centralizer $\C_G(K)$. Since $K$ is finite, it fixes a vertex $v$ of $T$.
Assume that the action of $G$ on $\T$ is $k$-acylindrical in the sense of Sela. Since $h$ is hyperbolic, there exists $n\in \mathbb N$ such that $\d (u,v)\ge k$ for $u=h^nv$. Obviously for every $g\in K$, we have $gu=gh^nv=h^ngv=h^nv=u$; hence $K$ fixes $u$ as well.
Now the $k$-acylindricity of the action in the sense of Sela implies that $K=\{ 1\}$.
\end{proof}

The next lemma is proved in \cite{DGO}.

\begin{lemma}[{\cite[Corollary 6.13]{DGO}}]\label{lem:613}
Suppose  that $H$ is a proper infinite
hyperbolically embedded subgroup of a group $G$. Let $K$ be a subgroup of $G$ such that $K\cap H$ is infinite and $K\setminus H\ne \emptyset$. Then $K$ contains  proper infinite hyperbolically embedded subgroups.
\end{lemma}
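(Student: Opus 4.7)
The plan is to verify condition (AH$_3$) of Theorem~\ref{acyl} for $K$, namely to exhibit a hyperbolic WPD element for some action of $K$ on a hyperbolic space; once that is done, the equivalence (AH$_3$) $\Leftrightarrow$ (AH$_4$) immediately yields a proper infinite hyperbolically embedded subgroup of $K$. The natural space to work with is $\Gamma(G, X \sqcup H)$, where $X \subseteq G$ is a generating set witnessing $H \hookrightarrow_h (G, X)$, so that the Cayley graph is hyperbolic; the group $K$ acts on it by restriction of the ambient $G$-action.

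My first step is to construct a loxodromic element of $K$ on $\Gamma(G, X \sqcup H)$. Since $(H, \widehat{d})$ is locally finite and $K \cap H$ is infinite, the values $\widehat{d}(1, h)$ over $h \in K \cap H$ are unbounded. Fix some $g \in K \setminus H$ (which exists by hypothesis), pick $h \in K \cap H$ with $\widehat{d}(1, h)$ very large compared to the hyperbolicity constant of $\Gamma(G, X \sqcup H)$, and set $f := gh \in K$. Standard estimates from \cite{DGO} about geodesics in $\Gamma(G, X \sqcup H)$ (isolated $H$-components and bounded geodesic image) force any path realizing $f^n$ in $X \sqcup H$ to contain $|n|$ disjoint $H$-syllables, each of $\widehat{d}$-length at least $\widehat{d}(1, h)$. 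This precludes substantial cancellation and shows that the orbit $\{f^n \cdot 1\}_{n \in \mathbb{Z}}$ is a quasi-geodesic in $\Gamma(G, X \sqcup H)$.

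My second step is to verify the WPD condition for $f$ in the $G$-action; since $K \leqslant G$ and WPD on the same space is inherited by subgroups, this will suffice. The key point is that any element $\varepsilon$-fixing both the basepoint $1$ and a far translate $f^M \cdot 1$ must carry a long ``$H$-threading'' portion of the connecting geodesic onto a nearby parallel portion, and the bounded-intersection property for conjugates of $H$ (which is built into the local finiteness of $\widehat{d}$, cf.\ \cite[Section~6]{DGO}) limits such translators to a finite set. Non-virtual-cyclicity of $K$ comes for free: if $\langle f \rangle$ had finite index in $K$, then some positive power $f^n$ would lie in the infinite subgroup $K \cap H \subseteq H$, but every element of $H$ sits at $\Gamma$-distance one from $1$, contradicting loxodromicity of $f$.

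The main obstacle is the quantitative control required in the second step. Converting the abstract statement ``$(H, \widehat{d})$ is locally finite'' into effective WPD estimates for elements built from long $H$-syllables is exactly the content of the technical lemmas of \cite[Sections~4--6]{DGO}, so while the overall strategy is clean, the proof genuinely rests on that machinery; a self-contained treatment would reproduce a substantial portion of those sections.
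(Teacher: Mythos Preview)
The paper does not prove this lemma at all: it is quoted verbatim as \cite[Corollary 6.13]{DGO} and used as a black box, so there is no in-paper argument to compare against. Your sketch is, in outline, exactly the strategy carried out in \cite{DGO} to obtain that corollary---produce a loxodromic WPD element of $K$ on $\Gamma(G,X\sqcup H)$ by taking $f=gh$ with $g\in K\setminus H$ and $h\in K\cap H$ of large $\widehat d$-length, then invoke (AH$_3$)$\Leftrightarrow$(AH$_4$)---and you are right that the substance lies in the isolated-component and WPD estimates of \cite[\S4--6]{DGO}. Your non-virtual-cyclicity argument is also fine once fleshed out: if $K$ were virtually cyclic then $K\cap H$, being infinite, would have finite index in $K$, hence would meet $\langle f\rangle$ nontrivially, forcing some $f^n\in H$ and contradicting loxodromicity. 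So the proposal is correct and matches the source argument; just be aware that what you have written is a roadmap rather than a proof, and the paper itself makes no attempt to supply one.
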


Finally, we will need the following result, which is a (probably well-known) consequence of the projective plane theorem proved in \cite{Eps}. By $F_n$ we denote the free group of rank $n$.

\begin{lemma}\label{epstein}
$F_2\times \mathbb Z_2\notin \MM$.
\end{lemma}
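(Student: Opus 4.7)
The plan is to suppose for contradiction that $F_2 \times \mathbb{Z}_2 \in \MM$ and derive a contradiction by showing that the universal cover of any compact $3$-manifold realizing this group as $\pi_1$ must be contractible, forcing $\pi_1$ to be torsion-free. Since $F_2 \times \mathbb{Z}_2$ is finitely generated, Scott's theorem (recalled just before the statement of Theorem~\ref{main-3-dim}) provides a compact $3$-manifold $N$ with $\pi_1(N) \cong F_2 \times \mathbb{Z}_2$, and I work with this $N$.

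First I would verify that $F_2 \times \mathbb{Z}_2$ is freely indecomposable. If $F_2 \times \mathbb{Z}_2 = A \ast B$ with both factors non-trivial, then the central order-$2$ element $z$, being torsion, is conjugate into one of the factors, say $A$; and because edge stabilizers of the associated Bass--Serre tree are trivial, the centralizer of $z$ in $A \ast B$ lies in the corresponding conjugate of $A$. Since $z$ is central in $F_2 \times \mathbb{Z}_2$, that centralizer is the whole group, forcing $A = F_2 \times \mathbb{Z}_2$ and $B = \{1\}$, a contradiction. Combining this with the Kneser--Milnor decomposition and the fact that $\pi_1(N) \not\cong \mathbb{Z}$, I conclude that $N$ is prime and hence irreducible.

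Next I would pass (if $N$ is non-orientable) to the orientation double cover $\hat N$; by the equivariant sphere theorem of Meeks--Simon--Yau the cover $\hat N$ remains compact and irreducible, it is now orientable, and $\pi_1(\hat N)$ has index at most $2$ in $F_2 \times \mathbb{Z}_2$, so it is infinite. At this point Epstein's projective plane theorem together with the classical sphere theorem of Papakyriakopoulos imply that any non-zero element of $\pi_2(\hat N)$ would be represented by an embedded $S^2$ or a two-sided $\mathbb{R}P^2$; irreducibility of $\hat N$ rules out the former and orientability rules out the latter, so $\pi_2(\hat N) = 0$. A standard Hurewicz/Whitehead argument then shows that the universal cover $\tilde N$ of $\hat N$, being a simply connected acyclic non-compact $3$-manifold, is contractible.

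Finally, $\tilde N$ is simultaneously the universal cover of $N$, and $\pi_1(N) \cong F_2 \times \mathbb{Z}_2$ acts on it freely and properly discontinuously. A group acting this way on a contractible CW complex of dimension $3$ has cohomological dimension at most $3$ over $\mathbb{Z}$, so it must be torsion-free, contradicting $\mathbb{Z}_2 \leqslant \pi_1(N)$. The main delicate ingredient is invoking the equivariant sphere theorem to transfer irreducibility to the orientation cover; the remaining steps are a routine combination of the sphere/projective plane theorems with the standard cohomological obstruction to torsion for groups acting freely on finite-dimensional contractible complexes.
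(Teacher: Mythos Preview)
Your argument is correct, but it takes a substantially different route from the paper's. The paper's proof is a two-line application of a specific result of Epstein \cite[Theorem~9.5]{Eps}: if $\mathbb{Z}\times\mathbb{Z}_2$ embeds in $\pi_1(N)$ for a $3$-manifold $N$, then either $\pi_1(N)\cong\mathbb{Z}\times\mathbb{Z}_2$ or $\pi_1(N)$ is a non-trivial free product. Since $F_2\times\mathbb{Z}_2$ is neither isomorphic to $\mathbb{Z}\times\mathbb{Z}_2$ nor a non-trivial free product (it has non-trivial centre), one is done immediately.

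Your approach instead shows directly that the compact $3$-manifold realizing $F_2\times\mathbb{Z}_2$ is aspherical, and then invokes the cohomological-dimension obstruction to torsion. This is longer and brings in heavier tools (Meeks--Simon--Yau to pass irreducibility to the orientation cover), but it has the advantage of being a reusable template: the same chain of reasoning excludes \emph{any} freely indecomposable group with torsion and infinite fundamental group from $\MM$, not just $F_2\times\mathbb{Z}_2$. One small point worth tightening: when you apply Kneser--Milnor to the compact core $N$, it is cleanest to first cap off any spherical boundary components with balls (which does not change $\pi_1$) so that the prime decomposition behaves as expected; after that your passage from ``freely indecomposable and $\pi_1\not\cong\mathbb{Z}$'' to ``irreducible'' is clean. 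At the level of underlying geometry the two proofs are close cousins --- Epstein's theorem is itself proved via the projective plane theorem --- but the paper's citation packages everything into a single black box, whereas you unpack the mechanism.
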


\begin{proof}
Assume that $F_2\times \mathbb Z_2\in \MM$. Then $F_2\times \mathbb Z_2=\pi_1(M)$ for some $3$-manifold $M$. By \cite[Theorem 9.5]{Eps}, if $\mathbb Z\times \mathbb Z_2$ embeds in the fundamental group of some $3$-manifold $N$, then either $\pi_1(N)\cong \mathbb Z\times \mathbb Z_2$ or $\pi_1(N)$ is a non-trivial free product. Applying this result to $M$ we obtain that $F_2\times \mathbb Z_2$ is a non-trivial free product. However, a non-trivial free product must have trivial center. A contradiction.
\end{proof}

Now we are ready to prove the main result of this section, which is slightly stronger than Theorem \ref{main-3-dim} from  Section \ref{sec:results}  (see parts (I$^\prime$) and (II$^\prime$)).

\begin{thm}\label{main-3d}
Let $M$ be a compact orientable $3$-manifold and let $G$ be a subgroup of $\pi_1(M)$. Then exactly one of the following three conditions holds.
\begin{enumerate}
\item[(I$^\prime$)] $G$ is acylindrically hyperbolic with trivial finite radical.

\item[(II$^\prime$)] $G$ contains an infinite cyclic normal subgroup $Z$ and $G/Z$ is acylindrically hyperbolic. In fact, $G/Z$ is virtually a subgroup of a surface group in this case.

\item[(III)] $G$ is virtually polycyclic.
\end{enumerate}
\end{thm}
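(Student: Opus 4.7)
The plan is to iterate the Bass--Serre tree machinery of Section \ref{sec:trees} on top of the geometrization theorem. I would begin by verifying that the three alternatives are mutually exclusive. Cases (I$'$) and (III) are disjoint since acylindrically hyperbolic groups are non-amenable. Cases (I$'$) and (II$'$) are disjoint because an acylindrically hyperbolic group with trivial finite radical has trivial amenable radical by Theorems \ref{thm:elem-prop-ah}(a) and \ref{thm:prop-ah-2}, and therefore no infinite cyclic normal subgroup. Cases (II$'$) and (III) are disjoint because an acylindrically hyperbolic quotient of a virtually polycyclic group cannot exist.

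Next I would reduce to the case of a closed orientable $3$-manifold. Doubling yields an embedding $\pi_1(M)\hookrightarrow\pi_1(DM)$ with $DM$ closed, and passing to the orientation double cover gives an index at most two subgroup $\pi_1(\widetilde{DM})\le\pi_1(DM)$. Writing $G_0=G\cap\pi_1(\widetilde{DM})$, a proof of the trichotomy for $G_0$ lifts to $G$ as follows. Case (III) is immediate from the index bound. In case (I$'$), a hypothetical non-trivial finite normal subgroup $K\lhd G$ would satisfy $K\cap G_0=\{1\}$, forcing $|K|\le 2$, and $K$ would centralise a finite-index subgroup of any $F_2\le G$; this would embed $F_2\times\mathbb Z_2$ into $\pi_1(DM)$, contradicting Lemma \ref{epstein}. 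In case (II$'$), the infinite cyclic subgroup $Z_0\lhd G_0$ is normal in $G$: any $G$-conjugate $tZ_0t^{-1}$ projects to a cyclic normal subgroup of the acylindrically hyperbolic group $G_0/Z_0$, which must be trivial by the remark just made, so $tZ_0t^{-1}\le Z_0$ and symmetry gives equality.

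With $M$ closed and orientable, I apply the prime decomposition $M=M_1\#\cdots\#M_n$, write $\pi_1(M)$ as a free product, and let $G$ act on the associated Bass--Serre tree. This action is acylindrical in the sense of Sela (edge stabilisers are trivial), so Lemma \ref{lem:at} gives cases (I$'$), (III), or places $G$ inside some $\pi_1(M_i)$ with $M_i$ prime. A factor $S^1\times S^2$ makes $G$ cyclic (case III). If $M_i$ is irreducible, Lemma \ref{WZ} gives two alternatives: either $M_i$ is finitely covered by a torus bundle over $S^1$, forcing $G$ virtually polycyclic (case III); or $G$ acts $4$-acylindrically in the sense of Sela on the Bass--Serre tree of the JSJ decomposition of $\pi_1(M_i)$. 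A second application of Lemma \ref{lem:at} then places us in (I$'$) or (III) or reduces to $G$ being contained in a single JSJ vertex stabilizer. Triviality of the finite radical in the acylindrically hyperbolic subcases is automatic: by geometrization, the fundamental groups of closed orientable irreducible $3$-manifolds with infinite fundamental group and of their JSJ pieces are torsion-free.

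The remaining step, which I expect to be the most delicate, is the analysis of the two types of JSJ vertex groups. If the vertex is atoroidal, then by geometrization its fundamental group is hyperbolic relative to its $\mathbb Z^2$ peripheral subgroups, hence acts acylindrically and non-elementarily on the coned-off Cayley graph; restricting this action to $G$ and applying Theorem \ref{class}, $G$ is either subconjugate into a peripheral $\mathbb Z^2$ (case III), virtually cyclic (case III), or acylindrically hyperbolic with automatically trivial finite radical (case I$'$). If the vertex group is that of a Seifert piece $S$ with infinite $\pi_1(S)$, I use the short exact sequence $1\to Z\to\pi_1(S)\to H\to 1$ and set $Z_G=G\cap Z$, which is either trivial or infinite cyclic and is automatically normal in $G$. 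The embedding $G/Z_G\hookrightarrow H$ realises $G/Z_G$ as a virtual subgroup of a surface group, and the standard dichotomy for subgroups of surface groups shows that $G/Z_G$ is either virtually abelian (hence virtually polycyclic, giving case (III) when combined with $Z_G$) or non-elementary hyperbolic and torsion-free (hence acylindrically hyperbolic with trivial finite radical, giving case (I$'$) or (II$'$) depending on $Z_G$). The ``virtually a subgroup of a surface group'' refinement in (II$'$) is recorded directly from $G/Z_G\hookrightarrow H$. Throughout, the main bookkeeping difficulty is the control of the finite radical under passage between finite-index subgroups and overgroups, and Lemma \ref{epstein} is the essential ingredient that makes this step go through.
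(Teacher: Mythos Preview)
Your outline follows the paper's strategy closely: reduce to closed orientable, apply the prime decomposition and Lemma~\ref{lem:at}, then the JSJ decomposition and Lemma~\ref{lem:at} again, and finally analyse Seifert and atoroidal vertex groups separately. Two steps, however, need more than you have written.

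In the atoroidal case, Theorem~\ref{class} does not yield ``subconjugate into a peripheral $\mathbb Z^2$'' as one of its alternatives; it yields ``bounded orbits''. Passing from bounded orbits on the relative Cayley graph to containment in a conjugate of a peripheral subgroup is not automatic, and this is precisely where the paper brings in additional input. The paper fixes any $g\in G\setminus\{1\}$ and uses \cite[Cor.~4.20]{Osi06} (together with torsion-freeness) to see that $g$ is either hyperbolic on the relative Cayley graph or conjugate into some peripheral $A_i$. In the hyperbolic case the $G$-action is non-elliptic and Theorem~\ref{class} applies directly. In the parabolic case one conjugates so that $G\cap A_i$ is infinite; then either $G\leqslant A_i$ (case~(III)) or $G\setminus A_i\ne\emptyset$ and Lemma~\ref{lem:613} produces a proper infinite hyperbolically embedded subgroup of $G$, giving $G\in\X$ via condition~(AH$_4$). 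Your single appeal to Theorem~\ref{class} skips this dichotomy.

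In the (II$'$) part of the reduction from $G_0$ to $G$, the claim that the image of $tZ_0t^{-1}$ in $G_0/Z_0$ ``must be trivial'' is unjustified: a cyclic normal subgroup of an acylindrically hyperbolic group is only guaranteed to lie in the finite amenable radical, and $G_0/Z_0$ is not asserted to have trivial finite radical in case~(II$'$). The paper sidesteps this by using the structural refinement already present in (II$'$): one passes to a finite-index $N\lhd G$ with $Z$ central in $N$ and $N/Z$ isomorphic to a subgroup of a surface group; since $N/Z$ (being non-cyclic here) is a non-abelian free or closed hyperbolic surface group and hence centreless, $Z$ equals the centre of $N$, is therefore characteristic in $N$, and so is normal in $G$. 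This argument also immediately gives the ``virtually a subgroup of a surface group'' clause for $G/Z$, which your commensurability route would not recover. A related imprecision occurs in your Seifert analysis: $G/Z_G$ embeds in the base orbifold group $H$, which can have torsion, so $G/Z_G$ need not itself be torsion-free or hyperbolic; it is only \emph{virtually} non-elementary hyperbolic, which still suffices for $G/Z_G\in\X$ via Lemma~\ref{lem: com}.
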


\begin{proof}[Proof of Theorem \ref{main-3-dim}]
Note that the three conditions (I$^\prime$), (II$^\prime$) and (III) are mutually exclusive by Theorem \ref{thm:elem-prop-ah} (c) and Example \ref{ex:non-ah} (a). Thus we only need to show that at least one of them holds.

If the boundary $\partial M$ is non-empty, we take the double $N=M\cup_{\partial M} M$. Since $M$ is a retract of $N$, $\pi_1(M)$ embeds in $\pi_1(N)$ (cf. \cite[Lemma 1.6]{AFW}). 


Thus we can (and will) assume that $G$ is a subgroup of $\pi_1(M)$ for an orientable closed $3$-manifold $M$. By the Kneser-Milnor theorem, $M$ decomposes as a connected sum $M_1\# \cdots \# M_n$ of prime $3$-manifolds. Consequently, $\pi _1(M)=\pi_1(M_1)\ast \cdots \ast \pi_1(M_n)$.
Suppose, first, that $G$ is not conjugate to a subgroup of $\pi_1(M_i)$ for any $i$. Then by Kurosh theorem $G$ is a free product of two non-trivial groups and whence (I$^\prime$) or (III) holds in this case
by Lemma \ref{lem:at}, because the action of this free product on the associated Bass-Serre tree is $1$-acylindrical in the sense of Sela and has no global fixed points.

Now, assume that $G$ is conjugate to a subgroup of some $\pi_1(M_i)$.  Since $M$ is closed and orientable, so is $M_i$.
Recall that every orientable prime manifold, other than $\mathbb S^2\times \mathbb S^1$, is irreducible (cf. \cite[Lemma 3.13]{Hem}). Since
$\pi_1(\mathbb S^2\times \mathbb S^1)$ is cyclic, it suffices to consider the case when $G$ is isomorphic to a subgroup of a closed orientable irreducible $3$-manifold.

From now on, we assume that $G\leqslant \pi_1(M)$ for a closed orientable irreducible $3$-manifold $M$. Recall that if the fundamental group of such a manifold $M$ is infinite, then it is torsion-free
(see, for example, \cite[(C.2)]{AFW}). Thus we can also assume that $\pi_1(M)$ is torsion-free. In particular, in (I$^\prime$) it will suffice to prove acylindrical hyperbolicity; triviality of the finite radical will follow from the absence of
torsion.

By Lemma \ref{WZ} either $M$ has a finite-sheeted covering space that is a torus bundle over a circle or the action of $\pi_1(M)$ on the Bass-Serre tree $\T$ associated to the JSJ
decomposition of $\pi_1(M)$ is acylindrical in the sense of Sela. In the former case (III) holds. In the latter case, by Lemma \ref{lem:at}, $G$ either satisfies one of the conditions
(I$^\prime$), (III), or fixes a vertex of $\T $. Thus it remains to consider the case when $G$ is a subgroup of the fundamental group of a Seifert fibered piece or an atoroidal piece of the JSJ decomposition.

Suppose, first, that $G$ is a subgroup of the fundamental group of a Seifert fibered space $S$.
If $\pi _1(S)$ is finite, we are done. If $\pi _1(S)$ is infinite, then it  contains an infinite normal cyclic subgroup $Z$ such that the quotient $H:= \pi_1(S)/Z$ contains a surface subgroup of finite index (see \cite[Sec. 1.6]{M_Kapovich}).
In particular, $H$ is either virtually $\mathbb Z^k$ for $k\in \{ 0,1,2\}$ or is a non-elementary hyperbolic group.  If $H$ is
virtually $\mathbb Z^k$ for $k\in \{ 0,1,2\}$, so is $G/I$, where $I:= G\cap Z$. If $H$ is non-elementary hyperbolic, we fix any finite generating set $X$ of $H$ and note that the
natural action of $G/I$ on the Cayley graph  $\Gamma (H,X)$ is acylindrical since this is true for the action of $H$. By Theorem \ref{class}, the action of $G/I$ on
$\Gamma (H,X)$ is elliptic (in which case $G/I$ is finite as the action is proper), or $G/I$ is virtually cyclic, or $G/I$ is acylindrically hyperbolic.
Summarizing, we conclude that, in any case, $G/I$ is either virtually $\mathbb Z^k$ for $k\in \{ 0,1,2\}$ or acylindrically hyperbolic.
Thus if $|I|=\infty$, $G$ satisfies (II$^\prime$) or (III). Otherwise, $I=\{ 1\}$,  $G\cong G/I$, and again (I$^\prime$) or (III) holds.

Now assume that $G$ fixes a vertex of $\T $ corresponding to an atoroidal piece. As we explained above, in this case $G$ is a subgroup of a group $R$ hyperbolic relative to
free abelian subgroups of rank $2$. Let $A_1, \ldots , A_m$ be the collection of peripheral subgroups of $R$, $X$ a finite generating set of $R$, $\mathcal A=\bigsqcup_{i=1}^m A_i$.
Then every $A_i$ is hyperbolically embedded in $R$ (see \cite[Prop. 4.28 and Remark 4.26]{DGO}).

If $G\ne \{ 1\}$, fix any $g\in G\setminus \{ 1\}$. Since $G$ is torsion-free, by
\cite[Cor. 4.20]{Osi06}
the element $g$ is either conjugate to an element of $A_i$ for
some $i$, or acts hyperbolically on $\Gamma (R, X\sqcup \mathcal A)$ (note that the term ``hyperbolic element" is used in a slightly different sense in \cite{Osi06}). Again, we consider two cases.

If $g$ is conjugate to an element of $A_i$ for some $i$, then passing from $G$ to its conjugate if necessary, we can assume that $G\cap A_i$ is infinite. If $G\leqslant A_i$,
then we have (III). Otherwise $G$ contains a proper infinite hyperbolically embedded subgroup by Lemma \ref{lem:613}, and hence
$G$ is acylindrically hyperbolic by Theorem \ref{acyl}.

Finally assume that $g$ is hyperbolic with respect to the action on  $\Gamma (R, X\sqcup \mathcal A)$. In particular, the action of $G$ is not elliptic. By \cite[Prop. 5.2]{Osi13}, the action of $R$ (and, hence, of $G$)
on the Cayley graph $\Gamma (R, X\sqcup \mathcal A)$ is acylindrical. Applying now Theorem \ref{class} we conclude that $G$ is either virtually cyclic or acylindrically hyperbolic. This completes the proof of the theorem.
\end{proof}

\begin{proof}[Proof of Corollary \ref{cor:geom}] Let $M$ be a compact irreducible orientable $3$-manifold and $G:=\pi_1(M)$. 
Suppose that $G$ is neither virtually polycyclic nor acylindrically hyperbolic.
Then, by Theorem~\ref{main-3-dim}, $G$ contains an infinite cyclic normal subgroup $Z$. Hence, since $M$ is irreducible, we can use \cite[Thm. 1.39]{M_Kapovich} claiming that
$M$ is a Seifert fibered manifold.
\end{proof}

\section{Graph products}\label{sec:GP}

\subsection{Parabolic subgroups}\label{sec:par_sbgps}
{Suppose that $\ga$ is a \emph{finite} simplicial graph with vertex set $V=V\ga$,
and $\G=\{G_v \mid v \in V\ga\}$ is a family of \emph{non-trivial} groups (called \textit{vertex groups}). 
The \emph{graph product} $G=\ga \G$, of this collection of groups with respect to $\ga$,
is the group obtained from the free product of the $G_v$, $v \in V\ga$, by adding the relations
$$[g_v, g_u]=1  \text{ for all }  g_v\in G_v,\, g_u\in G_u \text{ whenever $\{v,u\}$ is an edge of } \ga.$$

Graph product of groups is a natural group-theoretic construction generalizing free products (when $\ga$ has no edges) and direct products (when $\ga$ is a complete graph) of
groups $G_v$, $v \in V\ga$. Graph products were first introduced and studied by E. Green in her Ph.D. thesis \cite{Green}. Further properties of graph products have been investigated in the works of
Hermiller and Meier \cite{H-M}, Hsu and Wise \cite{HsuWise}, Januszkiewicz and \'Swi\c{a}tkowski \cite{J-S}, and Antol\'{i}n and Minasyan \cite{A-M-Tits}.
}

Recall that the \emph{link} $\link_{\Gamma}(v)$, of a vertex $v \in V\Gamma$, is the set of all vertices of $\ga$ that are adjacent to $v$ (excluding $v$ itself); in other words,
$\link_{\Gamma}(v)\coloneqq \{u\in V\Gamma \mid \{v,u\}\in E\Gamma \}$. For a non-empty subset $A \subseteq V\Gamma$, we define $\link_\ga(A)\coloneqq \bigcap_{v \in A} \link_\ga(v)$.
Note that $A \cap \link_\ga(A)=\emptyset$.

For any subset $A\subseteq V\Gamma,$ the subgroup $G_A\leqslant G$, generated by $\{G_v \mid v\in A\}$, is called \textit{full}; according to a standard convention,
$G_\emptyset=\{1\}$.  By the Normal Form Theorem {for graph products (see \cite{Green} or \cite{HsuWise})},
$G_A$ is the graph product of the family $\G_A:=\{G_v \mid v \in A\}$ with respect to the full subgraph $\Gamma_A$ of $\ga$,
spanned on the vertices from $A$. It is easy to see that there is a canonical retraction $\rho_A: G\to G_A$,
defined (on the generators of $G$) by $\rho_A(g) := g$ for each $g \in G_v$ with $v \in A$, and
$\rho_A(h) := 1$ for each $h \in G_u$ with $u \in V\setminus A$.

Any conjugate of a full subgroup in $G$ is called {\it parabolic}. Parabolic subgroups of graph products were studied and used by Antol\'in and the first author in \cite{A-M-Tits}.
In this subsection we summarize some of the known properties of parabolic subgroups of graph products.

For a subset $X \subseteq G$, $\C_G(X)$ and $\Nn_G(X)$ will denote the centralizer and the normalizer of $X$ in $G$ respectively.
All of the claims in the lemma below were proved in Section 3 of \cite{A-M-Tits}.

\begin{lemma}\label{lem:parab_props} 
\begin{itemize}
\item[]
\item[(i)] If $S,T \subseteq V$ and $g_1,g_2 \in G$ are such that $g_1G_Sg_1^{-1} \subseteq  g_2G_Tg_2^{-1}$ then $S \subseteq T$.
\item[(ii)] If $P$ is a parabolic subgroup of  $G$ and $gPg^{-1} \subseteq P$ for some $g \in G$, then $gPg^{-1}=P$.
\item[(iii)] If $P=fG_Sf^{-1}$, where $S \subseteq V$ and $f \in G$, then
$\Nn_G(P)=f G_{S \cup \link_\ga(S)}f^{-1}$ is a parabolic subgroup of $G$.
\item[(iv)] The intersection of two parabolic subgroups of $G$ is also a parabolic subgroup of $G$.
\end{itemize}
\end{lemma}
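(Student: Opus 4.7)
The plan is to establish the four claims in sequence, using the normal form theorem for graph products together with the canonical retractions $\rho_A: G \to G_A$ as the main tools. It is convenient to work with the notion of \emph{support} of an element $g \in G$, namely the smallest subset $\supp(g) \subseteq V$ with $g \in G_{\supp(g)}$, which is well-defined by the normal form theorem.

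For part (i), after replacing $g_1$ by $h = g_2^{-1}g_1$, the task reduces to showing that $h G_S h^{-1} \subseteq G_T$ forces $S \subseteq T$. Given $v \in S$ and a non-trivial $a \in G_v$, I would write $h$ in normal form and cancel all syllables lying in $G_w$ for $w \in \{v\} \cup \link_\ga(v)$ (which commute with $a$); the resulting reduced expression for $h a h^{-1}$ must contain $v$ in its support, and being an element of $G_T$ then forces $v \in T$. Part (iii) follows a parallel pattern: after conjugating so that $P = G_S$, the inclusion $G_{S \cup \link_\ga(S)} \leq \Nn_G(G_S)$ is immediate, since each vertex group $G_u$ with $u \in \link_\ga(S)$ commutes with every $G_v$, $v \in S$. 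For the reverse, I would take $g \in \Nn_G(G_S)$ in reduced normal form and induct on the word length, peeling off a leading syllable and using part (i) (applied to $h G_S h^{-1}$ for the remaining segment) to argue that each syllable must lie in $G_{S \cup \link_\ga(S)}$. Part (ii) then follows almost formally: after conjugation $P = G_S$, and a mild strengthening of the analysis of (iii) shows that the weaker hypothesis $gG_Sg^{-1} \subseteq G_S$ still forces $g \in G_{S \cup \link_\ga(S)} = \Nn_G(G_S)$, hence $gG_Sg^{-1} = G_S$.

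Part (iv) will be the main obstacle. A direct normal-form approach to intersections of conjugate parabolic subgroups is delicate, because one must simultaneously control the two conjugating elements and the interaction of their normal forms with the supports $S$ and $T$. My preferred route is geometric: realize $G$ as acting on the Salvetti-like CAT(0) cube complex naturally associated to the pair $(\ga, \G)$, in which every parabolic subgroup appears as the setwise stabilizer of a convex subcomplex spanned by translates of a ``standard'' parabolic sub-cube. The intersection of two parabolic subgroups then stabilizes the intersection of the two corresponding convex subcomplexes, which is again convex and of parabolic type, and so is itself a parabolic subgroup. As a back-up, one can attempt a combinatorial induction on $|V\ga|$: whenever $\ga$ admits a non-trivial full-subgraph separation one gets an amalgamated product splitting of $G$, reducing intersections of parabolics in $G$ to intersections of (smaller) parabolics in the vertex groups via Bass--Serre theory applied to the resulting tree.
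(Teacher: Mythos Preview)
The paper does not prove this lemma at all: the sentence immediately preceding it says ``All of the claims in the lemma below were proved in Section~3 of \cite{A-M-Tits}'', and no argument is given here. The proofs in \cite{A-M-Tits} are entirely combinatorial, based on normal forms and a minimal double-coset representative lemma (roughly: for $g\in G$ and $S,T\subseteq V$ one writes $g=ahb$ with $a\in G_T$, $b\in G_S$, $h$ of minimal length in $G_T g G_S$, and then $gG_Sg^{-1}\cap G_T$ is shown to equal $a\,G_{U}\,a^{-1}$ for an explicit $U\subseteq S\cap T$ determined by the support of $h$).

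Your sketches for (i)--(iii) are along correct lines and close in spirit to the cited source. For (i) there is a quicker route: if $hG_Sh^{-1}\subseteq G_T$ and $v\in S$, pick $1\ne a\in G_v$; applying the retraction $\rho_T$ to $hah^{-1}\in G_T$ gives $hah^{-1}=\rho_T(h)\rho_T(a)\rho_T(h)^{-1}$, and if $v\notin T$ then $\rho_T(a)=1$, forcing $a=1$.

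For (iv), both of your proposals have genuine gaps. In the cube-complex route, the assertion that the intersection of the two convex subcomplexes is ``again of parabolic type'' is precisely a geometric restatement of (iv), not a proof of it; and even granting that, you only get $P\cap Q\subseteq \st_G(C_P\cap C_Q)$, not equality. What actually makes this work is the Davis right-angled building associated to the graph product together with the building-theoretic fact that an intersection of residues is a residue and that residue stabilisers are exactly the parabolic subgroups --- nontrivial input you do not invoke (and ``Salvetti-like'' suggests you are implicitly in the right-angled Artin case). In the Bass--Serre backup, parabolic subgroups need not act elliptically on the splitting tree: if $v$ is the vertex being split off and $S$ contains both $v$ and some $u\notin\link_\ga(v)$, then $G_S\supseteq G_v\ast G_u$ has no fixed vertex, so there is no direct ``reduction to intersections of parabolics in the vertex groups''. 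The combinatorial double-coset argument of \cite{A-M-Tits} avoids both difficulties.
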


\begin{df} Suppose that $P=fG_Sf^{-1}$ is a parabolic subgroup of $G$, for some $S \subseteq V$ and $f \in G$. We define the \emph{parabolic dimension} $\pdim_\ga(P)$ of $P$ to be
the number of elements in $S$.
\end{df}

Part (i) of Lemma \ref{lem:parab_props} shows that the parabolic dimension is well-defined as $S$ is completely determined by $P$.

\begin{lemma}\label{lem:pdim} Let $P$ and $Q$ be parabolic subgroups of $G$.
\begin{itemize}
\item[(a)] If $P \subseteq  Q$ then $\pdim_\ga(P) \le \pdim_\ga(Q)$ with equality if and only if $P=Q$.
\item[(b)] If $ \pdim_\ga(P)\le \pdim_\ga(P \cap Q)$ then $P \subseteq Q$.
\end{itemize}
\end{lemma}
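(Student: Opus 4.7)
The plan is to reduce both parts to Lemma~\ref{lem:parab_props}. Write the parabolic subgroups in normal form as $P = f G_S f^{-1}$ and $Q = h G_T h^{-1}$ for some $S,T \subseteq V$ and $f,h \in G$, so that $\pdim_\ga(P) = |S|$ and $\pdim_\ga(Q) = |T|$, which is well-defined by part~(i) of Lemma~\ref{lem:parab_props}.

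For part~(a), the inequality $\pdim_\ga(P) \le \pdim_\ga(Q)$ follows immediately from $P \subseteq Q$ by applying Lemma~\ref{lem:parab_props}(i), which gives $S \subseteq T$ and hence $|S| \le |T|$. To handle the equality case, I assume $|S| = |T|$, which forces $S = T$, so that $f G_S f^{-1} \subseteq h G_S h^{-1}$. Setting $g \coloneqq h^{-1}f$, this inclusion rewrites as $g G_S g^{-1} \subseteq G_S$, and now Lemma~\ref{lem:parab_props}(ii) applied to the parabolic subgroup $G_S$ upgrades this to an equality $g G_S g^{-1} = G_S$, whence $P = f G_S f^{-1} = h G_S h^{-1} = Q$. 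Conversely, $P = Q$ trivially gives equality of parabolic dimensions.

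For part~(b), the key observation is that Lemma~\ref{lem:parab_props}(iv) guarantees that $P \cap Q$ is itself parabolic, so $\pdim_\ga(P \cap Q)$ is defined. Since $P \cap Q \subseteq P$, part~(a) (already proved) yields $\pdim_\ga(P \cap Q) \le \pdim_\ga(P)$. Combining this with the standing hypothesis $\pdim_\ga(P) \le \pdim_\ga(P \cap Q)$, I obtain the equality $\pdim_\ga(P \cap Q) = \pdim_\ga(P)$, and invoking the equality case of part~(a) for the inclusion $P \cap Q \subseteq P$ gives $P \cap Q = P$, i.e.\ $P \subseteq Q$.

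The only place that is even slightly delicate is the equality clause of (a), where one needs to pass from a one-sided inclusion $g G_S g^{-1} \subseteq G_S$ to equality; this is precisely the content of Lemma~\ref{lem:parab_props}(ii), so there is no real obstacle once that lemma is in hand. Everything else is bookkeeping with parabolic dimensions.
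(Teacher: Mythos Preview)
Your proof is correct and follows essentially the same approach as the paper: both use Lemma~\ref{lem:parab_props}(i) to get $S\subseteq T$, then in the equality case invoke Lemma~\ref{lem:parab_props}(ii) to upgrade the one-sided conjugate inclusion to equality, and both derive (b) from (a) via Lemma~\ref{lem:parab_props}(iv). The only cosmetic difference is that the paper applies (ii) to $P\subseteq gPg^{-1}$ with $g=hf^{-1}$, whereas you apply it to $gG_Sg^{-1}\subseteq G_S$ with $g=h^{-1}f$; these are equivalent.
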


\begin{proof} First note that, according to Lemma \ref{lem:parab_props}.(iv), $P \cap Q$ is also a parabolic subgroup of $G$. Therefore $\pdim_\ga(P \cap Q)$ is defined and
part (b) is an immediate consequence of part (a).

To prove part (a), suppose that $P \subseteq Q$ and $P=g_1G_Sg_1^{-1}$, $Q= g_2G_Tg_2^{-1}$ for some $S,T \subseteq V$, $g_1,g_2 \in G$.
Lemma \ref{lem:parab_props}.(i) implies that $S \subseteq T$, hence $\pdim_\ga(P)=|S|\le |T|=\pdim_\ga(Q)$. Now, if $\pdim_\ga(P) = \pdim_\ga(Q)$
then $S = T$ and thus $Q=g P g^{-1}$, where $g:=g_2g_1^{-1} \in G$. By Lemma \ref{lem:parab_props}.(ii), the inclusion of $P$ in $Q=gPg^{-1}$ yields that $P=gPg^{-1}=Q$, as required.
\end{proof}

By part (iv) of Lemma \ref{lem:parab_props} the intersection of finitely many parabolic subgroups is again a parabolic subgroup.
In fact a more general statement holds:

\begin{lemma}\label{lem:inter_parab_fam} Intersection of any family of parabolic subgroups of $G$ is equal to the intersection of a finite subfamily.
In particular, this intersection is itself a parabolic subgroup of $G$.
\end{lemma}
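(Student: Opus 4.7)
The plan is to exploit the parabolic dimension together with the fact that finite intersections of parabolic subgroups are parabolic (Lemma \ref{lem:parab_props}.(iv)), reducing the problem to a minimization argument over non-negative integers.

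Let $\{P_i\}_{i\in I}$ be the given family of parabolic subgroups, and consider the collection
\[
\mathcal{F} \coloneqq \left\{\,\textstyle\bigcap_{i\in F} P_i \;\middle|\; F\subseteq I,\ F\ \text{finite and non-empty}\,\right\}.
\]
By repeated application of Lemma \ref{lem:parab_props}.(iv), every member of $\mathcal{F}$ is a parabolic subgroup of $G$, so its parabolic dimension is defined and lies in $\{0,1,\dots,|V|\}$. First I would pick a finite subset $F\subseteq I$ for which $Q\coloneqq\bigcap_{i\in F}P_i$ attains the minimum value of $\pdim_\ga$ on $\mathcal{F}$; such an $F$ exists because the set of values is a non-empty subset of $\{0,1,\dots,|V|\}$.

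The key step is to argue that this $Q$ is already contained in every $P_j$, $j\in I$. Given any $j\in I$, the subgroup $Q\cap P_j=\bigcap_{i\in F\cup\{j\}}P_i$ again belongs to $\mathcal F$, so by minimality of $\pdim_\ga(Q)$ we have $\pdim_\ga(Q\cap P_j)\ge \pdim_\ga(Q)$. On the other hand, $Q\cap P_j\subseteq Q$, so Lemma \ref{lem:pdim}.(a) forces $\pdim_\ga(Q\cap P_j)\le \pdim_\ga(Q)$ with equality precisely when $Q\cap P_j=Q$. Consequently $Q\subseteq P_j$ for every $j\in I$, which gives $Q\subseteq\bigcap_{i\in I}P_i$. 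Since the reverse inclusion $\bigcap_{i\in I}P_i\subseteq Q$ is automatic from $F\subseteq I$, we conclude $\bigcap_{i\in I}P_i=Q=\bigcap_{i\in F}P_i$, and this common intersection is parabolic as a finite intersection of parabolics.

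I do not expect a serious obstacle: the whole argument is driven by the well-ordering of $\mathbb{N}$ applied to $\pdim_\ga$, and both ingredients (finite intersections of parabolic subgroups are parabolic, and strict inclusion of parabolic subgroups strictly decreases $\pdim_\ga$) are already packaged in Lemmas \ref{lem:parab_props} and \ref{lem:pdim}. The only point that requires a little care is noting that $\mathcal F$ is non-empty (handled by taking $F$ to be a singleton), so that the minimum of $\pdim_\ga$ on $\mathcal F$ is attained.
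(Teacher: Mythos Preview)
Your proof is correct and follows essentially the same approach as the paper: both minimize the parabolic dimension over finite subintersections and then show the minimizing intersection is contained in every $P_j$. The only cosmetic difference is that the paper invokes Lemma~\ref{lem:pdim}.(b) directly, whereas you spell out the equivalent argument via Lemma~\ref{lem:pdim}.(a).
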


\begin{proof} Consider a family $\{Q_i \mid i \in I\}$ of parabolic subgroups of $G$. For any finite subset $J$ of $I$, the intersection
$P_J:=\bigcap_{j \in J} Q_j$ is a parabolic subgroup
of $G$, thus its parabolic dimension is defined. Since this dimension is a non-negative integer,  and non-negative integers are well-ordered,
the function, which assigns to each finite subset $J \subseteq I$ the value $\pdim_\ga(P_J)$, reaches its minimum on
some finite subset $J_1 \subseteq I$.
Consider any $i \in I$ and set $J':=J_1 \cup \{i\}$. Then $\pdim_\ga(P_{J_1}) \le \pdim_\ga(P_{J'})=\pdim_\ga(P_{J_1} \cap Q_i)$ by the choice of $J_1$,
hence Lemma \ref{lem:pdim}.(b)
implies that $P_{J_1} \subseteq Q_i$. Since the latter is true for all $i \in I$ we can conclude that
$\bigcap_{i \in I} Q_i= P_{J_1}= \bigcap_{j \in {J_1}} Q_j$.
\end{proof}

In \cite[Prop. 3.10]{A-M-Tits} it was proved that any subset $X \subseteq G$ is contained in a unique minimal
parabolic subgroup $\pc_\ga(X)$, which is called \emph{the parabolic closure} of $X$. The next lemma lists two important properties of parabolic closures, which were
also established in \cite[Sec.~3]{A-M-Tits}.

\begin{lemma}\label{lem:parab_clos} Let $X \subseteq G$ be an arbitrary subset and let $P=\pc_\ga(X)$ be its parabolic closure in the graph product $G$.
\begin{itemize}
\item[(i)] The normalizer $\Nn_G(X)$ is contained in the normalizer $\Nn_G(P)$.
\item[(ii)] There exists a finite subset $X' \subseteq X$ such that $\pc_\ga(X)=\pc_\ga(X')$.
\end{itemize}
\end{lemma}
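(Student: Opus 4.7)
My plan is to prove the two parts separately, using the algebraic machinery from Lemmas \ref{lem:parab_props}, \ref{lem:pdim} and \ref{lem:inter_parab_fam}.

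For part (i), the plan is to use the minimality of the parabolic closure together with Lemma \ref{lem:parab_props}.(ii). Suppose $g \in \Nn_G(X)$, so that $gXg^{-1}=X$. Then $X=g^{-1}Xg \subseteq g^{-1}Pg$, and $g^{-1}Pg$ is itself a parabolic subgroup (being a conjugate of a full subgroup). By the minimality of $P=\pc_\ga(X)$ among parabolic subgroups containing $X$, we deduce $P \subseteq g^{-1}Pg$, i.e.\ $gPg^{-1} \subseteq P$. Lemma \ref{lem:parab_props}.(ii) then upgrades this inclusion to equality $gPg^{-1}=P$, so $g \in \Nn_G(P)$, as required.

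For part (ii), the plan is to build $X'$ inductively, using the parabolic dimension as a well-founded rank. Start by picking any $x_1 \in X$ and setting $X_1:=\{x_1\}$, $P_1:=\pc_\ga(X_1)$. At stage $k$, given a finite $X_k \subseteq X$ with $P_k:=\pc_\ga(X_k)$, either $P_k=P$ and we stop, or, by the minimality of $P=\pc_\ga(X)$, the subgroup $P_k$ does not contain $X$ (otherwise $P_k$ would be a parabolic subgroup containing $X$ strictly smaller than $P$, forcing $P\subseteq P_k\subsetneq P$, a contradiction). So we can pick $x_{k+1}\in X\setminus P_k$ and set $X_{k+1}:=X_k \cup\{x_{k+1}\}$, $P_{k+1}:=\pc_\ga(X_{k+1})$. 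Then $P_k \subsetneq P_{k+1}$ (the inclusion is clear since $X_k \subseteq X_{k+1}$, and it is strict because $x_{k+1}\in P_{k+1}\setminus P_k$), so by Lemma \ref{lem:pdim}.(a) we have $\pdim_\ga(P_k)<\pdim_\ga(P_{k+1})$.

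The main point is that this process must terminate: the parabolic dimension is a non-negative integer bounded above by $|V\ga|$, which is finite (recall that $\ga$ is assumed to be a finite simplicial graph). Hence after at most $|V\ga|$ steps the procedure halts, yielding a finite subset $X'=X_k \subseteq X$ with $\pc_\ga(X')=P=\pc_\ga(X)$. There is no real obstacle here: the argument is a standard ``descent / well-foundedness'' trick, and the only ingredients needed beyond the definition of $\pc_\ga$ are Lemma \ref{lem:pdim}.(a) to turn strict inclusion of parabolics into strict increase of dimension, and the finiteness of $V\ga$ to guarantee termination. (Alternatively, one could deduce (ii) more slickly from Lemma \ref{lem:inter_parab_fam} applied to the family of all parabolic subgroups of the form $\pc_\ga(X'')$ as $X''$ ranges over finite subsets of $X$, but the inductive argument above is more direct and self-contained.)
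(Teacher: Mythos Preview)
The paper does not actually supply a proof of this lemma; it is stated with a reference to \cite[Sec.~3]{A-M-Tits}, so there is no in-paper argument to compare against. That said, your proof is correct and is essentially the natural way to derive both parts from the tools the paper has already assembled.

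For (i), the argument via minimality of $\pc_\ga(X)$ followed by Lemma~\ref{lem:parab_props}.(ii) is exactly right. For (ii), your inductive construction is sound: the key point, that a strict inclusion of parabolics forces a strict increase in parabolic dimension (Lemma~\ref{lem:pdim}.(a)), together with the finiteness of $V\ga$, guarantees termination. One tiny edge case you glide over is $X=\emptyset$, where $X'=\emptyset$ works trivially; otherwise nothing is missing. Your parenthetical alternative via Lemma~\ref{lem:inter_parab_fam} would need a small reformulation (one should take a finite $X'$ maximizing $\pdim_\ga(\pc_\ga(X'))$ among finite subsets of $X$ rather than literally intersecting), but the inductive version you wrote out is clean and self-contained.
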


\begin{lemma}\label{lem:par_clos_in_spec} Consider any subset $A \subseteq V$ and the  corresponding full subgroup $G_A$ of $G$. As we know, $G_A$ is
equal to the graph product $\ga_A\G_A$, where $\ga_A$ is the full subgraph of $\ga$ spanned by $A$, and $\G_A:=\{G_v \mid v \in A\}$.
Then for an arbitrary subset $X \subseteq G_A$, the parabolic closure $\pc_{\ga_A}(X)$ of $X$ in $G_A=\ga_A\G_A$ coincides with the parabolic closure
$\pc_\ga(X)$ of $X$ in $G=\ga\G$.
\end{lemma}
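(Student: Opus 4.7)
The strategy is to prove the two inclusions $\pc_\ga(X) \subseteq \pc_{\ga_A}(X)$ and $\pc_{\ga_A}(X) \subseteq \pc_\ga(X)$ separately. The first is almost immediate: every parabolic subgroup of $G_A=\ga_A\G_A$ has the form $gG_B g^{-1}$ for some $B \subseteq A$ and some $g \in G_A$; but $g \in G_A \subseteq G$ and $B \subseteq A \subseteq V$, so such a subgroup is also parabolic in $G$. Applying this to $\pc_{\ga_A}(X)$, which contains $X$, the minimality of $\pc_\ga(X)$ among parabolic subgroups of $G$ containing $X$ yields $\pc_\ga(X) \subseteq \pc_{\ga_A}(X)$.

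For the reverse inclusion, set $P := \pc_\ga(X)$ and write $P = gG_S g^{-1}$ for some $S \subseteq V$ and $g \in G$. Since $G_A$ is itself a parabolic subgroup of $G$ containing $X$, the minimality of $P$ forces $P \subseteq G_A$. The inclusion $gG_Sg^{-1} \subseteq G_A = 1\cdot G_A \cdot 1$ combined with Lemma \ref{lem:parab_props}(i) then yields $S \subseteq A$.

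The crucial step is to replace the conjugator $g \in G$ by an element of $G_A$. For this I would invoke the canonical retraction $\rho_A \colon G \to G_A$. Since $\rho_A$ restricts to the identity on $G_A$ and $P \subseteq G_A$, one has $\rho_A(P)=P$; on the other hand, $\rho_A$ is a homomorphism and $\rho_A(G_S)=G_S$ because $S \subseteq A$, so
\[
P \;=\; \rho_A\!\left(gG_S g^{-1}\right) \;=\; \rho_A(g)\, G_S\, \rho_A(g)^{-1}.
\]
Setting $h := \rho_A(g) \in G_A$, we conclude that $P = hG_Sh^{-1}$ with $h\in G_A$ and $S \subseteq A$, so $P$ is a parabolic subgroup of the graph product $G_A = \ga_A\G_A$. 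Since $P$ contains $X$, the minimality of $\pc_{\ga_A}(X)$ inside $G_A$ gives $\pc_{\ga_A}(X) \subseteq P = \pc_\ga(X)$, finishing the proof.

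The only non-routine ingredient is the use of the retraction $\rho_A$ to ``push'' the conjugator $g$ into $G_A$; the rest is a direct application of Lemma \ref{lem:parab_props} and the defining minimality property of the parabolic closure. I do not anticipate a real obstacle, although one should be careful to verify the harmless point that $\rho_A$ is indeed the identity on $G_A$ (so that $\rho_A(P)=P$) before invoking it on the equation $P = gG_Sg^{-1}$.
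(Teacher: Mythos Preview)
Your proof is correct and essentially identical to the paper's own argument: both directions are handled the same way, with the nontrivial inclusion obtained by first using Lemma~\ref{lem:parab_props}(i) to get $S\subseteq A$ and then applying the canonical retraction $\rho_A$ to replace the conjugator by an element of $G_A$.
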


\begin{proof} Indeed, suppose that $\pc_\ga(X)=fG_Tf^{-1}$ for some $T \subseteq V$ and $f \in G$.
Since $X \subseteq G_A$ and $G_A$ is a parabolic subgroup of $G$,
we have $fG_Tf^{-1} \subseteq G_A$, implying that $T \subseteq A$ by Lemma \ref{lem:parab_props}.(i).
Let $\rho_A:G \to G_A$ be the canonical retraction. The inclusions $fG_Tf^{-1},G_T \subseteq G_A$, yield that
$fG_Tf^{-1} =\rho_A(fG_Tf^{-1})=gG_Tg^{-1}$, where $g:=\rho_A(f)\in G_A$. Hence $\pc_\ga(X)=gG_Tg^{-1}$ is a parabolic subgroup of $G_A$ containing $X$.
Consequently $\pc_{\ga_A}(X)\subseteq \pc_\ga(X)$. On the other hand, $\pc_{\ga}(X)\subseteq \pc_{\ga_A}(X)$ because any parabolic subgroup of $G_A=\ga_A\G_A$ is also
a parabolic subgroup of $G=\ga\G$. Therefore $\pc_{\ga_A}(X)= \pc_\ga(X)$.
\end{proof}

\begin{df}\label{df:join} Let $K=fG_Sf^{-1}$ be a parabolic subgroup of $G$, where $S \subseteq V$ and $f \in G$. We shall say that $K$ is a {\it join subgroup} of $G$
if there are non-empty disjoint subsets $A,B \subset V$ such that $S=A \sqcup B$ and $B \subseteq \link_\ga(A)$. If, in addition, both $G_A$ and $G_B$ are infinite
then we will say that $K$ is a {\it principal join subgroup} of $G$.
\end{df}

In other words, a join subgroup is a parabolic subgroup which naturally splits as a direct product of smaller parabolic subgroups, and in a principal join subgroup these smaller
parabolic subgroups must be infinite.

Using the terminology from \cite{A-M-Tits}, the {\it essential support} $\esupp_\ga(X)$ of a subset $X \subseteq G$ is defined as the smallest subset $S$ of $V$ such that
$X$ is contained in a conjugate of $G_S$ in $G$. Equivalently, $S=\esupp_\ga(X)$ if $\pc_\ga(X)=f G_S f^{-1}$ for some $f \in G$. A subset $T$ of $V=V\ga$ is said to be \emph{irreducible} if
$T$ cannot be represented as a union of disjoint non-empty subsets $A,B \subset S$ such that $B \subseteq \link_\ga(A)$.
Thus $T$ is irreducible if and only if $\ga_T$ is an irreducible graph (as defined in Subsection \ref{subsec:gp}), which is also equivalent to $G_T$ not being a join subgroup of $G$.

The above definitions together with Lemma \ref{lem:parab_props}.(i) immediately imply the following fact:

\begin{rem}\label{rem:cont_in_join} Suppose that $X$ is a subset of the graph product $G$. Then $X$ is contained in a join subgroup of $G$ if and only if
there are non-empty disjoint subsets $A,B \subset V$ such that $B \subseteq \link_\ga(A)$ and $\esupp_\ga(X) \subseteq A \sqcup B$. Similarly, $X$ is
contained in a principal join subgroup of $G$ if and only if there exist non-empty disjoint subsets $A, B \subset V$ such that $B \subseteq \link_\ga(A)$,
$\esupp_\ga(X) \subseteq A \sqcup B$ and $|G_A|=\infty$, $|G_B|=\infty$.
\end{rem}

\begin{lemma}\label{lem:H_in_join}  Let $H$ and $N$ be non-trivial subgroups of $G$ such that $H$ normalizes $N$ and $H$ is not contained in the parabolic closure $P:=\pc_\ga(N)$.
Then $H$ is contained in a join subgroup of $G$. Moreover, if $|N|=\infty$ and $|H:H\cap P|=\infty$ then $H$ is contained in a principal join subgroup of $G$.
\end{lemma}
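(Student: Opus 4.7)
The plan is to exploit the interplay between the normalizer of $N$ and the normalizer of its parabolic closure, using the explicit description of normalizers of parabolic subgroups from Lemma~\ref{lem:parab_props}(iii) together with the transfer of normalization to $P$ provided by Lemma~\ref{lem:parab_clos}(i).

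First, since $H$ normalizes $N$, Lemma~\ref{lem:parab_clos}(i) yields $H \leqslant \Nn_G(N) \leqslant \Nn_G(P)$. Writing $P=fG_Sf^{-1}$ for some $S \subseteq V$ and $f \in G$, note that $S \neq \emptyset$ because $N \leqslant P$ and $N \neq \{1\}$. By Lemma~\ref{lem:parab_props}(iii), $\Nn_G(P) = fG_{S \cup \link_\ga(S)}f^{-1}$, and by the definition of the link, $S \cap \link_\ga(S) = \emptyset$. I then set $A:=S$ and $B:=\link_\ga(S)$, so that $A \sqcup B$ is a disjoint union with $B \subseteq \link_\ga(A)$. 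The assumption $H \not\subseteq P = fG_Af^{-1}$ combined with the inclusion $H \leqslant fG_{A \sqcup B}f^{-1}$ forces $B \neq \emptyset$ (otherwise $\Nn_G(P)=P$), so $fG_{A \sqcup B}f^{-1}$ is a join subgroup of $G$ containing $H$. This establishes the first assertion.

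For the second assertion, suppose additionally that $|N|=\infty$ and $|H:H\cap P|=\infty$. Infinitude of $G_A=G_S$ is immediate from $|N|=\infty$ and $N \leqslant P \cong G_S$. For $G_B$, observe that since every vertex of $B=\link_\ga(S)$ is adjacent in $\ga$ to every vertex of $A=S$, the normal form theorem for graph products gives $G_{A \sqcup B} = G_A \times G_B$, and hence $fG_{A \sqcup B}f^{-1} = P \times K$ where $K := fG_Bf^{-1}$. Projecting $fG_{A\sqcup B}f^{-1}$ onto $K$ along $P$ and restricting to $H$ yields a map whose kernel is exactly $H \cap P$, so $|H:H\cap P| \leqslant |K| = |G_B|$. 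Thus $|G_B|=\infty$, confirming that $fG_{A\sqcup B}f^{-1}$ is a principal join subgroup.

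The main obstacle is the verification that $B$ is non-empty in the first part, and, in the second part, the subtler point of ruling out the possibility that $G_{\link_\ga(S)}$ is finite; both reduce to hypotheses via the explicit normalizer formula and the $A$-$B$ commutation yielding the direct-product decomposition of $G_{A \sqcup B}$. Everything else is a direct application of the listed properties of parabolic subgroups.
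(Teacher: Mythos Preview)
Your proof is correct and follows essentially the same route as the paper: apply Lemma~\ref{lem:parab_clos}(i) to pass from $\Nn_G(N)$ to $\Nn_G(P)$, use Lemma~\ref{lem:parab_props}(iii) to identify $\Nn_G(P)=fG_{S\cup\link_\ga(S)}f^{-1}$, and deduce $\link_\ga(S)\neq\emptyset$ from $H\not\subseteq P$. For the principal case the paper computes $|G_B|=|G_{A\cup B}:G_A|$ directly from $G_{A\cup B}=G_AG_B$, whereas you use the equivalent projection argument; the content is identical.
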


\begin{proof} By definition, $P=fG_Af^{-1}$ for some non-empty subset $A \subset V$ and an element $f \in G$. By Lemma \ref{lem:parab_clos}.(i), since $P$ is the parabolic closure of $N$,
$H \leqslant \Nn_G(N) \leqslant \Nn_G(P)$ in $G$. And, according to Lemma \ref{lem:parab_props}.(iii), $\Nn_G(P)=f G_{A \cup \link_\ga(A)}f^{-1}$. Since
$H$ is contained in $\Nn_G(P)$ but not in $P$, it follows that $B:=\link_\ga(A) \neq \emptyset$. Hence $\Nn_G(P)$ is a join subgroup of $G$ containing $H$ and the first claim is proved.

Now, if $N$ is infinite then $G_A$ is infinite. And if $|H:H\cap P|=\infty$ then $|\Nn_G(P):P|=\infty$, and so $|G_{A\cup B}:G_A|=\infty$. Recalling that $G_A$ commutes with $G_B$
(by definition of $B$), we see that $G_{A\cup B}=G_A G_B$, hence $|G_B|=|G_{A\cup B}:G_A|=\infty$. Thus $\Nn_G(P)$ is a principal join subgroup of $G$ containing $H$, and
the second claim of the lemma also holds.
\end{proof}

\subsection{Application of Bass-Serre theory to graph products}\label{sec:wpd_in_gp}
As before, we fix a finite simplicial graph $\ga$ with the vertex set $V$ and consider the graph product $G=\ga\G$, of a family of non-trivial
groups $\G=\{ G_v \mid v \in V\} $, with respect to $\ga$.

The definition of graph product immediately implies that for any $v \in V$ the group $G$ naturally splits as the
free amalgamated product: $G=G_A*_{G_C}G_B$, where $C=\link_\ga(v)$, $B=\{v\}\cup\link_\ga(v)$ and $A=V-\{v\}$
(cf. \cite[Lemma 3.20]{Green}). It follows that $G$ acts (simplicially by isometries and without edge inversions) on the Bass-Serre tree $\T$ associated to this splitting,
and full vertex and edge stabilizers for this action are parabolic subgroups of $G$ (conjugates of $G_A$, $G_B$ and $G_C$).

\begin{lemma}\label{lem:H_has_hyp_elt} Consider a subgroup $H\leqslant G$ and a vertex $v \in \esupp_\ga(H)$.
Suppose that $\esupp_\ga(H) \not\subset \{v\} \cup \link_\ga(v)$ in $\ga$.
Let $A:=V \setminus \{v\}$, $B:=\{v\} \cup \link_\ga(v)$ and $C:=\link_\ga(v)$. As we know, $G$ splits as an amalgamated product
$G=G_A *_{G_C} G_B$ and acts on the corresponding simplicial Bass-Serre tree $\T$. Then there exists an element $h \in H$ which acts as a hyperbolic isometry of $\T$.
\end{lemma}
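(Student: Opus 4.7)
The plan is to argue by contradiction: I would assume that every element of $H$ acts elliptically on $\T$, and derive a contradiction with the hypotheses $v\in\esupp_\ga(H)$ and $\esupp_\ga(H)\not\subseteq B=\{v\}\cup\link_\ga(v)$.

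First, using Lemma \ref{lem:parab_clos}.(ii), I would choose a finite subset $X'\subseteq H$ such that $\pc_\ga(X')=\pc_\ga(H)$, and pass to the finitely generated subgroup $H':=\langle X'\rangle\leqslant H$. Under the standing assumption, every element of $H'$ is elliptic, and since $G$ (hence $H'$) acts on $\T$ without edge inversions, Serre's fixed-point theorem for finitely generated groups acting elliptically on a tree (the Helly property for fixed-point subtrees) yields a vertex $u\in \T$ fixed pointwise by $H'$.

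Then $H'\leqslant \st_G(u)$, and $\st_G(u)$ is a parabolic subgroup of $G$, being a conjugate of either $G_A$ or $G_B$ by standard Bass--Serre theory for the splitting $G=G_A*_{G_C}G_B$. Minimality of the parabolic closure gives $\pc_\ga(H')\leqslant \st_G(u)$; but $\pc_\ga(H')=\pc_\ga(X')=\pc_\ga(H)=g G_S g^{-1}$ for $S:=\esupp_\ga(H)$ and some $g\in G$, so Lemma \ref{lem:parab_props}.(i) forces $S\subseteq A$ or $S\subseteq B$. The former contradicts $v\in S$, the latter contradicts the hypothesis $\esupp_\ga(H)\not\subseteq B$. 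Hence there must exist $h\in H$ acting hyperbolically on $\T$.

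The main obstacle, which is exactly what Lemma \ref{lem:parab_clos}.(ii) allows one to sidestep, is that $H$ itself may fail to be finitely generated: for such groups, having every element elliptic on a tree is consistent with fixing an end without fixing any vertex (compare Lemma \ref{lem:fix_end}), so Serre's theorem cannot be applied to $H$ directly. Passing to a finitely generated subgroup with the same parabolic closure reduces the problem to a genuine fixed-vertex statement, which then combines with the minimality property of $\pc_\ga$ and Lemma \ref{lem:parab_props}.(i) to yield the desired contradiction on essential supports.
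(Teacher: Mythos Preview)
Your proof is correct and follows essentially the same route as the paper's: contradiction, reduction to a finitely generated subgroup $H'$ with the same parabolic closure via Lemma~\ref{lem:parab_clos}.(ii), Serre's fixed-point theorem to find a fixed vertex, and then Lemma~\ref{lem:parab_props}.(i) to force $\esupp_\ga(H)\subseteq A$ or $\esupp_\ga(H)\subseteq B$, contradicting the hypotheses. Your explicit remark about why one must pass to $H'$ (to handle non-finitely-generated $H$) is a nice addition but does not change the argument.
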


\begin{proof} Arguing by contradiction, suppose that $H$ contains no hyperbolic elements; then every element of $H$ is elliptic.
By Lemma \ref{lem:parab_clos}.(ii), there is a finite subset $X' \subseteq H$ such that $\pc_\ga(H)= \pc_\ga(X')$.
The subgroup $H'\coloneqq \gen{X'} \leqslant H$ is finitely generated and every element of $H'$ fixes a vertex of $\mathcal T$, therefore
$H'$ fixes some vertex of $\mathcal T$ (see \cite[I.6.5, Cor. 3]{Serre}). But full $G$-stabilizers of vertices of $\mathcal T$ are conjugates of $G_A$ and $G_B$ in $G$,
which are parabolic subgroups of $G$.  Therefore there is $f \in G$ such that $H' \leqslant fG_Df^{-1}$ where $D \subset V$ is either $A$ or $B$.
It follows that $$\pc_\ga(H)=\pc_\ga(X') =\pc_\ga(H') \leqslant fG_Df^{-1}.$$
One can now apply Lemma \ref{lem:parab_props}.(i) to conclude that $\esupp_\ga(H) \subseteq D$, and so $D \neq A$ by the definition of $A$. Therefore $D=B$ and
$\esupp_\ga(H) \subseteq B$. Recall that $B=\{v\} \cup \link_\ga(v)$, thus $\esupp_\ga(H)\subseteq \{v\} \cup \link_\ga(v)$,
which contradicts our assumptions.
\end{proof}

Suppose that a group $G$ acts on a tree $\T$  and let $\d_\T(\cdot,\cdot)$ be the metric on $\T$.
For any two points $x, y $ of $\T$, $[x,y]$ will denote the geodesic segment between $x$ and $y$ in $\T$.
If $\mathcal{Y}$ is a subset of $\T$, we will write $\st_G(\mathcal{Y})$ and $\pst_G(\mathcal{Y})$ to denote the setwise and pointwise stabilizers of
$\mathcal Y$ in $G$ respectively. Evidently $\pst_G(\mathcal{Y})\lhd \st_G(\mathcal{Y})$.

\begin{lemma}\label{lem:axis_pst} Consider an element $h \in G$ that acts on the tree $\T$ as a hyperbolic isometry, and let $\mathcal{L} \subseteq \T$ be the axis of $h$.
Assume that $\pst_G(\L)=\pst_G([x,y])$ for some geodesic subsegment $[x,y]$ of $\L$ with $x,y \in \L$. Then there exists $M> 0$ such that for any points
$z,w \in \L$ with $\d_\T(z,w) \ge M$ one has $\pst_G([z,w])=\pst_G(\L)$.
\end{lemma}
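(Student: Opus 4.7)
The plan is to exploit the fact that $h$ acts on $\L$ by translation, so the iterates $h^k[x,y]$, as $k$ ranges over $\Z$, cover $\L$ by translates of a single segment of fixed length. Once this is clear, the conclusion should follow from a simple conjugation argument applied to the pointwise stabilizer.

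First I would record the basic observation that for any subset $S \subseteq \T$ and any $g \in G$ one has $\pst_G(gS) = g \pst_G(S) g^{-1}$. Since $h$ stabilizes $\L$ setwise, this yields $h \pst_G(\L) h^{-1} = \pst_G(\L)$. Combined with the hypothesis $\pst_G([x,y]) = \pst_G(\L)$, this gives, for every integer $k$,
\[
\pst_G\bigl(h^k[x,y]\bigr) \;=\; h^k \pst_G([x,y]) h^{-k} \;=\; h^k \pst_G(\L) h^{-k} \;=\; \pst_G(\L).
\]

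Next I would produce the constant $M$. Let $\|h\|>0$ denote the translation length of $h$, so that $h$ acts on $\L$ as a translation by $\|h\|$. Parametrizing $\L$ by $\R$ with $h$ acting as $t \mapsto t + \|h\|$, and writing $\ell = \d_\T(x,y)$, the segments $h^k[x,y]$ are just the translates of $[x,y]$ by integer multiples of $\|h\|$. An elementary computation shows that if $[z,w] \subseteq \L$ satisfies $\d_\T(z,w) \ge \ell + \|h\|$, then the interval $\{k \in \R : h^k[x,y] \subseteq [z,w]\}$ has length at least $1$, and therefore contains an integer. Setting $M \coloneqq \d_\T(x,y) + \|h\|$ (or any larger constant for safety) guarantees such a $k$.

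Finally, for $[z,w] \subseteq \L$ with $\d_\T(z,w) \ge M$, pick $k$ with $h^k[x,y] \subseteq [z,w] \subseteq \L$. Pointwise stabilizers reverse inclusions, so
\[
\pst_G(\L) \;\subseteq\; \pst_G([z,w]) \;\subseteq\; \pst_G\bigl(h^k[x,y]\bigr) \;=\; \pst_G(\L),
\]
which forces equality throughout. There is no real obstacle in this argument; the only thing to be slightly careful about is the choice of $M$ and the verification that a power of $h$ can translate $[x,y]$ into an arbitrary long subsegment of $\L$, which is an elementary check using the translation-length parametrization of the axis.
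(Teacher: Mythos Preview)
Your argument is correct and essentially identical to the paper's proof: you take the same constant $M = \d_\T(x,y) + \|h\|$, use the translation action of $h$ on $\L$ to find a power $h^k$ with $h^k[x,y] \subseteq [z,w]$, and then sandwich $\pst_G([z,w])$ between $\pst_G(\L)$ and $\pst_G(h^k[x,y]) = h^k\pst_G(\L)h^{-k} = \pst_G(\L)$. The only cosmetic difference is that you establish $\pst_G(h^k[x,y]) = \pst_G(\L)$ for all $k$ at the outset, whereas the paper performs the conjugation at the end of the chain of inclusions.
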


\begin{proof} Recall that $h$ acts on $\L$ by translation with positive translation length $\|h\|>0$.
Let $M:=\d_\T(x,y) + \|h\|> 0$, and consider any subsegment $[z,w]$ of $\L$ with $\d_\T(z,w)\ge M$.
Then for some integer $m \in \Z$, $h^m  [x,y]$ will be contained in $[z,w]$.
Consequently one has $$\pst_G([z,w])\subseteq \pst_G(h^m [x,y])=h^m \pst_G([x,y]) h^{-m} = h^m \pst_G(\L)h^{-m}. $$
Clearly $h \in \st_G(\L)$ normalizes $\pst_G(\L)$, thus $\pst_G([z,w])\subseteq \pst_G(\L)$.
On the other hand, since $[z,w]$ is a subsegment of $\L$ one sees that  $\pst_G(\L) \subseteq \pst_G([z,w])$. Therefore $\pst_G([z,w])=\pst_G(\L)$, as claimed.
\end{proof}

For the remainder of this section we assume that $G=\ga\G$,  $H\leqslant G$ is a subgroup and $v \in \esupp_\ga(H)$ is
such that $\esupp_\ga(H) \not\subset \{v\} \cup \link_\ga(v)$ in $\ga$.
As before, set $A:=V \setminus \{v\}$, $B:=\{v\} \cup \link_\ga(v)$ and $C:=\link_\ga(v)$. Let $\T$ be the Bass-Serre tree associated to the natural splitting
$G=G_A *_{G_C} G_B$.

\begin{lemma}\label{lem:pst-parab} If $\L$ is the axis of some hyperbolic element $h \in G$ then $\pst_G(\L)$ is a parabolic subgroup of $G$, which is
contained in a conjugate of $G_C$. Moreover, there exist vertices $x,y$ on $\L$ such that $\pst_G(\L)=\pst_G([x,y])$.
\end{lemma}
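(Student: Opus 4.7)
The plan is to exploit the structure of the Bass-Serre tree $\T$ associated to the splitting $G = G_A *_{G_C} G_B$: every edge stabilizer is a conjugate of $G_C$, and since $G$ acts without inversions, the setwise and pointwise stabilizers of an edge coincide. Hence an element of $G$ fixes the axis $\mathcal{L}$ pointwise if and only if it fixes every edge of $\mathcal{L}$. This gives the identity
\[
\pst_G(\mathcal{L}) \;=\; \bigcap_{e \,\in\, E(\mathcal{L})} \st_G(e),
\]
expressing $\pst_G(\mathcal{L})$ as an intersection of a family of parabolic subgroups, each of which is a conjugate of $G_C$.

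Next I would invoke Lemma \ref{lem:inter_parab_fam}, which says that any intersection of parabolic subgroups of $G$ equals the intersection of a finite subfamily and is itself parabolic. Applied to the display above, this yields a finite list of edges $e_1,\dots,e_n$ on $\mathcal{L}$ with
\[
\pst_G(\mathcal{L}) \;=\; \bigcap_{i=1}^{n} \st_G(e_i),
\]
and this intersection is a parabolic subgroup of $G$. Since $\pst_G(\mathcal{L}) \subseteq \st_G(e_1)$ and $\st_G(e_1)$ is a conjugate of $G_C$, the first claim follows immediately.

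For the ``moreover'' part, I would choose vertices $x,y$ on $\mathcal{L}$ so that the geodesic segment $[x,y]$ contains all of the edges $e_1,\dots,e_n$ (possible since these edges form a finite subset of the line $\mathcal{L}$, and one can take $x,y$ to be the two extreme vertices bounding their union). Then $[x,y] \subseteq \mathcal{L}$ gives the containment $\pst_G(\mathcal{L}) \subseteq \pst_G([x,y])$. Conversely, any element fixing $[x,y]$ pointwise fixes each edge $e_i$ (hence lies in $\st_G(e_i)$), so $\pst_G([x,y]) \subseteq \bigcap_{i=1}^n \st_G(e_i) = \pst_G(\mathcal{L})$, giving the desired equality.

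I don't expect a serious obstacle here: the proof is essentially a packaging of Lemma \ref{lem:inter_parab_fam} together with the standard fact about Bass-Serre edge stabilizers. The only mildly delicate point is the passage from ``fixing every point of $\mathcal{L}$'' to ``fixing every edge of $\mathcal{L}$'', which requires the absence of edge inversions; this was built into the setup at the beginning of Section \ref{sec:wpd_in_gp} when we noted that the natural splitting action of $G$ on $\T$ has no inversions.
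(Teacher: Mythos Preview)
Your proposal is correct and follows essentially the same approach as the paper: express $\pst_G(\mathcal{L})$ as the intersection of the edge stabilizers (conjugates of $G_C$) along $\mathcal{L}$, apply Lemma~\ref{lem:inter_parab_fam} to reduce to a finite subfamily and conclude parabolicity, then enclose those finitely many edges in a segment $[x,y]$. The paper writes the intersection using $\pst_G(e)$ rather than $\st_G(e)$, but your remark that these agree in the absence of inversions makes the two arguments identical in substance.
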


\begin{proof} By definition, $\pst_G(\L)=\bigcap_{e \in \mathcal{E}} \pst_G(e)$, where $\mathcal{E}$ is the set of all edges of $\L$.
Since each $\pst_G(e)$ is a conjugate of $G_C$ in $G$,
$\pst_G(L)$ is a parabolic subgroup of $G$ by Lemma \ref{lem:inter_parab_fam}. In fact Lemma \ref{lem:inter_parab_fam} implies that
$\pst_G(\L)=\bigcap_{e \in \mathcal{E}'} \pst_G(e)=\pst_G(\mathcal{E}')$
for some finite collection $\mathcal{E}'$ of edges of $\L$. Since any finite collection of edges of $\L$
is contained in a finite subsegment we can find vertices $x,y \in \L$
such that $\pst_G(\L)=\pst_G(\mathcal{E}')=\pst_G([x,y])$.
\end{proof}

With the above assumptions on $H$, Lemma \ref{lem:H_has_hyp_elt} tells us that $H$ contains at least one element that acts as a hyperbolic isometry on $\T$.
The main technical statement of this section is the following proposition:

\begin{prop}\label{prop:pst(axis)-normal} Let $h \in H$ be a hyperbolic element such that $\pdim_\ga(\pst_G(\L))$ is minimal, where $\L$ is the axis of $h$.
Then the parabolic subgroup $\pst_G(\L)$ is normalized by $H$ in $G$.
\end{prop}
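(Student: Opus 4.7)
Set $P := \pst_G(\L)$ and $m := \pdim_\ga(P)$; by hypothesis $m$ is the minimum of $\pdim_\ga(\pst_G(axis(k)))$ as $k$ ranges over hyperbolic elements of $H$. My task is to verify $gPg^{-1}=P$ for every $g\in H$, so fix such a $g$. The element $h':=ghg^{-1}$ lies in $H$ and is hyperbolic with axis $g\L$, and $\pst_G(g\L)=gPg^{-1}$ has parabolic dimension $m$ (parabolic dimension is preserved by conjugation). If $g\L=\L$ there is nothing to show, so I assume $g\L\ne\L$.

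Next I set up a ``subsegment reduction''. By Lemma \ref{lem:pst-parab} one has $P=\pst_G([x,y])$ for some $x,y\in\L$, so Lemma \ref{lem:axis_pst} yields an integer $M>0$ such that $\pst_G([z,w])=P$ for every subsegment $[z,w]\subseteq\L$ with $\d_\T(z,w)\ge M$. Applying the same lemmas to the axis $g\L$ of $h'$ produces an integer $M'>0$ such that $\pst_G([z,w])=gPg^{-1}$ whenever $[z,w]\subseteq g\L$ and $\d_\T(z,w)\ge M'$.

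The core step is to build a hyperbolic $\tilde h\in H$ whose axis $\tilde\L$ contains both a subsegment of $\L$ of length at least $M$ and a subsegment of $g\L$ of length at least $M'$. Since $\L\ne g\L$ in the simplicial tree $\T$, their intersection is either empty, a bounded segment, or a single half-line. In each case a standard tree ping-pong argument, applied to the two hyperbolic elements $h$ and $h'=ghg^{-1}$, shows that for an appropriate choice of signs $\e_1,\e_2\in\{\pm 1\}$ and all sufficiently large $n$, the element
\[
\tilde h \,:=\, h^{\e_1 n}\cdot g h^{\e_2 n} g^{-1}\;\in\; H
\]
is hyperbolic with axis whose central portion is a concatenation of a long subsegment of $\L$, the (possibly degenerate) bridge joining $\L$ and $g\L$, and a long subsegment of $g\L$; taking $n$ large makes both subsegments exceed $M$ and $M'$. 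Every element of $\pst_G(\tilde\L)$ fixes $\tilde\L$ pointwise and in particular fixes these two subsegments, so by the choice of $M,M'$ one obtains $\pst_G(\tilde\L)\subseteq P\cap gPg^{-1}$.

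Finally, minimality of $m$ forces $\pdim_\ga(\pst_G(\tilde\L))\ge m$, while $\pst_G(\tilde\L)\subseteq P$ and $\pdim_\ga(P)=m$, so Lemma \ref{lem:pdim}(a) yields $\pst_G(\tilde\L)=P$; symmetrically $\pst_G(\tilde\L)=gPg^{-1}$, whence $P=gPg^{-1}$. I expect the main obstacle to be the ping-pong construction in the third paragraph, specifically the subcase where $\L$ and $g\L$ share a single end: there the signs $\e_1,\e_2$ must be chosen so that $h^{\e_1 n}$ and $gh^{\e_2 n}g^{-1}$ translate in opposite directions along the shared ray, ensuring that $\tilde h$ is hyperbolic and that its axis contains genuinely long subsegments of $\L$ and $g\L$ rather than collapsing toward the shared end.
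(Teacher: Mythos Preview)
Your proposal is correct and follows essentially the same strategy as the paper: build a hyperbolic element of $H$ whose axis contains long subsegments of both axes under consideration, so that its pointwise stabilizer sits inside both $P$ and $gPg^{-1}$, and then invoke minimality of the parabolic dimension together with Lemma~\ref{lem:pdim}(a) to force equality.

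The organization differs slightly. The paper does not conjugate $h$ by an arbitrary $g\in H$; instead it shows directly that $\pst_G(\L)\subseteq \pst_G(\M)$ for the axis $\M$ of \emph{every} hyperbolic element of $H$, and then observes that $H$ permutes the set of such axes, so the intersection $\bigcap_{\M}\pst_G(\M)=\pst_G(\L)$ is $H$-invariant. Your version bypasses this intersection argument by taking $\M=g\L$ from the start, which gives the normalization conclusion in one step. Both routes use the same construction $\tilde h=(h')^{n}h^{n}$ (up to signs), and the paper handles your case split as follows: when the two axes share at least one edge it gives an explicit translation argument (this covers both the bounded-overlap and the shared-half-line subcases simultaneously, so the latter need not be singled out), and when they are disjoint or meet in a single vertex it cites \cite[1.5]{CullerMorgan} for the structure of the axis of the product. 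You may wish to replace the phrase ``standard tree ping-pong'' with these two explicit references, since that is where the work actually lies.
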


\begin{proof} We will first show that for any element $g \in H$, which also acts as a hyperbolic isometry on $\T$, $\pst_G(\L) \subseteq \pst_G(\M)$, where $\M$ is the axis of $g$.
Let $M_1>0$ and $M_2>0$ be the constants obtained by applying Lemma \ref{lem:axis_pst} to $h$ and $g$ respectively (this can be done in view of Lemma \ref{lem:pst-parab}).
Choose a natural number $n \in \N$ so that $n\|h\| \ge M_1$ and $n\|g\| \ge M_2$. We need to consider two separate cases.

\emph{Case 1.} The intersection of $\L$ with $\M$  contains at least one edge $e$. After replacing $h$ and $g$ with their inverses, if necessary,
we can suppose that both  $h$ and $g$ translate $e$ (cf. Def.~\ref{df:edge_transl}).
Set $e':=h^{-n}  e$ and note that the element $f:=g^nh^n \in H$ translates $e'$.
Therefore $f$ is hyperbolic and $[e'_-,(f  e')_+]$ lies on the axis $\mathcal{N}$ of $f$. By construction,
the edge $e$ is contained in the segment $[e'_-,(f  e')_+]$ (see Figure \ref{fig:1}).

\begin{figure}[ht]
\input{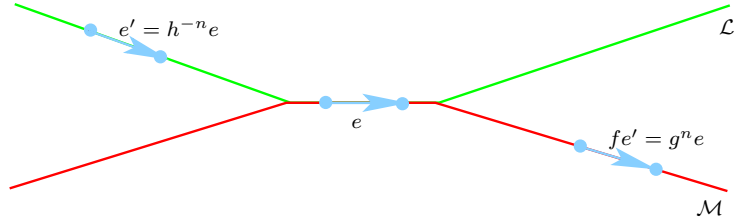}
\caption{Illustration of Case 1. }\label{fig:1}
\end{figure}

Observe that $e'$ is an edge of $\L$ (as $\L$ is $\langle h \rangle$-invariant and $e \in \L$), hence the intersection of $\mathcal{N}$ and $\L$ contains the segment
$[e'_-,e_-]$, whose length is equal to $n \|h\|$, which is greater than $M_1$. Therefore Lemma \ref{lem:axis_pst} implies that
\begin{equation} \label{eq:N<L} \pst_G(\mathcal{N})\subseteq \pst_G([e'_-,e_-])=\pst_G(\L).
\end{equation}

Similarly, as $g^n  e=f  e'$, we have $[e_-,(g^n  e)_-] \subseteq \cN \cap \M$, and so
\begin{equation} \label{eq:N<M} \pst_G(\mathcal{N})\subseteq \pst_G([e_-,(g^n  e)_-])=\pst_G(\M). \end{equation}
However, since $\L$ was chosen so that $\pdim_\ga(\pst_G(\L))$ is minimal, we know that $$\pdim_\ga(\pst_G(\L)) \le \pdim_\ga(\pst_G(\cN)).$$ Combining \eqref{eq:N<L}
with Lemma \ref{lem:pdim}.(a) we get that $\pst_G(\L) = \pst_G(\cN)$, and hence \eqref{eq:N<M} yields that $\pst_G(\L) \subseteq \pst_G(\M)$.

\emph{Case 2.} The intersection of $\L$ with $\M$ is empty or consists of a single vertex. Let $p \in \L$ and $q \in \M$ be vertices with the property
$\d_\T(p,q)=\d_\T(\L,\M)$; thus  (in the terminology of \cite{CullerMorgan}) $[p,q]$ is the \emph{spanning geodesic} between $\L$ and $\M$. In \cite[1.5]{CullerMorgan}
it is shown that the element $f:=g^n h^n \in H$ is hyperbolic and the segment $[h^{-n}  p, g^n  q]$ lies on the axis $\cN$ of $f$.
By construction, both $p$ and $q$ belong to this segment (see Figure \ref{fig:2}).

\begin{figure}[ht]
\input{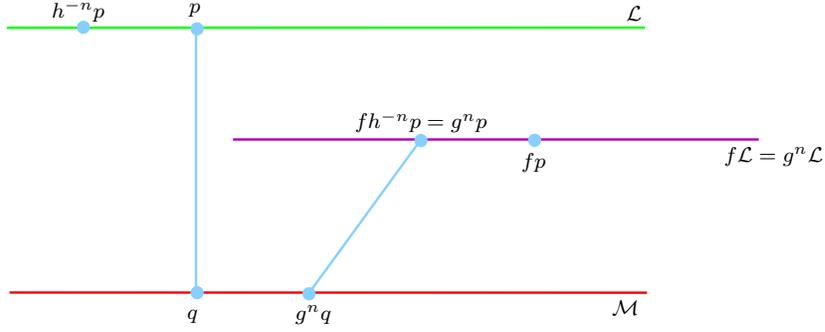}
\caption{Illustration of Case 2. }\label{fig:2}
\end{figure}

Again, we observe that $[h^{-n}  p,p] \subseteq \L \cap \cN$ and $\d_\T(h^{-n}  p,p)=n \|h\|\ge M_1$. Therefore Lemma \ref{lem:axis_pst} tells us that
$\pst_G(\mathcal{N})\subseteq \pst_G([h^{-n}  p,p])=\pst_G(\L)$. Similarly, $\pst_G(\mathcal{N})\subseteq \pst_G([q,g^n q])=\pst_G(\M)$. As before,
using minimality of $\pdim_\ga(\pst_G(\L))$, one can conclude that $\pst_G(\mathcal{L})=\pst_G(\cN)\subseteq \pst_G(\M)$.

Let $\mathfrak{M}$ denote the set of axis of the elements of $H$ that act as hyperbolic isometries of $\T$.
We have shown above that \begin{equation}\label{eq:pstL-in_each} \pst_G(\mathcal{L})\subseteq  \pst_G(\M) \mbox{ for every }\M \in \mathfrak{M}. \end{equation}
Note that $H$ naturally acts on the set $\mathfrak{M}$, and so it acts by conjugation on the collection of its subgroups
$\{\pst_G(\M) \mid \M\in \mathfrak{M}\} $
(if $f \in H$ and $\M$ is the axis of a hyperbolic $g \in H$ then $f \pst_G(\M) f^{-1}=\pst_G(f  \M)$, where $f  \M$ is the axis of the
hyperbolic element $fgf^{-1} \in H$). It follows that the intersection $\bigcap_{ \M\in\mathfrak{M}} \pst_G(\M)$ is normalized by $H$; but, according to \eqref{eq:pstL-in_each},
this intersection is equal to $\pst_G(\L)$. Thus the proposition is proved.
\end{proof}

\begin{thm}\label{thm:join}
Suppose that $G$, $H$ and $\T$ are as above. Let $h \in H$ be a hyperbolic element with minimal $\pdim_\ga(\pst_G(\L))$, where $\L$ is the axis of $h$.
If $\pst_G(\L)$ is non-trivial then $H$ is contained in a join subgroup of $G$. And if  $|\pst_G(\L)|=\infty$ then $H$ is contained in a principal join subgroup of $G$.
\end{thm}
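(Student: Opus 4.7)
The plan is to combine Proposition \ref{prop:pst(axis)-normal} with Lemma \ref{lem:H_in_join}, taking $N \coloneqq \pst_G(\L)$. Proposition \ref{prop:pst(axis)-normal} tells us that $H$ normalizes $N$. Moreover, Lemma \ref{lem:pst-parab} guarantees that $N$ is itself parabolic, so its parabolic closure satisfies $P \coloneqq \pc_\ga(N) = N$. Under the assumption that $N$ is non-trivial, Lemma \ref{lem:H_in_join} will yield the conclusion provided we verify that $H \not\leqslant P$.

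To check this, I would exploit the hyperbolic element $h \in H$: since $P = N = \pst_G(\L)$ fixes every point of $\L$, every element of $P$ is elliptic on $\T$, while $h$ acts on $\L$ by a nonzero translation. Consequently $h \notin P$, and so $H \not\leqslant P$. Applying the first statement of Lemma \ref{lem:H_in_join} to the pair $(H,N)$ immediately gives that $H$ is contained in a join subgroup of $G$.

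For the principal join case, assume additionally that $|N| = \infty$. To invoke the second statement of Lemma \ref{lem:H_in_join} it remains to show that $|H : H \cap P| = \infty$. This is where the hyperbolic $h$ is used again: for any $n \in \Z \setminus \{0\}$, the element $h^n$ acts on $\L$ as a nonzero translation of length $n\|h\|$, hence $h^n$ is not elliptic and therefore $h^n \notin P$. Consequently the cosets $h^n(H\cap P)$, $n \in \Z$, are pairwise distinct in $H/(H\cap P)$, so $|H:H\cap P|=\infty$. The second part of Lemma \ref{lem:H_in_join} then yields that $H$ is contained in a principal join subgroup of $G$, completing the proof.

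I do not expect any serious obstacle here: once Proposition \ref{prop:pst(axis)-normal} is in hand, the whole statement is a clean application of Lemma \ref{lem:H_in_join}, and the needed non-containment and infinite-index conditions are both forced by the fact that $h$ is a hyperbolic isometry of $\T$ while $P$ consists of elliptic elements fixing $\L$ pointwise. The only point requiring a small amount of care is recognizing that $\pc_\ga(N) = N$ whenever $N$ is parabolic, but this is built into the definition of the parabolic closure.
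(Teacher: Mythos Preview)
Your proposal is correct and follows essentially the same approach as the paper: set $N=\pst_G(\L)$, use Lemma~\ref{lem:pst-parab} to see $N=\pc_\ga(N)$, invoke Proposition~\ref{prop:pst(axis)-normal} for normality, and observe that $\langle h\rangle\cap N=\{1\}$ (equivalently, your argument that $h^n\notin N$ for $n\neq 0$) gives $|H:H\cap N|=\infty$, so Lemma~\ref{lem:H_in_join} applies. The paper's proof is slightly terser in that it notes the infinite-index condition once and derives both conclusions simultaneously, but the content is identical.
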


\begin{proof} If we set $N:=\pst_G(\L)$, then $N$ is parabolic by Lemma \ref{lem:pst-parab}, thus $N=\pc_\ga(N)$. Clearly the infinite cyclic subgroup $\langle h \rangle$ of $H$
trivially intersects $N:=\pst_G(\L)$, hence $|H:H\cap N|=\infty$. Both claims of the theorem now follow from Lemma \ref{lem:H_in_join}, as
$N$ is normalized by $H$ by Proposition \ref{prop:pst(axis)-normal}.
\end{proof}

\begin{cor}\label{cor:cyc-centr} Let $H$ be a subgroup of a graph product $G=\ga\G$ such that $|\esupp_\ga(H)|\ge 2$ and $H$ is not contained in a join subgroup of $G$.
Then there is a non-trivial element $h \in H$ such that the centralizer $\C_G(h)$ is infinite cyclic.
\end{cor}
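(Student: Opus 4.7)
The plan is to combine the first-step reduction for graph products with the main output Theorem~\ref{thm:join} of the previous subsection. I start by locating a vertex $v\in\esupp_\ga(H)$ with $\esupp_\ga(H)\not\subset\{v\}\cup\link_\ga(v)$. If no such $v$ existed, $\esupp_\ga(H)$ would span a complete subgraph of $\ga$ of size $\ge 2$; any non-trivial splitting $\esupp_\ga(H)=A\sqcup B$ would then display the smallest parabolic containing $H$, namely a conjugate of $G_{\esupp_\ga(H)}$, as a join subgroup, contradicting our hypothesis.

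Fix such a $v$, set $A:=V\setminus\{v\}$, $B:=\{v\}\cup\link_\ga(v)$, $C:=\link_\ga(v)$, and let $\T$ be the Bass-Serre tree of the resulting splitting $G=G_A*_{G_C}G_B$ (an action without inversions, as noted in Subsection~\ref{sec:wpd_in_gp}). Lemma~\ref{lem:H_has_hyp_elt} produces a hyperbolic element of $H$; among all such elements I pick $h\in H$ so that $\pdim_\ga(\pst_G(\L))$ is minimal, where $\L$ is the axis of $h$. Since $H$ is not contained in any join subgroup, Theorem~\ref{thm:join} forces $\pst_G(\L)=\{1\}$.

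The remaining task is to show $\C_G(h)\cong\Z$. For any $g\in\C_G(h)$ and any $x\in\L$ the computation $\d_\T(gx,h(gx))=\d_\T(gx,g(hx))=\d_\T(x,hx)=\|h\|$ places $gx$ on $\L$, so $g\L=\L$ and $\C_G(h)$ acts by simplicial isometries on $\L\cong\mathbb{R}$. This restriction map is injective because its kernel is $\pst_G(\L)=\{1\}$. A non-identity isometry of $\L$ is either a translation or a reflection, and a reflection $g$ would force $ghg^{-1}|_\L=(h|_\L)^{-1}\ne h|_\L$, contradicting $ghg^{-1}=h$ together with $\|h\|>0$. Hence $\C_G(h)$ embeds into the translation subgroup of $\mathrm{Isom}(\L)$, which is isomorphic to $\Z$ since the action is simplicial and inversion-free; containing the non-trivial element $h$, the image must be $\cong\Z$. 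The main care points are the combinatorial reduction in the first step and the invocation of the already-established Theorem~\ref{thm:join}; the line-of-isometries analysis at the end is routine.
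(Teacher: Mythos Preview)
Your proof is correct and follows essentially the same route as the paper: choose $v\in\esupp_\ga(H)$ with $\esupp_\ga(H)\not\subset\{v\}\cup\link_\ga(v)$, invoke Lemma~\ref{lem:H_has_hyp_elt} and Theorem~\ref{thm:join} to obtain a hyperbolic $h\in H$ with $\pst_G(\L)=\{1\}$, and then observe that $\C_G(h)\leqslant\st_G(\L)$ embeds in the infinite dihedral group but cannot contain a reflection. The only cosmetic differences are that the paper notes the condition on $v$ holds for \emph{every} $v\in\esupp_\ga(H)$ (whereas you argue by contradiction that it holds for some $v$), and the paper cites \cite[1.3]{CullerMorgan} for $\C_G(h)\subseteq\st_G(\L)$ rather than computing with translation lengths as you do.
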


\begin{proof} Take any $v \in \esupp_\ga(H)$ and define $A, B, C \subseteq V$, and the Bass-Serre tree $\T$ as above.
Since  $|\esupp_\ga(H)|\ge 2$ and $H$ is not contained in a join subgroup of $G$, we see that
$\esupp_\ga(H) \not\subset \{v\} \cup \link_\ga(v)$ in $\ga$.

According to Lemma \ref{lem:H_has_hyp_elt} and Theorem \ref{thm:join}, there is an element $h \in H$, that acts as a hyperbolic isometry on $\T$, such that $\pst_G(\L)$ is trivial, where $\L$ is the axis of $h$.
Since $h$ has a unique axis (cf. \cite[1.3]{CullerMorgan}), it follows that $\C_G(h) \subseteq \st_G(\L)$. Since $\pst_G(\L)=\{1\}$, $\C_G(h) \leqslant\st_G(\L)$
is isomorphic  to a subgroup of the infinite dihedral group (which corresponds to the group of all simplicial isometries of the line $\L$).
Hence $\C_G(h)$ must be infinite cyclic because it contains the non-trivial central element $h$.
\end{proof}

\subsection{Some consequences of Theorem \ref{thm:join}}\label{sec:conseq}
Again, let $\ga$ be a finite graph with the vertex set $V$ and let $G$ be the graph product of a family of non-trivial groups
$\G=\{G_v \mid v \in V\}$ with respect to $\ga$.
The following result can be regarded as an improvement of claim (ii) in Lemma \ref{lem:parab_clos}.

\begin{thm} \label{thm:one_elt}
Suppose that $H$ is a subgroup of $G$ such that $\esupp_\ga(H)$ is irreducible. Then there exists an element $h \in H$ such that $\pc_\ga(\{h\})=\pc_\ga(H)$.
\end{thm}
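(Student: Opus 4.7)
The plan is to reduce to a canonical situation and then exploit Theorem \ref{thm:join} to produce an element with the required property. First I would set $S \coloneqq \esupp_\ga(H)$, write $\pc_\ga(H) = fG_Sf^{-1}$ for some $f \in G$, and conjugate $H$ by $f^{-1}$; Lemma \ref{lem:par_clos_in_spec} then permits passage from $G$ to the graph product $G_S = \ga_S\G_S$ without affecting parabolic closures. This reduces the problem to the case where $\ga$ is irreducible, $\esupp_\ga(H) = V$, and $\pc_\ga(H) = G$. The cases $V=\emptyset$ (where $H=\{1\}$ and $h=1$ works) and $|V|=1$ (where $H \leqslant G_v$ is non-trivial, and the only parabolic subgroups of $G_v$ are $\{1\}$ and $G_v$, so any non-trivial element of $H$ suffices) are immediate, so I assume $|V|\ge 2$.

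Under these assumptions, $G$ is not a join subgroup of itself by irreducibility of $\ga$, and since $\pc_\ga(H) = G$, the subgroup $H$ is not contained in any proper parabolic subgroup of $G$, and a fortiori in no join subgroup of $G$. Picking any vertex $v\in V$, consider the Bass--Serre tree $\T$ of the splitting $G=G_{V\setminus\{v\}}*_{G_{\link_\ga(v)}}G_{\{v\}\cup\link_\ga(v)}$. Irreducibility of $\ga$ together with $|V|\ge 2$ forces $\{v\}\cup\link_\ga(v)\neq V$, hence $\esupp_\ga(H)=V\not\subset\{v\}\cup\link_\ga(v)$, and so Lemma \ref{lem:H_has_hyp_elt} supplies an element of $H$ acting hyperbolically on $\T$. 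Since $H$ is not contained in a join subgroup, Theorem \ref{thm:join} applied to $H$ produces a hyperbolic element $h\in H$ whose axis $\mathcal L$ satisfies $\pst_G(\mathcal L)=\{1\}$.

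The core of the argument is to show that such an $h$ satisfies $\pc_\ga(\{h\})=G$. I would argue by contradiction: assume $Q \coloneqq \pc_\ga(\{h\})=gG_Tg^{-1}$ with $T\subsetneq V$. By Lemma \ref{lem:parab_props}(iii), $\Nn_G(Q)=gG_{T\cup\link_\ga(T)}g^{-1}$, and the subgroup $K\coloneqq gG_{\link_\ga(T)}g^{-1}$ commutes element-wise with $Q$, and in particular with $h$. Mimicking the proof of Proposition \ref{prop:pst(axis)-normal}, any elliptic element of $K$ commuting with $h$ must preserve $\mathcal L$ and, because $\mathcal L$ is the axis of $h$, must in fact fix $\mathcal L$ pointwise and therefore lie in $\pst_G(\mathcal L)=\{1\}$. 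This rules out non-trivial elliptic elements of $K$ acting on $\T$; since $K$ is itself (a conjugate of) a full subgroup, hence elliptic whenever $v\notin\link_\ga(T)$, one is forced to conclude $\link_\ga(T)=\emptyset$. The combination of $T\subsetneq V$, $\link_\ga(T)=\emptyset$, and irreducibility of $\ga$ then yields some $u\in V\setminus(T\cup\link_\ga(T))$; repeating the construction with the Bass--Serre tree $\T_u$ in place of $\T$ produces a second hyperbolic element $h'\in H$ with $u\in\esupp_\ga(\{h'\})$, and a careful comparison of the axes of $h$ and $h'$, leveraging the minimality of $\pdim_\ga(\pst_G(\mathcal L))$ exactly as in the two-case analysis inside the proof of Proposition \ref{prop:pst(axis)-normal}, contradicts $T\subsetneq V$.

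The principal obstacle is this last bootstrapping step: proving that the hyperbolic element delivered by Theorem \ref{thm:join} has full essential support, rather than merely acting non-trivially on a single Bass--Serre tree. The difficulty lies in simultaneously exploiting the triviality of $\pst_G(\mathcal L)$, the freedom to reselect the base vertex $v\in V$, and the combinatorial constraints coming from the irreducibility of $\ga$; once those are pieced together, the conclusion $\pc_\ga(\{h\})=G=\pc_\ga(H)$ follows.
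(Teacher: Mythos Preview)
Your reduction to the case $\pc_\ga(H)=G$, $V$ irreducible, $|V|\ge 2$ is correct, and so is the production of a hyperbolic $h\in H$ on $\T$ with $\pst_G(\mathcal L)=\{1\}$ via Theorem~\ref{thm:join}. Your deduction that $\link_\ga(T)=\emptyset$ for $T=\esupp_\ga(\{h\})$ also goes through: since $h$ is hyperbolic on $\T$, one has $T\not\subseteq V\setminus\{v\}$, hence $v\in T$ and $v\notin\link_\ga(T)$; then $K=gG_{\link_\ga(T)}g^{-1}$ fixes a vertex of $\T$, so every element of $K$ is elliptic, centralizes $h$, and therefore lies in $\pst_G(\mathcal L)=\{1\}$.

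The genuine gap is the final ``bootstrapping'' step. Having $T\subsetneq V$, $\link_\ga(T)=\emptyset$ and $V$ irreducible is \emph{not} contradictory: for instance, on the path $a\!-\!b\!-\!c\!-\!d$ (which is irreducible) take $T=\{a,d\}$. Your proposed fix---passing to a second tree $\T_u$ for some $u\in V\setminus T$ and ``comparing axes'' of $h$ and a new element $h'$---cannot work as stated: the axes $\mathcal L\subset\T$ and $\mathcal L'\subset\T_u$ live in \emph{different} trees, so the two-case analysis from Proposition~\ref{prop:pst(axis)-normal} (which compares axes inside one tree) does not apply. Moreover, on $\T_u$ the element $h$ is elliptic (since $T\subseteq V\setminus\{u\}$), and nothing forces $u\in\esupp_\ga(\{h'\})$ beyond $u\in T'$ by the same reasoning as for $v\in T$; iterating just produces a sequence of subsets $T,T',T'',\ldots$ each with empty link, with no mechanism to force any of them to equal $V$.

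The paper circumvents this obstacle by a different idea: it enlarges $G$ to a graph product $G'$ obtained by adding, for each $v_j\in V$, a new vertex $u_j$ carrying an infinite cyclic group $\langle t_j\rangle$, with $u_j$ adjacent to every $v_i$ for $i\ne j$. One checks (using irreducibility of $V$) that $H$ is not in a join subgroup of $G'$, so Corollary~\ref{cor:cyc-centr} applied in $G'$ yields $h\in H$ with $\C_{G'}(h)$ infinite cyclic. Now if some $v_j\notin\esupp_\ga(\{h\})$, then $t_j$ (suitably conjugated) commutes with $h$ but $\langle t_j\rangle\cap\langle h\rangle=\{1\}$ (via the retraction $G'\to G$), contradicting cyclicity of $\C_{G'}(h)$. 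Thus the auxiliary vertices serve exactly as witnesses for any missing vertex of $\esupp_\ga(\{h\})$, which is the ingredient your approach lacks.
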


Theorem \ref{thm:one_elt} will be derived from the following auxiliary lemma, which is essentially a corollary of Theorem \ref{thm:join}.

\begin{lemma} \label{lem:irred} Assume that $H \leqslant G$, $\pc_\ga(H)=G$, $V$ is irreducible in $\ga$ and $|V|\ge 2$.
Then there exists a non-trivial element $h \in H$ such that $\C_G(h)$ is infinite cyclic and $\pc_\ga(\{h\})=G$.
\end{lemma}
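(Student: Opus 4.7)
My plan is to reduce the lemma to producing a single element $h \in H$ that acts as a hyperbolic isometry on the Bass-Serre tree $\T_v$ of the splitting $G = G_{V \setminus \{v\}} *_{G_{\link_\ga(v)}} G_{\{v\} \cup \link_\ga(v)}$ for every vertex $v \in V$. A preliminary observation is that $H$ is not contained in any join subgroup of $G$: if $H \subseteq fG_{A \sqcup B}f^{-1}$ with non-empty disjoint $A, B \subset V$ and $B \subseteq \link_\ga(A)$, then $\pc_\ga(H) = G$ would force $V = A \sqcup B$, a join decomposition of $V$, contradicting irreducibility. Specializing to the potential decomposition $V = \{v\} \sqcup (V \setminus \{v\})$ shows that $\{v\} \cup \link_\ga(v)$ is a proper subset of $V$ for every $v \in V$; in particular $\esupp_\ga(H) = V \not\subseteq \{v\} \cup \link_\ga(v)$, so Lemma \ref{lem:H_has_hyp_elt} supplies a hyperbolic element of $H$ on each tree $\T_v$.

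Now suppose we have $h \in H$ acting hyperbolically on every $\T_v$. Since the vertex stabilizers in $\T_v$ are precisely the conjugates of $G_{V \setminus \{v\}}$ and of $G_{\{v\} \cup \link_\ga(v)}$, hyperbolicity of $h$ on $\T_v$ is equivalent to the two conditions $\esupp_\ga(\{h\}) \not\subseteq V \setminus \{v\}$ and $\esupp_\ga(\{h\}) \not\subseteq \{v\} \cup \link_\ga(v)$. The first condition, applied for every $v$, gives $\esupp_\ga(\{h\}) = V$, hence $\pc_\ga(\{h\}) = G$. Since $\esupp_\ga(\gen{h}) = V$ is irreducible, the very same argument as above shows that $\gen{h}$ itself is not contained in any join subgroup of $G$. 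Choose any $v_0 \in V$; since all non-trivial powers of $h$ share the axis $\L_{v_0}$ on $\T_{v_0}$, the quantity $\pdim_\ga(\pst_G(\L_{v_0}))$ is the minimal value over the hyperbolic elements of $\gen h$, and so applying Theorem \ref{thm:join} (with $H$ replaced by $\gen h$) forces $\pst_G(\L_{v_0}) = \{1\}$. Then $\C_G(h) \leqslant \st_G(\L_{v_0})$ embeds into the infinite dihedral group, and since $h$ has infinite order and lies in the center of $\C_G(h)$, this centralizer is infinite cyclic, exactly as in the proof of Corollary \ref{cor:cyc-centr}.

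It remains to build such an $h$. By Theorem \ref{thm:join} applied to $H$ on each $\T_v$, for every $v \in V$ one obtains an element $h_v \in H$ acting hyperbolically on $\T_v$. I would proceed by iteration on the set $U := \{v \in V : h \text{ acts hyperbolically on } \T_v\}$: start from $h = h_{v_0}$ for some initial $v_0 \in V$, so $v_0 \in U$; while $U \ne V$, pick $v' \in V \setminus U$ and replace $h$ by the product $h^M h_{v'}^N$ for exponents $M, N$ chosen so that hyperbolicity on $\T_{v'}$ is forced while hyperbolicity on each $\T_v$ with $v \in U$ is preserved; after at most $|V|$ steps $U = V$. The main obstacle is this combination step, because the product of a hyperbolic and an elliptic isometry of a tree can itself be elliptic, so hyperbolicity must be forced through quantitative translation-length estimates: on any tree on which at least one of $h, h_{v'}$ is hyperbolic, the translation length of $h^M h_{v'}^N$ grows linearly in $\min\{M, N\}$ once the exponents exceed a threshold depending on the finitely many axes and fixed sets involved. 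Because the family $\{\T_v : v \in V\}$ is finite, a uniform choice of $M, N$ works and the iteration terminates.
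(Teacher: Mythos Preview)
Your strategy differs from the paper's, and the reduction you carry out in the first two paragraphs is correct: an element $h\in H$ that is hyperbolic on every tree $\T_v$ would indeed satisfy $\pc_\ga(\{h\})=G$ and $\C_G(h)\cong\Z$, for exactly the reasons you give. The paper avoids the combination step entirely by a different device: it embeds $G$ as a full subgroup of a larger graph product $G'=\ga'\G'$, obtained by adding for each $v_j\in V$ a new vertex $u_j$ with infinite cyclic vertex group $\langle t_j\rangle$, adjacent to every old vertex except $v_j$. One checks directly that $H$ is not contained in any join subgroup of $G'$, so Corollary~\ref{cor:cyc-centr} already furnishes $h\in H$ with $\C_{G'}(h)$ infinite cyclic. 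If $\pc_\ga(\{h\})$ were a proper parabolic $fG_Sf^{-1}$ with $v_j\notin S$, then $ft_jf^{-1}$ would commute with $h$ while meeting $\langle h\rangle$ trivially, contradicting cyclicity of $\C_{G'}(h)$. Thus a single application of Corollary~\ref{cor:cyc-centr} in $G'$ simultaneously forces cyclic centraliser and full essential support, with no need to juggle several trees.

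Your combination step, however, has a genuine gap. The assertion that $\|h^M h_{v'}^N\|$ grows linearly in $\min\{M,N\}$ on every tree where at least one of $h,h_{v'}$ is hyperbolic is false as stated. If on some $\T_v$ the element $h_{v'}$ is elliptic and stabilises the axis $\L$ of $h$ while swapping its two ends (this is not ruled out by the action being without edge inversions), then $h^M h_{v'}^N$ restricts to $\L$ as a reflection whenever $N$ is odd, hence is elliptic on $\T_v$ for all $M$ and all odd $N$. Similarly, if on some $\T_v$ both $h$ and $h_{v'}$ are hyperbolic with coinciding axes but opposite directions of translation, then $\|h^M h_{v'}^N\|_{\T_v}=\bigl|M\|h\|-N\|h_{v'}\|\bigr|$, which vanishes along an entire line of pairs $(M,N)$ and certainly does not grow with $\min\{M,N\}$. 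These obstructions can likely be circumvented (replace $h_{v'}$ by an even power to kill reflections; choose $(M,N)$ avoiding finitely many rational slopes; or set things up as a ping--pong once characteristic sets are made disjoint by large powers), but the proposal does not actually do this, and without it the inductive step does not go through.
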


\begin{proof}
Starting with $\ga$ we construct a new graph $\ga'$ as follows. Let $V=\{v_1,\dots,v_k\}$. Take a $k$-element set $U=\{u_1,\dots,u_k\}$ and let the set of vertices
$V'$ of $\ga'$ be defined by $V':=V \sqcup U$. The set of edges $E'$ of $\ga'$ (which we identify with a subset of $V' \times V'$) is defined by
$$E':=E \sqcup \{(v_i,u_j) \mid \mbox{ for all } i \neq j\},$$ where $E$ is the set of edges of $\ga$. Thus $\ga$ is a full subgraph of $\ga'$ and
$|V'|=2k=2|V|$, $|E'|=|E|+k(k-1)$. Basically, $\ga'$ is obtained from $\ga$ by adding a new vertex $u_j$ for each vertex $v_j$ of $\ga$ and connecting $u_j$ by an edge
with every $v_i$ except for $i=j$. For each $j \in \{1,\dots,k\}$ let $G'_{u_j}=\gen{t_j}$ be an infinite cyclic group generated by $t_j$, and set
$\G':=\G \sqcup \{G'_{u_j} \mid j =1,2,\dots,k\}.$ We now have a new graph product $G':=\ga'\G'$ of the family $\G'$ with respect to the finite graph $\ga'$, and
$G=G'_V$ is a full subgroup of $\G'$.

First, note that, by Lemma \ref{lem:par_clos_in_spec},  the parabolic closure $\pc_{\ga'}(H)$ of $H$ in $G'$ is still equal to $G=G'_V$.

Second, let us check that $H$ is not contained in a join subgroup of $G'$. Suppose, on the contrary, that $H \subseteq f G'_T f^{-1}$ for some $T \subseteq V'$ and $f \in G'$,
where $T=A \sqcup B$ such that $A,B$ are disjoint, non-empty and $B \subseteq \link_{\ga'}(A)$. Then $G'_V=\pc_{\ga'}(H) \subseteq fG'_Tf^{-1}$ and so
$V \subseteq T=A\sqcup B$ by Lemma \ref{lem:parab_props}.(i). Note that $V$ is an irreducible subset of $\ga'$ (because
it is irreducible in $\ga$ and $\ga$ is a full subgraph of $\ga'$). Therefore either $V \cap A=\emptyset$ or $V \cap B=\emptyset$. Without loss of generality
assume that $V \cap B=\emptyset$, so that $V \subseteq A$. Hence $B \subseteq U$ and, as $B \neq\emptyset$, there must exist $j \in \{1,\dots,k\}$ with $u_j \in B$.
However, $v_j \in A$ and $u_j \notin \link_{\ga'}(v_j)$ by construction, which contradicts $B \subseteq \link_{\ga'}(A)$.

Thus $H$ is not contained in a join subgroup of $G'$ and all of the assumptions of Corollary~\ref{cor:cyc-centr} are satisfied.
Therefore there exists $h \in H \setminus\{1\}$ such that $\C_{G'}(h)$ is infinite cyclic. Let us prove that $\pc_\ga(\{h\})=G$. Indeed, otherwise
$h \in fG_Sf^{-1}$ for some proper subset $S \subsetneqq V$ and $f \in G$. Then $v_j \notin S$ for some $j \in \{1,\dots,k\}$, hence
$u_j$ is adjacent to every vertex from $S$ in $\ga'$. Consequently, $ft_jf^{-1} \in \C_{G'}(h)$.
Since $t_j$ lies in the kernel of the canonical retraction of $G'$ onto $G=G'_V$, we have $\gen{ft_jf^{-1}}\cap \gen{h} \subseteq \gen{ft_jf^{-1}}\cap G=\{1\}$ in $G'$.
This implies that the infinite cyclic group $\C_{G'}(h)$ contains the
subgroup $\gen{h,ft_jf^{-1}}$, which is naturally isomorphic to $\gen{h} \times \gen{ft_jf^{-1}}\cong \Z^2$. As the latter is impossible,
we conclude that $\pc_\ga(\{h\})=G$, as claimed.
\end{proof}

We are now ready to prove Theorem \ref{thm:one_elt}.

\begin{proof}[Proof of  Theorem \ref{thm:one_elt}] After replacing $H$ with its conjugate, we can assume that $\pc_\ga(H)=G_S$ for some $S \subseteq V$.
As we know, $G_S$ is itself the graph product of the family $\G_S:=\{G_s \mid s \in S\}$ with respect to the full subgraph $\ga_S$ of $\ga$, spanned by the vertices from $S$.
Moreover, $\pc_{\ga_S}(H)=G_S$ since every parabolic subgroup of $G_S$ is also a parabolic subgroup of $G$.

If $|S| \le 1$ then the statement obviously holds, so we can assume that $|S| \ge 2$. Since $S$ is irreducible, by Lemma \ref{lem:irred}
there is $h \in H$ such that the parabolic closure of $\{h\}$ in $G_S=\ga_S\G_S$ is $G_S$.
It remains to apply Lemma \ref{lem:par_clos_in_spec} to conclude that $\pc_\ga(\{h\})=\pc_{\ga_S}(\{h\})=G_S$, as claimed.
\end{proof}

Let us now give an example showing that in Theorem \ref{thm:one_elt} it is indeed necessary to assume that $\esupp_\ga(H)$ is irreducible.
\begin{ex} Let $F$ be a group with an element $f \in F\setminus \{1\}$ of prime order $p\in \N$. Let $G=F^{p+1}$ be the direct product of $p+1$ copies of $F$.
Then $G$ is naturally isomorphic to the graph product of $p+1$ copies of $F$ with respect to the complete graph $\ga$ on $p+1$ vertices.

Set $h_1:=(f,f, \dots,f,1) \in G$, $h_2:=(1,f,f^2,\dots,f^{p-1},f)\in G$ and $H:=\langle h_1,h_2 \rangle \leqslant G$. Clearly $\pc_\ga(H)=G$ and so $\esupp_\ga(H)=V\ga$.
By construction, for any element $h \in H$ there are $m,n \in \Z$ such that $h=h_1^mh_2^n=(f^m,f^{m+n},f^{m+2n},\dots, f^{m+(p-1)n},f^n)$ in $G$.
It is easy to see that if $n \equiv 0 \pmod{p}$ then $f^n=1$ and so $\pc_\ga(\{h\}) \neq G$. On the other hand, if $n \not\equiv 0 \pmod p$ then $m+ln \equiv 0 \pmod p$ for some
$l \in \{0,\dots,p-1\}$, hence $f^{m+ln}=1$ in $G$ and $h$ belongs to a proper parabolic subgroup of $G$.
Thus $\pc_\ga(\{h\}) \neq \pc_\ga(H)$ for all $h \in H$.
\end{ex}

The following is a consequence of Theorem  \ref{thm:join} and
 Corollary \ref{prop:WPD-crit}.

\begin{cor}\label{cor:WPD_in_GP} Suppose that $H$ is a subgroup of $G=\ga\G$ and $v \in \esupp_\ga(H)$ is such that
$\esupp_\ga(H) \not\subset \{v\} \cup \link_\ga(v)$ in $\ga$.
Let $A:=V \setminus \{v\}$, $B:=\{v\} \cup \link_\ga(v)$ and $C:=\link_\ga(v)$. Let $\mathcal T$ be the Bass-Serre tree corresponding to the natural splitting $G=G_A *_{G_C} G_B$.
Assume that $H$ is not contained in a principal join subgroup of $G$. Then there exists $h \in H$  which is a WPD element (for the action of $G$ on $\T$) and
induces a hyperbolic isometry of $\T$. Thus, if $H$ is not virtually cyclic then $H\in \X$.
\end{cor}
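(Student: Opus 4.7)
The plan is to combine Theorem \ref{thm:join}, which controls the pointwise stabilizer of an axis, with Corollary \ref{prop:WPD-crit}, which turns such control into the WPD property on a tree. First I would use Lemma \ref{lem:H_has_hyp_elt}, whose hypotheses on $v$ and $\esupp_\ga(H)$ are granted, to produce at least one element of $H$ acting hyperbolically on $\T$. Among all such hyperbolic elements, I would then select $h \in H$ so that the parabolic dimension $\pdim_\ga(\pst_G(\L))$ is as small as possible, where $\L$ denotes the axis of $h$; this is exactly the selection used in Proposition \ref{prop:pst(axis)-normal} and makes Theorem \ref{thm:join} available for $h$. Note that $\pst_G(\L)$ is parabolic by Lemma \ref{lem:pst-parab}, so its parabolic dimension is well-defined.

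The next step is to invoke Theorem \ref{thm:join}: if $\pst_G(\L)$ were infinite, then $H$ would be contained in a principal join subgroup of $G$, contradicting our hypothesis. Hence $|\pst_G(\L)|<\infty$. To pass from finiteness of $\pst_G(\L)$ to finiteness of the pointwise stabilizer of a pair of vertices on $\L$, I would apply the second assertion of Lemma \ref{lem:pst-parab} to produce vertices $x,y \in \L$ with $\pst_G(\L)=\pst_G([x,y])$. Since $\T$ is a tree, any isometry fixing both $x$ and $y$ pointwise fixes the unique geodesic segment $[x,y]$, so $\pst_G(\{x,y\})=\pst_G([x,y])=\pst_G(\L)$ is finite. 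Corollary \ref{prop:WPD-crit}, applied to the $G$-action on $\T$ using the hyperbolic element $h$ and the vertices $x,y$ on its axis, then yields that $h$ is a WPD element for the action of $G$ on $\T$. Since $\pst^\e_H(\cdot) \subseteq \pst^\e_G(\cdot)$ for every $\e\ge 0$, the same element $h$ remains WPD for the restricted action of $H$ on $\T$.

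For the final statement, $\T$ is a hyperbolic metric space, $h \in H$ is a hyperbolic isometry of $\T$ satisfying the WPD condition for the $H$-action, and $H$ is not virtually cyclic by hypothesis. Condition (AH$_3$) of Theorem \ref{acyl} then gives $H \in \X$, completing the proof. The only genuinely technical ingredient is the appeal to Theorem \ref{thm:join}; once that is in hand, the remaining steps are direct applications of Lemma \ref{lem:pst-parab} and Corollary \ref{prop:WPD-crit}, so I do not expect any additional obstacle.
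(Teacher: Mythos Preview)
Your proposal is correct and follows essentially the same route as the paper: use Lemma \ref{lem:H_has_hyp_elt} to get a hyperbolic element, choose $h$ minimizing $\pdim_\ga(\pst_G(\L))$ so that Theorem \ref{thm:join} forces $|\pst_G(\L)|<\infty$, then combine Lemma \ref{lem:pst-parab} with Corollary \ref{prop:WPD-crit} to obtain WPD, and finish via Theorem \ref{acyl}. Your write-up is slightly more explicit about the minimality selection and the passage from the $G$-action to the $H$-action, but the argument is the same.
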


\begin{proof} By  Lemma \ref{lem:H_has_hyp_elt}, at least one element of $H$ induces a hyperbolic isometry of $\T$. Hence we can apply Theorem \ref{thm:join} to find
a hyperbolic element $h \in H$ such that $|\pst_G(\L)|<\infty$, where $\L$ denotes the axis of $h$ in $\T$.
Lemma \ref{lem:pst-parab} claims that $\pst_G(\L)=\pst_G([u,v])=\pst_G(\{u,v\})$ for some vertices $u,v \in \L$. Therefore  Corollary \ref{prop:WPD-crit}  applies, telling us
that $h$ is a WPD element for the action of $G$ on $\T$. Clearly the latter yields that $h \in H$ is also a WPD element for the action of $H$ on $\T$.
The last claim of the corollary now follows from Theorem \ref{acyl}.
\end{proof}

If $\ga$ is irreducible, $|V\ga| \ge 2$ and $\esupp_\ga(H)=V$ (equivalently, $\pc_\ga(H)=G$) then any vertex $v \in V$ will satisfy the assumptions of Corollary \ref{cor:WPD_in_GP}. Thus we get the following:

\begin{cor} \label{cor:WPD_in_GP-simplified}
Let $\ga$ be a finite irreducible graph with at least two vertices and let $G =\ga\G$ be the graph product of a family of non-trivial groups $\G=\{G_v \}_{v \in V\ga}$ with respect to $\ga$.
Then there is a simplicial tree $\T$ on which $G$ acts coboundedly and by simplicial isometries such that the following holds. If $H \leqslant G$ is subgroup with $\pc_\ga(H)=G$ then $H$ contains a hyperbolic WPD element
for the action of $G$ on $\T$. Thus either $H$ is virtually cyclic or $H \in \X$.
\end{cor}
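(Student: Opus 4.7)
The plan is to fix the tree $\T$ once and for all, independently of $H$, by choosing any vertex $v \in V\ga$ and letting $\T$ be the Bass-Serre tree associated to the natural splitting $G = G_A *_{G_C} G_B$, with $A = V\ga \setminus \{v\}$, $B = \{v\} \cup \link_\ga(v)$ and $C = \link_\ga(v)$, as in Subsection \ref{sec:wpd_in_gp}. The action of $G$ on $\T$ is then by simplicial isometries, and it is cobounded since the quotient graph of groups has just one edge (two vertex orbits and one edge orbit).

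Before applying Corollary \ref{cor:WPD_in_GP} I would establish two points rooted in the irreducibility of $\ga$. First, because $\ga^c$ is connected and has at least two vertices, no vertex of $\ga$ can be adjacent to all the others; hence $B \subsetneq V\ga$ for our chosen $v$, so the splitting is non-trivial and $\T$ is unbounded. Second, and more substantively, the whole group $G$ cannot be realized as a join subgroup of itself. Indeed, if $G = fG_Sf^{-1}$ for a join subgroup, with $S = A' \sqcup B'$, both parts non-empty, and $B' \subseteq \link_\ga(A')$, then Lemma \ref{lem:parab_props}(i) applied to $G = fG_Sf^{-1} \supseteq G_{V\ga}$ forces $S = V\ga$, and the existence of such a partition of $V\ga$ into two non-empty parts with no $\ga^c$-edges between them contradicts connectedness of $\ga^c$.

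Now let $H \leqslant G$ satisfy $\pc_\ga(H) = G$. If $H$ were contained in a join subgroup $P$ of $G$, then $\pc_\ga(H) \subseteq P$, giving $P = G$, which contradicts the previous paragraph. In particular $H$ is not contained in a principal join subgroup of $G$. Moreover, $\esupp_\ga(H) = V\ga \not\subseteq \{v\} \cup \link_\ga(v) = B$, so the hypotheses of Corollary \ref{cor:WPD_in_GP} hold for $H$ and our chosen $v$. That corollary then yields an element $h \in H$ acting as a hyperbolic isometry of $\T$ and satisfying WPD for the $G$-action on $\T$, and gives the final dichotomy: either $H$ is virtually cyclic, or $H \in \X$.

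The proof is essentially a repackaging of Corollary \ref{cor:WPD_in_GP}, so no step is genuinely difficult; the one substantive ingredient is the translation of irreducibility of $\ga$ into the statement that $G$ itself is not a join subgroup, which is the only place where the hypothesis on $\ga$ enters beyond producing a non-trivial Bass-Serre splitting.
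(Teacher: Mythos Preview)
Your proof is correct and follows essentially the same approach as the paper: the paper's justification is the single sentence preceding the corollary, noting that when $\ga$ is irreducible, $|V\ga|\ge 2$, and $\esupp_\ga(H)=V$, any vertex $v$ satisfies the hypotheses of Corollary~\ref{cor:WPD_in_GP}. You have simply spelled out the two implicit verifications---that $V\ga\not\subseteq\{v\}\cup\link_\ga(v)$ and that $H$ cannot lie in a (principal) join subgroup---both of which reduce to the irreducibility of $V\ga$, exactly as you argue.
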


\begin{proof}[Proof of Theorem \ref{cor:gp-hyp}]
This theorem is an immediate consequence of Corollary \ref{cor:WPD_in_GP-simplified}.
\end{proof}

\begin{rem} In the case of right angled Artin groups Theorem \ref{cor:gp-hyp} would follow from Theorem 5.2 of \cite{Behr-Char}, because if a subgroup of a CAT($0$)
group contains a rank one element, then it is either virtually cyclic or acylindrically hyperbolic (see \cite[Prop. 3.13, Thm. 5.6]{Sis}).
Unfortunately the proof of this theorem, given in \cite{Behr-Char}, is invalid: the third paragraph of this proof claims that if $c, h$ are elements of a right angled Artin group $G$ and
$c$ is cyclically reduced then in the product $c^k h c^k$ cancellations can only occur between the letters from $c$ and the letters from $h$, which is not always the case
(one can easily construct counterexamples).
Nevertheless, the statement is still true: one can use  Corollary \ref{cor:cyc-centr} above to fix the argument from \cite{Behr-Char}.
\end{rem}

Corollary \ref{thm:gpacyl} is a direct consequence of Theorem \ref{cor:gp-hyp}. As we already mentioned in  Section \ref{sec:results}, it can be considered as another instance of a more general phenomenon:  ``irreducibility" of ``nice" non-positively
curved groups (or spaces) often implies the existence of ``hyperbolic directions" in an appropriate sense. For groups acting on $CAT(0)$ spaces, the
standard formalization of the latter condition is the existence of a \emph{rank one element},
i.e., an axial hyperbolic isometry none of whose axes bounds a flat half-plane.
The notion of a rank one element originates in the celebrated Rank Rigidity
Theorem for Hadamard manifolds proved by Ballmann, Brin, Burns, Eberlein, and Spatzier (for more information and detailed references we refer to \cite{Bal,Bal95,BS87}):
\emph{Let $G$ be a discrete group acting properly and cocompactly on an irreducible Hadamard manifold $M$, which is not a higher rank symmetric space; then $G$ contains a rank one isometry.}
We recall that a Hadamard manifold is a simply connected complete Riemannian manifold of non-positive curvature,
and such a manifold is said to be irreducible if it does not admit a nontrivial decomposition as a metric product of two manifolds.

Motivated by this theorem, Ballman and Buyalo \cite{BB} suggested the following:
\begin{conj}[Rank Rigidity Conjecture]
Let $X$ be a locally compact geodesically complete $CAT(0)$
space and let $G$ be an infinite discrete group acting properly and cocompactly on $X$. If $X$ is
irreducible and is not a higher rank symmetric space or a Euclidean building of dimension at least
$2$, then $G$ contains a rank one isometry.
\end{conj}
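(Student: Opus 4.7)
The plan is to combine boundary analysis with group-theoretic rigidity, following the outline of the known partial results (Ballmann--Brin--Burns--Eberlein--Spatzier for Hadamard manifolds and Caprace--Sageev for $CAT(0)$ cube complexes) and trying to isolate their common ingredients. First, I would produce at least one axial (hyperbolic) isometry in $G$: since $G$ is infinite and acts properly cocompactly on a locally compact $CAT(0)$ space $X$, standard results (Swenson and others on cocompact $CAT(0)$ groups) yield an infinite-order $g \in G$ translating along a geodesic axis. The real work is then to upgrade \emph{some} axial element to a rank one element, i.e.\ one with an axis that does not bound an isometrically embedded flat half-plane.

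I would argue by contradiction: assume every axial $g \in G$ has the property that every axis of $g$ bounds a flat half-plane. Cocompactness should translate this into a structural statement on the Tits boundary $\partial_T X$, namely that the fixed endpoints of axial isometries all carry ``spherical'' Tits-neighborhoods. At this point I would invoke a splitting theorem in the spirit of Caprace--Lytchak -- a geodesically complete locally compact $CAT(0)$ space with cocompact isometry group and Tits-decomposable boundary splits as a metric product -- and combine it with the irreducibility hypothesis on $X$ to deduce that $\partial_T X$ itself must be Tits-irreducible. The goal is then to apply the characterisation results of Leeb and Kleiner--Leeb for spherical buildings and symmetric spaces at infinity, concluding that $X$ must be a higher rank symmetric space or a Euclidean building of dimension at least $2$, contrary to the hypothesis.

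Thus the key steps, in order, are: (1) produce one axial isometry; (2) analyse the parallel set and Tits-geometry around its endpoints under the ``no rank one'' hypothesis; (3) apply a splitting theorem to exploit irreducibility and reduce to the case where $\partial_T X$ is Tits-irreducible; (4) classify the remaining possibilities for $X$ from the geometry of $\partial_T X$. The main obstacle -- and the reason the conjecture remains open in full generality -- is step (4). In the smooth setting, Berger's holonomy theorem does the heavy lifting; for cube complexes, the combinatorics of hyperplanes plays the analogous role and makes a Sageev-style construction available. In a generic locally compact geodesically complete $CAT(0)$ space, no structural classification of comparable strength is currently known, so any honest proof must either develop the missing structure theory as a by-product or invent a new dynamical argument on $\partial_T X$ that bypasses classification entirely. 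This is where essentially all the difficulty should concentrate; the earlier steps are, by comparison, soft consequences of already established cocompact $CAT(0)$ technology.
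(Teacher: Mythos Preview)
This statement is the Rank Rigidity Conjecture, which the paper quotes as an \emph{open conjecture} of Ballmann and Buyalo; the paper does not claim to prove it and offers no proof. It is invoked only as motivation, and the paper then cites the Caprace--Sageev theorem confirming the conjecture in the special case of $CAT(0)$ cube complexes. So there is no ``paper's own proof'' to compare against.

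As for your proposal itself: you have honestly identified that it is not a proof. Steps (1)--(3) are a reasonable sketch of the soft part, but you explicitly acknowledge that step (4) --- classifying a locally compact geodesically complete $CAT(0)$ space from the Tits geometry of its boundary --- is the entire content of the conjecture and is not available in general. Saying ``any honest proof must either develop the missing structure theory as a by-product or invent a new dynamical argument'' is a fair assessment of the state of the problem, but it means what you have written is a discussion of why the conjecture is hard, not a proof attempt. The conjecture remains open.
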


Caprace and Sageev \cite{CS} confirmed this conjecture for $CAT(0)$ cube complexes. More precisely, they showed that \emph{if an infinite discrete
group $G$ acts properly and cocompactly on a locally compact geodesically complete $CAT(0)$ cube complex $X$, then $X$ is a product of two geodesically
complete unbounded convex subcomplexes or $G$ contains a rank one isometry.} (Neither higher rank symmetric spaces nor Euclidean buildings of dimension at least $2$ occur in these settings.)

Any rank one element $g$ in a group $G$, acting properly on proper $CAT(0)$ space, is contained in a virtually cyclic subgroup $E(g)$ which is hyperbolically embedded in $G$ \cite{Sis}.
Such subgroups can be thought of as a formalization of ``hyperbolic directions" in the context of abstract groups (for details and the relation
to \emph{generalized loxodromic elements} see \cite[Section 6]{Osi13}). Recall that the existence of a virtually cyclic subgroup $E\h G$ implies that
$G$ is either virtually cyclic (if $G=E$) or acylindrically hyperbolic. Thus the Caprace--Sageev result  \cite{CS}, being applied
to a right angled Artin (respectively, Coxeter) group acting on the universal cover of its Salvetti (respectively, Davis) complex,
yields Corollary \ref{thm:gpacyl}. Note, however, that Theorem \ref{cor:gp-hyp} generalizes this in two directions: it deals with
arbitrary subgroups (which corresponds to non-cocompact actions in the settings of the Caprace-Sageev theorem) and more general
graph products which may not act properly on any $CAT(0)$ spaces. In fact, Theorem \ref{thm:join}, under the assumptions of Theorem \ref{cor:gp-hyp},
produces an action of the subgroup $H$ of a graph product $G$ on a simplicial tree $\T$ so that the pointwise stabilizer of a finite segment of the axis of some hyperbolic element $h \in H$ is finite.
Since a tree cannot contain half-flats, the latter condition can be thought of as an additional requirement one should include in the definition of a rank one isometry for discrete non-proper actions on $CAT(0)$ spaces.

\subsection{Classification of subgroups}\label{sec:subgr-class}
\begin{df}\label{def:N}
Let $\gN$ denote the class of all groups $G$ with the following property: $G$ contains two infinite normal subgroups $N_1,N_2\lhd G$ such that $|N_1\cap N_2|<\infty $.
\end{df}

\begin{lemma}\label{lem: sn} The intersection $\gN\cap \X $ is empty.
\end{lemma}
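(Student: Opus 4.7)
The plan is to derive a contradiction from the combination of Theorem \ref{thm:elem-prop-ah}(b), (c) and Lemma \ref{lem: com}. Suppose for contradiction that $G \in \mathcal{N} \cap \mathcal{AH}$, and let $N_1, N_2 \lhd G$ be infinite normal subgroups with $K := N_1 \cap N_2$ finite. The key observation is that since $N_1$ and $N_2$ are both normal in $G$, for any $n_1 \in N_1$ and $n_2 \in N_2$ the commutator $[n_1, n_2]$ lies in $N_1 \cap N_2 = K$. Hence $[N_1, N_2] \leqslant K$.

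Next, consider $H := N_1 N_2$, which is an infinite normal subgroup of $G$. In particular $H$ is $s$-normal in $G$, so by Theorem \ref{thm:elem-prop-ah}(c) we have $H \in \mathcal{AH}$. Since $K$ is a finite normal subgroup of $G$, it is a finite normal subgroup of $H$, and Lemma \ref{lem: com} implies that $H/K \in \mathcal{AH}$ as well.

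Now I would analyze the structure of $H/K$. Let $\overline{N_i}$ denote the image of $N_i$ in $H/K$, $i=1,2$. Then $\overline{N_1}$ and $\overline{N_2}$ are infinite subgroups (since $N_i$ is infinite and $K$ is finite), their intersection $\overline{N_1} \cap \overline{N_2}$ is trivial (as $N_1 \cap N_2 = K$), and they commute element-wise because $[N_1, N_2] \leqslant K$. Together with $H/K = \overline{N_1}\cdot \overline{N_2}$, this yields a direct product decomposition
\[
H/K \;\cong\; \overline{N_1} \times \overline{N_2},
\]
where both factors are infinite. This contradicts Theorem \ref{thm:elem-prop-ah}(b) applied to $H/K \in \mathcal{AH}$, completing the proof.

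There is no real obstacle here: the argument is essentially a direct application of the properties of acylindrically hyperbolic groups already collected in Section \ref{sec:ah-prop}. The only minor point to verify carefully is that modding out by the finite normal subgroup $K$ splits $H$ as a direct product; this is an elementary group-theoretic fact that follows from $[N_1,N_2] \leqslant K$ and $N_1 \cap N_2 = K$.
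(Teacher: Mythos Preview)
Your proof is correct and follows essentially the same approach as the paper's own proof: form $H=N_1N_2\lhd G$, apply Theorem~\ref{thm:elem-prop-ah}(c) to get $H\in\X$, pass to $H/K$ via Lemma~\ref{lem: com}, and then contradict Theorem~\ref{thm:elem-prop-ah}(b) using the direct product decomposition $H/K\cong (N_1/K)\times(N_2/K)$. The only difference is that you spell out the justification for this decomposition via $[N_1,N_2]\leqslant K$, whereas the paper simply states it.
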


\begin{proof}
Suppose that $G\in \gN\cap \X$. Thus $G$ contains two infinite normal subgroups $N_1,N_2 \lhd G$ such that $P:=N_1 \cap N_2$ is finite. Let $N:=N_1N_2 \lhd G$,
and observe that $N/P \cong N_1/P \times N_2/P$.
By  Theorem \ref{thm:elem-prop-ah}.(c), we know that $N$ is acylindrically hyperbolic. Therefore, by Lemma \ref{lem: com}, $N_1/P \times N_2/P \in \X$.
However this contradicts Theorem \ref{thm:elem-prop-ah}.(b) because $|N_i/P|=\infty$ for $i=1,2$.
\end{proof}

Using Theorem \ref{cor:gp-hyp} we obtain the following classification of subgroups of graph products.

\begin{thm}\label{thm:subgroups} Let $H$ be a subgroup of a graph product $G=\ga\G$ of non-trivial groups with respect to some finite graph $\ga$.
Then at least one of the following holds:
\begin{itemize}
\item[(a)] there is a short exact sequence $\{1\}\to K\to H\to S\to \{1\}$, where $K$ is finite and $S$ is isomorphic to a subgroup of a vertex group;

\item[(b)] $H$ is virtually cyclic;

\item[(c)] $H\in \gN$;

\item[(d)] $H\in \X$.
\end{itemize}
Moreover, the possibilities (b),(c), and (d) are mutually exclusive.
\end{thm}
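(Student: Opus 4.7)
The plan is to proceed by induction on $n := |V\Gamma|$. The base cases $n \le 1$ are immediate: if $n=0$ then $H=\{1\}$ and (a) holds trivially; if $n=1$ then $G=G_v$ and $H\le G_v$, so (a) holds with $K=\{1\}$ and $S=H$.

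For the inductive step, first suppose $H$ is contained in some proper parabolic subgroup $fG_Sf^{-1}$ of $G$ (necessarily $S\subsetneq V$ by Lemma~\ref{lem:parab_props}(i)). Then $f^{-1}Hf\le G_S=\Gamma_S\mathcal{G}_S$, which is a graph product on a strictly smaller graph, so the induction hypothesis applies. All four conclusions are invariant under conjugation in $G$, and in case (a) the relevant vertex group of $\Gamma_S$ is also a vertex group of $\Gamma$, so the conclusion transfers back to $H$. We may therefore assume $\pc_\Gamma(H)=G$ and $n\ge 2$.

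If $\Gamma$ is irreducible, Theorem~\ref{cor:gp-hyp} directly yields conclusion (b) or (d). Otherwise there is a partition $V=A\sqcup B$ into non-empty subsets with $B\subseteq \link_\Gamma(A)$, giving the internal direct product $G=G_A\times G_B$. Both factors are normal in $G$, so $H_A:=H\cap G_A$ and $H_B:=H\cap G_B$ are normal in $H$ with $H_A\cap H_B=\{1\}$. If both $H_A$ and $H_B$ are infinite, then $H\in\gN$ and (c) holds. Otherwise, say $H_A$ is finite; then the restriction of the projection $\pi_B:G\to G_B$ to $H$ has kernel $H_A$ and image $H':=\pi_B(H)\le G_B$, so $H/H_A\cong H'$ and $H\fk H'$. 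By induction $H'$ satisfies one of (a)--(d) inside the smaller graph product $G_B$. Each case lifts to $H$: condition (a) for $H'$ composes with $1\to H_A\to H\to H'\to 1$ to give (a) for $H$ (the resulting kernel is a finite extension of $H_A$ by a finite group, hence finite); condition (b) transfers since a finite extension of a virtually cyclic group is virtually cyclic; condition (c) transfers by taking preimages under $\pi_B|_H$ of the two infinite normal subgroups of $H'$ (their intersection inside $H$ is a finite extension of the finite intersection inside $H'$, hence finite); and condition (d) transfers by Lemma~\ref{lem: com}.

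For the mutual exclusivity of (b), (c), (d): we have $\gN\cap\X=\emptyset$ by Lemma~\ref{lem: sn}; virtually cyclic groups fail the non-elementarity requirement in Definition~\ref{df:acyl-gp}, hence are not in $\X$; and a virtually cyclic group cannot lie in $\gN$, since any infinite normal subgroup of a virtually cyclic group has finite index, so two such subgroups always intersect in an infinite subgroup. The main obstacle I anticipate is the reducible-graph step, where one must verify carefully that each of the four conditions lifts through the finite-kernel extension $H_A\to H\to H'$; this is routine but relies essentially on Lemma~\ref{lem: com} in the acylindrically hyperbolic case.
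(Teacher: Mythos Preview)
Your proof is correct and follows essentially the same approach as the paper's: induction on $|V\Gamma|$, reduction to $\pc_\Gamma(H)=G$, and in the reducible case the direct-product argument with $H\cap G_A$, $H\cap G_B$, lifting the inductive conclusion through a finite-kernel quotient. The only organizational difference is that the paper assumes (b) and (d) fail at the outset and then invokes Theorem~\ref{cor:gp-hyp} to force $\Gamma$ reducible (so only cases (a) and (c) need to be lifted from $\widehat H$), whereas you case directly on irreducibility and lift all four conditions; both are fine.
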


\begin{proof} 
The argument will proceed by induction on $|V\ga|$. If $|V\ga|\le 1$ then (a) holds trivially, so we can assume that $|V\ga |=n\ge 2$
and the statement has already been established for subgroups of graph products with at most $n-1$ vertices.

Without loss of generality we can assume that $H$ is not contained in a proper
parabolic subgroup of $G$, i.e., $\esupp_\ga(H)=V\ga$. Let us first prove that at least one of (a)--(d)
holds.

Suppose that $H$ is not virtually cyclic and $H\notin \X$. Then, by Theorem \ref{cor:gp-hyp}, there exists a decomposition $V\ga=A \sqcup B$,
such that $A,B \neq \emptyset$ and $B \subseteq \link_\ga(A)$. Thus $G$ naturally splits as the direct
product $G=G_A \times G_B$. Let $N_1=H \cap G_A$ and $N_2=H \cap G_B$. Clearly $N_i\lhd H$ for $i=1,2$ and $N_1 \cap N_2=\{1\}$.
If both $N_1$ and $N_2$ are infinite, we have (c). Otherwise $|N_i|<\infty$ for some $i\in \{ 1,2\}$, and then
$H$ splits as $\{1\}\to N_i\to H\stackrel{\rho}{\to} \widehat H\to \{1\}$ where $\widehat H$ a subgroup of $G_A$ or $G_B$. By induction,
$\widehat H$ satisfies one of (a)--(d). If $\widehat H$ satisfies (a) then evidently the same is true for $H$, because the kernel of the epimorphism
$\rho:H \to \widehat{H}$ is finite. By the same reason, $\widehat{H}$ is not virtually cyclic and $\widehat{H}\notin \X$ (by Lemma~\ref{lem: com}).
So, it remains to consider the case when $\widehat{H}$ satisfies (c), i.e., there are infinite normal subgroups ${L},{M} \lhd \widehat{H}$ such
that $|{L} \cap {M}|<\infty$. Then $\rho^{-1}(L),\rho^{-1}(M)$ are infinite normal subgroups of $H$ and
$$|\rho^{-1}(L)\cap \rho^{-1}(M)|=|\rho^{-1}(L \cap M)|=|L \cap M||N_i|<\infty,$$
which shows that $H$ satisfies (c).

It remains to note that (c) and (d) are mutually exclusive by Lemma \ref{lem: sn}, and the fact that (b) is incompatible with (c) and (d) is obvious.
\end{proof}

Corollary \ref{cor:RAAG_in_X} from Section \ref{sec:results} is special case of Theorem \ref{thm:subgroups}, because any right angled Artin group is a
graph product of infinite cyclic groups (we also need to use the well-known fact that right angled Artin groups are torsion-free).

\section{Applications}

\subsection{Some applications to 3-manifold groups}\label{sec:3m-app}
The goal of this section is to use Theorem \ref{main-3d} to obtain some new results about $3$-manifold groups. We begin by proving Corollaries \ref{3-dim-cor-1} and \ref{3-dim-cor-2}.

\begin{proof}[Proof of Corollary \ref{3-dim-cor-1}]
If $G$ is of type (III), it is inner amenable since every amenable group is inner amenable. Further assume that $G$ is of type (II). Since $Z$ is amenable, there exists a finitely additive $Z$-invariant probability measure
$\mu \colon \mathcal P(Z)\to [0,1]$. Given a subset $A\subseteq Z$, we let $ A^-=\{ a^{-1}\mid a\in A\}$. We define a probability measure on $\mathcal P(G\setminus\{1\})$ by the rule
$$\nu (S)=\frac{\mu (S\cap Z) + \mu ((S\cap Z)^-)}2$$ for every $S\subseteq G\setminus \{ 1\}$.
It is easy to see that $\nu $ is finitely additive and invariant under conjugation by elements of $G$ (indeed conjugation by an element of $G$ preserves the collection $\{ A, A^-\}$ for every $A\subseteq Z$).
Thus every group of type (II) is inner amenable.
Finally suppose that $G$ is of type (I), then $G$ has trivial finite radical (see Theorem \ref{main-3d}.(I$^\prime$)) and hence $G$ is not inner amenable by Theorem \ref{thm:prop-ah-2}.
\end{proof}

Using Corollary \ref{cor:geom}, we obtain the following.

\begin{cor}
Let $M$ be a compact irreducible orientable $3$-manifold. Suppose that $\pi_1(M)$ is inner amenable. Then either $M$ is Seifert fibered or $\pi _1(M)$ is amenable.
\end{cor}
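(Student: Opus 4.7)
The plan is to combine Corollary \ref{3-dim-cor-1} (which characterizes inner amenability within $\MM$) with Corollary \ref{cor:geom} (which gives geometric information when $\pi_1(M)$ fails to be acylindrically hyperbolic in the irreducible case). Since $M$ is a compact $3$-manifold, $\pi_1(M)\in \MM$, so the trichotomy of Theorem \ref{main-3-dim} applies and the three types are mutually exclusive.

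First I would invoke Corollary \ref{3-dim-cor-1} to conclude that $\pi_1(M)$ has type (II) or type (III). If $\pi_1(M)$ has type (III), then by definition it is virtually polycyclic, and hence amenable, so the second alternative of the desired conclusion holds.

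Next, suppose $\pi_1(M)$ has type (II). By the mutual exclusivity of types (I), (II), (III), $\pi_1(M)$ is then not acylindrically hyperbolic. Since $M$ is compact and irreducible, Corollary \ref{cor:geom} applies and tells us that either $\pi_1(M)$ is virtually polycyclic, in which case it is amenable, or $M$ is Seifert fibered. In either case, one of the two conclusions of the statement holds, which completes the argument.

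The proof is essentially a bookkeeping combination of the earlier results, so there is no real obstacle; the only thing to double-check is that the three types in Theorem \ref{main-3-dim} are indeed mutually exclusive (which is stated explicitly there), so that type (II) forces the failure of acylindrical hyperbolicity needed to feed into Corollary \ref{cor:geom}.
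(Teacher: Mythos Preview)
Your proof is correct and follows essentially the same approach as the paper, which simply notes that the corollary is obtained ``using Corollary \ref{cor:geom}''. Your argument makes the implicit reasoning explicit: inner amenability forces type (II) or (III) via Corollary \ref{3-dim-cor-1}, hence $\pi_1(M)$ is not acylindrically hyperbolic, and then Corollary \ref{cor:geom} yields the dichotomy (with virtually polycyclic giving amenability).
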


Recall that a countable group belongs to the class $w\mathcal C_{reg}$ if its quotient modulo the amenable radical belongs to $\mathcal C_{reg}$ \cite{MS}.

\begin{cor}\label{cor:GANC-3m}
For any $G\in \MM$, the following holds.
\begin{enumerate}
\item[(a)] $G\in \mathcal C_{reg}$ iff $G\in \mathcal D_{reg}$ iff $G$ is of type (I).
\item[(b)] $G\in w\mathcal C_{reg}$ iff $G$ is non-amenable iff $G$ is of type (I) or (II).
\end{enumerate}
\end{cor}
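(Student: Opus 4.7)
The plan is to deduce both parts directly from the trichotomy of Theorem \ref{main-3d}, combined with Theorem \ref{thm:prop-ah-3} (which gives $\X \subseteq \mathcal C_{reg} \cap \mathcal D_{reg}$), and the standard fact that an infinite amenable normal subgroup acting without invariant vectors forces vanishing of second bounded cohomology and of quasi-cocycles with coefficients in $\ell^2(G)$.

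For part (a), I would argue by cases on the type of $G$. Type (I) groups lie in $\X$, hence in $\mathcal C_{reg} \cap \mathcal D_{reg}$ by Theorem \ref{thm:prop-ah-3}. Type (III) groups are virtually polycyclic, hence amenable: this excludes them from $\mathcal D_{reg}$ by definition, and from $\mathcal C_{reg}$ because amenable groups have vanishing second bounded cohomology with coefficients in any dual Banach module (in particular $\ell^2(G)$). For type (II), $G$ contains an infinite cyclic normal subgroup $Z$, and the left regular representation on $\ell^2(G)$ has no nonzero $Z$-invariant vectors because $Z$ is infinite. The Monod--Shalom vanishing theorem for $H_b^2$ and its analogue for unbounded $\ell^2(G)$-valued quasi-cocycles then yield $G \notin \mathcal C_{reg}$ and $G \notin \mathcal D_{reg}$.

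For part (b), I first check the equivalence ``$G$ non-amenable iff $G$ is of type (I) or (II)''. Type (III) is amenable; type (I) lies in $\X$, which contains non-abelian free subgroups by Theorem \ref{thm:elem-prop-ah}.(e), so is non-amenable; in type (II) the quotient $G/Z$ is in $\X$, hence non-amenable, which forces $G$ itself to be non-amenable. To identify the amenable radical $R$ in each case: in type (I), $R$ is finite by Theorem \ref{thm:elem-prop-ah}.(a), but $G$ has trivial finite radical by Theorem \ref{main-3d}(I$^\prime$), so $R = \{1\}$ and $G/R = G \in \X \subseteq \mathcal C_{reg}$; in type (II), $Z \leqslant R$, while $R/Z$ is an amenable normal subgroup of $G/Z \in \X$ and is therefore finite by Theorem \ref{thm:elem-prop-ah}.(a), so $G/R$ is a quotient of $G/Z \in \X$ by a finite normal subgroup and stays in $\X$ by Lemma \ref{lem: com}, hence in $\mathcal C_{reg}$; in type (III), $R = G$, so $G/R$ is trivial and not in $\mathcal C_{reg}$.

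The main technical obstacle is the type (II) step in part (a): one must invoke, in the appropriate form, the vanishing of $H_b^2(G,\ell^2(G))$ and of unbounded $\ell^2(G)$-valued quasi-cocycles when $G$ carries an infinite amenable normal subgroup acting without invariant vectors on the coefficient module. Once this classical input is in place, the rest of the argument is bookkeeping via the trichotomy, Theorem \ref{thm:prop-ah-3}, Theorem \ref{thm:elem-prop-ah}, and Lemma \ref{lem: com}.
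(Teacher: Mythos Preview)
Your proposal is correct and follows essentially the same route as the paper: the key external input in both is the Monod--Shalom/Thom fact that groups in $\mathcal C_{reg}\cup\mathcal D_{reg}$ have finite amenable radical (which you phrase as a vanishing theorem for $H_b^2$ and quasi-cocycles in the presence of an infinite amenable normal subgroup), and the rest is bookkeeping via the trichotomy and Theorem~\ref{thm:prop-ah-3}. The paper is simply more terse, handling types (II) and (III) in one line via that citation and declaring that (b) ``obviously follows from (a)'', whereas you spell out the amenable radical computation in each type --- your extra detail for part (b) is a welcome unpacking of what the paper leaves implicit.
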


\begin{proof}
Recall that every $G\in \mathcal C_{reg}\cup \mathcal D_{reg}$ is infinite and the amenable radical of any group $G\in \mathcal C_{reg}\cup \mathcal D_{reg}$ is finite \cite{MS,T}. Thus $C\notin \mathcal C_{reg}\cup \mathcal D_{reg}$ whenever $C$ is of type (II) or (III). On the other hand, $G\in  \mathcal C_{reg}\cap \mathcal D_{reg}$ whenever $G$ is of type (I) by Theorem \ref{thm:prop-ah-3}. This proves (a); (b) obviously follows from (a).
\end{proof}

\begin{proof}[Proof of Corollary \ref{3-dim-cor-2}]
It is proved in \cite[Cor. 7.6 and 7.9]{MS} that being in $\mathcal C_{reg}$ and $w\mathcal C_{reg}$ is a measure equivalence invariant.
On the other hand, it is well known that being amenable is also a measure equivalence invariant (see, for example, \cite{Fur}). Thus the claim follows from Corollary \ref{cor:GANC-3m}.
\end{proof}

We mention one more application. For the definition and details about conjugacy growth we refer to \cite{HO13}. Recall that, in general, any non-decreasing function $\mathbb N\to \mathbb N$ bounded from above by an exponential function can be realized up to a standard equivalence as the conjugacy growth function of a finitely generated group \cite{HO13}. For $3$-manifolds groups, the situation is more rigid.

\begin{cor}
Let $M$ be a compact orientable $3$-manifold. Then $\pi_1(M)$ has either exponential or polynomially bounded conjugacy growth. Moreover, the conjugacy growth function of $\pi_1(M)$ is polynomially bounded if and only if $\pi_1(M)$ is virtually nilpotent.
\end{cor}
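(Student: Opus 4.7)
\emph{Plan.} The approach is to apply the trichotomy of Theorem~\ref{main-3-dim} to $G\coloneqq \pi_1(M)\in\MM$ (which is finitely generated because $M$ is compact) and treat the three types separately; throughout, let $\kappa_G(n)$ denote the number of conjugacy classes of $G$ meeting the ball of radius $n$ with respect to a fixed finite generating set $X$ of $G$. In each case I will either produce exponential conjugacy growth and rule out virtual nilpotency, or establish polynomial conjugacy growth together with virtual nilpotency, which together yield the desired equivalence.

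In type (I), $G$ is acylindrically hyperbolic, so Theorem~\ref{thm:prop-ah-4} immediately yields exponential conjugacy growth, and Theorem~\ref{thm:elem-prop-ah}.(e) gives a non-abelian free subgroup, so $G$ is not virtually nilpotent. In type (II), with $Z\lhd G$ infinite cyclic and $Q\coloneqq G/Z$ acylindrically hyperbolic, I would first verify the elementary estimate $\kappa_G(n)\ge \kappa_Q(n)$: the projection $\pi\colon G\to Q$ sends $G$-conjugacy classes to $Q$-conjugacy classes, and any length-$\le n$ word in $\pi(X)$ lifts verbatim to a length-$\le n$ word in $X$, so the induced map between conjugacy classes meeting the respective $n$-balls is surjective. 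Applying Theorem~\ref{thm:prop-ah-4} to $Q$ then gives exponential conjugacy growth of $G$; and virtual nilpotency of $G$ would force virtual nilpotency of the quotient $Q$, contradicting the presence of a free subgroup in $Q$.

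In type (III), $G$ is virtually polycyclic. If $G$ is virtually nilpotent, then Bass--Guivarc'h gives polynomial word growth, and the trivial bound $\kappa_G(n)\le |B_n(G,X)|$ gives polynomially bounded conjugacy growth. The only real obstacle of the proof is the complementary subcase, in which $G$ is virtually polycyclic but not virtually nilpotent: here one must exhibit exponentially many conjugacy classes. The cleanest path is to cite the known fact that every finitely generated virtually polycyclic non-virtually-nilpotent group has exponential conjugacy growth. A more self-contained alternative for $3$-manifold groups is to use Corollary~\ref{cor:geom} (together with the remark that if $\pi_1(M)$ is virtually polycyclic then either $M$ is irreducible or the Kneser--Milnor splitting forces $\pi_1(M)\cong\mathbb{Z}/2\ast\mathbb{Z}/2$, which is virtually cyclic and hence already handled) to reduce to closed Sol-manifold groups, which are commensurable with $\mathbb{Z}^2\rtimes_A\mathbb{Z}$ for some Anosov matrix $A\in\mathrm{SL}(2,\mathbb{Z})$ with leading eigenvalue $\lambda>1$. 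In that model, two elements $(v,0),(v',0)\in\mathbb{Z}^2\rtimes_A\mathbb{Z}$ are conjugate if and only if $v$ and $v'$ lie in the same $\langle A\rangle$-orbit; the $n$-ball in $G$ projects to a subset of $\mathbb{Z}^2$ of cardinality $\gtrsim \lambda^n$, while each $\langle A\rangle$-orbit meets that projection in only $O(n)$ points, yielding $\gtrsim \lambda^n/n$ distinct conjugacy classes already from the abelian piece.

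Finally, to conclude the ``polynomially bounded iff virtually nilpotent'' equivalence, I would observe that in types (I) and (II) the group $G$ has both exponential conjugacy growth and a non-abelian free subquotient, ruling out both virtual nilpotency and polynomial conjugacy growth simultaneously, while in type (III) the two alternatives within the virtually polycyclic case are exactly virtual nilpotency (giving polynomial growth) and its failure (giving exponential growth). The main obstacle is thus the exponential lower bound in the non-nilpotent polycyclic subcase, as indicated.
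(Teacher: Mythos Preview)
Your proposal is correct and follows the same trichotomy strategy as the paper. The paper's proof is much terser: it invokes Theorem~\ref{thm:prop-ah-4} for both types (I) and (II) without spelling out the quotient inequality $\kappa_G(n)\ge\kappa_{G/Z}(n)$ that you correctly make explicit, and for type (III) it simply cites Hull's result \cite{Hull11} on conjugacy growth of polycyclic groups (your ``cleanest path''), without the Sol-manifold alternative. Your route through Corollary~\ref{cor:geom} to reduce the non-nilpotent polycyclic case to Sol geometry is a reasonable geometric substitute, though note that Corollary~\ref{cor:geom} gives ``virtually polycyclic \emph{or} Seifert fibered'', which are not mutually exclusive, so a little more $3$-manifold input (geometrization for virtually solvable groups) is needed to land exactly on Sol; citing \cite{Hull11} avoids this detour entirely.
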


\begin{proof}
By Theorem  \ref{thm:prop-ah-4}, the conjugacy growth of $\pi_1(M)$ is exponential if $\pi_1(M)$ has type (I) or (II). On the other hand, it is proved in \cite{Hull11} that if $\pi_1(M)$ is of type (III), its conjugacy growth is either exponential or polynomially bounded and the latter possibility happens iff $\pi_1(M)$ is virtually nilpotent.
\end{proof}

It is worth noting that, in some cases, one can obtain results similar to the above corollaries using relative hyperbolicity. For example, it is not hard to prove that if $M$ is a closed orientable $3$-manifold with a hyperbolic piece in the JSJ decomposition, then $\pi_1(M)$ is hyperbolic relative to proper subgroups. However this approach does not always work (for example for graph manifolds).

\subsection{Geometric vs. analytic negative curvature} \label{sec:nc}
Let $\mathcal {SN}$  be the class of all groups $G$ containing a subnormal subgroup $N$ and a finite subgroup $K\lhd N$ such that $N/K$ splits as a direct product two infinite groups.
Let also $\mathcal N$ be the class defined in Definition \ref{def:N}.

\begin{lemma}
$\mathcal N\subset \mathcal {SN}$.
\end{lemma}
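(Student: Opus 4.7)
The plan is to take the obvious candidates $N \coloneqq N_1 N_2$ and $K \coloneqq N_1 \cap N_2$, and verify that they fit the definition of $\mathcal{SN}$. Since $N_1, N_2 \lhd G$ we have $N \lhd G$ (in particular subnormal in $G$), and $K \lhd G$ as well, so $K \lhd N$; by the defining hypothesis $|K| < \infty$. The non-trivial point is to exhibit the direct product decomposition of $N/K$ into two infinite factors.

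The key observation I would use is that $[N_1, N_2] \subseteq N_1 \cap N_2 = K$, since each $N_i$ is normal in $G$ (hence in $N$). Consequently, writing $\bar{N}_i$ for the image of $N_i$ in $N/K$, the subgroups $\bar{N}_1$ and $\bar{N}_2$ commute elementwise. They also generate $N/K$ because $N = N_1 N_2$. To check that $\bar{N}_1 \cap \bar{N}_2$ is trivial, I would chase an element: if $xK \in \bar{N}_1 \cap \bar{N}_2$, then $x = n_1 k_1 = n_2 k_2$ with $n_i \in N_i$ and $k_i \in K$, forcing $n_1 n_2^{-1} \in K \subseteq N_2$, hence $n_1 \in N_1 \cap N_2 = K$ and $xK$ is trivial. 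Together these facts yield the internal direct product decomposition $N/K = \bar{N}_1 \times \bar{N}_2$.

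Finally, since $K \subseteq N_i$, the natural map identifies $\bar{N}_i$ with $N_i/K$. As $N_i$ is infinite and $K$ is finite, each factor $\bar{N}_i$ is infinite, so $N/K$ splits as a direct product of two infinite groups, placing $G$ in $\mathcal{SN}$. There is no real obstacle: the entire argument is a short diagram-chase exploiting the fact that two normal subgroups of a group commute modulo their intersection.
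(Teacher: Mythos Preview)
Your proof is correct and follows exactly the same approach as the paper: set $N=N_1N_2\lhd G$ and $K=N_1\cap N_2$, then observe that $N/K\cong N_1/K \times N_2/K$ is a direct product of two infinite groups. The paper states this decomposition without further justification, while you have spelled out the verification (commutation modulo $K$, trivial intersection of the images, infiniteness of the factors); there is no substantive difference.
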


\begin{proof}
If $G$ contains two infinite normal subgroups $N_1,N_2\lhd G$ such that $|N_1\cap N_2|<\infty $, then $N=N_1N_2$ is a normal subgroup of $G$ and $N/(N_1\cap N_2)$ decomposes as a direct product of two infinite groups (isomorphic to $N_1/(N_1\cap N_2)$ and $N_2/(N_1\cap N_2)$).
\end{proof}

The next observation is an easy consequence of known results.
\begin{lemma}\label{CDN}
\begin{enumerate}
\item[(a)] $\mathcal{C}_{reg}\cap \mathcal {SN}=\emptyset$.
\item[(b)] $\mathcal{D}_{reg}\cap \mathcal {SN}=\emptyset$.
\end{enumerate}
\end{lemma}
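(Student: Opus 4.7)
Both parts will follow from three standard closure properties enjoyed by each of $\mathcal{C}_{reg}$ and $\mathcal{D}_{reg}$: (i) inheritance by infinite normal subgroups; (ii) invariance under the equivalence relation $\fk$ of commensurability up to finite kernels, and in particular under the passage to quotients by finite normal subgroups; (iii) no group in the class splits as a direct product of two infinite subgroups. For $\mathcal{C}_{reg}$ these are essentially contained in the work of Monod and Shalom \cite{MS}; for $\mathcal{D}_{reg}$ the analogous statements can be extracted from Thom \cite{T} (alternatively from \cite{HO} and \cite{BBF}, where quasi-cocycles with $\ell^2$-coefficients are studied in detail).

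Granting these, the argument is short. Suppose, for contradiction, that $G \in \mathcal{C}_{reg} \cap \mathcal{SN}$. By definition of $\mathcal{SN}$ there is a subnormal chain
\[
N = N_0 \lhd N_1 \lhd \cdots \lhd N_k = G
\]
and a finite normal subgroup $K \lhd N$ such that $N/K \cong A \times B$ with $A$ and $B$ both infinite. Since $K$ is finite while $N/K$ is infinite, each $N_i$ is infinite. Iterating property (i) down the chain yields $N \in \mathcal{C}_{reg}$; property (ii) then gives $N/K \in \mathcal{C}_{reg}$, which is incompatible with (iii) since $N/K \cong A \times B$. This proves (a); the identical argument, with $\mathcal{D}_{reg}$ in place of $\mathcal{C}_{reg}$, proves (b).

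The main obstacle is simply to pin down precise citations for the three properties in both classes; the arguments themselves are routine once the correct statements are isolated. Property (iii) is the most substantial of the three. For $\mathcal{C}_{reg}$ it follows from a K\"unneth-type vanishing of $H_b^2(A\times B, \ell^2(A\times B))$ whenever both $A$ and $B$ are infinite; for $\mathcal{D}_{reg}$ one argues analogously that an unbounded $\ell^2$-quasi-cocycle on $A\times B$ cannot exist when both factors are infinite, using the tensor-product decomposition $\ell^2(A\times B)\cong \ell^2(A)\,\widehat{\otimes}\,\ell^2(B)$ and the fact that each factor commutes with the action of the other. Properties (i) and (ii) reduce to standard restriction and inflation arguments for (quasi-)cocycles, combined with the vanishing of bounded cohomology of finite groups.
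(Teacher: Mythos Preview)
Your argument for part~(a) is correct and matches the paper's proof exactly: descend along the subnormal chain using \cite[Prop.~7.4]{MS}, pass to $N/K$ via \cite[Lemma~7.3]{MS}, and contradict \cite[Prop.~7.10(iii)]{MS}.

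For part~(b), however, your approach diverges from the paper's and leaves a genuine gap. You assert that properties~(ii) and~(iii) hold for $\mathcal{D}_{reg}$ and can be ``extracted from Thom~\cite{T}'', but Thom does not appear to prove either statement, and the sketch you give for~(iii) is not a proof. Saying that ``each factor commutes with the action of the other'' and invoking the tensor decomposition $\ell^2(A\times B)\cong \ell^2(A)\,\widehat{\otimes}\,\ell^2(B)$ does not, by itself, preclude an unbounded quasi-cocycle; you would need a concrete boundedness argument, and none is supplied. Likewise, passing an unbounded $\ell^2$-quasi-cocycle from $N$ to $N/K$ (property~(ii)) is not obviously routine: projecting onto $K$-invariants is equivariant but may kill unboundedness.

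The paper sidesteps both issues. After obtaining $N\in\mathcal{D}_{reg}$ via \cite[Thm.~3.4]{T} (your property~(i), which is fine), it invokes Thom's dichotomy \cite[Lemma~2.8]{T}: either $N\in\mathcal{C}_{reg}$, which is impossible by the part~(a) argument applied to $N$, or $\beta_1^{(2)}(N)>0$. In the latter case $\beta_1^{(2)}(N/K)>0$ by \cite[Thm.~7.34]{L}, contradicting the K\"unneth formula for $\ell^2$-Betti numbers of a product of two infinite groups \cite[Thm.~6.54(5)]{L}. Thus the paper never needs (ii) or (iii) for $\mathcal{D}_{reg}$ at all; it reduces to firmly established vanishing results for $\mathcal{C}_{reg}$ and for $\ell^2$-Betti numbers. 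If you want to keep your more uniform outline, you should either supply full proofs of (ii) and (iii) for $\mathcal{D}_{reg}$ or, more economically, replace your treatment of~(b) by the dichotomy argument.
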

\begin{proof}
Let $G \in  \mathcal {SN} $ be a group containing a subnormal subgroup $N$ and a finite subgroup $K\lhd N$ such that $N/K$ splits as a direct product two infinite groups.

Suppose first that $G\in \mathcal{C}_{reg}$. The class $\mathcal{C}_{reg}$ is closed under taking infinite subnormal subgroups by \cite[Prop. 7.4]{MS}. Hence $N\in \mathcal{C}_{reg}$.  Further we have $N/K\in \mathcal{C}_{reg} $ by \cite[Lemma~7.3]{MS}.
However this contradicts \cite[Prop. 7.10.(iii)]{MS}, which states that no group from $\mathcal{C}_{reg}$  decomposes as a direct product of two infinite groups. Therefore  $\mathcal{C}_{reg}\cap \mathcal {SN}=\emptyset$.

Suppose now that $G\in \mathcal{D}_{reg}$. By \cite[Thm 3.4]{T}, every infinite normal subgroup of a group from $\mathcal{D}_{reg}$ itself belongs to $\mathcal{D}_{reg}$. Hence we obtain $N\in \mathcal{D}_{reg}$ as above. Further by \cite[Lemma 2.8]{T}, $N$ either belongs to $\mathcal{C}_{reg}$ or has
non-vanishing first $\ell^2$-Betti number. We have already shown that $N\in \mathcal{C}_{reg}$ is impossible. If $\beta_1^{(2)} (N) >0$, then we have
$\beta_1^{(2)} (N/K) >0$, say by \cite[Thm. 7.34]{L}. However the latter inequality is impossible since $N/K$ decomposes as a direct
product of two infinite groups (see \cite[Thm.~6.54.(5)]{L}).
\end{proof}

\begin{proof}[Proof of Corollary \ref{cor:GANC}]
Claim (a) follows from Corollary \ref{cor:GANC-3m}.(a), so it remains to prove claim (b).

Let $G$ be a graph product of amenable groups, let $H \leqslant G$.
If $H$ is amenable, then it does not belong to either of the classes $\X$, $\mathcal C_{reg}$, $\mathcal{D}_{reg}$.
Thus we can assume that $H$ is non-amenable. Then, according to Theorem \ref{thm:subgroups}, either $H \in \mathcal{N}$ or $H \in \X$.

Assume, first, that $H \in \mathcal{N}$. Then $H\in \mathcal {SN}$, which implies that $H\notin \mathcal{C}_{reg}\cup \mathcal{D}_{reg}$ by Lemma \ref{CDN}. 
We also have $H\notin \X$ by Lemma~\ref{lem: sn}.

Finally,  if $H \in \X$, then $H\in   \mathcal C_{reg}\cap \mathcal D_{reg}$ as $\X \subseteq \mathcal C_{reg}\cap \mathcal D_{reg}$ by Theorem \ref{thm:prop-ah-3}.
\end{proof}

\subsection{Local properties of acylindrically hyperbolic groups}
As demonstrated by Theorems \ref{thm:elem-prop-ah}--\ref{thm:prop-ah-4}, global properties of acylindrically hyperbolic groups resemble those of relatively hyperbolic groups.
One can then ask if the same is true for \emph{local} properties (i.e., properties that pass to finitely generated subgroups).
If a group $G$ is hyperbolic relative to a subgroup $H$ then many  local properties of $H$ are inherited by $G$ (e.g., if $H$ is finitely generated and has solvable word problem then
the same is true for $G$ -- see \cite{Farb}, if $H$ has finite asymptotic dimension then so does $G$ -- see \cite{O-asdim}, etc.). However, the same cannot be said if $H$ is hyperbolically embedded in $G$.
Indeed, if a property $P$ is {local}, one can simply take some finitely generated group
$A$ which does not possess this property and set $G=H*A$. Then $H$ will be hyperbolically embedded into $G$ but $G$ will not satisfy $P$ even if $H$ does satisfy it. Of course
in this example both $H$ and $A$ are hyperbolically embedded into $G$ (and $G$ is hyperbolic relative to $\{H,A\}$),
and so, naturally, to ensure ``niceness'' of $G$ one should require that both $H$ and $A$ are ``nice''.
This leads to the following question:

\begin{prob} Let $G$ be a finitely generated acylindrically hyperbolic group. Suppose that every proper hyperbolically embedded subgroup $H \h G$ satisfies some local property $P$.
Does $G$ also have this property?
\end{prob}

Our investigation of graph products allows to produce simple examples showing that the answer to the above question is almost always negative. Namely, we have the following statement:

\begin{thm}\label{thm:emb} Any finitely generated group $K$ can be embedded into a finitely generated group $G$ so that all of the following hold:
\begin{enumerate}
  \item[(a)] $G \in \X$;
  \item[(b)] every proper hyperbolically embedded subgroup $H \h G$ is finitely generated and virtually free;
  \item[(c)] $K$ is a retract of $G$;
  \item[(d)] if $K$ is torsion-free then so is $G$;
 \item[(e)] if $K$ is finitely presented then so is $G$.
\end{enumerate}
\end{thm}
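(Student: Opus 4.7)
The plan is to build $G$ as a graph product over a carefully chosen finite graph. Take $\Gamma$ to be a finite irreducible graph with at least two vertices in which some distinguished vertex $v_0$ has at least one neighbor; for definiteness take $\Gamma$ to be the pentagon $C_5$. Put $G_{v_0}:=K$ and $G_v:=\mathbb{Z}$ for every other $v \in V\Gamma$, and set $G:=\Gamma\{G_v\}_{v \in V\Gamma}$, so that $K$ is identified with the full vertex subgroup $G_{v_0} \leqslant G$.

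Properties (a), (c), (d), (e) are then close to formal. Property (c) follows from the canonical retraction $\rho_{\{v_0\}}\colon G \twoheadrightarrow K$. For (d) I would invoke the folklore fact that a graph product of torsion-free groups is torsion-free. For (e), a graph product of finitely many finitely presented groups is itself finitely presented. For (a), I would apply Corollary~\ref{thm:gpacyl}: since $\Gamma$ is irreducible with $|V\Gamma|\geqslant 2$, $G$ is either virtually cyclic or in $\X$; and $G$ contains a copy of $F_2$ generated by the $\mathbb{Z}$-vertex groups at any two non-adjacent vertices of $\Gamma$, so it is not virtually cyclic.

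The main work is (b). Let $H \h G$ be a proper infinite hyperbolically embedded subgroup. I would use two tools: first, the almost malnormality of $H$ in $G$, i.e.\ $|H \cap gHg^{-1}|<\infty$ for every $g\in G\setminus H$; second, Theorem~\ref{thm:subgroups} applied to $H\leqslant G$, placing $H$ in one of the cases (a)--(d) there. In case (a) with $H$ virtually sitting in a $\mathbb{Z}$-vertex group, or in case (b), $H$ is virtually cyclic and hence virtually free, as required. In case (a) with $H$ virtually in a conjugate of $K$, the centralizer of $K$ in $G$ contains the full subgroup $G_{\link(v_0)}$, which is non-trivial because $v_0$ has a neighbor in $\Gamma$; an element of this centralizer lying outside $H$ conjugates an infinite subgroup of $H$ into itself, violating almost malnormality. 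Case (c), $H\in\gN$, is excluded by Lemma~\ref{lem: sn} once we know that $H$ is itself either virtually cyclic or acylindrically hyperbolic.

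The principal obstacle is case (d), where $H$ is acylindrically hyperbolic and a priori not virtually free. The plan here is an inductive descent on the essential support $\esupp_\Gamma(H)$. If $\esupp_\Gamma(H)$ is a proper subset of $V\Gamma$, then $H$ lies in a proper parabolic subgroup, which is itself a graph product over a smaller subgraph of $\Gamma$, so we recurse, with the base cases being subgroups virtually inside $\mathbb{Z}$ or inside $K$, already handled. If $\esupp_\Gamma(H)=V\Gamma$, I would apply Theorem~\ref{thm:one_elt} to obtain a single $h \in H$ with $\pc_\Gamma(\{h\})=G$ and then, via Corollary~\ref{cor:cyc-centr}, refine $h$ so that $\C_G(h)$ is infinite cyclic. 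Playing almost malnormality off against the centralizer structure of the graph product, one should be able to force $H$ to act on the Bass-Serre tree of a vertex splitting $G=G_A\ast_{G_C}G_B$ with finite edge stabilizers inside $H$; then Bass-Serre theory realizes $H$ as virtually free, and finite generation follows from the quasi-convexity of $H$ in the hyperbolic Cayley graph witnessing $H \h G$, combined with Lemma~\ref{lem:parab_clos}(ii) applied to $\pc_\Gamma(H)$. I expect the hardest technical step to be ruling out ``large'' acylindrically hyperbolic $H$ with full essential support, where one must combine almost malnormality with the explicit centralizer computations in the graph product.
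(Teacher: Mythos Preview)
Your construction and the verification of (a), (c), (d), (e) are fine, and almost malnormality of $H$ together with \cite[Cor.~4.32]{DGO} for finite generation are the right ingredients. But the argument for (b) has a genuine gap: once Theorem~\ref{thm:subgroups} places $H$ in its case (d) (and likewise in its case (a) with $S$ sitting in the vertex group $K$), you have not shown $H$ is virtually free. Knowing $H\in\X$ says nothing about virtual freeness, and your proposed ``inductive descent on $\esupp_\Gamma(H)$'' does not help precisely in the main case $\esupp_\Gamma(H)=V\Gamma$. The sentence ``one should be able to force $H$ to act on the Bass-Serre tree \dots\ with finite edge stabilizers inside $H$'' is exactly the heart of the theorem, and you have not supplied it; Corollary~\ref{cor:cyc-centr} produces one element with cyclic centralizer but does not by itself bound $H$-stabilizers of edges. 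Your exclusion of case (c) is also circular: Lemma~\ref{lem: sn} needs $H\in\X$, which you do not know.

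The paper sidesteps Theorem~\ref{thm:subgroups} entirely by choosing $\Gamma$ to be the path $a\!-\!b\!-\!c\!-\!d$ with $G_a=K$ and $G_b,G_c,G_d\cong\mathbb Z$, and using the single splitting $G=G_{\{a,b\}}\ast_{G_{\{b\}}}G_{\{b,c,d\}}$. The point of this graph is that \emph{each} vertex group of the splitting has an infinite cyclic central element ($x_b$ in $K\times\langle x_b\rangle$, and $x_c$ in the RAAG on $b\!-\!c\!-\!d$), and these central elements chain along the path. Almost malnormality then gives a short direct argument: if $H$ meets some conjugate of $G_{\{a,b\}}$ in an infinite set, the central $x_b$ is forced into (a conjugate of) $H$, hence all of $G_{\{a,b\}}$, then $x_c$, then $x_d$, so $H=G$; similarly for $G_{\{b,c,d\}}$. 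Thus a proper $H\hookrightarrow_h G$ meets every vertex stabilizer of the Bass-Serre tree finitely, and standard Bass-Serre theory (finite generation plus finite vertex stabilizers) yields that $H$ is virtually free. With the pentagon $C_5$, neither vertex group of a natural splitting has such a central element for arbitrary $K$, so this chaining argument is unavailable; if you want to keep $C_5$ you would need a genuinely different mechanism, which your proposal does not provide.
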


\begin{proof} Let $\ga$ be the simplicial path of length $3$ with (consecutive) vertices $a,b,c,d$, where $a$ and $d$ have valency $1$ and $b,c$ have valency $2$.
Let $G_a:=K$ and let $G_b$, $G_c$ and $G_d$ be infinite cyclic groups generated by elements $x_b,x_c$ and $x_d$ respectively. Let $G=\ga\G$ be the graph product of these groups
with respect to $\ga$ (see Figure \ref{fig:4}).
Clearly $\ga$ is irreducible and $G$ is not virtually cyclic, hence $G \in \X$ by Theorem \ref{cor:gp-hyp}. The properties (c)-(e) of $G$ follow from general properties of graph products, so it remains only to verify property (b).

\begin{figure}[ht]
\input{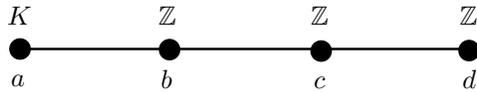}
\caption{The graph $\Gamma$ and the graph product $G=\ga\G$.}\label{fig:4}
\end{figure}

By the definition, $G$ splits as the amalgamated free product $G=G_{ab}*_{G_b} G_{bcd}$,
where $G_{ab}:=K \times \langle x_b \rangle$, $G_b=\langle x_b \rangle$ and $G_{bcd}:=\langle x_b,x_c,x_d \rangle$.
Then $G$ acts on the Bass-Serre tree $\T$ corresponding to this splitting with vertex stabilizers being conjugates of $G_{ab}$ or $G_{bcd}$.
Note that $\langle x_b \rangle$ is central in $G_{ab}$ and $\langle x_c \rangle$ is central in $G_{bcd}$.

Consider any $H \h G$. Then $H$ is \emph{almost malnormal} in $G$, i.e., $|H \cap H^f|<\infty$ for any $f\in G \setminus H$ (see \cite[Prop. 4.33]{DGO}).
Suppose that the intersection $H \cap G_{ab}^g$ is infinite for some $g \in G$.
Since $H_1:=g^{-1}Hg$ is  the image of $H$ under an inner automorphism, it is also almost malnormal in $G$.

Since $x_b$ is central in $G_{ab}$, $H_1 \cap G_{ab} \subseteq H_1 \cap H_1^{x_b}$ is infinite, hence $x_b\in H_1$ by almost malnormality of $H_1$.
But then for any $f \in G_{ab}$, the infinite order element $x_b$ belongs to $H_1 \cap H_1^f$, implying that $f \in H_1$. Thus $G_{ab} \subseteq H_1$.
Now, since $x_c$ commutes with $x_b \in H_1$, we can similarly get that $x_c \in H_1$, and since $x_d$ commutes with $x_c$ we conclude that
$x_d \in H_1$. Thus all the generators of $G$ are contained in $H_1$, therefore $H_1=G$, yielding that $H=G$. Similarly, one can show that $H=G$ if
$H$ has infinite intersection with some conjugate of $G_{bcd}$ (using the fact that $x_c$ is central in $G_{bcd}$).

Thus if $H \h G$ is a proper subgroup, then $H$ has finite intersection with every vertex stabilizer for the action of $G$ on $\T$. Observe that $G$ is finitely generated because $K$ is,
hence we can apply \cite[Cor. 4.32]{DGO} claiming that $H$ is finitely generated as well. By \cite[I.4.12]{D-D} $H$ either fixes a vertex of $\T$ or contains an element that acts on $\T$
as a hyperbolic isometry. In the former case $H$ will have to be finite (because it acts with finite vertex stabilizers), and in the latter case by \cite[I.4.13]{D-D} there is a unique minimal
$H$-invariant subtree $\T'$ of $\T$ on which $H$ acts cocompactly. Now we can apply
\cite[IV.1.9]{D-D} to conclude that $H$ is virtually free, as desired.
\end{proof}

For example, if $K$ is a finitely presented torsion-free group with unsolvable word problem, then, by Theorem \ref{thm:emb}, $K$ can be embedded into a finitely presented
torsion-free acylindrically hyperbolic group $G$ such that every hyperbolically embedded subgroup in $G$ is free of finite rank.

Yet another application is related to maximal elliptic subgroups (a subgroup is \emph{elliptic} if all of its orbits are bounded).
Recall that for every group $G$ acting acylindrically on a hyperbolic space,
every elliptic subgroup of $G$ is contained in a maximal elliptic subgroup of $G$, and maximal elliptic subgroups exhibit malnormality-type properties similar to
hyperbolically embedded subgroups (see \cite[Corollary 1.6]{Osi13}). It is easy to show that if $H\h (G,X)$ and $H$ is infinite, then $H$ is maximal elliptic
with respect to the action of $G$ on the Cayley graph $\Gamma (G, X\sqcup H)$. Thus it is natural to ask whether maximal elliptic subgroups are hyperbolically embedded.
The following corollary provides the negative answer. In what follows we say that a subgroup $H\leqslant G$ is \emph{absolutely elliptic} if it is elliptic with
respect to any acylindrical action of $G$ on a hyperbolic space.

\begin{cor}
There exists a group $G\in \X$ and an absolutely elliptic subgroup $K\leqslant G$ such that $K$ is not contained in any
proper hyperbolically embedded subgroup of $G$. In particular, for any non-elliptic acylindrical action of $G$ on a hyperbolic space, the maximal elliptic subgroup of $G$ containing $K$ is not hyperbolically embedded.
\end{cor}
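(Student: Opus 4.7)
The plan is to apply Theorem~\ref{thm:emb} to $K=\Z^2$, which is a finitely generated (in fact finitely presented and torsion-free) group. This produces a finitely generated acylindrically hyperbolic group $G$ containing $K$ as a subgroup, such that every proper hyperbolically embedded subgroup of $G$ is finitely generated and virtually free. Since $\Z^2$ does not embed into any virtually free group, $K$ cannot be contained in any proper hyperbolically embedded subgroup of $G$.

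Next I would show that $K$ is absolutely elliptic. Fix an arbitrary acylindrical action of $G$ on a hyperbolic space $S$; the restricted action of $K$ on $S$ is also acylindrical. Suppose for contradiction that some $g\in K\setminus\{1\}$ acts loxodromically on $S$. Being loxodromic for an acylindrical action of $G$, such $g$ is automatically a hyperbolic WPD element; a standard consequence of WPD (see \cite{DGO}) is that the centralizer $C_G(g)$ is then virtually cyclic. But $K\cong\Z^2$ is abelian, so $K\leqslant C_G(g)$, which is impossible. Hence no non-trivial element of $K$ acts loxodromically on $S$. Applying Theorem~\ref{class} to the restricted acylindrical action of $K$ on $S$, and noting that $K$ is not virtually cyclic, we conclude that $K$ must have bounded orbits, as desired.

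For the concluding statement, consider any non-elliptic acylindrical action of $G$ on a hyperbolic space $S$. Since $K$ is absolutely elliptic, it is contained in a maximal elliptic subgroup $M\leqslant G$ (the existence of such an $M$ is guaranteed by \cite[Corollary~1.6]{Osi13}). If $M$ were hyperbolically embedded in $G$, then, as $K\leqslant M$ cannot lie in any proper hyperbolically embedded subgroup, we would be forced to conclude that $M=G$. But then $G$ itself would act elliptically on $S$, contradicting the hypothesis. Hence $M$ is not hyperbolically embedded in $G$.

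The only technical input beyond direct application of Theorem~\ref{thm:emb}, Theorem~\ref{class} and \cite[Corollary~1.6]{Osi13} is the virtual cyclicity of the centralizer $C_G(g)$ of a hyperbolic WPD element~$g$; I would expect this to be the main point requiring careful citation, although it is a well-established consequence of the WPD condition, going back to \cite{BF} and formalized in \cite{DGO}.
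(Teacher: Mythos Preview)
Your proof is correct and follows the same approach as the paper: take $K=\mathbb Z^2$ and apply Theorem~\ref{thm:emb}. The only difference is in how you establish absolute ellipticity of $K$. The paper argues more directly: since $\mathbb Z^2$ is amenable, it is not acylindrically hyperbolic (Theorem~\ref{thm:elem-prop-ah}), and it is not virtually cyclic; hence by Theorem~\ref{class} applied to the restricted action of $K$, case (a) must hold for every acylindrical action. Your detour through WPD and virtual cyclicity of $C_G(g)$ is valid but unnecessary---once you invoke Theorem~\ref{class} for $K$, cases (b) and (c) are already excluded without ever needing to analyze individual loxodromic elements.
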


\begin{proof}
Let $K= \mathbb Z\oplus \mathbb Z$ and let $G$ be the group provided by Theorem \ref{thm:emb}. Note that $K$ is
absolutely elliptic by Theorem \ref{class} (it is not acylindrically hyperbolic by Theorem \ref{thm:elem-prop-ah}).
On the other hand, it cannot be a subgroup of any proper hyperbolically embedded subgroup of $G$ by part (b) of Theorem \ref{thm:emb}.
\end{proof}

\section{Open problems}\label{sec:q}
We showed that every one-relator group with at least $3$ generators and ``most" one-relator group with least $2$ generators  are acylindrically hyperbolic. However the case of $2$-generator groups is far from being clear.

\begin{prob}\label{p1}
Which one-relator $2$-generated groups are acylindrically hyperbolic?
\end{prob}

By Proposition \ref{cor:1-rel-2-gen}, to solve Problem \ref{p1} it suffices to consider one-relator groups which are ascending HNN-extensions of free groups. This leads to the following:

\begin{prob}\label{p2}
Which mapping tori of injective endomorphisms of finitely generated free groups are acylindrically hyperbolic?
\end{prob}

In the other direction, one can try to generalize our Corollary \ref{cor:one-rel} as follows:

\begin{prob}\label{p3}
Is every group of deficiency at least $2$ acylindrically hyperbolic?
\end{prob}

As a warm up for Problem \ref{p3}, we mention the following question, which looks much easier. We expect the answer to be negative, but we are not aware of any counterexamples.

\begin{prob}
Is every group of deficiency at least $2$ hyperbolic relative to a collection of proper subgroups?
\end{prob}

The next two questions are motivated by our results about the integral $2$-dimensional Cremona group ${\rm Aut}\, k[x,y]$.
Wright \cite{Wri} proved that the $2$-dimensional Cremona group $Cr_2$ (i.e., the group of birational transformations of the complex projective plane) is isomorphic to the
fundamental group of a finite developable complex of groups. Let $\mathcal W$ denote its development. Thus $Cr_2$ acts on $\mathcal W$ cocompactly.
The positive answer to the following question could lead to a new way of proving that
$Cr_2$ is acylindrically hyperbolic and hence not simple. Note that acylindrical hyperbolicity of $Cr_2$ was proved in \cite{DGO} and non-simplicity of
$Cr_2$ was a long-standing conjecture in algebraic geometry resolved in \cite{CL}.

\begin{prob}
Is $\mathcal W$ hyperbolic (as a metric space)?
\end{prob}

The last problem is inspired by Corollary \ref{GA2}. Note that for $n\ge 3$ no nontrivial decomposition of ${\rm Aut}\,k[x_1,\ldots, x_n]$  in an amalgamated product is known,
so our strategy of the proof of Corollary \ref{GA2} does not work. In fact, we rather expect the answer to be negative.

\begin{prob}
Let $k$ be a field. Is ${\rm Aut}\,k[x_1,\ldots, x_n]$ acylindrically hyperbolic for $n\ge 3 $?
\end{prob}

Finally, we would like to mention that in the originally published version of the article the statement of Lemma \ref{lem: com} was stronger: it claimed that acylindrical hyperbolicity is invariant under commensurability up to finite kernels. Unfortunately the proof contained a mistake (see \cite{erratum}), and we do not know the answer to the following question.

\begin{prob} Suppose that a group $G$ contains a finite index subgroup which is acylindrically hyperbolic. Does it follow that $G$ is acylindrically hyperbolic itself?
\end{prob}

\end{document}